\newtheorem{THM}{Theorem}
\newtheorem{COR}{Corollary}
\newtheorem{thm}{Theorem}[section]
\newtheorem{lem}[thm]{Lemma}
\newtheorem{prop}[thm]{Proposition}
\newtheorem{cor}[thm]{Corollary}
\theoremstyle{definition}
\newtheorem{dfn}[thm]{Definition}
\newtheorem{question}{Question}
\newtheorem{rk}[thm]{Remark}
\newtheorem{ex}[thm]{Example}
\crefname{ex}{example}{examples}
\DeclareMathOperator{\stab}{stab}
\DeclareMathOperator{\aut}{Aut}
\DeclareMathOperator{\out}{Out}
\DeclareMathOperator{\inn}{Inn}
\DeclareMathOperator{\gl}{GL}
\DeclareMathOperator{\Mod}{Mod}
\newcommand{\cato}{\operatorname{CAT}(0)}
\DeclareMathOperator{\st}{st}
\DeclareMathOperator{\ad}{ad}
\apptocmd{\sloppy}{\hbadness 10000\relax}{}{}
\begin{document}
\pagenumbering{roman}
\begin{titlepage}
	\large
		\hfill\vspace{2em}

	\begin{center}

	{\LARGE\bfseries
		Train Tracks 
		on Graphs of Groups \\[0.4em]
		and Outer Automorphisms  
		of \\[0.5em] Hyperbolic Groups
	}
	\vspace{1em}

	\emph{a dissertation submitted by}
	\vspace{1em}

	{\huge\bfseries Rylee Alanza Lyman }
	\vspace{1em}
	
	\emph{in partial fulfillment of the requirements for the degree of}
	\vspace{1em}

	{\LARGE\scshape Doctor of Philosophy}

	\emph{in}

	{\LARGE\scshape Mathematics}
	\vspace{1em}

	{\Large\scshape  Tufts University}

	May 2020
	\vspace{1em}

	Advisor: Kim Ruane
	\end{center}
\end{titlepage}

\setcounter{page}{1}
\begin{abstract}
	\thispagestyle{plain}
	Stallings remarked that an outer automorphism
	of a free group may be thought of as a subdivision of a graph
	followed by a sequence of folds. In this thesis, we prove that automorphisms
	of fundamental groups of graphs of groups satisfying this condition
	may be represented by irreducible train track maps in the sense of
	Bestvina--Handel (we allow collapsing invariant subgraphs).
	Of course, we construct relative train track maps as well.
	Along the way, we give a new exposition of the Bass--Serre theory
	of groups acting on trees, morphisms of graphs of groups,
	and foldings thereof.
	We produce normal forms for automorphisms of free products
	and extend an argument of Qing--Rafi to show that they are not quasi-geodesic.
	As an application, we answer affirmatively a question of Paulin:
	outer automorphisms of finitely-generated word hyperbolic groups
	satisfy a dynamical trichotomy generalizing the Nielsen--Thurston
	``periodic, reducible or pseudo-Anosov.'' At the end of the thesis
	we collect some open problems we find interesting.
\end{abstract}

\setcounter{page}{2}
\pagebreak
{

	\vspace*{6em}
	\centering
	\itshape
	To my parents,  who always believed in me
	and challenged me to pursue excellence.
	\vspace{4em}

	To Jeffrey, Jeremy, Kay and Mara; 
	it is a gift to be seen and heard. 
	\vspace{2em}

	-- \& --
	\vspace{2em}

	To Kim,  who struck the impossible balance
	between guidance and independence.
}

\chapter*{Acknowledgments}
\addcontentsline{toc}{chapter}{Acknowledgments}

The mere existence of a PhD thesis speaks to decades of
assistance, intentional and unintentional,
without which this document would not exist.
Considered from this perspective, my gratitude overwhelms me;
I owe far more thanks than I will be able to write here.

I first learned that math could be beautiful from my parents,
for whom my gratitude could fill a page by itself.
I particularly want to thank my mother for two moments:
for advocating for me with my sixth grade math teacher,
and for not letting me give up math when I was overwhelmed by
its difficulty my first year of college.
Her favorite math course in college was abstract algebra;
it is perhaps no surprise that I would fall in love with group theory.

Five years is a long time to work at something.
For listening, supporting me, and being a fellow-traveler,
I want to thank Jeffrey Lyman, Jeremy Budd, Kay Gabriel,
Mara Knoecklein and Max Shinn.
The community of math grad students at Tufts is truly something special;
in particular I want to mention
Ayla S\'anchez, Burns Healy, Curtis Heberle, Daniel Keliher,
Michael Ben-Zvi, Remy Bohm (who might as well have been a grad student),
Nariel Monteiro, Nathan Fisher, Peter Ohm and Sunrose Shrestha.
Noah Barrientos, Sarah Rich and Tyler Mitchell 
went above and beyond in looking out for me.

Being able to talk mathematics with someone is a rare delight. 
Jason Behrstock and Tim Susse introduced me to geometric group theory.
Robert Kropholler was the first to talk with me as a peer
at the edge of knowledge.
Daniel Keliher was always game to shoot for the moon
or listen to a half-baked observation.
Santana Afton tolerated my over-enthusiasm with grace.
Abdul Zalloum, Francesco Cellarosi, Dave Futer, and Thomas Ng
were excellent hosts.
Besides these I also want to mention
Autumn Kent, Corey Bregman, Dan Margalit, Elizabeth Field, Emily Stark,
Ian Runnels, Justin Lanier,
Kasra Rafi, Kathryn Mann, Lee Mosher, Marissa Loving, Mark Hagen,
Matt Clay, Matt Zaremsky, Mladen Bestvina, Olivia Borghi, Paul Plummer, Radhika Gupta,
Rose Morris-Wright, Sami Douba, Tarik Aougab, Tyrone Ghaswala and Yvon Verbene.

Even high school students seem to be aware that good mathematical writing
is hard to find. 
Dan Margalit, Emily Riehl, Ian Stewart, Martin Gardner, Mladen Bestvina,
and William Thurston are a pleasure to read and to emulate.

For helpful comments on the work in this thesis and closely-related
work in \cite{TrainTracksOrbigraphs}, 
I want to thank
Genevieve Walsh, Kim Ruane, Mark Hagen, Mladen Bestvina,
Robert Kropholler, and an anonymous referee.

Finally I want to thank my advisor, Kim Ruane,
for masterful guidance in my development as a thinker.
She was patient when I made no apparent progress,
brought me back when I wandered too far into the weeds,
offered new directions when I was stuck,
and encouraged my independence.
It is truly a gift to have such a mentor.

\tableofcontents
\pagebreak
\setcounter{page}{0}
\pagenumbering{arabic}
\hfill\vspace{1.5in}

\thispagestyle{empty}
\begin{center}
{\Huge\bfseries
		Train Tracks 
		on Graphs of Groups \\[0.35em]
		and Outer Automorphisms  
		of \\[0.5em] Hyperbolic Groups
}
\end{center}
\pagebreak

\chapter*{Introduction}
\label{introduction}
\addcontentsline{toc}{chapter}{\nameref{introduction}}

The Nielsen--Thurston classification of 
elements of the mapping class group of a surface
has spurred a massive development in the theory of mapping class groups,
and by analogy, outer automorphisms of free groups.
In 1992, Bestvina and Handel \cite{BestvinaHandel}
constructed \emph{relative train track maps}
representing each outer automorphism of a free group.

Relative train track maps play a similar role to Thurston's
normal form for mapping classes,
and led Paulin \cite{Paulin} to ask
whether all outer automorphisms of finitely-generated word hyperbolic groups 
satisfy a dynamical trichotomy generalizing
the Nielsen--Thurston ``periodic, reducible, or pseudo-Anosov.''

The main contribution of this thesis
is the construction of relative train track maps
representing those automorphisms of a group
that may be \emph{realized} on some graph of groups splitting
as a subdivision followed by a sequence of folds.
As an application we answer Paulin's question in the affirmative.

We think it useful to digress to discuss the source of this project;
we will discuss the results of this thesis
in more detail in \hyperref[results]{the following section}
of this introduction,
to which the impatient reader is invited to skip ahead.

\paragraph{}
This thesis, and the author's graduate career, 
begins with the outer automorphism groups $\out(W_n)$, where
\[ W_n = \underbrace{C_2 * \dotsb * C_2}_{n \text{ factors}} 
= \langle a_1,\dotsc,a_n \mid a_1^2 = \dotsb = a_n^2 = 1 \rangle, \]
that is, $W_n$ is 
the free product of $n$ copies of the cyclic group of order $2$.
As usual, the group $\out(W_n)$ is the quotient 
\[ \out(W_n) = \aut(W_n)/\inn(W_n), \]
where $\inn(W_n)$ denotes the group of inner automorphisms,
arising from the conjugation action of $W_n$ on itself.
The groups $W_n$ are the simplest infinite Coxeter groups.

The groups $\out(W_n)$ share some of the properties of
the braid groups and some of the properties of
$\out(F_{n-1})$, the outer automorphism group of a free group
of rank $n-1$.
Indeed, $\out(W_n)$ contains the quotient of 
the $n$-strand braid group by its (infinite  cyclic) center
as a subgroup (of infinite index once $n \ge 4$),
and  $\out(W_n)$ is isomorphic to the quotient of
a subgroup of $\out(F_{n-1})$ by a finite normal subgroup.
In particular $\out(W_1)$ is trivial, $\out(W_2)$ is cyclic of order $2$,
and $\out(W_3)$ is isomorphic to the projective linear group
$\operatorname{PGL}_2(\mathbb{Z})$.
A particularly striking connection is the sporadic isomorphism
\[ \aut(B_4) \cong \aut(F_2) \cong \aut(W_3), \]
where $B_4$ denotes the $4$-strand braid group.
For $n \ge 4$, the groups $\out(W_n)$,
and outer automorphisms of free products
more generally are much less well-understood than
either the braid groups or $\out(F_{n-1})$.

Brady and McCammond \cite{BradyMcCammond}
construct a contractible metric simplicial complex on
which the $n$-strand braid group acts geometrically.
For $n \le 6$, this metric is known to satisfy Gromov's
non-positive curvature condition $\cato$,
and conjecturally the condition is satisfied for all $n$.
On the other hand, for $n \ge 3$,
$\out(F_n)$ cannot act geometrically on a $\cato$ metric space.
This suggests the following question of Kim Ruane.

\begin{question}\label{outWnquestion}
	For $n \ge 4$, does $\out(W_n)$ act geometrically
	on a $\cato$ metric space?
\end{question}

Charles Cunningham in his thesis \cite{Cunningham} showed that
a candidate simplicial complex constructed by
McCullough and Miller in \cite{McCulloughMiller}
equipped with a simplicial action of $\out(W_n)$
with finite stabilizers and finite quotient
does not support an $\out(W_n)$-equivariant $\cato$ metric.
The result is reminiscent of Bridson's thesis,
where the same result is shown for the spine of
Culler--Vogtmann's Outer Space.

Gersten \cite{GerstenFreeByZ} gave the first proof
that $\out(F_n)$ cannot act geometrically on a $\cato$ metric space
when $n \ge 4$
by constructing a group $G_\Psi$
which cannot be a subgroup of a group acting geometrically on
a $\cato$ metric space, but \emph{is} a subgroup of $\aut(F_n)$
for $n \ge 3$ and of $\out(F_n)$ when $n \ge 4$.

The group $G_\Psi$ fits in a short exact sequence
\[\begin{tikzcd}
	1 \ar[r]  & F_3 \ar[r] & G_\Psi \ar[r] & \mathbb{Z} \ar[r] & 1,
\end{tikzcd}\]
and is thus called a \emph{free-by-cyclic} group.
The sequence splits; $G_\Psi$ decomposes as a semidirect product
$F_3\rtimes_\Psi\mathbb{Z}$, where abusing notation
we identify $\Psi\colon \mathbb{Z} \to \aut(F_3)$ with the image
$\Psi\colon F_3 \to F_3$ of the generator $1$.
Automorphisms of free groups are determined by their action
on a free basis;
writing $F_3 = \langle a,b,c \rangle$,
Gersten's automorphism $\Psi$ is described by its action as
\[
	\Psi \begin{dcases}
		a \mapsto a \\
		b \mapsto ba \\
		c \mapsto caa.
	\end{dcases}
\]
A simple computation reveals that word lengths of elements
of $F_3$ grow at most linearly under repeated application of $\Psi$:
we say $\Psi$ is \emph{polynomially-growing.}
To answer \Cref{outWnquestion} in the negative,
the author set out to find a Gersten-type example
$W_n\rtimes_\Phi \mathbb{Z}$.
Surprisingly, the following theorem implies no example exists.
\begin{THM}[Lyman '19]
	Suppose $\Phi\colon  W_n \to W_n$ is a polynomially-growing
	automorphism. There exists $k \ge 1$ such that
	$W_n\rtimes_{\Phi^k}\mathbb{Z}$ acts
	geometrically on a $\cato$ $2$-complex.
	The power $k$ is bounded by
	Landau's function $g(n) < n!$.
\end{THM}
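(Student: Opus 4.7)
The plan is to split the argument into three stages: first reduce to a ``pure'' case where $\Phi$ fixes each conjugacy class of $C_2$-subgroup; second, identify the resulting automorphism as a product of Dehn twists; and third, build a mapping-torus-style $2$-complex and verify Gromov's link condition.

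For the first stage, I would appeal to the (easy direction of the) Kurosh subgroup theorem: the subgroups $\langle a_1\rangle,\dotsc,\langle a_n\rangle$ are exactly the conjugacy class representatives of the maximal finite subgroups of $W_n$. Consequently $\aut(W_n)$ acts on the $n$-element set $\{[\langle a_1\rangle],\dotsc,[\langle a_n\rangle]\}$, giving a homomorphism to $S_n$. By the very definition of Landau's function $g(n)$, the order of the image of $\Phi$ is at most $g(n)$; taking $k$ to be this order and replacing $\Phi$ by $\Phi^k$, I may assume $\Phi$ fixes each $[\langle a_i\rangle]$. Note that polynomial growth is preserved under taking powers, so no property of $\Phi$ is lost.

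For the second stage I would deploy the relative train track machinery constructed earlier in the thesis, applied to the graph of groups $\mathcal{G}$ with a central trivial-group vertex and $n$ leaves carrying $C_2$ (whose fundamental group is $W_n$). A polynomially-growing outer automorphism that is pure on the vertex groups must, by the structure theory for polynomially-growing relative train track maps adapted to free products, be represented (after possibly passing to a further bounded power, which can be absorbed into the $g(n)$ bound) by a composition of Dehn twists along the edges of $\mathcal{G}$. The key point is that exponential growth would force some stratum to be an expanding train-track stratum, contradicting polynomial growth; the only remaining option compatible with fixing the vertex groups is the twist automorphism on each edge. Concretely one obtains $\Phi(a_i) = w_i a_i w_i^{-1}$ with the $w_i$ representing specific edge-path words in $\mathcal{G}$.

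For the third stage I would build the CAT(0) $2$-complex as a ``twisted product'' of the Bass--Serre tree $T$ of $\mathcal{G}$ with a line: take $T \times \mathbb{R}$, with $W_n$ acting on the first factor and the generator $t$ of $\mathbb{Z}$ acting by $(\Phi_T, \tau_1)$, where $\Phi_T$ is a lift of $\Phi$ to $T$ and $\tau_1$ is unit translation. The resulting square complex carries a proper cocompact action of $W_n\rtimes_{\Phi^k}\mathbb{Z}$; one equips it with the standard Euclidean metric on each square and verifies non-positive curvature by Gromov's link condition, which in dimension two amounts to checking that every injective edge-loop in each vertex link has combinatorial length at least four. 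The Dehn twist description of $\Phi$ ensures that the squares glue together along the $t$-direction in a way that prevents short link-loops --- each potential short loop would correspond to a relation among the $w_i$ that the twist structure rules out. The main obstacle is the second stage: squeezing the precise Dehn twist conclusion out of the polynomial growth hypothesis via the train-track technology, since once the twist structure is available, both the construction of the complex and the link condition check reduce to finite combinatorial verifications.
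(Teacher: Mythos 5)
Your Stage 1 is correct and identifies where the $g(n)$ bound comes from: $\aut(W_n)$ acts on the $n$-element set of conjugacy classes of maximal finite subgroups, and the order of the induced permutation is at most $g(n)$. The trouble starts in Stage 2. The conclusion you draw there --- $\Phi^k(a_i) = w_i a_i w_i^{-1}$ for some $w_i\in W_n$ --- is already an immediate consequence of Stage 1 alone: once $\Phi^k$ fixes $[\langle a_i\rangle]$, and since $C_2$ has trivial automorphism group, $\Phi^k(a_i)$ must be conjugate to $a_i$. Polynomial growth has not been used at all. What polynomial growth actually yields, and what the $\cato$ construction needs, is a filtration or ``upper-triangular'' structure: as the introduction of this thesis indicates, the theorem is proved by showing that a polynomially-growing palindromic automorphism of $F_{n-1}$ (equivalently, a polynomially-growing pure automorphism of $W_n$) has a power that is upper-triangular with respect to some free basis. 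Your ``composition of Dehn twists along the edges of $\mathcal{G}$'' is the wrong model: such a product grows at most linearly, whereas polynomially-growing palindromic automorphisms can grow quadratically and faster (consider $x\mapsto x$, $y\mapsto xyx$, $z\mapsto (yxy)z(yxy)$ on $F_3$), so you would be proving a false intermediate statement.

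Stage 3 does not close the gap either. The lift $\Phi_T$ to the Bass--Serre tree $T$ is not an isometry when $\Phi$ is a nontrivial topological realization, so ``$t$ acts by $(\Phi_T,\tau_1)$'' does not define an isometric $\mathbb{Z}$-action on $T\times\mathbb{R}$; what you actually want is the universal cover of a mapping-torus square complex, and then one must verify the link condition for a carefully chosen metric. That verification is the whole difficulty, and it cannot be dismissed as ``the twist structure rules out short loops'': Gersten's automorphism $a\mapsto a$, $b\mapsto ba$, $c\mapsto ca^2$ of $F_3$ is polynomially-growing, upper-triangular, and a composition of transvections (the free-group analogue of your twists), yet $F_3\rtimes_\Psi\mathbb{Z}$ acts geometrically on no $\cato$ space whatsoever. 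The $W_n$-specific feature that must enter --- the free factors are finite, so the $\mathbb{Z}^2$-with-three-conjugate-elements obstruction of Gersten type cannot arise --- is never invoked in your proposal, so as written the argument would ``prove'' Gersten's group is $\cato$ as well.
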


(In fact, the theorem holds for more classes of automorphisms
of free products. Details appear in \cite{SomeNewCAT0FreeByCyclicGroups}.)
Automorphisms of $W_n$ are closely related to 
automorphisms of $F_{n-1}$ which send generators
of some fixed free basis $x_1,\dotsc,x_n$ 
to \emph{palindromes} in the $x_i$---words
spelled the same forwards and backwards.
Let $\iota\colon F_n \to F_n$ denote the automorphism
inverting each element of our fixed free basis.
It is a pleasant exercise to show that
palindromic automorphisms $\Phi\colon F_n \to F_n$
are precisely those automorphisms centralizing $\iota$
\cite[Section 2]{GloverJensen}.

\begin{COR}[Lyman '19]
	If $\Phi \colon F_n \to F_n$ is a polynomially-growing palindromic automorphism,
	there exists an integer $k \ge 1$ such that $F_n\rtimes_{\Phi^k}\mathbb{Z}$
	acts geometrically on a $\cato$ $2$-complex.
	The power $k$ is bounded by Landau's function $g(n+1) < (n+1)!$.
\end{COR}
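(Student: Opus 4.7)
The plan is to deduce the corollary from Theorem~A by realizing $F_n$ as an index-$2$ subgroup of $W_{n+1}$ and extending $\Phi$ to a polynomially-growing automorphism of the latter. Write $W_{n+1} = \langle a_0, a_1, \ldots, a_n \mid a_i^2 = 1\rangle$ and set $x_i = a_0 a_i$ for $1 \le i \le n$; a standard Reidemeister--Schreier argument identifies the subgroup $\langle x_1,\dotsc,x_n\rangle$ with $F_n$, of index $2$ in $W_{n+1}$, and conjugation by $a_0$ acts on this free basis by inversion, so $W_{n+1} \cong F_n \rtimes \langle \iota \rangle$ with $\iota$ realized by the inner involution $a_0$.

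First I would define $\tilde\Phi \colon W_{n+1}\to W_{n+1}$ by $\tilde\Phi|_{F_n}=\Phi$ and $\tilde\Phi(a_0)=a_0$. The palindromic hypothesis $\Phi\iota = \iota\Phi$ is precisely the compatibility needed for $\tilde\Phi$ to respect the semidirect-product relation $a_0 x a_0 = \iota(x)$; the relation $a_0^2=1$ is preserved trivially. The map is an automorphism: its image contains $\Phi(F_n)=F_n$ and $a_0$, and if $\tilde\Phi(fa_0^\epsilon)=1$ with $f\in F_n$ and $\epsilon\in\{0,1\}$, then $\Phi(f)a_0^\epsilon=1$ forces $\epsilon=0$ and $f=1$.

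Next I would check $\tilde\Phi$ is polynomially-growing. Every element of $W_{n+1}$ has the form $f$ or $fa_0$ with $f\in F_n$, and $\tilde\Phi^m$ sends these to $\Phi^m(f)$ and $\Phi^m(f)a_0$. Because the $x_i$-word length on $F_n$ is biLipschitz-equivalent to the restriction of the $a_i$-word length on $W_{n+1}$, the polynomial growth of $\Phi$ transfers directly to $\tilde\Phi$.

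Finally, I would apply Theorem~A to $\tilde\Phi$ to produce $k\ge 1$ with $k\le g(n+1)$ such that $W_{n+1}\rtimes_{\tilde\Phi^k}\mathbb{Z}$ acts geometrically on a $\cato$ $2$-complex $X$. Since $\tilde\Phi^k$ restricts to $\Phi^k$ on $F_n$, the subgroup $F_n\rtimes_{\Phi^k}\mathbb{Z}$ sits inside $W_{n+1}\rtimes_{\tilde\Phi^k}\mathbb{Z}$ with index $[W_{n+1}:F_n]=2$, so it inherits a geometric action on $X$. The only step requiring any care is constructing $\tilde\Phi$, but the commutation relation forced by the palindromic hypothesis makes this immediate, so the corollary is a corollary in the most literal sense.
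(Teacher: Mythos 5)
Your proof is correct, and it is essentially the route the surrounding text in the introduction points at: the paragraph immediately preceding the corollary sets up precisely the identification of palindromic automorphisms of $F_n$ with automorphisms of $F_n$ centralizing the inversion $\iota$, and that inversion is realized as conjugation by $a_0$ inside $W_{n+1} = F_n \rtimes \langle a_0\rangle$. Your extension $\tilde\Phi$ with $\tilde\Phi|_{F_n} = \Phi$ and $\tilde\Phi(a_0) = a_0$ is well-defined exactly because $\Phi$ centralizes $\iota$ (this is what preserves the relations $a_i^2 = (a_0 x_i)^2 = 1$), the transfer of polynomial growth across the finite-index inclusion $F_n \le W_{n+1}$ is standard, and the index-$2$ subgroup $F_n\rtimes_{\Phi^k}\mathbb{Z} \le W_{n+1}\rtimes_{\tilde\Phi^k}\mathbb{Z}$ indeed inherits a geometric action. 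The shift from $g(n)$ in Theorem~A to $g(n+1)$ in the corollary falls out of this embedding exactly as you say.

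The one thing worth noting for comparison: the paper's own route (sketched rather than proved in this thesis, with details deferred to the cited paper) is phrased in terms of reducing to the case where some power $\Phi^k$ is \emph{upper-triangular} with respect to a free basis, and then giving an explicit $\cato$ $2$-complex for upper-triangular palindromic automorphisms. Your argument bypasses that internal structure entirely by treating Theorem~A as a black box. That is cleaner as a derivation of the corollary, at the cost of not exhibiting the $2$-complex for $F_n\rtimes_{\Phi^k}\mathbb{Z}$ directly — you get it only as a finite-index sub-action of the one built for $W_{n+1}\rtimes_{\tilde\Phi^k}\mathbb{Z}$. Both give the same bound $g(n+1)$, for the same underlying reason.
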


A direct construction proving the corollary
is easiest to give for those palindromic
automorphisms $\Phi$ of $F_n = \langle x_1,\dotsc,x_n \rangle$
which are \emph{upper-triangular}, in the sense
that for each $i$, the element $\Phi(x_i)$ may be 
written as a word using only those $x_j$ satisfying $j \le i$.
In fact, the corollary is proven
by arguing that in fact every polynomially-growing,
palindromic automorphism $\Phi\colon F_n \to F_n$
has a power $k \ge 1$ for which $\Phi^k$ is upper-triangular
with respect to some free basis.

The demonstration of this latter statement
requires some understanding of automorphisms
$\Phi\colon F_n \to F_n$---or better, their outer classes
$\varphi \in \out(F_n)$---independent 
from their expression in a given basis.
The main tools for this understanding are
\emph{relative train track maps,} originally introduced 
by Bestvina and Handel
in \cite{BestvinaHandel} to prove a conjecture by Scott
that the rank of the fixed subgroup
\[ \{ x \in F_n : \Phi(x) = x \} \]
is at most $n$ for all automorphisms $\Phi\colon F_n \to F_n$.

As we learn in algebraic topology,
free groups are fundamental groups of graphs.
Since every graph $\Gamma$ is an Eilenberg--Mac Lane space,
every automorphism of a free group may be represented
by a basepoint-preserving homotopy equivalence of a graph.
The (free) homotopy class of a homotopy equivalence
determines an outer automorphism of the fundamental group.
A \emph{relative train track map} is a homotopy equivalence
of a graph
with specified extra structure that allows for
exactly the kind of basis-independent reasoning we hoped for.

Thus to complete the proof of the corollary,
one would hope for a ``palindromic'' relative train track map
of some kind.
It is a theorem of Culler \cite{Culler} building on
earlier work of Karrass, Pietrowski and Solitar \cite{KPS}
that finite groups of (outer) automorphisms of free groups
may be realized as homeomorphisms of graphs,
and Bass--Serre theory allows one to consider
a kind of ``orbifold quotient'' 
of the action \cite{Bass}---this is a \emph{graph of groups}
(with, as it  turns out, fundamental group $W_n$
in the case of $\iota$).
One way to guarantee that a relative train track map
respects the action of the outer automorphism
would be to work directly in the quotient graph of groups.
This is the work of  this thesis.

\section*{Statement of Results}
\label{results}
\addcontentsline{toc}{section}{\nameref{results}}

We briefly describe the contents of this thesis,
referring the curious reader to the appropriate section
for more details.

\paragraph{\Cref{graphsofgroups}: Graphs of Groups.}
We give an exposition of Bass--Serre theory.
All the results in the chapter are contained in
or follow easily from \cite{Trees} or \cite{Bass}, but the proofs are new.
The central theorem is \Cref{BassSerreThm} establishing
a correspondence between graphs of groups $(\Gamma,\mathscr{G})$
and actions of the associated fundamental group $\pi_1(\Gamma,\mathscr{G},p)$
without inversion on a tree $\tilde\Gamma$.

Our proof of \Cref{BassSerreThm} first gives
a geometric construction of the tree $\tilde\Gamma$ from the data
of the graph of groups $(\Gamma,\mathscr{G})$.
Our definition of the fundamental group $\pi_1(\Gamma,\mathscr{G},p)$
avoids the use of the \emph{path group,}
preferring instead to directly develop a homotopy theory
of edge paths in graphs of groups.
This approach eases the transition in \Cref{traintrackchapter}
to a more directly topological approach.

Graphs of groups form a \emph{category,}
in particular there is a notion, essentially due to Bass,
of a \emph{morphism} between  graphs of groups.
Let us give the brief explanation.
Each graph $\Gamma$ determines a \emph{small category,}
also called $\Gamma$,
with \begin{enumerate*}[label=(\roman*)] \item objects the vertices
of the first barycentric subdivision of the graph
and \item arrows from barycenters of edges to the vertices incident to the edge.
\end{enumerate*}

A graph of groups $(\Gamma,\mathscr{G})$ is a connected graph $\Gamma$
and a \emph{functor}
$\mathscr{G}\colon \Gamma \to \operatorname{Group^{mono}}$
to a subcategory of the category of groups.
A \emph{morphism} of graphs $f\colon \Lambda \to \Gamma$
is a functor of the associated small categories,
and thus a morphism of graphs of groups 
$f\colon (\Lambda,\mathscr{L}) \to (\Gamma,\mathscr{G})$ ought
to be a morphism of functors---i.e. a natural transformation
$f\colon \mathscr{L} \Longrightarrow \mathscr{G}f$.

We develop this perspective beginning in \Cref{morphisms}.
It turns out that the right notion of a morphism is instead a
\emph{pseudonatural} transformation;
certain diagrams only commute up to
conjugation by specified elements of the target vertex groups.
This is essentially the definition given by Bass in \cite{Bass}
except for two small  differences,
the first of which is that our  morphisms may collapse edges to vertices.
In \Cref{diagramfill1} and \Cref{diagramfill2},
we prove a correspondence between morphisms of graphs of groups 
\[ f\colon (\Lambda,\mathscr{L}) \to (\Gamma,\mathscr{G}) \]
and equivariant  morphisms of Bass--Serre trees
\[ \tilde f \colon \tilde\Lambda \to \tilde\Gamma \]
preserving the distinguished lift of the basepoint.
It is a consequence of the second difference in our definitions
that Bass's definition ignores basepoints.
Bass's definition is aimed at understanding
the group of equivariant automorphisms of the tree $\tilde\Gamma$,
where it is convenient to allow inner automorphisms,
but seems inappropriate for most other applications.

Let us remark that the correspondence just mentioned
allows one to understand the pseudonaturality condition:
the identification of a group $G$  acting without inversions
on a tree $T$ with the fundamental group of the corresponding
graph of groups involves a choice of fundamental domain for the action of $G$ on $T$.
The conjugating elements in the definition of 
$f\colon (\Lambda,\mathscr{L}) \to (\Gamma,\mathscr{G})$ arise
to measure the difference between the image of the fundamental domain
for $\tilde\Lambda$ and the fundamental domain for $\tilde\Gamma$.

Bass's paper leaves implicit the classification of covering spaces
of a graph of groups.
The result follows quickly from \Cref{diagramfill1}
and \Cref{diagramfill2}; we prove it as \Cref{Galoiscorrespondence}.

\paragraph{\Cref{foldingchapter}: Folding.}
Stallings foldings, introduced in \cite{Stallings}
are a fundamental tool in studying
outer automorphisms of free groups and
maps of graphs more generally.
Stallings subsequently gave an extension in \cite{StallingsGTrees}
to morphisms between Bass--Serre trees,
and the theory was later refined and extended in 
\cite{BestvinaFeighnGTrees,Dunwoody,KapovichWeidmannMiasnikov}.
Most of the first four sections are expository,
giving a uniform treatment of results in the previously mentioned papers.
Our one contribution in these early sections
is a study of two kinds of collapsing that can occur:
collapsing of \emph{subgraphs}, where the fundamental group
$\pi_1(\Gamma,\mathscr{G})$ is preserved but the topology of $\Gamma$
changes, and collapsing of \emph{stabilizers,}
where the graph $\Gamma$ is preserved,
but the graph of groups structure $\mathscr{G}$ is changed.
The result is \Cref{collapsing}, which serves 
both as a preliminary step for folding a morphism 
$f\colon (\Lambda,\mathscr{L}) \to (\Gamma,\mathscr{G})$
and, as Dunwoody observed, an intermediary step for folding
in the case where $\pi_1(\Lambda,\mathscr{L})$ and
$\pi_1(\Gamma,\mathscr{G})$ differ.

In \Cref{applications}, we use folding to 
study automorphism groups of free products.
Stallings remarked that an automorphism of a free group
may be thought of as a subdivision of a graph followed
by a sequence of folds.
For us, a \emph{topological realization} of an endomorphism
$\pi_1(\Gamma,\mathscr{G}) \to \pi_1(\Gamma,\mathscr{G})$
is a continuous map $f\colon (\Gamma,\mathscr{G}) \to (\Gamma,\mathscr{G})$
which is a subdivision of the domain followed by a morphism
(in the sense of the previous chapter) from the subdivided graph of groups
to $(\Gamma,\mathscr{G})$.
Topologically realizable automorphisms are exactly those for which
Stallings's observation applies, and folding may be used to study them.

We give a conceptually simple proof in \Cref{fouxerabinovitch}
of a result of Fouxe-Rabinovitch finding a generating set
for the automorphism group of a free product.
Actually, Fouxe-Rabinovitch gives a \emph{presentation;} we do not.
By carefully analyzing the process of folding,
we give as \Cref{combing} a method for determining whether
a topologically realizable endomorphism 
$\Phi\colon \pi_1(\Gamma,\mathscr{G}) \to \pi_1(\Gamma,\mathscr{G})$
is an automorphism, and if so give a normal form for $\Phi$
in the Fouxe-Rabinovitch generators.
Every automorphism of a free product is topologically realizable
on a (Grushko) splitting we call the ``thistle.''

The method given above is algorithmic in principle,
although is not in full generality a true algorithm
because the factors in the free product may be arbitrary.

In the case of a free group $F_n$, 
the methods in the previous paragraph are well-known,
but their extension to free  products appears to be new.
In the case of the free group, Qing and Rafi
show in \cite{QingRafi} that any normal forms that rely solely on folding
are not quasi-geodesic in the Cayley graph of $\aut(F_n)$ 
once the rank of the free group is at least $3$.
We show in \Cref{notquasigeodesic} that our normal forms
are likewise not quasi-geodesic once the Kurosh rank of the free product
$A_1*\dotsb*A_n*F_k$ satisfies $n+k \ge 4$ or $k\ge 3$,
where the $A_i$ are freely indecomposable and not infinite cyclic.

Let us remark that folding techniques may be used to study
automorphisms of virtually free groups.
It would be interesting, for instance,
to know whether a uniform method can be given for computing
a presentation for the subgroup of $\aut(G)$ or $\out(G)$
that can be topologically realized on a given splitting.
We collect a number of such questions at the end of this thesis.

\paragraph{\Cref{traintrackchapter}: Train Track Maps}
In \cite{BestvinaHandel}, Bestvina and Handel
construct \emph{relative train track maps}
for outer automorphisms of free groups.
A \emph{train track map} is an efficient topological realization
of a given outer automorphism; it minimizes a certain
algebraic integer $\lambda \ge 1$ called its \emph{stretch factor.}
The idea of the proof is to associate a nonnegative integral matrix
to a topological realization and use folding and some auxiliary moves
to minimize the associated Perron--Frobenius eigenvalue.
A \emph{relative} train track map preserves a filtration
\[\varnothing = \Gamma_0 \subset \Gamma_1 \subset \dotsb \subset \Gamma_m = \Gamma \]
of the graph of groups $(\Gamma,\mathscr{G})$ into subgraphs
such that within each stratum 
\[H_k = \overline{\Gamma_k \setminus \Gamma_{k-1}}, \]
our topological realization $f\colon(\Gamma,\mathscr{G}) \to (\Gamma,\mathscr{G})$
``looks like'' a train track map.
We make this precise in \Cref{ttfromrtt}
by showing that if $H_k$ is an irreducible stratum,
then the topological realization obtained by first restricting to
$\Gamma_k$ and then collapsing each component of $\Gamma_{k-1}$
satisfies the train track property (although \emph{a priori}
the resulting graph of groups is disconnected.)

The main result of the chapter is an extension of Bestvina--Handel's
methods to topological realizations of automorphisms of 
fundamental groups of graphs of groups.
In \Cref{traintrackthm},
we show that if an automorphism $\Phi\colon G \to G$ 
can be topologically realized, then it can be realized  by a train track map.
In the proof, we \emph{collapse} any invariant subgraphs that occur,
which explains why we get a train track map and not a relative train track map.

If one repeats the argument on any invariant subgraphs that occur,
the result is a \emph{hierarchy} of irreducible train track maps,
\[ f_1\colon (\Lambda_1,\mathscr{L}_1) \to (\Lambda_1,\mathscr{L}_1),
\dotsc,f_m\colon (\Lambda_m,\mathscr{L}_m) \to (\Lambda_m,\mathscr{L}_m), \]
where each $(\Lambda_i,\mathscr{L}_i)$ corresponds to a vertex of
$(\Lambda_j,\mathscr{L}_j)$ with vertex group  $\pi_1(\Lambda_i,\mathscr{L}_i)$
for some $j > i$.
In the case where each graph of groups has trivial edge groups,
we prove a partial converse to \Cref{ttfromrtt} as \Cref{rttfromtt},
namely that one can graft the $(\Lambda_i,\mathscr{L}_i)$ together
to yield a topological realization
$f\colon(\Gamma,\mathscr{G}) \to (\Gamma,\mathscr{G})$
that has no valence-one or valence-two vertices, only irreducible strata,
and each irreducible stratum satisfies
a weakening of the relative train track properties
due to Gaboriau, Jaeger, Levitt and Lustig \cite{GaboriauJaegerLevittLustig}
called a \emph{partial train track property.}
Of course, one may use techniques of Bestvina--Handel 
to modify $f\colon(\Gamma,\mathscr{G}) \to (\Gamma,\mathscr{G})$,
yielding a relative train track map with the same exponentially-growing strata,
which we record as \Cref{rttexistence}

As an application of \Cref{rttexistence},
we answer affirmatively as \Cref{trichotomy} a question of Paulin,
who asks whether each automorphism of a hyperbolic group
satisfies a generalization of the Nielsen--Thurston
``periodic, reducible, or pseudo-Anosov'' trichotomy.

\paragraph{}
We expect \Cref{traintrackthm} and a future strengthening of it
constructing the \emph{completely split relative train track maps}
of \cite{FeighnHandel} on graphs of groups to be a useful tool
in studying outer automorphisms of free products,
and subgroups of outer automorphisms of free groups preserving 
the conjugacy classes of certain free factors.
For example, we have used \Cref{traintrackthm} in the case
of $W_n$, the free product of $n$ copies of the cyclic group of order $2$,
to show that \emph{fully irreducible} elements $\varphi \in \out(W_n)$
are either realizable as pseudo-Anosov homeomorphisms of once-punctured orbifolds,
or $W_n\rtimes_\phi\mathbb{Z}$ is word-hyperbolic.

Perhaps most noticeably absent from this thesis is the connection
to deformation spaces of trees, where a train track map corresponds
to an axis for the outer automorphism in the (asymmetric) Lipschitz metric.
This is how Francaviglia and Martino \cite{FrancavigliaMartino}
prove the existence of relative train
track maps for outer automorphisms of free products.
We collect some questions and directions for future work at the end of this thesis.

\chapter{Graphs of Groups}
\label{graphsofgroups}

Bass--Serre theory~\cite{Trees} studies the structure 
of groups acting on trees.
Suppose a group $G$ acts on a tree $T$. The idea is to treat
the quotient $G\backslash T$ as a kind of ``orbifold,''
in the sense that understanding the topology of the quotient space
should be in some way equivalent to understanding the action of $G$ on $T$.
The original definition is combinatorial. Scott and Wall~\cite{ScottWall}
gave a topological interpretation by introducing \emph{graphs of spaces.}
In this thesis we need both the combinatorial 
clarity of graphs of groups
and the ability to reason about our objects 
using the tools of one-dimensional topology.

A reader familiar with Bass--Serre theory may skim this chapter
up until \Cref{morphisms}.
Our exposition differs from that in~\cite{Trees} only in its
preference for combinatorial topology. We construct the tree first,
then analyze the resulting action of the fundamental group on it.

This decision pays off in the latter part of the chapter
as it makes clear the mysterious appearance of a \emph{pseudonaturality}
condition appearing in the definition of a morphism of a graph of groups.
Many authors mention casually that vertex and edge groups of a graph of groups
are really only well defined ``up to inner automorphism.'' Indeed,
a careful reading of \Cref{inducedgraphofgroups} below already shows
why this might be the case. One way to anticipate the definition is to
imagine how a different choice of fundamental domain for the action of
a group on a tree might yield a different, yet isomorphic 
graph of groups structure on the quotient.

Beginning with \Cref{morphisms}, we develop the covering space theory
of graphs of groups. Our main source is Bass \cite{Bass}. Bass's
ultimate goal appears to be understanding the group of
$\pi_1$-equivariant isomorphisms of the Bass--Serre tree
$T$ in \cite{BassJiang}. Our goal is to develop Stallings foldings
and to understand the subgroups of $\aut(\pi_1)$ or $\out(\pi_1)$
which may be realized as \emph{homotopy equivalences} 
of a given graph of groups.
These goals are at odds with each other, hence our definitions
and treatment diverge somewhat from those of Bass.

\section{Two Notions of Graphs}

For us, a \emph{graph} is a 1-dimensional CW complex.
Its $0$-cells are \emph{vertices,} and its $1$-cells are \emph{edges.}
We think of an edge $e$ as coming with a choice of orientation,
and write $\bar{e}$ for $e$ with its orientation reversed.

There is a convenient combinatorial definition of graphs due to Gersten.
In this definition a graph is a \emph{set} $\Gamma$ with
an involution $e \mapsto \bar e$ on $\Gamma$,
and a retraction $\tau\colon \Gamma \to \Gamma$ onto
the fixed point set $V(\Gamma)$ of the involution.
Thus for all $e \in \Gamma$ we have $\tau(e) \in V(\Gamma)$,
and $\tau^2 = \tau$. In this definition the fixed point set $V(\Gamma)$
is the set of vertices of $\Gamma$,
and its complement $E(\Gamma) = \Gamma - V(\Gamma)$
is the set of oriented edges, with the involution $e \mapsto \bar e$
reversing orientations. The map $\tau$ sends an edge to its terminal vertex.
One recovers the topological definition by constructing a CW complex
with $0$-skeleton $V(\Gamma)$ and a $1$-cell for each orbit $\{e,\bar e\}$ in
$E(\Gamma)$, which is attached to $\{\tau(\bar e),\tau(e)\}$.

The advantage of this definition is that there is an obvious
\emph{category of graphs,}
with objects triples $(\Gamma,e\mapsto\bar e,\tau)$ as above.
Let $\Gamma$ and $\Gamma'$ be graphs.
A \emph{morphism} $f \colon \Gamma \to \Gamma'$ 
is a map of the underlying sets compatible with the involution and retraction
in the following sense. 
Abusing notation
by writing $e \mapsto \bar e$ and $\tau$ for the involution and retraction,
respectively, of both $\Gamma$ and $\Gamma'$,
a map $f\colon \Gamma \to \Gamma'$
is a morphism if the following diagrams commute
\[\begin{tikzcd}
	\Gamma \arrow[r,"f"] \arrow[d,"\tau"] & \Gamma' \arrow[d,"\tau"] \\
	\Gamma \arrow[r,"f"] & \Gamma'
\end{tikzcd}
\qquad\qquad\qquad
\begin{tikzcd}
	\Gamma \arrow[r,"f"] \arrow[d,"\bar{\cdot}"] & \Gamma' \arrow[d,"\bar\cdot"] \\
	\Gamma \arrow[r,"f"] & \Gamma'.
\end{tikzcd}\]
Topologically, morphisms define maps of graphs that send vertices to vertices,
and send edges either homeomorphically to edges or collapse them to vertices.

We will pass back and forth continuously
between the topological and combinatorial notions of graphs.
By an ``edge,'' we will always mean a $1$-cell; we will
write ``oriented edge'' and $E(\Gamma)$ when we need to consider
$e$ to be distinct from $\bar e$.

A simply-connected graph is a \emph{forest,}
and a forest is called a \emph{tree} if it is connected.
We remark that trees are also naturally simplicial complexes.

\section{Graphs of Groups}
Here is the abstract nonsense. A graph $\Gamma$ determines
a \emph{small category} (let us also call it $\Gamma$)
whose objects are the vertices and edges of $\Gamma$,
with morphisms recording the incidence structure of vertices and edges.
Thus if $e$ is an edge, there are morphisms 
$e \to \tau(e)$ and $e \to \tau(\bar e)$. We have $e$ and $\bar e$
equal as objects in the small category $\Gamma$.
A graph of groups structure on $\Gamma$ is a diagram
of groups and monomorphisms in the shape  of $\Gamma$.
That is, it is a functor 
$\mathscr{G}\colon \Gamma \to \operatorname{Grp^{mono}}$
from the small category $\Gamma$ 
to the category of groups with arrows monomorphisms.

Let us be more concrete.

\begin{dfn} A \emph{graph of groups} is a pair
$(\Gamma,\mathscr{G})$, namely a connected graph $\Gamma$, 
together with a diagram
$\mathscr{G}$ of groups and homomorphisms between them.
For each vertex $v$ and edge $e$, $\mathscr{G}$ contains groups 
$G_v$, and $G_e$, which are called \emph{vertex groups}
and \emph{edge groups,} respectively. 
For each edge $e$, there are two monomorphisms,
$\iota_e\colon G_e \to G_{\tau(e)}$
and $\iota_{\bar e}\colon G_e \to  G_{\tau(\bar e)}$.
\end{dfn}

\begin{ex}
	Every graph is naturally a graph of groups by declaring each
	group in $\mathscr{G}$ to be the trivial group.
\end{ex}

Here is the fundamental example
capturing the situation we would like to understand.
Suppose $G$ is a group acting (on the left) by automorphisms on a tree $T$.
We assume further that the action is \emph{without inversions in edges.}
That is, if $e$ is an edge of $T$, then no element of $G$ sends
$e$ to $\bar e$. Topologically this requirement says that 
if an element of $G$ sends an edge to itself,
then it fixes each incident vertex. 
The advantage of this definition is that the graph structure on $T$
naturally defines a graph structure on the quotient $G\backslash T$.
An action of a group $G$ on a tree $T$ by automorphisms always yields an action
on the barycentric subdivision of $T$ without inversions in edges.

\begin{ex}\label{inducedgraphofgroups}
	Suppose $G$ acts on a tree $T$
	without inversions in edges.
	We say $T$ is a \emph{$G$-tree.}
	The quotient $G\backslash T$ is a graph, which will serve as $\Gamma$.
	Choose a maximal subtree (called a spanning tree)
	$T_0 \subset \Gamma$ and a lift
	$\tilde T_0$ that is sent bijectively to $T_0$ under
	the natural projection $\pi\colon T \to G\backslash{T}$.

	We define the graph of groups structure $\mathscr{G}$
	first on $T_0$.  For a vertex $v$ or an edge $e$ of $T_0$, write
	$\tilde v$ or $\tilde e$, respectively,
	for its preimage in $\tilde T_0$.
	Define the groups $G_v$ and $G_e$ to be the stabilizers
	of $\tilde v$ and $\tilde e$ under the action of $G$, respectively.
	If $e$ is an edge of $T_0$, the assumption that $G$ acts on $T$
	without inversions in edges says that the edge group $G_e$
	is naturally a subgroup of the vertex groups
	$G_{\tau(e)}$ and $G_{\tau(\bar e)}$.
	These inclusions define the required monomorphisms.

	Suppose now that $e'$ is an edge of the complement 
	$\Gamma\setminus T_0$.
	Let $\tilde e'$ be a lift of $e'$ to $T$ 
	such that $\tau(\tilde e') \in \tilde T_0$.
	As before, let $G_{e'}$ be the stabilizer of $\tilde e'$,
	which is naturally a subgroup of $G_{\tau(e')}$ as above.
	We now describe the monomorphism $G_{e'} \to G_{\tau(\bar e')}$.
	Let $\tilde w$ be the lift of $\tau(\bar e')$ incident to $e'$.
	Since $T_0$ is a spanning tree of $\Gamma$, $\tilde T_0$
	contains a vertex $\tilde v'$ in the $G$-orbit of $\tilde w$.
	Let $g$ be an element of $G$ with $g.\tilde w = \tilde v'$.
	Observe that if $h$ stabilizes $\tilde e'$, then
	$ghg^{-1}$ stabilizes $\tilde v'$.
	Therefore define the corresponding monomorphism 
	$\iota_{\bar e'}\colon G_{e'} \to G_{\tau(\bar e')}$
	by $\iota_{\bar e'}(h) = ghg^{-1}$.

	This collection of groups and monomorphisms
	defines a graph of groups structure $\mathscr{G}$ on $\Gamma$.
	We say that $\mathscr{G}$ is \emph{induced} by
	the action of $G$ on $T$.
\end{ex}

In the course of the above example, we built a closed, connected subtree 
$F \subset T$ containing $\tilde T_0$ and a lift $\tilde e'$ for each edge $e'$
of $\Gamma\setminus T_0$. The orbit $GF$ is all of $T$, and the
natural projection $\pi\colon T \to G\backslash T$ restricts to an embedding
on the interior of $F$.
Such a subset $F$ is a \emph{fundamental domain}
for the action of $G$ on $T$.
It is compact if $G\backslash T$ is compact.

\begin{rk}
The graph of groups structure defined above is not quite unique.
Different choices of spanning tree, 
for instance, or different choices of the conjugating elements $g$
could change the description slightly while not changing $\Gamma$ or
the isomorphism type of any of the groups $G_v$ and $G_e$.
\end{rk}

\begin{rk}\label{starsofverts}
	The local structure of $T$
	is faithfully reflected in the induced graph of groups structure.
	If $v$ is a vertex of a graph $\Gamma$, define the \emph{star} of $v$,
	written $\st(v)$, to be the set of oriented edges incident to $v$:
	\[ \st(v) = \tau^{-1}(v) = \{ e \in E(\Gamma) : \tau(e) = v\}. \]
	The \emph{valence} of $v$ is the cardinality of $\st(v)$.

	In the above example, if $v$ is a vertex of $\Gamma$
	and $\tilde v \in \pi^{-1}(v)$, the projection induces a surjection
	$\pi_{\tilde v}\colon \st(\tilde v) \to \st(v)$.
	The stabilizer of $\tilde v$ is conjugate to $G_v$ as a subgroup of $G$,
	so there is an action of $G_v$ on $\st(\tilde v)$ which respects
	$\pi_{\tilde v}$. 
	The action is well-defined up to an inner automorphism of $G_v$.
	If $e$ is an oriented edge of $\st(v)$,
	the action of $G_v$ on $\st(\tilde v)$ puts the fiber $\pi^{-1}(e)$
	in $G_v$-equivariant bijection with the set of cosets 
	$G_v/\iota_e(G_e)$.
\end{rk}

The following theorem states that this latter observation allows
one to rebuild both $T$ and $G$ up to equivariant isomorphism from
the graph of groups structure $(\Gamma,\mathscr{G})$.

\begin{thm}[I.5.4, Theorem 13~\cite{Trees}]\label{BassSerreThm}
	If $(\Gamma,\mathscr{G})$ is a graph of groups, then there
	exists a tree $T$ and a group $G$ acting on $T$
	without inversions in edges
	such that $G\backslash T \cong \Gamma$, and the action of
	$G$ on $T$ induces the graph of groups structure 
	$\mathscr{G}$ on $\Gamma$.
	If $G'$ and $T'$ are another such group and tree inducing
	the graph of groups $(\Gamma,\mathscr{G})$,
	then $G\cong G'$, and there is a $G$-equivariant isomorphism $T \to T'$.
\end{thm}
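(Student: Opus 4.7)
The plan is to build the tree $T$ and the group $G = \pi_1(\Gamma,\mathscr{G},p)$ in parallel from the data $(\Gamma,\mathscr{G})$, prove $T$ is simply connected, and then recover uniqueness by bookkeeping. Fix a basepoint $p \in V(\Gamma)$ and a spanning tree $T_0 \subseteq \Gamma$. First I would construct $T$ geometrically: by \Cref{starsofverts} the fiber of $\st(\tilde v) \to \st(v)$ over an edge $e \in \st(v)$ is forced to be in bijection with $G_v/\iota_e(G_e)$, so starting from a single lift $\tilde p$ of $p$ and developing outward, I would attach at each already-constructed lift $\tilde v$ a new edge for every element of $\coprod_{e \in \st(v)} G_v/\iota_e(G_e)$, with new free endpoints taken to be fresh lifts of the corresponding vertices. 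This yields a simplicial projection $\pi \colon T \to \Gamma$ together with a canonical $G_v$-action on the star of each lift over $v$.

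Next I would define $G$ combinatorially as homotopy classes of based edge paths: words $g_0 e_1 g_1 e_2 \cdots e_n g_n$ with consecutive edges composable, $g_i$ in the vertex group at $\tau(e_i) = \tau(\bar e_{i+1})$, and endpoint letters in $G_p$, modulo two relations: adjacent vertex-group letters multiply, and a subword $e\,\iota_e(h)\,\bar e$ may be replaced by $\iota_{\bar e}(h)$. Concatenation makes $G$ a group, and such a path acts on $T$ by ``following it'' from $\tilde p$: each vertex-group letter permutes the current star via the built-in local action and each edge letter advances along the resulting unique lift. A direct check shows this action is well-defined, has no inversions in edges, has quotient $\Gamma$, and reproduces the vertex and edge stabilizer data of $\mathscr{G}$.

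The main obstacle is showing that $T$ is simply connected; connectedness is immediate. For the absence of loops I would prove a normal form theorem for $G$: every non-trivial element has a unique reduced representative (no subword of the form $e\,\iota_e(h)\,\bar e$ with $e$ not in $T_0$, and no $\iota_e$-image letter sandwiched between a spanning-tree edge and its reverse), and the length of this representative equals the length of the corresponding path in $T$. Combined with transitivity of the $G$-action on fibers over $p$, this forces any closed edge path at $\tilde p$ to be trivial. The combinatorial argument is of the standard type used for Britton's lemma for HNN extensions and van der Waerden's normal form for amalgamated products, carried out by induction on the number of edges outside $T_0$. Uniqueness then reduces to bookkeeping: given any other pair $(G',T')$ inducing $(\Gamma,\mathscr{G})$, choose a fundamental domain $F' \subset T'$ as in \Cref{inducedgraphofgroups}, match it to the analogous $F \subset T$ using the stabilizer identifications, and extend $G$-equivariantly, using the conjugating elements from \Cref{inducedgraphofgroups} as the coherence data needed along non-spanning-tree edges.
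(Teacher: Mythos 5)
Your overall route --- build $T$ geometrically, define $G = \pi_1(\Gamma,\mathscr{G},p)$ as homotopy classes of edge paths, then prove uniqueness by matching fundamental domains --- is the paper's route. But there is a conceptual misstep in the middle: you say ``the main obstacle is showing that $T$ is simply connected'' and propose to dispatch it via a normal-form theorem for $G$. With the construction you describe, where you develop outward from $\tilde p$ and attach edges with \emph{fresh} free endpoints at every step, $T$ is a direct limit of trees and therefore automatically a tree; no normal form is needed, and this is exactly how the paper's puzzle-piece construction also gets simple-connectivity for free. The normal form theorem belongs to the \emph{other} classical construction, where $T$ is built algebraically as the coset space $\coprod_e G/\iota_{\bar e}(G_e)$ and one must rule out loops after the fact. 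Proving that normal form directly (the Britton/van der Waerden-style induction you gesture at) is a substantial enterprise in its own right; in the paper it is \emph{deduced} from the tree after \Cref{BassSerreThm} is established, not used as an ingredient. Importing it as a prerequisite lemma here would either cost you a nontrivial independent proof or risk circularity, and in any case it is solving a problem you do not have.

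Meanwhile the step you compress into ``a direct check shows this action is well-defined ... and reproduces the vertex and edge stabilizer data'' is where the real content lives, and the proposal does not engage with it. You need (i) to verify that the homotopy relation $e\,\iota_e(h)\,\bar e \simeq \iota_{\bar e}(h)$ acts trivially on $T$, which requires tracking how the identification of $\st(\tilde v)$ with $\coprod_{e}G_v/\iota_e(G_e)$ transforms as one traverses edges --- this is the substance of \Cref{action} and the ``ending piece'' bookkeeping there; (ii) to show the maps $G_v \to G$ are injective, so that the stabilizer of $\tilde v$ is not \emph{smaller} than $G_v$, which is the point of \Cref{presentation}; and (iii) a characterization of the kernel of the action (the paper shows it is the largest normal subgroup contained in $\bigcap_e \iota_e(G_e)$), so that the induced graph of groups structure is exactly $\mathscr{G}$ and not a proper quotient. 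These three, not the tree-ness of $T$, are the load-bearing parts of the existence half, and the proposal should foreground them rather than wave them through.
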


The proof of \Cref{BassSerreThm} 
occupies the next several sections until \Cref{morphisms}. 
We will adopt the notation of the theorem statement
until the completion of the proof.

\section{Construction of the Bass--Serre Tree}\label{construction}

In the construction we will give, the group $G$ is called
the \emph{fundamental group of the graph of groups}
$\pi_1(\Gamma,\mathscr{G})$,
for reasons that will become clear---in particular
there is a choice of basepoint we are temporarily suppressing. 
The tree $T$ is called the \emph{universal cover} 
of the graph of groups $(\Gamma,\mathscr{G})$,
although more commonly it is called the \emph{Bass--Serre tree.}

\paragraph{The case of $\Gamma$ a tree.}
Suppose $(\Gamma,\mathscr{G})$ is a graph of groups where $\Gamma$ is a tree.
We will build the tree $T$ inductively from copies of $\Gamma$,
which we will call \emph{puzzle pieces.}
Each puzzle piece has a natural simplicial homeomorphism
onto $\Gamma$, and the projection $\pi\colon T \to \Gamma$
will be assembled from these homeomorphisms.

We can build $T$ inductively. Begin with $T$ 
a single puzzle piece and $\pi\colon T \to \Gamma$ a homeomorphism.
We would like to make the observation in \Cref{starsofverts} hold.
Namely, given a vertex $v \in \Gamma$ and a vertex $\tilde v$ in the fiber
$\pi^{-1}(v)$, the projection $\pi\colon T \to \Gamma$ induces a map
$\pi_{\tilde v}\colon\st(\tilde v)\to \st(v)$, which we would like to
induce a $G_v$-equivariant bijection
\[ \st(\tilde v) \cong \coprod_{e \in \st(v)} (G_v/\iota_e(G_e)\times \{e\}). \]

For the inductive step, suppose the above statement does not hold. 
If $G_e$ were trivial for each edge $e \in \st(v)$, we could 
form a tree $T_{\tilde v}$ as the one-point union
\[ T_{\tilde v} = (G_v\times \Gamma) / (G_v \times \{v\}), \] 
and then form a new tree $T'$ from $T\cup T_{\tilde v}$
by identifying $\{1\}\times \Gamma$ in $T_{\tilde v}$
with the (unique, by induction) puzzle piece containing $\tilde v$ in $T$.
The projection of $T_{\tilde v}$ onto the $\Gamma$ factor
allows us to extend the projection $\pi\colon T \to \Gamma$
to a projection $T' \to \Gamma$.
The induction continues on $T'$.

If some $G_e$ is nontrivial, the outline is the same,
but a more sophisticated construction of the tree $T_{\tilde v}$
is required. Observe that since $\Gamma$ is a tree, each edge in $\st(v)$
determines a unique component of $\Gamma\setminus\{v\}$, whose closure is a
subtree denoted $\Gamma_e \subset \Gamma$. The vertex $v$
is a leaf of $\Gamma_e$.
The tree $T_{\tilde v}$ in this case is formed from the disjoint union
\[ \coprod_{e\in\st(v)} (G_v/\iota_e(G_e))\times \Gamma_e \] 
by identifying $(g\iota_e(G_e),v)$ with $(h\iota_{e'}(G_{e'}),v)$
for each pair of group elements $g$ and $h \in G_v$,
and each pair of oriented edge $e$ and $e'\in \st(v)$. 
Note that for each $g \in G_v$, the image in $T_{\tilde v}$ of the disjoint union
\[ \coprod_{e\in\st(v)} \{g\iota_e(G_e)\}\times \Gamma_e \]
is homeomorphic to $\Gamma$. 
These are the puzzle pieces contained in $T_{\tilde v}$.
The action of $G_v$ on the disjoint union by permuting the labels
descends to an action on $T_{\tilde v}$ permuting the puzzle pieces,
which may intersect.
The inductive step completes as in the previous paragraph.

Continue this process inductively, perhaps transfinitely, and pass
to the direct limit of the trees constructed. The limiting tree is 
the \emph{Bass--Serre tree} $T$. Each puzzle piece in $T$
is sent homeomorphically to $\Gamma$ under the projection $\pi\colon T \to \Gamma$.

\paragraph{The general case.} If we drop the assumption that $\Gamma$
is a tree we can build $T$ in two stages. In the first stage, we build
the universal cover $\tilde\Gamma$ of $\Gamma$ as an ordinary graph.
Let $p\colon\tilde\Gamma \to \Gamma$ be the covering map.
We define a graph of groups structure $\tilde{\mathscr{G}}$ on $\tilde\Gamma$.
If $\tilde v$ is a vertex of $\tilde\Gamma$,
set $G_{\tilde v} \cong G_{p(\tilde v)}$,
and do the same for edges.
A monomorphism $G_e \to G_{\tau(e)}$ induces a monomorphism
$G_{\tilde e}\to G_{\tau(\tilde e)}$ for each edge $\tilde e \in p^{-1}(e)$;
these will be the monomorphisms of $\mathscr{\tilde G}$.

Now follow the preceding construction to produce the Bass--Serre tree $T$
and projection $p'\colon T \to \tilde \Gamma$
for the graph of groups $(\tilde\Gamma,\mathscr{\tilde G})$.
The tree $T$ will also be the Bass--Serre tree for 
$(\Gamma,\mathscr{G})$, and the projection
$\pi$ will be $p'p\colon T \to \Gamma$.

\section{Fundamental Group of a Graph of Groups}

To construct the group $G$ acting on $T$, we introduce
$\pi_1(\Gamma,\mathscr{G},p)$, \emph{the fundamental group 
of the graph of groups based at the point $p\in \Gamma$,}
and define an action of $\pi_1(\Gamma,\mathscr{G},p)$ on $T$
inducing the graph of groups structure.

\paragraph{Paths and edge paths.}
Since the tree $T$ is simply connected,
any two maps $\gamma,\gamma'\colon [0,1] \to T$ are homotopic
rel endpoints if and only if their endpoints are equal.
It will be more convenient to work with paths that are normalized
in the following way.

By a \emph{path} in $T$ we mean an \emph{edge path,}
a finite sequence $e_1\dotsb e_k$
of oriented edges such that
$\tau(e_i) = \tau(\bar  e_{i+1})$ for  $1\le i \le k-1$.
More generally, we allow paths to be points (i.e. $k = 0$),
or for $e_1$ and $e_k$ to be segments---connected, closed
subsets of edges, which must contain the appropriate vertex if $k > 1$.
For any path there is a map $\gamma\colon[0,1]\to T$
which is an embedding on each edge or segment of an edge,
and whose image is the specified edge path.
Since every map $[0,1]\to T$ is homotopic rel endpoints
to such a path $\gamma$, we will work only with edge paths.
A path is \emph{tight} if it is a point,
or if $\bar{e_i} \neq e_{i+1}$ for $1 \le i\le k$.
Paths  may be \emph{tightened} by a homotopy rel endpoints
to produce tight paths by deleting subpaths of the form
$\sigma\bar\sigma$. If $\gamma$ is a path,
let $\bar\gamma$ denote  $\gamma$ with its orientation reversed.

Let $\tilde\gamma$ and $\tilde\gamma'$ be paths in $T$.
We would like a notion of paths 
$\gamma$ and $\gamma'$ in $(\Gamma,\mathscr{G})$
and a notion of homotopy
such that $\gamma$ and $\gamma'$ are homotopic rel endpoints
only if their lifts $\tilde\gamma$ and $\tilde\gamma'$ are.

Recall from our construction that $T$ is built from
puzzle pieces, which are copies of $\Gamma$ or its universal cover.
Vertices $\tilde v \in T$ come equipped with actions of
their associated vertex groups $G_{\pi(\tilde v)}$ permuting
the set of puzzle pieces containing $\tilde v$.
Whenever a path $\tilde\gamma$ meets a lift of a vertex
with nontrivial vertex group,
it has the opportunity to cross into a new puzzle piece.
That is, suppose $ee'$ is a path with $e$ and $e'$ contained
in the puzzle pieces $P$ and $P'$, respectively.
If $v$ is the common vertex of $e$ and $e'$,
then there is a group element $g \in G_{\pi(v)}$
taking $P$ to $P'$. The group element $g$ is
uniquely determined if and only if
$G_{\pi(v)}$ acts effectively on the set of fundamental domains at $v$.

Thus we define a \emph{path} $\gamma$ in $(\Gamma,\mathscr{G})$
to be a finite sequence 
\[ \gamma = e_1'g_1e_2g_2\dotsb g_{k-1}e_k',\]
where $e_1'e_2\dotsb e_{k-1}e_k'$ is an edge path in $\Gamma$,
denoted $|\gamma|$,
and for each $i$ satisfying $1 \le i \le k-1$
we have $g_i \in G_{\tau(e_i)} = G_{\tau(\bar e_{i+1})}$. 
We allow the terminal and initial segments $e_1'$ and $e_k'$ to be empty,
in which case they should be dropped from the notation.
To reverse the orientation of $\gamma$, replace it by
\[ \bar\gamma = \bar e_k'g_{k-1}^{-1}\dotsb g_2^{-1}\bar e_2 g_1^{-1}\bar e_1'. \]
A  path $\gamma$ is a \emph{loop} if $|\gamma|$ is a loop in $\Gamma$,
i.e.\ if its endpoints are equal. A loop is
\emph{based} at its endpoint.

\paragraph{Lifting and homotopy.}
Let $\gamma= e_1'g_1e_2g_2\dotsb g_{k-1}e_k'$
be a path in $(\Gamma,\mathscr{G})$
with initial endpoint $p$.
For any lift of the initial endpoint
$\tilde p \in \pi^{-1}(p)$ and puzzle piece $P$ containing $\tilde p$,
we can construct a path $\tilde\gamma\colon [0,1]\to T$ with
$\pi\tilde\gamma = |\gamma|$. We proceed inductively.
Let $\tilde e_1'$ be the lift of $e_1'$ contained in $P$.
Write $P_1 = P$ and $v_1 = \tau(\tilde e_1')$.
Inductively, let $P_{i+1} = g_{i}.P_i$ under the action of
$G_{\pi(v_i)}$ on the set of puzzle pieces in $T$ containing $v_i$.
Then $\tilde e_{i+1}$ is the lift of $e_{i+1}$ contained in $P_{i+1}$,
and $v_{i+1} = \tau( e_{i+1})$.  In the end we construct a path
\[ \tilde \gamma = \tilde e_1'\tilde e_2\dotsb \tilde e_{k-1}\tilde e_k'. \]
Write $P(\tilde\gamma) = P_k$. This is the \emph{ending piece} of $\gamma$,
or more properly its lift at $(\tilde p,P)$.

It is not hard to see that the following operations on $\gamma$
do not change the homotopy class of its lift to $T$:
\begin{enumerate}
	\item Inserting or removing subpaths of the form $\sigma\bar\sigma$
	\item For $h \in G_e$, $g \in G_{\tau(\bar e)}$ and 
		$g' \in G_{\tau(e)}$, replacing subpaths of the form
		$ge\iota_{e}(h)g'$ with  $g\iota_{\bar e}(h) eg'$.
\end{enumerate}
Two  paths $\gamma$  and $\eta$ in $(\Gamma,\mathscr{G})$ are
\emph{homotopic rel endpoints} 
if $\gamma$ can be transformed into $\eta$
in a finite number of moves as above\footnote{
	Following Haefliger~\cite[III.$\mathscr{G}$.3.2]{theBible},
	it might be better to think of the second operation as
	\emph{equivalence} and reserve \emph{homotopy} for the first.
	We have no obvious need for such a distinction, so will not draw one.
}.
The homotopy class of $\gamma$ rel endpoints is denoted $[\gamma]$,
and if $\gamma$ and $\eta$ are homotopic rel endpoints, we write
$\gamma\simeq \eta$.

\begin{dfn} 
	Let $(\Gamma,\mathscr{G})$ be a graph of groups, and $p$
	a point of $\Gamma$. The \emph{fundamental group of 
	the graph of groups} $\pi_1(\Gamma,\mathscr{G},p)$
	is the group of homotopy classes of loops in $(\Gamma,\mathscr{G})$
	based at $p$
	under the operation of concatenation.
\end{dfn}

It is clear, as in the case of the ordinary fundamental group,
that if $\gamma$ is a path in $(\Gamma,\mathscr{G})$ from $p$ to $q$,
then the map $\pi_1(\Gamma,\mathscr{G},p) \to \pi_1(\Gamma,\mathscr{G},q)$
defined by $[\sigma] \mapsto [\bar\gamma\sigma\gamma]$ is an isomorphism.
If $p = q$, then $\gamma$ is a loop,
and this isomorphism is an inner automorphism.

\begin{lem}\label{action}
	Given $p \in \Gamma$, a choice of lift $\tilde p\in \pi^{-1}(p)$
	and puzzle piece $P$ containing $\tilde p$
	determine an action of $\pi_1(\Gamma,\mathscr{G},p)$ on $T$
	by automorphisms of the projection $\pi\colon T \to \Gamma$.
	In particular, the action is without inversions in edges.
	Different choices of $(\tilde p,P)$ alter the action
	by an inner automorphism of $\pi_1(\Gamma,\mathscr{G},p)$.
\end{lem}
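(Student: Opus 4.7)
The plan is to leverage the lifting construction for paths in $(\Gamma,\mathscr{G})$ that was developed just before the statement. Given $(\tilde p, P)$, I will define the action by saying how $[\gamma]$ moves an arbitrary point $\tilde q \in T$. Because $T$ is a tree, there is a unique tight edge path $\tilde\sigma$ from $\tilde p$ to $\tilde q$. Tracking the sequence of puzzle pieces that $\tilde\sigma$ enters and recording, at each vertex it crosses, the unique group element in the vertex group that takes the incoming piece to the outgoing piece, produces a canonical path $\sigma$ in $(\Gamma,\mathscr{G})$ from $p$ to $\pi(\tilde q)$ whose lift at $(\tilde p, P)$ is $\tilde\sigma$. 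I then declare $[\gamma].\tilde q$ to be the terminal endpoint of the lift of the concatenation $\gamma\sigma$ at $(\tilde p, P)$; this uses only data at $\tilde q$ plus the chosen basepoint lift.

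The verification breaks into four pieces. First, well-definedness on homotopy classes: I must check that the two elementary homotopy moves on paths do not change the endpoint of a lift. Replacing $\sigma\bar\sigma$ with a point is immediate, since the lifting prescription retraces the same edges. Replacing $ge\iota_e(h)g'$ with $g\iota_{\bar e}(h)eg'$ is exactly where the construction of the Bass--Serre tree is designed to pay off: the action of $\iota_e(h) \in G_{\tau(e)}$ fixes the piece reached by crossing $e$ (because $\iota_e(h)$ lies in the stabilizer of that piece's coset in $G_{\tau(e)}/\iota_e(G_e)$), and symmetrically $\iota_{\bar e}(h)$ fixes the piece on the other side, so the two sequences of pieces traversed agree. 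Second, that this defines a left action: if $\gamma$ and $\eta$ are loops at $p$, then lifting $\gamma\eta\sigma$ at $(\tilde p, P)$ can be done in stages, giving $[\gamma\eta].\tilde q = [\gamma].([\eta].\tilde q)$. Third, the map $\tilde q \mapsto [\gamma].\tilde q$ is a simplicial automorphism commuting with $\pi$: it projects to the identity because at each step the underlying edge path in $\Gamma$ is unchanged, and it is bijective with inverse given by $[\gamma]^{-1}$. Fourth, it is without inversions in edges: every edge of $T$ lies in a unique puzzle piece, so the action shuffles puzzle pieces, and any element that sends an edge $\tilde e$ to $\bar{\tilde e}$ would have to send the piece containing $\tilde e$ to itself while swapping its endpoints, which the puzzle-piece construction forbids.

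For the dependence on $(\tilde p, P)$, suppose $(\tilde p', P')$ is another choice with $\tilde p' \in \pi^{-1}(p)$. Let $\tilde\alpha$ be the unique tight path in $T$ from $\tilde p$ to $\tilde p'$, enhanced with the data required to pass from $P$ to $P'$ at the terminal endpoint (using the appropriate element of $G_p$). Associated to $\tilde\alpha$ is a loop $\alpha$ based at $p$ in $(\Gamma,\mathscr{G})$, hence a class $[\alpha] \in \pi_1(\Gamma,\mathscr{G},p)$. A direct comparison of the two lifting prescriptions, one starting at $(\tilde p,P)$ and the other at $(\tilde p', P')$, shows that the actions are intertwined by the translation $\tilde q \mapsto [\alpha].\tilde q$, so the two actions differ by conjugation by $[\alpha]$, i.e.\ by an inner automorphism of $\pi_1(\Gamma,\mathscr{G},p)$.

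The main obstacle is the well-definedness step, specifically the move $ge\iota_e(h)g' \simeq g\iota_{\bar e}(h)eg'$. Everything else is formal once one trusts this piece. The point is that the Bass--Serre tree was built precisely to make $\iota_e(G_e)$ act trivially on the piece reached after crossing $e$; verifying that the two sides of this move produce the same lift is essentially unpacking the definition of the gluing in Section~\ref{construction}, but it has to be done carefully because the element $h$ appears on different sides of the edge in the two expressions and acts on different puzzle pieces on the two sides.
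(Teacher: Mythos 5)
Your proposal follows the same route as the paper: define the action by lifting $\gamma\sigma$ (the paper phrases this as first lifting $\gamma$ to get $(g.\tilde p, g.P)$ and then lifting $\sigma$ there, which is the same since lifting respects concatenation), establish well-definedness on homotopy classes by checking the two elementary moves, verify the action axiom via lifting of concatenations, and handle the dependence on $(\tilde p, P)$ by conjugation with a connecting loop. The paper delegates the well-definedness check to the sentence immediately preceding the lemma and does not unpack the move $ge\iota_e(h)g' \simeq g\iota_{\bar e}(h)eg'$ any more explicitly than you do.

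Two concrete points need adjustment. First, you construct $\sigma$ from $\tilde\sigma$ by recording ``the unique group element in the vertex group that takes the incoming piece to the outgoing piece''; that element need not be unique unless $G_{\pi(v)}$ acts effectively on the set of pieces at $\tilde v$ (the paper flags exactly this caveat when introducing paths in $(\Gamma,\mathscr{G})$). You only need \emph{some} $\sigma$ lifting to $\tilde\sigma$, together with the observation that the terminal endpoint of the lift of $\gamma\sigma$ is independent of this choice. Second, your argument that the action is without inversions asserts ``every edge of $T$ lies in a unique puzzle piece,'' which fails when edge groups are nontrivial: in the gluing at $\tilde v$, pieces labeled $g$ and $g'$ share the lift of $e\in\st(v)$ whenever $g\iota_e(G_e) = g'\iota_e(G_e)$, so a single edge generally lies in several pieces. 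Fortunately the conclusion is cheaper than you make it: you already established that $x \mapsto [\gamma].x$ commutes with $\pi$, and since $\pi$ is a morphism of combinatorial graphs it never sends an oriented edge to its reverse ($\pi(\bar{\tilde e}) = \overline{\pi(\tilde e)} \ne \pi(\tilde e)$ for any edge $\tilde e$ of $T$); thus $[\gamma].\tilde e = \bar{\tilde e}$ would force $\pi(\tilde e) = \overline{\pi(\tilde e)}$, which is impossible. Replace both passages and the rest of your argument matches the paper's.
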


\begin{proof}
	Given $\tilde p$ and $P$ as in the statement and
	$g \in \pi_1(\Gamma,\mathscr{G},p)$, we define an automorphism
	of the projection and show that these automorphisms define
	an action of $\pi_1(\Gamma,\mathscr{G},p)$ on $T$.
	Suppose $g$ is represented by a loop $\gamma \in (\Gamma,\mathscr{G})$
	based at $p$.
	We constructed a lift $\tilde \gamma$ of $\gamma$ to $T$ 
	corresponding to the choice
	$(\tilde p,P)$. Associated to $\tilde\gamma$ was its
	ending piece $P(\tilde\gamma)$ containing $\tilde\gamma(1)$. 
	Both the terminal endpoint
	$\tilde\gamma(1)$ and ending piece 
	depend only on the homotopy class $g$ and not on 
	the choice of representative path $\gamma$.

	Path lifting respects composition in $\pi_1(\Gamma,\mathscr{G},p)$
	in the following sense. If $\gamma$ and $\eta$
	are loops representing elements of $\pi_1(\Gamma,\mathscr{G},p)$,
	then we see that the path $\tilde\gamma\tilde\eta$
	is a lift of the concatenation 
	$[\gamma\eta] \in \pi_1(\Gamma,\mathscr{G},p)$,
	where $\tilde\gamma\tilde\eta$ is
	obtained by first lifting $\gamma$ at $(\tilde p,P)$
	to $\tilde\gamma$, then lifting
	$\eta$ at $(\tilde\gamma(1),P(\tilde\gamma))$ 
	to $\tilde \eta$ and $P(\tilde\gamma\tilde\eta)$ and finally concatenating.

	This lifting allows us to construct an 
	automorphism of the projection $\pi\colon T \to \Gamma$
	corresponding to $g\in\pi_1(\Gamma,\mathscr{G},p)$.
	Namely, define $g.\tilde p$ and $g.P$ to be
	$\tilde\gamma(1)$ and $P(\tilde\gamma)$
	coming from lifting some (and hence any)
	loop $\gamma$ representing $g$ at $(\tilde p,P)$.
	
	More generally, if $x \in T$, there is a path
	$\tilde\sigma$ in $T$ from $\tilde p$ to $x$.
	There is a path $\sigma$ which lifts at $(\tilde p,P)$ to $\tilde\sigma$.
	Take $g.\tilde\sigma$ to be the lift of
	$\sigma$ at $(g.\tilde p,g.P)$, and $g.x$ to be $g.\tilde\sigma(1)$.
	It is clear that $\pi(g.x) = \pi(x)$, 
	and not hard to see that $x \mapsto g.x$
	is a graph automorphism.
	The argument in the previous paragraph shows that this defines an action of
	$\pi_1(\Gamma,\mathscr{G},p)$ on $T$.
	
	If $(\tilde p',P')$ are another choice of lift and fundamental domain,
	there is a loop $\gamma$ based at $p$ whose lift at $(\tilde p,P)$
	terminates at $(\tilde p',P')$.
	The loop $\gamma$ determines an element $g \in \pi_1(\Gamma,\mathscr{G},p)$,
	and the conjugation $h \mapsto ghg^{-1}$
	is  the required inner automorphism.
\end{proof}

Observe that the action of $\pi_1(\Gamma,\mathscr{G},p)$
on $T$ sends puzzle pieces to puzzle pieces,
and furthermore that the action of an element $g$
is determined by the data $(g.\tilde p, g.P)$.

\begin{rk}
It should be noted that homotopy for edge paths 
in $(\Gamma,\mathscr{G})$ is possibly a finer invariant
than homotopy of their lifts to $T$. 
This is the case when the action of $\pi_1(\Gamma,\mathscr{G},p)$ on $T$
is not effective. We will describe the kernel below.
\end{rk}

\begin{cor}\label{fundamentaldomain}
	The action of $\pi_1(\Gamma,\mathscr{G},p)$ on $T$
	induces a homeomorphism 
	\[\pi_1(\Gamma,\mathscr{G},p)\backslash T \cong \Gamma. \]
	The action admits a fundamental domain.
\end{cor}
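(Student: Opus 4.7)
The plan is to verify that $\pi \colon T \to \Gamma$ descends to a homeomorphism on the quotient, and then to exhibit a concrete fundamental domain. By \Cref{action} the action is by automorphisms of the projection, so $\pi(g.x) = \pi(x)$ for all $g \in \pi_1(\Gamma,\mathscr{G},p)$, meaning $\pi$ induces a continuous map $\bar\pi \colon \pi_1(\Gamma,\mathscr{G},p) \backslash T \to \Gamma$. It suffices to show that $\bar\pi$ is a bijection; continuity of the inverse will follow because $\pi$ is a local homeomorphism on each puzzle piece and the quotient map is open.

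For surjectivity of $\bar\pi$, given any $x \in \Gamma$, choose a path in $\Gamma$ from $p$ to $x$, regard it as a path $\gamma$ in $(\Gamma,\mathscr{G})$ with all intermediate group elements trivial, and lift it at $(\tilde p, P)$; the endpoint projects to $x$. For injectivity, suppose $x, y \in T$ satisfy $\pi(x) = \pi(y)$. Using the inductive construction of $T$, a path in $T$ from $\tilde p$ to $x$ can be recorded as a path $\gamma$ in $(\Gamma,\mathscr{G})$ from $p$ to $\pi(x)$: each time the lifted path crosses from one puzzle piece into another through a vertex $\tilde v$, the uniquely determined element $g \in G_{\pi(\tilde v)}$ taking one piece to the other is inserted between the outgoing and incoming edges. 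Similarly record a path $\eta$ whose lift at $(\tilde p,P)$ ends at $y$ in some puzzle piece $P_y$. The concatenation $\omega = \eta\bar\gamma$ is a loop at $p$, and hence determines an element $g \in \pi_1(\Gamma,\mathscr{G},p)$. Its lift at $(\tilde p,P)$ factors as the lift of $\eta$ ending at $(y,P_y)$ followed by the lift of $\bar\gamma$ starting at $(y,P_y)$; the latter ends at some $(\tilde p', P')$ in $\pi^{-1}(p)$. By definition of the action, $g.\tilde p = \tilde p'$ and $g.P = P'$. Reversing the second half of the lift shows that the lift of $\gamma$ at $(g.\tilde p, g.P)$ ends at $y$, and since the lift of $\gamma$ at $(\tilde p,P)$ ends at $x$, this means $g.x = y$, as desired.

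Finally, to exhibit a fundamental domain, I would mimic the construction from \Cref{inducedgraphofgroups}. Choose a spanning tree $T_0 \subset \Gamma$ and a vertex $v_0 \in T_0$, and fix a lift $\tilde v_0 \in \pi^{-1}(v_0)$ in the base puzzle piece $P$. Inductively lift the edges of $T_0$ starting from $\tilde v_0$ to produce a subtree $\tilde T_0 \subset T$ mapping bijectively to $T_0$. For each edge $e$ of $\Gamma \setminus T_0$, choose any lift $\tilde e$ with $\tau(\bar{\tilde e}) \in \tilde T_0$. Let $F$ be the union of $\tilde T_0$ with these chosen lifts; then $\pi|_F \colon F \to \Gamma$ is a continuous bijection, homeomorphic on the interior of every cell, hence a homeomorphism. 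Surjectivity of $\bar\pi$ gives $\pi_1(\Gamma,\mathscr{G},p).F = T$, so $F$ is a fundamental domain, clearly compact when $\Gamma$ is finite.

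The main obstacle is the injectivity argument: translating the geometric equality $\pi(x) = \pi(y)$ into an element of $\pi_1(\Gamma,\mathscr{G},p)$ whose action identifies $x$ with $y$ requires careful bookkeeping to verify that the loop $\omega$ built from the two path descriptions acts via lifting in the way prescribed by \Cref{action}, and that the puzzle-piece data match on the nose so that $g.x = y$ rather than only $\pi(g.x) = \pi(y)$.
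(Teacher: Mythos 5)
Your approach is essentially the same as the paper's, but you make explicit some work that the paper delegates to the proof of \Cref{action}. The paper simply cites the fact (observed at the end of that lemma's proof, when showing that different choices of $(\tilde p, P)$ yield conjugate actions) that $\pi_1(\Gamma,\mathscr{G},p)$ acts transitively on pairs $(\tilde q, Q)$ with $\tilde q \in \pi^{-1}(p)$; your injectivity argument reproves this transitivity from scratch by recording the tree path from $\tilde p$ to $x$ as a path in $(\Gamma,\mathscr{G})$ and checking that the resulting loop $\omega = \eta\bar\gamma$ acts correctly. Your bookkeeping here is sound: the lift of a reversed path at the endpoint data of the original lift traces the original lift in reverse, so $g.x$ is indeed the endpoint of the lift of $\gamma$ at $(g.\tilde p, g.P) = (\tilde p', P')$, which is $y$. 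This makes the argument more self-contained, at the cost of redoing work. Your fundamental domain is the same as the paper's (a spanning tree lift plus one lift of each non-tree edge inside the base puzzle piece $P$ --- the paper packages this as a fundamental domain for the ordinary $\pi_1(\Gamma,p)$ action on $P \cong \tilde\Gamma$).

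Two imprecisions in your final paragraph are worth flagging. First, $\pi|_F \colon F \to \Gamma$ is generally \emph{not} a bijection, let alone a homeomorphism: if $e$ is a non-tree edge, the endpoint $\tau(\tilde e)$ of its chosen lift is a vertex of $F$ distinct from the vertex of $\tilde T_0$ already mapping to $\tau(e)$, so $\pi|_F$ identifies them. (Take $\Gamma$ a single vertex with a loop: $F$ has two vertices, $\Gamma$ has one.) What the definition of fundamental domain actually requires, and what is true, is only that $\pi$ restricts to an embedding on the \emph{interior} of $F$; this holds because $\pi$ sends distinct open cells of $F$ to distinct open cells of $\Gamma$. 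Second, it is injectivity of $\bar\pi$, not surjectivity, that gives $\pi_1(\Gamma,\mathscr{G},p).F = T$: given $y \in T$, pick $z \in F$ with $\pi(z) = \pi(y)$ (which uses that $\pi|_F$ is onto $\Gamma$), and then injectivity of $\bar\pi$ supplies $g$ with $g.z = y$. You have already proved bijectivity, so the conclusion stands, but the attribution is inverted.
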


\begin{proof}
	In the proof of the previous lemma,
	we saw that $\pi_1(\Gamma,\mathscr{G},p)$
	acts transitively on the set of pairs
	$(\tilde q,Q)$, where $q\in\pi^{-1}(p)$, and
	$Q$ is a puzzle piece containing $q$.
	Since the action is by automorphisms of the
	projection $\pi\colon T \to \Gamma$, this implies
	that the quotient is homeomorphic to $\Gamma$.

	Recall that the puzzle piece $P$ is the
	universal cover of $\Gamma$ as an ordinary graph.
	The action of the ordinary fundamental group of $\Gamma$
	(based at $p$) on $P$ admits a fundamental domain
	$F$. If we view $F$ as a connected subtree of $P \subset T$,
	we see that $F$ is a fundamental domain for
	the action of $\pi_1(\Gamma,\mathscr{G},p)$.
	Indeed, $F$ contains one edge for each edge of $\Gamma$,
	so the restriction of $\pi\colon T \to \Gamma$
	is an embedding on the interior of $F$.
	It follows from the above that the translates of $F$
	cover $T$.
\end{proof}

\section{A Presentation of the Fundamental Group}
From our description of homotopy of loops, we can give a presentation
for $\pi_1(\Gamma,\mathscr{G},p)$. Let $T_0\subset \Gamma$ be a
spanning tree
of $\Gamma$ containing the basepoint $p$.

\begin{thm}\label{presentation}
The group $\pi_1(\Gamma,\mathscr{G},p)$ is isomorphic to a quotient of
the free product of the groups
$G_v$ for each vertex $v \in \Gamma$ with the free group on the set
$E(\Gamma)$ of \emph{oriented} edges in $\Gamma$ 
\[ \left(*_{v\in V(\Gamma)}G_v\right)*F(E(\Gamma))/R \]
Where $R$ is the normal subgroup imposing the following relations.
\begin{enumerate}
	\item $\iota_{\bar e}(h)e = e\iota_{e}(h)$,
		where $e$ is an oriented edge of $\Gamma$,
		and $h \in G_e$.
	\item $\bar{e} = e^{-1}$, where $e$ is an oriented edge of $\Gamma$.
	\item $e = 1$ if $e$ is an oriented edge of $T_0$.
\end{enumerate}
\end{thm}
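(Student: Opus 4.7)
The plan is to construct mutually inverse homomorphisms between the group $\widehat{G} := (*_{v\in V(\Gamma)}G_v)*F(E(\Gamma))/R$ presented above and $\pi_1(\Gamma,\mathscr{G},p)$. I may assume the basepoint $p$ is a vertex of $\Gamma$, subdividing otherwise and adjoining a trivial vertex group with identity edge inclusions, which does not change $\pi_1$.

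First I would define $\Phi\colon \widehat{G}\to \pi_1(\Gamma,\mathscr{G},p)$. Let $\sigma_v$ denote the unique tight edge path in $T_0$ from $p$ to $v$. Send a generator $g\in G_v$ to the class $[\sigma_v\, g\, \bar\sigma_v]$ and an oriented edge $e$ to $[\sigma_{\tau(\bar e)}\, e\, \bar\sigma_{\tau(e)}]$. Each defining relation of $R$ is readily verified in $\pi_1$: after cancelling the intermediate $\bar\sigma_{\tau(\bar e)}\sigma_{\tau(\bar e)}$ by homotopy move (1), relation (1) becomes exactly homotopy move (2); relation (2) is the definition of path reversal; and for relation (3), if $e\in T_0$ then $\sigma_{\tau(\bar e)}e$ tightens inside $T_0$ to $\sigma_{\tau(e)}$, so $\Phi(e) = [\sigma_{\tau(e)}\bar\sigma_{\tau(e)}]$ is null-homotopic.

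Next I would construct the inverse $\Psi\colon \pi_1(\Gamma,\mathscr{G},p)\to\widehat{G}$ by reading a loop $\gamma = e_1 g_1 e_2\cdots g_{k-1}e_k$ verbatim as a word $W(\gamma)$ in the generators of $\widehat{G}$. Well-definedness on homotopy classes requires that each of the two homotopy moves preserve $W(\gamma)$ modulo $R$: move (2) is precisely relation (1), while move (1) requires a short induction on the length of the backtrack $\sigma\bar\sigma$, using relation (2) to cancel paired edges and the internal relations of each $G_v$ to absorb the intermediate vertex-group elements. Concatenation of loops corresponds to word concatenation in $\widehat{G}$, so $\Psi$ is a homomorphism.

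That $\Phi$ and $\Psi$ are mutually inverse is then immediate: $\Psi\Phi(g) = \sigma_v\cdot g\cdot\sigma_v^{-1} = g$ since every edge of $\sigma_v$ lies in $T_0$ and is killed by relation (3), with the edge-generator case analogous; conversely $\Phi\Psi[\gamma]$ reassembles $\gamma$ after inserting null-homotopic junctions $\bar\sigma_{\tau(e_i)}\sigma_{\tau(e_i)}$ at each division point. The main technical obstacle will be the inductive step showing $\Psi$ respects the general form of move (1), since the cancelled subpath $\sigma\bar\sigma$ may contain nested vertex-group elements that must be transported past each paired edge using relations (1) and (2); this is bookkeeping, but warrants care in the writeup.
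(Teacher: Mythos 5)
Your proposal is correct and takes essentially the same approach as the paper: the paper likewise defines a homomorphism $f_\sharp\colon \widehat{G}\to\pi_1(\Gamma,\mathscr{G},p)$ via $g\mapsto[\gamma_v g\bar\gamma_v]$ and $e\mapsto[\gamma_{\tau(\bar e)}e\bar\gamma_{\tau(e)}]$, and its injectivity argument implicitly constructs your $\Psi$ by observing that a loop may be read verbatim as a word in $\widehat{G}$ and that homotopy moves correspond to applications of the defining relations. The only cosmetic difference is that you package this reading as an explicit two-sided inverse $\Psi$, whereas the paper folds it into a surjectivity-plus-injectivity argument; one small over-claim is that removing a backtrack $\sigma\bar\sigma$ from the word $W(\gamma)$ needs only relation (2) and the group structure of the $G_v$ (relation (1) is not actually required there).
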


We remark that the theorem implies that each $G_v$ is naturally a subgroup
of $\pi_1(\Gamma,\mathscr{G},p)$.
It follows that different choices of spanning tree $T_0$
yield differing but isomorphic presentations.

\begin{proof}
	Write $\pi(\mathscr{G})$ for the free product in the statement,
	\[
		\pi(\mathscr{G}) = \left(*_{v\in V(\Gamma)}G_v\right)*F(E(\Gamma)).
	\]
	The group $\pi(\mathscr{G})/R'$, where $R'$ imposes the first
	two relations in the statement is called the \emph{path group}
	and fills a convenient role in other expositions of the theory.
	Write $G = \pi(\mathscr{G})/R$ for the group 
	described in the statement of the theorem.
	We will define a map $f$ sending words in $\pi(\mathscr{G})$
	to paths in $(\Gamma,\mathscr{G})$.
	Let $\gamma_v$ denote the unique tight path in $\Gamma$
	(as an ordinary graph)
	from $p$ to $v$ contained in $T_0$.
	We define
	\[ f(g) = \gamma_vg\bar\gamma_v 
		\quad\text{for }g \in G_v,\quad\text{and}\quad
	f(e) = \gamma_{\tau(\bar e)}e\gamma_{\tau(e)} 
	\quad\text{for }e \in E(\Gamma).\]
	Then define $f$ on the word $x_1\dotsb x_n$ by $f(x_1)\dotsb f(x_n)$.
	One checks that the following statements hold.
	\begin{enumerate}
		\item If $w$ and $w'$ represent the same element 
			of $\pi(\mathscr{G})$,
			then $f(w)\simeq f(w')$.
		\item $f(\iota_{\bar e}(h)e)\simeq f(e\iota_{e}(h))$,
			where $e \in E(\Gamma)$ and $h \in G_e$.
		\item $f(\bar e) = \overline{f(e)}$.
		\item $f(e)$ is null-homotopic (i.e. homotopic to the constant path
			at $p$) if $e$ is an oriented edge belonging to $T_0$.
	\end{enumerate}

	The map $f$ defines a homomorphism 
	$f_\sharp\colon G \to \pi_1(\Gamma,\mathscr{G},p)$.
	If $g \in G$ is represented by a word $w$ in $\pi(\mathscr{G})$,
	the homotopy class of $f(w)$ depends only on $g$, so
	define $f_\sharp(g) = [f(w)]$.

	We claim that $f_\sharp$ is an isomorphism.
	First observe that if $\gamma  = e_1'g_1e_2\dotsb g_{k-1}e_k'$
	is an edge path loop based at $p$, then we have that
	\[ f(g_1e_2\dotsb e_{k-1}g_{k-1}) \simeq \gamma, \]
	so $f_\sharp$ is surjective.
	On the other hand, suppose $g$ and $g'$ are elements of $G$
	with $f_\sharp(g) = f_\sharp(g')$. If we represent $g$ and $g'$
	by words $w$ and $w'$ in $\pi(\mathscr{G})$ respectively, then
	\[ f(w) \simeq f(w') \]
	by assumption. But observe that the edge path loop $f(w)$
	may be thought of as a word in $\pi(\mathscr{G})$ representing $g$,
	since each $\gamma_v$ (perhaps dropping the initial segment)
	is a sequence of elements
	of $E(\Gamma)$ whose image in $G$ is trivial.
	Performing a homotopy from $f(w)$ to $f(w')$ amounts to 
	applying a finite number of defining relations in $G$.
	Thus the images of $f(w)$ and $f(w')$ as words in $\pi(\mathscr{G})$
	are equal in $G$, implying that $g = g'$, so $f_\sharp$ is injective.
\end{proof}

We continue to write $G$ for the group 
described in the statement of \Cref{presentation}.
The injection of $G_v$ into the free product, combined with
the isomorphism in the proof exhibits monomorphisms
$f_v\colon G_v\to \pi_1(\Gamma,\mathscr{G},p)$, defined as
$f_v(g) = [\gamma_vg\bar\gamma_v]$.

\begin{cor}\label{ordinary}
	Let $\gamma$ be an edge loop 
	representing an element of $\pi_1(\Gamma,\mathscr{G},p)$.
	The forgetful map $\gamma \mapsto |\gamma|$ induces a surjection
	$|\cdot|\colon\pi_1(\Gamma,\mathscr{G},p) \to \pi_1(\Gamma,p)$
	onto the ordinary fundamental group of $\Gamma$.
	The map $|\cdot|$ admits a section.

	The kernel of the map $|\cdot|$ is the subgroup normally
	generated by $f_v(G_v)$ for each vertex $v \in \Gamma$.
\end{cor}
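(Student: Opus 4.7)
The plan is to leverage the presentation from \Cref{presentation} throughout. First I would verify that the forgetful map $|\cdot|$ is well-defined on homotopy classes: the two basic moves defining homotopy of paths in $(\Gamma,\mathscr{G})$ descend to the underlying edge path either tautologically (the second move preserves $|\gamma|$ on the nose) or to the corresponding $\sigma\bar\sigma$-deletion in the ordinary sense (the first move). Concatenation is obviously preserved, so $|\cdot|$ is a well-defined homomorphism.

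For surjectivity and the section, I would use the presentations of both groups with respect to the same spanning tree $T_0$. The ordinary fundamental group $\pi_1(\Gamma,p)$ has the presentation with generators $E(\Gamma)$ and relations (2) and (3) from \Cref{presentation}. Since these relations are a subset of the relations defining $\pi_1(\Gamma,\mathscr{G},p)$, sending each generating edge $e$ to itself gives a homomorphism $s\colon \pi_1(\Gamma,p) \to \pi_1(\Gamma,\mathscr{G},p)$. The composite $|\cdot| \circ s$ is the identity on generators, hence the identity, so $s$ is the required section and in particular $|\cdot|$ is surjective.

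For the kernel, one inclusion is immediate: if $g \in G_v$, the representative edge loop $\gamma_v g \bar\gamma_v$ has underlying edge path $\gamma_v\bar\gamma_v$, which is null-homotopic in $\Gamma$, so $f_v(G_v)$ lies in $\ker|\cdot|$, and hence so does the normal closure $K := \langle\!\langle \bigcup_v f_v(G_v)\rangle\!\rangle$. For the reverse inclusion, I would analyze $\pi_1(\Gamma,\mathscr{G},p)/K$ via the presentation of \Cref{presentation}: quotienting by $K$ amounts to adjoining the relations $g = 1$ for every $g \in G_v$ and every vertex $v$. Doing so kills all vertex-group generators, trivializes relation (1) (both $\iota_{\bar e}(h)$ and $\iota_e(h)$ become trivial), and leaves only the edge generators subject to relations (2) and (3). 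This is precisely the presentation of $\pi_1(\Gamma,p)$, and the induced map $\pi_1(\Gamma,\mathscr{G},p)/K \to \pi_1(\Gamma,p)$ agrees with the map induced by $|\cdot|$. Hence $\ker|\cdot| = K$, as desired.

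The main obstacle is the last step: one must be careful that collapsing the vertex-group generators to $1$ really kills the edge-conjugation relation (1) without introducing any new identifications among edges, and that the identification of the resulting group with $\pi_1(\Gamma,p)$ is effected by the forgetful map rather than by some spurious isomorphism. Both points are essentially bookkeeping once the presentations are written side by side, but they are where one could easily be sloppy.
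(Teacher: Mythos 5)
Your proof is correct, but it handles the kernel computation by a genuinely different route from the paper. The paper works directly with paths: given $h\in\ker|\cdot|$ represented by an edge loop $\eta = e_1'g_1e_2\dotsb g_{k-1}e_k'$, it rewrites $\eta$ as a concatenation $f(g_1)\gamma_2f(g_2)\bar\gamma_2\dotsb\gamma_{k-1}f(g_{k-1})$ where $\gamma_i = f(e_2\dotsb e_i)$ is a ``prefix'' loop, observes that the trailing loop $\gamma_{k-1}\simeq|\eta|$ is null-homotopic precisely because $h$ lies in the kernel, and so after deleting it the expression exhibits $h$ explicitly as a product of $\gamma_i$-conjugates of elements of $f_{v_i}(G_{v_i})$. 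You instead argue at the level of presentations: adjoining the relations $g=1$ for all $g\in G_v$ to the presentation of \Cref{presentation} trivializes relation (1) and kills the vertex-group generators, leaving the standard presentation of $\pi_1(\Gamma,p)$ on the edge generators, and one checks the isomorphism is induced by the forgetful map. Both arguments are sound. The paper's buys an explicit decomposition of kernel elements into conjugates, which is sometimes useful for further computation; yours is shorter, makes the isomorphism $\pi_1(\Gamma,\mathscr{G},p)/K\cong\pi_1(\Gamma,p)$ transparent, and cleanly reduces everything to bookkeeping with the presentation. Your surjectivity/section argument is essentially the paper's observation repackaged in presentation language, and your concern in the final paragraph (that the identification is effected by $|\cdot|$ and not a spurious isomorphism) is exactly the right point to flag; it is handled by checking $|\cdot|$ on the edge generators, which you indicate.
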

\begin{proof}
	Consider an element $g$ of the ordinary fundamental group
	of $\Gamma$ based at $p$. Represent $g$ by an edge path
	$\gamma = e_1'e_2\dotsb e_{k-1}e_k'$. Inserting the identity
	element of every vertex group present in $\gamma$,
	we may view $\gamma$ as a loop in $(\Gamma,\mathscr{G})$.
	The homotopy class of $\gamma$ in $\pi_1(\Gamma,\mathscr{G},p)$
	is independent of the choice of $\gamma$.
	This shows that the map $g \mapsto [\gamma]$ defines a section 
	of $|\cdot|$, which is thus surjective.

	It is clear from the proof of \Cref{presentation}
	that each $f_v(G_v)$ is contained in the kernel of $|\cdot|$.
	Conversely, consider an element
	$h\in\pi_1(\Gamma,\mathscr{G},p)$ in the
	kernel of $|\cdot|$. Let $\eta = e_1'g_1e_2\dotsb g_{k-1}e_k'$ 
	be an edge path loop representing $h$.
	We will use the map $f$ from the proof of \Cref{presentation}.
	Write
	\[	\eta' = f(g_1)f(e_2)\dotsb f(g_{k-1}). \]
	As we saw, $\eta' \simeq \eta$. For $2 \le i \le k-1$, write
	$\gamma_i = f(e_2\dotsb e_i)$, and observe that
	\[	\eta' \simeq f(g_1)\gamma_2f(g_2)\bar\gamma_2\dotsb
	\gamma_{k-2}f(g_{k-2})\bar\gamma_{k-2}\gamma_{k-1}f(g_{k-1}).  \]
	Observe that the path $\gamma_{k-1}$ satisfies $\gamma_{k-1}$ 
	satisfies $\gamma$ satisfies $\gamma$ satisfies $\gamma \simeq |\eta|$
	in $(\Gamma,\mathscr{G})$,
	and is thus null-homotopic. Removing it from the above concatenation
	via a homotopy
	exhibits $h$ as an element of the subgroup normally generated by
	the subgroups $f_v(G_v)$.
\end{proof}

In the terms of the surjection $\pi(\mathscr{G}) \to G$, 
the content of the above corollary is that the subgroup
of $G$ containing the image of the free group on $E(\Gamma)$
is isomorphic to the ordinary fundamental group $\pi_1(\Gamma,p)$,
and there is a retraction from $G$ to this subgroup sending
each $G_v$ to $1$.

\section{Proof of the Main Theorem}
We collect a few lemmas needed to complete the proof of
\Cref{BassSerreThm}. Fixing a choice $(\tilde p,P)$ in $T$
and a choice of spanning tree $T_0\subset \Gamma$ containing $p$,
using the isomorphism $G\cong \pi_1(\Gamma,\mathscr{G},p)$
from \Cref{presentation},
we will work with the action of $G$ on $T$.
By \Cref{fundamentaldomain}, there is a fundamental domain 
$F_0 \subset P\subset T$ containing $\tilde p$,
and a subtree $\tilde T_0\subset F_0$ mapped homeomorphically
to $T_0$ under the projection $\pi\colon T \to \Gamma$.
If $v$ is a vertex of $\Gamma$, write $\tilde v$ for the lift of $v$ contained
in $\tilde T_0$. If $e$ is an edge of $\Gamma$, write $\tilde e$ for the
lift of $e$ contained in $F_0$.
We will frequently confuse edges of $\Gamma$ and vertex groups $G_v$
with their images in $G$.

\paragraph{Description of the action of $G$ on $T$.}
If $g \in G_v$, recall that the image of $g$ in $G$ corresponds
to the homotopy class of $\gamma_vg\bar\gamma_v$, where
$\gamma_v$ is the unique tight path in $T_0$ (as an ordinary tree)
from $p$ to $v$. It follows from the description of the action
of $\pi_1(\Gamma,\mathscr{G},p)$ in \Cref{action} that
$G_v \le G$ stabilizes $\tilde v$ and its action on the set
of puzzle pieces containing $\tilde v$ agrees with the
action described in the construction of $T$.

If $e$ is an edge of $\Gamma$ not contained in $T_0$
orient $e$ so that $\tau(\tilde e)$ does not lie
in $\tilde T_0$. Write $v = \tau(e)$.
The action of $e$ (as an element of $G$) on $T$
preserves $P$ and sends $\tilde v$ to $\tau(\tilde e)$.

\begin{lem} The kernel of the action of $G$ on $T$
	is the largest normal subgroup contained in the intersection
	\[ 	\bigcap_{e\in E(\Gamma)} \iota_e(G_e). \]
\end{lem}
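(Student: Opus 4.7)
The plan is to prove both containments separately, using the stabilizer calculation baked into the construction of $T$. Write $K$ for the kernel of the action. Clearly $K$ is normal in $G$, so it suffices to show (i) $K \subseteq \bigcap_{e \in E(\Gamma)} \iota_e(G_e)$, and (ii) every normal subgroup $N \lhd G$ contained in this intersection already lies in $K$.

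For (i), the key observation I would record first is: for each oriented edge $e \in E(\Gamma)$, if $\tilde e^\ast$ denotes the unique lift of $e$ with $\tau(\tilde e^\ast) = \tilde{\tau(e)} \in \tilde T_0$, then the full stabilizer of $\tilde e^\ast$ in $G$ is $\iota_e(G_e)$. This follows by combining two facts already in the text: first, the stabilizer of $\tilde{\tau(e)}$ in $G$ is (the embedded image of) $G_{\tau(e)}$, as noted after \Cref{presentation}; second, by \Cref{starsofverts}, the action of $G_{\tau(e)}$ on $\st(\tilde{\tau(e)})$ identifies the stabilizer of $\tilde e^\ast$ with $\iota_e(G_e)$. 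Since $k \in K$ fixes every point of $T$, in particular it fixes each $\tilde e^\ast$, hence $k \in \iota_e(G_e)$ for every oriented $e$.

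For (ii), let $N$ be normal with $N \subseteq \bigcap_e \iota_e(G_e)$, and let $n \in N$. By the stabilizer description, $n$ fixes each $\tilde e^\ast$. For any edge of the fundamental domain $F_0 \subseteq P$ from \Cref{fundamentaldomain}, at least one of its two orientations realizes that edge as some $\tilde e^\ast$ (for $e \in T_0$ either orientation works; for $e \notin T_0$ exactly one does). Therefore $n$ fixes every edge of the connected subtree $F_0$ pointwise, hence fixes $F_0$ pointwise. For an arbitrary $x \in T$, write $x = g.y$ with $y \in F_0$; normality gives $g^{-1}ng \in N$, which fixes $y$, so $n.x = g(g^{-1}ng).y = g.y = x$. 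Thus $n$ acts trivially on $T$, i.e. $n \in K$.

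The main thing to get right is the bookkeeping in the first step: it is tempting to think of $\iota_e(G_e)$ and $\iota_{\bar e}(G_e)$ as the same subgroup of $G$, but for $e \notin T_0$ the lifts $\tilde e^\ast$ and $\widetilde{\bar e}^\ast$ are genuinely different edges of $T$ sitting on opposite sides of the deck translation corresponding to $e$, so the two images $\iota_e(G_e), \iota_{\bar e}(G_e) \subseteq G$ are distinct conjugate subgroups and both must appear in the intersection. Once this point is handled, the rest is a standard ``fix a fundamental domain, then propagate by normality'' argument.
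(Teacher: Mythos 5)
Your part (ii) is fine, and in spirit it matches what the paper does: propagate from a fundamental domain via normality. The problem is in part (i), and it is a problem of logical order rather than of ideas.

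Your step (i) rests on the identity $\operatorname{stab}_G(\tilde e^\ast) = \iota_e(G_e)$, and in particular on the \emph{hard} inclusion $\operatorname{stab}_G(\tilde e^\ast) \subseteq \iota_e(G_e)$: you need to conclude that a kernel element, because it fixes $\tilde e^\ast$, must lie in $\iota_e(G_e)$. But at the point where this lemma appears, the paper has only established the easy inclusion. The paragraph ``Description of the action of $G$ on $T$'' just before the lemma shows that $G_v$ (and hence $\iota_e(G_e)$) \emph{is contained in} the stabilizer of $\tilde v$ (resp.\ $\tilde e^\ast$). The reverse inclusion $\operatorname{stab}(\tilde v) \subseteq G_v$ is precisely the content of the corollary \emph{following} this lemma, and its proof invokes the lemma: given $h$ stabilizing $\tilde v$, one finds $g \in G_v$ agreeing with $h$ on puzzle pieces at $\tilde v$, observes $g^{-1}h$ fixes $(\tilde p,P)$ and hence acts trivially, and then uses the lemma to place $g^{-1}h$ inside $\bigcap_e \iota_e(G_e) \subseteq G_v$. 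So citing the stabilizer computation here is circular. Your references are also slightly off for the same reason: the remark after \Cref{presentation} gives the embedding $G_v \hookrightarrow G$ and nothing more, and \Cref{starsofverts} is phrased for the \emph{induced} graph of groups of \Cref{inducedgraphofgroups}, where $G_v$ is the stabilizer of $\tilde v$ by fiat; transporting it to the constructed tree requires exactly the identification you are assuming.

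The paper avoids this by not going through stabilizers at all for the forward inclusion. Instead it represents $h$ in the kernel by a loop $\eta$, lifts $\eta$ at $(\tilde p, P)$ to a closed, hence null-homotopic, path $\tilde\eta$ in $T$, finds a backtrack, and repeatedly applies the homotopy move $eg \simeq \iota_{\bar e}(g')e$ to peel the backtrack off, eventually pinning $h$ inside each $\iota_e(G_e)$. If you want to keep your stabilizer-based structure, you would first have to prove $\operatorname{stab}_G(\tilde v) \subseteq f_v(G_v)$ independently of the lemma, which amounts to re-doing that loop-tightening argument in different clothing; it does not buy you a shorter route.
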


\begin{proof}
	The kernel of the action is clearly normal. An element
	$h \in G$ fixes the tree $T$ pointwise if and only if it sends
	the pair $(\tilde p,P)$ to itself. 
	If $\eta$ is a loop representing $h$,
	it follows that $\tilde\eta$, the lift of $\eta$
	at $(\tilde p,P)$ is null-homotopic,
	and thus contains a subpath of the form
	$\tilde\sigma\bar\tilde\sigma$. Such a subpath is a \emph{backtrack.}
	In $\eta$, a backtrack of $\tilde\eta$ corresponds to a subpath
	of the form $\sigma g\bar \sigma$.
	If $e$ is the last edge appearing in the edge path
	decomposition of $\sigma$, it follows that 
	$g$ belongs to the subgroup $\iota_e(G_e)$ of $G_{\tau(e)}$,
	say $g = \iota_e(g')$.
	Perform a homotopy replacing $eg\simeq  \iota_{\bar e}(g')e$
	and removing  $e\bar e$.
	By iterating the above argument, we see that we
	may shorten any backtrack contained in $\eta$
	that contains a full edge.

	Thus we have that
	\[ \eta \simeq e_1'g'\bar e_1'\text{, for some }g'\in G_{\tau(e_1)}, \]
	where $e_1'$ is a (possibly empty) segment of an edge
	containing $p$. This shows that $h$ belongs to $G_{\tau(e_1)}$.
	By assumption, $g'$ sends $P$ to itself, 
	so we have that
	\[	g' \in \bigcap_{e \in \st(\tau(e_1))} \iota_e(G_e).	\]
	Given an oriented edge $e \in \st(\tau(e_1))$, if we write $g' = \iota_e(k')$,
	we have
	\[	\eta\simeq e_1'e\iota_e(k')\bar e\bar e_1', \]
	so $h$ belongs to $G_{\tau(e)}$, and a similar argument
	to the above shows that for any vertex $v$,
	\[	h \in \bigcap_{e\in\st(v)} \iota_e(G_e), \]
	so we conclude that $h$ belongs to $\iota_e(G_e)$ for all
	$e \in E(\Gamma)$.

	Now suppose $N$ is a normal subgroup of $G$
	satisfying $N \le \iota_e(G_e)$ for all $e\in E(\Gamma)$.
	In particular, if $v$ is a vertex, we have
	\[	N \le \bigcap_{e\in\st(v)} \iota_e(G_e).	\]
	Thus if $h \in N \le G_v$, it follows from the action of
	$G_v$ on the set of puzzle pieces containing $\tilde v$
	that $h$ sends $P$ to itself. Furthermore, $N$ is contained
	within the kernel of $G \to \pi_1(\Gamma,p)$ by \Cref{ordinary},
	so it follows that $N$ also stabilizes $\tilde p$.
	Thus $N$ acts trivially on $T$.
\end{proof}

\begin{cor}
	The action of $G$ on $T$ induces the graph of groups structure
	$(\Gamma,\mathscr{G})$.
\end{cor}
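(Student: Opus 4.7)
The plan is to check that the induced-graph-of-groups recipe of \Cref{inducedgraphofgroups}, applied to the action of $G$ on $T$ with the fundamental domain $F_0$ from \Cref{fundamentaldomain}, reproduces $(\Gamma,\mathscr{G})$ on the nose. \Cref{fundamentaldomain} already supplies the homeomorphism of the quotient with $\Gamma$ and a spanning-tree lift $\tilde T_0$; \Cref{action} assures the action has no inversions. What remains is, for each vertex $v$, to identify $\stab_G(\tilde v)$ with $f_v(G_v)$; for each edge $e \in T_0$, to identify $\stab_G(\tilde e)$ with $f_{\tau(e)}(\iota_e(G_e))$; and for each edge $e \notin T_0$, to identify the conjugating element prescribed in \Cref{inducedgraphofgroups} with one that produces the monomorphism $\iota_{\bar e}$.

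For the vertex groups, $f_v(G_v) \subseteq \stab_G(\tilde v)$ is exactly the content of the description of the action recorded just before the preceding lemma. For the reverse inclusion, suppose $h \in G$ fixes $\tilde v$ and choose a loop $\eta$ based at $p$ representing $h$. The aim is to show $\eta \simeq \gamma_v g \bar\gamma_v$ for some $g \in G_v$, so that $h = f_v(g)$. Lifting the conjugated loop $\gamma_v \eta \bar\gamma_v$ at $\tilde v$ gives a closed edge path there; applying the same backtrack-then-slide reduction used in the proof of the kernel lemma --- iterating the presentation relation $\iota_{\bar e}(h')e = e\iota_e(h')$ of \Cref{presentation} to move vertex-group elements across edges and then cancelling adjacent $e\bar e$ --- shrinks this closed path down to a single element of $G_v$, giving the required form.

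For edge groups, a stabilizer of $\tilde e$ stabilizes both endpoints and, by the vertex case, lies inside $f_{\tau(e)}(G_{\tau(e)})$; combined with the local description in \Cref{starsofverts} that puts the fiber of $\st(\tilde v) \to \st(v)$ over $e$ in $G_v$-equivariant bijection with $G_v/\iota_e(G_e)$, this pins down the stabilizer as $f_{\tau(e)}(\iota_e(G_e))$. For $e \notin T_0$, the element $e$ of the presentation in \Cref{presentation} acts on $T$ by fixing $P$ and sending the lift of $\tau(\bar e)$ in $\tilde T_0$ to the endpoint of $\tilde e$ away from $\tilde T_0$; so $e$ itself plays the role of the conjugating element $g$ in \Cref{inducedgraphofgroups}, and the induced monomorphism sends $h \mapsto ehe^{-1}$. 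The relation $\iota_{\bar e}(h')e = e\iota_e(h')$ then translates this conjugation into $\iota_{\bar e}$, as desired.

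The main obstacle is the reverse direction for vertex stabilizers, since the $G$-action is based at $\tilde p$ rather than $\tilde v$. One cannot simply quote the kernel-lemma argument verbatim; the extra prefix $\gamma_v$ and suffix $\bar\gamma_v$ must be incorporated into the reduction without accidentally trivialising part of the loop, which is why one performs the reduction on the conjugated loop $\gamma_v \eta \bar\gamma_v$ and keeps track of which vertex group absorbs the residual element that survives after all cancellations.
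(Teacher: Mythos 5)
Your proof is correct in outline and reaches the right conclusion, but the reverse inclusion for vertex stabilizers is handled by a genuinely different device than the paper's. You re-run the backtrack-then-slide reduction from the kernel lemma's proof directly on a loop conjugated to be based at $v$; the paper instead observes that $h$'s action on the set of puzzle pieces at $\tilde v$ agrees with left multiplication by some $g\in G_v$, so that $g^{-1}h$ fixes $(\tilde p,P)$ and is therefore (by the kernel lemma's proof) already inside $G_v$, whence $h=g\cdot(g^{-1}h)\in G_v$. The paper's route is shorter because it uses the kernel-lemma reduction as a black box rather than re-deriving it; yours is more hands-on but repeats work. For the edge stabilizers and the conjugating element at a non-tree edge, both arguments are the same in substance.

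Two things to tighten. First, $\gamma_v\eta\bar\gamma_v$ is not a valid concatenation since $\gamma_v$ runs from $p$ to $v$ while $\eta$ is based at $p$; you mean the loop $\bar\gamma_v\eta\gamma_v$ based at $v$, lifted at $(\tilde v,P)$. Second, the kernel lemma's reduction as written terminates at a loop of the form $e_1'g'\bar e_1'$ with $e_1'$ a possibly nontrivial segment, because there the basepoint $p$ may lie in the interior of an edge. Your basepoint is the vertex $v$, so you need the reduction to continue until the edge path is empty; this works because the lift is a closed path in $T$ (any closed path of positive length in a tree has a full-edge backtrack, and the corresponding vertex-group element in the quotient loop necessarily lies in the relevant $\iota_e(G_e)$, so it can be slid across and the $e\bar e$ cancelled), but ``applying the same reduction'' glosses over this. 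Once spelled out, your version of the argument is sound.
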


\begin{proof}
	We claim that $G_v$ is the stabilizer of $\tilde v$. Indeed,
	that $G_v$ stabilizes $\tilde v$ is clear. If $h$ stabilizes
	$\tilde v$, its action on the set of puzzle pieces 
	containing $v$ agrees with some $g \in G_v$.
	Thus $g^{-1}h$ sends  $(\tilde p,P)$
	to itself, and the claim follows from the previous lemma.

	It follows that if $e$  is an edge of $\Gamma$
	with $\tau(\tilde e) \in \tilde T_0$, then $\iota_e(G_e)$
	is the stabilizer of $\tilde e$.
	If $\tau(\tilde e)$ is not contained in $\tilde T_0$,
	then $\iota_{\bar e}(G_e) = e\iota_e(G_e)e^{-1}$ is the stabilizer
	of $\tilde e$.
\end{proof}

\begin{rk}
	The tree $T$ admits an alternate description: by the orbit-stabilizer
	theorem, vertices of $T$ lifting $v$ 
	correspond to left cosets of $G_v$ in $G$.
	Edges lifting $e$ correspond to cosets of 
	$\iota_{\bar e}(G_e)$ in $G$, and the edge corresponding
	to $ge\iota_{\bar e}(G_e)$ has initial vertex 
	corresponding to the coset $gG_{\tau(\bar e)}$
	and terminal vertex corresponding to $geG_{\tau(e)}$.
	Fix an orientation for the edges of $\Gamma$ such that
	$\tau(\tilde e)$ is not in $\tilde T_0$ when
	$e$ is not in $\tilde T_0$. Combinatorially this is
	a subset $O\subset E(\Gamma)$ containing one element
	of each orbit of the involution $e \mapsto \bar e$.
	The tree $T$ is the quotient of the disjoint union
	\[	\coprod_{e\in O} G/\iota_{\bar e}(G_e)\times \{e\}\]
	by identifying vertices as above. 
	This is the construction of $T$ given in \cite{Trees}.
\end{rk}

\begin{proof}[Proof of \Cref{BassSerreThm}.]
	Much of the statement
	of the theorem follows from the previous lemmas.
	To complete the proof, suppose that we have a group
	$G'$ acting on a tree $T'$ without inversions such that
	$G'\backslash T'$ is homeomorphic to $\Gamma$, and that the
	induced graph of groups structure is isomorphic to
	$(\Gamma,\mathscr{G})$ in the following sense.

	By assumption, there is a fundamental domain 
	$F'_0\subset T'$ for the action of $G'$ on $T'$
	such that $F'_0$ and $F_0$ are homeomorphic via a homeomorphism
	such that the following diagram commutes
	\[\begin{tikzcd}
		F_0 \ar[r,"\cong"]\ar[d] & F'_0 \ar[d] \\
		\Gamma \ar[r,"\cong"] & G'\backslash T',
	\end{tikzcd}\]
	where the vertical maps are the restriction of the natural projections.
	Similarly there is a lift $\tilde T'_0$  of $T_0$.
	For each vertex $v$ and edge $e$,
	write $\tilde v'$ for the lift of $v$ in $\tilde T'_0$
	and $\tilde e'$ for the lift of $e$ in $F'_0$,
	and let $G'_{\tilde v'}$ and $G'_{\tilde e'}$ denote
	their stabilizers in $\tilde G'$, respectively.
	By assumption, there are isomorphisms $f_v\colon G_v \to G'_{\tilde v'}$.
	If $\tau(\tilde e') \in \tilde T'_0$, there is an isomorphism
	$f_e \colon G_e \to G'_{\tilde e'}$ such that the following diagram
	commutes
	\[ \begin{tikzcd}[remember picture]
		G_e \arrow[r,"f_e"]\arrow[d,"\iota_e"] & G'_{\tilde e'} \\
		G_v \arrow[r,"f_v"] & G'_{\tilde v'}.
	\end{tikzcd}
	\begin{tikzpicture}[overlay,remember picture]
	\path (\tikzcdmatrixname-1-2) to node[midway,sloped]{$\subset$}
	(\tikzcdmatrixname-2-2);
	\end{tikzpicture}
	\]

	If $\tau(\tilde e')$ is \emph{not} in $\tilde T'_0$
	and $v = \tau(e)$,
	then by assumption there exists $g_e \in G'$
	with $g_e.\tilde v' = \tau(\tilde e')$, such that
	for $h \in G_e$, we have $g_e^{-1}f_e(h)g_e = f_v(\iota_e(h))$.
	Define $g_{\bar e} = g_e^{-1}$.
	
	The maps $f_v \colon G_v \to G'_{\tilde v'} \subset G'$,
	$e \mapsto 1$ if $\tilde e' \in \tilde T'_0$ and
	$e \mapsto g_e$ otherwise define a homomorphism
	\[ \hat f\colon \left(*_{v\in V(\Gamma)}G_v\right)*F(E(\Gamma)) \to G'. \]
	One checks that the relations in the statement of \Cref{presentation}
	are satisfied in $G'$, so $\hat f$ induces a homomorphism
	$f\colon G \to G'$.

	We will show that $f$ is an isomorphism.
	Since $G'$ admits $F'_0$ as a fundamental domain,
	a classical result of Macbeath \cite{Macbeath}
	says that $G'$ is generated by the set
	\[ S = \{g\in G' \colon gF'_0 \cap F'_0 \neq \varnothing\}. \]
	If $g \in S$, then either $g$ stabilizes 
	some vertex of $\tilde T_0'$, or
	there is an edge $e$ in $\Gamma$ such that $g_e'^{-1}g$ stabilizes
	a vertex of $\tilde T_0'$. It follows
	that $f\colon G \to  G'$ is surjective.

	We define a map $f_T\colon T\to T'$. Define
	$f_T$ on $F_0$ to be the isomorphism taking $\tilde e$
	to $\tilde  e'$ for each edge $e$ of $\Gamma$.
	For a point $x \in T$, choose a lift $\tilde\pi(x)$ of $\pi(x)$
	in $F_0$ in the following way. If $x$ is not a vertex,
	there is a unique lift. If $x$ is a vertex, choose
	the lift lying in $\tilde T_0$.
	There exists $g_x\in G$ such that $g_x.\tilde\pi(x) = x$.
	Extend $f_T$ to all of $T$ by defining 
	\[ f_T(x) = f(g_x).f_T(\tilde \pi(x)). \]
	
	The map $f_T$ is well-defined. Indeed, if $h\in G$
	satisfies $h.\tilde\pi(x) = x$, then 
	$h^{-1}g_x.\tilde\pi(x) = \tilde\pi(x)$, so $f(h^{-1}g_x) \in S$,
	from which it follows that 
	\[ f(h).f_T(\tilde\pi(x)) = f(g_x).f_T(\tilde\pi(x)). \]
	Since $f$ is surjective, $f_T$ is a $G$-equivariant
	surjective morphism of graphs.

	We will show that the induced maps
	$f_{\tilde v}\colon \st(\tilde v) \to \st(\tilde v')$
	are injective, from which it follows from
	$G$-equivariance and simple connectivity of $T'$
	that $f_T$ is an isomorphism, and thus
	$f$ is also an isomorphism.
	Indeed, the map $f$ restricts to an isomorphism
	from the stabilizer of $\tilde v$ to the stabilizer
	of $\tilde v'$, and similarly for $\tilde e$ and $\tilde e'$.
	By the orbit stabilizer theorem, 
	both $\st(\tilde v)$ and $\st(\tilde v')$
	are in $G_v$-equivariant bijection as sets with the set
	\[ \coprod_{e \in \st(v)} G_v/\iota_{e}(G_e), \]
	and this bijection is respected by $f_{\tilde v}$.
	This completes the proof.
\end{proof}

\section{Examples}
	
\begin{ex} Let $A$, $B$ and $C$ be groups with monomorphisms
	$\iota_A\colon C \to A$ and $\iota_B\colon C \to B$.
	The \emph{free product of $A$ and $B$ amalgamated along $C$,}
	written $A*_C B$
	is the fundamental group of the graph of groups $(\Gamma,\mathscr{G})$,
	and where $\Gamma$ is the graph with one edge $e$ and two vertices
	$v_1$ and $v_2$, and $\mathscr{G}$ is the graph of groups structure
	$G_{v_1} = A$, $G_{v_2} = B$, $G_e = C$, and  the monomorphisms
	are $\iota_A$ and $\iota_B$. It has the following  presentation
	\[ A*_C B = \langle A,B \mid \iota_A(c) = \iota_B(c),\ c \in C\rangle. \]
\end{ex}

An example of a fundamental group of a graph of groups whose action
on the Bass--Serre tree is not effective comes from the classical isomorphism
\[
\operatorname{SL}_2(\mathbb Z)\cong
\mathbb Z/4\mathbb Z*_{\mathbb Z/2\mathbb Z} \mathbb{Z}/6\mathbb{Z}.
\]
The tree in this example is the \emph{Serre tree} dual
to the Farey ideal tessellation of the hyperbolic plane. 
It has two orbits of vertices,
with valence $2$ and $3$, respectively.
The edge group $\mathbb{Z}/2\mathbb{Z}$ corresponds to
the \emph{hyperelliptic involution} $-I$ in $\operatorname{SL}_2(\mathbb{Z})$,
which acts trivially on the Farey tessellation, and thus trivially on the tree.
Thus the action factors through the quotient map 
$\operatorname{SL}_2(\mathbb Z)\to \operatorname{PSL}_2(\mathbb Z)$,
which acts effectively.

\begin{ex} Let $A$ and $C$ be groups, with two monomorphisms
	$\iota_1\colon C \to A$, $\iota_2\colon C \to A$.
	The \emph{HNN extension of $A$ with associated subgroups
	$\iota_1(C)$ and $\iota_2(C)$} is the fundamental group
	of the graph of groups $(\Gamma,\mathscr{G})$,
	where $\Gamma$ is the graph with one edge $e$ and one vertex $v$,
	and $\mathscr{G}$ is the graph of groups structure
	$G_v = A$, $G_e = C$, and the monomorphisms are $\iota_1$ and $\iota_2$.
	The name stands for Higman, Neumann and Neumann, 
	who gave the first construction. The standard notation
	is $A*_C$.
	It has the following presentation
	\[ A*_C = \langle A,t \mid t\iota_1(c)t^{-1} = \iota_2(c),\ 
	c \in C\rangle.	\]
\end{ex}

In the special case where $C = A$, $\iota_1 = 1_A$ is the identity
and $\iota_2 \in \aut(A)$, the HNN extension is isomorphic to
the semidirect product $A\rtimes_{\iota_2}\mathbb{Z}$,
and the Bass--Serre tree is homeomorphic to $\mathbb{R}$.
$A$ acts trivially on the tree and $\mathbb{Z}$ acts by a unit
translation of the graph structure.

\section{The Category of Graphs of Groups}\label{morphisms}

Thinking of a graph of groups $(\Gamma,\mathscr{G})$ as
a graph $\Gamma$ and a functor $\mathscr{G}$, a \emph{morphism}
$f\colon(\Lambda,\mathscr{L}) \to (\Gamma,\mathscr{G})$ is
a morphism of graphs $f\colon \Lambda\to \Gamma$ and a
\emph{pseudonatural transformation}
of functors as below left. Concretely, for each edge $e$ 
of $\Lambda$ and vertex $v = \tau(e)$, there are homomorphisms
$f_e\colon \mathscr{L}_e \to \mathscr{G}_{f(e)}$ and
$f_v\colon \mathscr{L}_v \to \mathscr{G}_{f(v)}$
of edge groups and vertex groups. Additionally,
for each oriented edge $e$, there is an element $\delta_e \in G_{f(v)}$
such that the diagram
below right commutes
\[\begin{tikzcd}
	\Lambda \ar[r,bend left=65, "\mathscr{L}"{name=L}]
	\ar[r,bend right=65,swap, "\mathscr{G}f"{name=Gf}]
	\ar[from=L,to=Gf,Rightarrow,shorten <=6pt, shorten >=6pt,swap,"f\ "]
	& \mathrm{Grp^{mono}}
\end{tikzcd}
\qquad\qquad
\begin{tikzcd}[column sep = large]
	\mathscr{L}_e \ar[r,"f_e"]\ar[d,"\iota_e"]
	& \mathscr{G}_{f(e)} \ar[d,"\ad(\delta_e)\iota_{f(e)}"] \\
	\mathscr{L}_v \ar[r,"f_v"] & \mathscr{G}_{f(v)},
\end{tikzcd}\]
where $\ad(\delta_e)\colon \mathscr{G}_{f(v)}\to \mathscr{G}_{f(v)}$
is the inner automorphism $g \mapsto \delta_eg\delta_e^{-1}$.
Note that if the graph morphism $f$ collapses the edge $e$,
the monomorphism
$\iota_{f(e)}$ should be replaced by the identity of
$\mathscr{G}_{f(e)}=\mathscr{G}_{f(v)}$.
In other words, if the morphism $f$ collapses the edge $e$,
then as a functor of small categories, $f\colon \Lambda \to \Gamma$
must send the morphism $\iota_e$ to the identity of $f(\tau(e))$.

Bass \cite[2.1]{Bass} also defines a notion of a morphism of a graph of groups.
In the case where $f$ does not collapse edges, Bass's definition is equivalent 
to our definition followed by an inner automorphism of
$\pi_1(\Gamma,\mathscr{G},p)$; ours is more like his morphism $\delta\Phi$
\cite[2.9]{Bass}.
It is a straightforward exercise to check that the composition
of two morphisms is a morphism, and that the evident ``identity''
morphism really behaves as the identity.

\section{The Action on Paths}
A morphism $f\colon(\Lambda,\mathscr{L})\to(\Gamma,\mathscr{G})$
acts on edge paths
\[\gamma=e_1'g_1e_2g_2\dotsb g_{k-1}e_k'\in (\Lambda,\mathscr{L}) \]
with each $g_i \in \mathscr{L}_{v_i}$ by acting
as $f_{v_i}$ on $\mathscr{L}_{v_i}$ and sending $e_i$
to $\delta_{\bar e_i}f(e_i)\delta_{e_i}^{-1}$. Thus $f$ sends $\gamma$
to the edge path
\[f(\gamma) = f(e_1')\delta_{e_1}^{-1}f_{v_1}(g_1)\delta_{\bar e_2}f(e_2)
\delta_{e_2}^{-1}f_{v_2}(g_2)\dotsb f_{v_{k-1}}(g_{k-1})\delta_{\bar e_k}
f(e_k') \in (\Gamma,\mathscr{G}).\]	
Observe that the pseudonaturality condition implies that 
if $e$ is an edge of $\Gamma$ with $h \in G_e$, then we have that
\begin{align*}  
	f(e\iota_{e}(h)) 
&= \delta_{\bar e}f(e)\cdot\delta_e^{-1}f_{\tau(e)}(\iota_{e}(h))
= \delta_{\bar  e}f(e)\cdot\iota_{f(e)}(f_e(h))\delta_e^{-1}\\
&\simeq\delta_{\bar e}\iota_{f(\bar e)}(f_e(h))\cdot f(e)\delta_e^{-1}
= f_{\tau(\bar e)}(\iota_{\bar e}(h))\delta_{\bar e}\cdot f(e)\delta_e^{-1}
= f(\iota_{\bar e}(h)e),
\end{align*}
from which it follows that $f$ preserves homotopy classes of paths
in $(\Lambda,\mathscr{L})$. Thus if $p$ is a point in $\Lambda$,
$f$ induces a homomorphism
\[f_\sharp\colon\pi_1(\Lambda,\mathscr{L},p)\to\pi_1(\Gamma,\mathscr{G},f(p)).\]

\begin{ex}
	Let $\gamma$ be a path in $(\Gamma,\mathscr{G})$ with
	endpoints at vertices, say
	\[	\gamma = g_1e_1\dotsb e_kg_{k+1}.	\]
	The path $\gamma$ determines a morphism 
	$\gamma\colon I\to(\Gamma,\mathscr{G})$,
	where $I$ is an interval of $\mathbb{R}$.
	To wit: give $I$ a graph structure with $k+1$ vertices and $k$ edges
	$E_1,\dotsb, E_k$ and the trivial graph of groups structure.
	Orient the $E_i$ so they point towards the terminal endpoint of $I$.
	There is an obvious graph morphism $\gamma\colon I \to \Gamma$
	sending $E_i$ to $e_i$. 
	To define $\gamma\colon I \to(\Gamma,\mathscr{G})$, we
	need to choose $\delta_E\in G_{\tau(f(E))}$ for each
	oriented edge $E$ of $I$. So set $\delta_{\bar E_i} = g_i$
	for $1\le i \le k$. For
	$1\le i\le k-1$, define $\delta_{E_i} = 1$,  and define
	$\delta_{E_k} = g_{k+1}^{-1}$.
	Since each edge and vertex group of $I$ is trivial,
	the pseudonaturality condition is trivially satisfied, 
	and observe that
	\[ \gamma(E_1\dotsb E_k) = 
	\delta_{\bar E_1}e_1\delta_{E_1}^{-1}\dotsb
	\delta_{\bar E_k}e_k\delta_{E_k}^{-1}
	= g_1e_1\dotsb e_kg_{k+1} = \gamma. \]
\end{ex}

\paragraph{A normal form for paths.}
For each oriented edge $e \in E(\Gamma)$, fix $S_e$ a set of left
coset representatives for $G_{\tau(e)}/\iota_e(G_e)$.
We require that $1\in G_{\tau(e)}$ belongs to $S_e$.
A path $\gamma$ in $(\Gamma,\mathscr{G})$ with endpoints
at vertices is in \emph{normal form} if
\[ \gamma = s_1e_1s_2\dotsb s_{k-1}e_kg_k, \]
where $s_i \in S_{\bar e_i}$ and $g_k \in G_{\tau(e_k)}$. Furthermore,
for those $s_i$ with $s_i = 1$, we require that $e_{i-1}\ne \bar e_i$.
Any path is homotopic rel endpoints to a unique path in normal form, and
one may inductively put a path into normal form in the following way.
Suppose $\gamma$ is a path satisfying
\[\gamma = \eta g_{i-1}e_ig_i\dotsb e_kg_k,\]
where $\eta=s_1e_1\dotsb g_{i-2}e_{i-1}$ is in normal form. Then
$g_{i-1}= s_{i-1}\iota_{\bar e_i}(h)$ for some $s_{i-1}\in S_{e_{i-1}}$,
and we may replace $\gamma$ with
\[ \gamma' = \eta s_{i-1}e_i\iota_{e_i}(h)g_{i}\dotsb e_kg_k. \]
We have that $\gamma\simeq\gamma'$, and $\eta s_{i-1}e_i$ is in normal
form unless $s_{i-1} = 1$ and $e_{i-1} = \bar e_i$, in which case we
may remove $e_{i-1}$ and $e_i$ via a homotopy.
We have either increased the portion of $\gamma$ which is in normal
form or shortened the length of $|\gamma|$ by two, so iterating this process
will terminate in finite time.

\section{Morphisms and the Bass--Serre Tree}\label{localmap}
The most important example of a morphism between graphs
of groups is the natural projection $\pi\colon T \to (\Gamma,\mathscr{G})$.
Since $T$ is an ordinary graph, its vertex groups and edge groups
are all trivial. For  $\pi$ to define a morphism of graphs of groups,
we need only to define $\delta_{\tilde e}$ for each oriented edge 
$\tilde e$ of $T$. 

Our definition depends on a
choice of fundamental domain $F \subset T$.
Choose a basepoint $p \in \Gamma$, a lift $\tilde p \in \pi^{-1}(p)$,
and a fundamental domain $F$ containing $\tilde p$.
In this way we fix an action of $\pi_1(\Gamma,\mathscr{G},p)$
on $T$.
Observe that for every vertex $\tilde v$ of $T$, there is a unique
path $\gamma$ in $(\Gamma,\mathscr{G})$ in the normal form
\[	\gamma=e_1's_1e_2\dotsb s_{k-1}e_k1\]
which lifts to a tight path $\tilde\gamma$ from $\tilde p$ to $\tilde v$.
If $\tilde e$ is an oriented edge of $T$, let $\tilde\gamma$ be the unique
tight path in $T$ connecting $\tilde p$ to $\tau(\tilde{e})$.
The path $\tilde\gamma$ corresponds to a unique path $\gamma$
in $(\Gamma,\mathscr{G})$ of the above form. If $\pi(\tilde e)=e_k$ 
is the last edge appearing in $\gamma$, define 
$\delta_{\overline{\tilde e}}=s_{k-1}$.
Otherwise define $\delta_{\overline{\tilde e}} = 1$.
Observe that under this definition, $\pi(\tilde\gamma) = \gamma$
as a path in $(\Gamma,\mathscr{G})$.

Recall that for each vertex $v\in\Gamma$ and each lift $\tilde v\in\pi^{-1}(v)$,
there is a $G_v$-equivariant bijection
\[\st(\tilde v) \cong \coprod_{e\in\st(v)} G_v/\iota_e(G_e)\times\{e\}.\]
In fact, the morphism $\pi$ induces this bijection: if 
$\pi(\tilde e) = \delta_{\overline{\tilde e}}e\delta_{\tilde e}^{-1}$,
the above map is $\tilde e \mapsto ([\delta_{\tilde e}],e)$, where
$[\delta_e]$ denotes the left coset of $\iota_e(G_e)$ in $G_v$
represented by $\delta_e$.

\begin{rk}
	Observe that $\pi_1(\Gamma,\mathscr{G},p)$ does \emph{not} act 
	as a group of
	automorphisms of $\pi\colon T \to (\Gamma,\mathscr{G})$ as a morphism
	of graphs of groups: if $g \in \pi_1(\Gamma,\mathscr{G},p)$,
	the equality $\delta_{g.\tilde e} =  \delta_{\tilde e}$
	only holds when $\tau({\tilde e}) \ne \tilde p$.
	This rigidity of the projection will turn out to be useful.
\end{rk}

More generally let $f\colon(\Lambda,\mathscr{L})\to(\Gamma,\mathscr{G})$
be a morphism of graphs of groups. If the graph morphism $f$ does not collapse
any edges, define a map
\[
	f_{\st(v)}\colon\coprod_{e\in\st(v)}
	\mathscr{L}_v/\iota_e(\mathscr{L}_e)\times\{e\}\to
	\coprod_{e'\in\st(f(v))}\mathscr{G}_{f(v)}/\iota_{e'}(\mathscr{G}_{e'})
	\times\{e'\}\]
	\[f_{\st(v)}([g],e) = 
	([f_v(g)\delta_{e}],f(e)).\]
The pseudonaturality condition ensures that $f_{\st(v)}$ is well-defined: since
\[ f_v(g\iota_e(h))\delta_e = f_v(g)f_v(\iota_e(h))\delta_e
= f_v(g)\delta_e\iota_{f(e)}(f_e(h)), \]
we conclude different choices of representative determine 
the same coset of $\iota_{f(e)}(\mathscr{G}_{f(e)})$ under $f_{\st(v)}$.

\begin{prop}\label{diagramfill1}
	Let $f\colon(\Lambda,\mathscr{L}) \to (\Gamma,\mathscr{G})$ be a morphism,
	$p \in \Lambda$ and $q = f(p)\in \Gamma$.
	Write $\tilde\Lambda$ and $\tilde\Gamma$ for the Bass--Serre trees of
	$(\Lambda,\mathscr{L})$ and $(\Gamma,\mathscr{G})$, respectively. 
	There is a unique morphism $\tilde f\colon \tilde\Lambda \to \tilde\Gamma$
	such that the following diagram commutes
	\[\begin{tikzcd}
		(\tilde\Lambda,\tilde p) \ar[r, dashed, "\tilde f"]
		\ar[d, "\pi_\Lambda"] & (\tilde\Gamma,\tilde q) \ar[d, "\pi_\Gamma"] \\
		(\Lambda,\mathscr{L},p) \ar[r, "f"] & (\Gamma,\mathscr{G},q),
	\end{tikzcd}\]
	where $\pi_\Lambda$ and $\pi_\Gamma$ are the covering projections,
	and $\tilde p$ and $\tilde q$ are the distinguished lifts of $p$ and $q$, 
	respectively.
\end{prop}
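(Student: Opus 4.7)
\emph{Plan of proof.} The strategy is to construct $\tilde f$ by path lifting, exploiting the fact that the tree $\tilde\Lambda$ is built from lifts of paths in $(\Lambda,\mathscr{L})$ based at $p$, and that paths in $(\Gamma,\mathscr{G})$ lift uniquely to $\tilde\Gamma$ once a basepoint lift is chosen. Concretely, given a vertex $\tilde v \in \tilde\Lambda$, there is a unique tight edge path $\tilde\sigma$ in $\tilde\Lambda$ from $\tilde p$ to $\tilde v$. Projecting and putting into normal form gives a path $\sigma$ in $(\Lambda,\mathscr{L})$ based at $p$ with $\pi_\Lambda(\tilde\sigma)=\sigma$. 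Applying the morphism $f$ yields a path $f(\sigma)$ in $(\Gamma,\mathscr{G})$ based at $q$, which by the construction of $\pi_\Gamma$ lifts uniquely to a path in $\tilde\Gamma$ starting at $\tilde q$; I define $\tilde f(\tilde v)$ to be its endpoint.

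I would extend $\tilde f$ to edges as follows. For an oriented edge $\tilde e \in \tilde\Lambda$ with $v=\pi_\Lambda(\tau(\tilde e))$, the star bijection from \Cref{localmap} identifies $\tilde e \in \st(\tau(\tilde e))$ with a pair $([\delta_{\tilde e}^{\pi_\Lambda}],\pi_\Lambda(\tilde e))$ in $\coprod_{e\in\st(v)}\mathscr{L}_v/\iota_e(\mathscr{L}_e)\times\{e\}$. Applying the well-defined local map $f_{\st(v)}$ produces a pair $([f_v(\delta_{\tilde e}^{\pi_\Lambda})\delta_{\pi_\Lambda(\tilde e)}^f],f(\pi_\Lambda(\tilde e)))$, which via the star bijection at $\tilde f(\tau(\tilde e))$ designates the unique edge (or vertex, if $f$ collapses $\pi_\Lambda(\tilde e)$) of $\tilde\Gamma$ at $\tilde f(\tau(\tilde e))$ to serve as $\tilde f(\tilde e)$. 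Because $\tilde\Lambda$ and $\tilde\Gamma$ have trivial vertex and edge groups, the vertex/edge homomorphisms of $\tilde f$ are forced to be trivial and the $\delta$-data are trivial, so $\tilde f$ is a morphism of graphs of groups as soon as I verify it is a morphism of the underlying graphs.

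Consistency between the edge-level and vertex-level definitions follows because lifting $f(\sigma)$ to $\tilde\Gamma$ is done one edge at a time, and each step uses precisely the star bijection described above; this is exactly the same information used in the construction of $\pi_\Gamma$ in \Cref{localmap}. Commutativity of the diagram amounts to the identity $\pi_\Gamma(\tilde f(\tilde\sigma))=f(\pi_\Lambda(\tilde\sigma))$ for every path $\tilde\sigma$ based at $\tilde p$, which holds by construction, combined with the computation that with our conventions the composite $\delta$-data satisfy $\delta_{\tilde f(\tilde e)}^{\pi_\Gamma} = f_v(\delta_{\tilde e}^{\pi_\Lambda})\cdot\delta_{\pi_\Lambda(\tilde e)}^f$, which is the formula for the $\delta$-data of the composition $f\circ \pi_\Lambda$. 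Uniqueness is immediate: any $\tilde f$ making the diagram commute and sending $\tilde p\mapsto\tilde q$ must send the unique tight lift of $\sigma$ at $\tilde p$ to the unique tight lift of $f(\sigma)$ at $\tilde q$, so its value on every vertex, and hence on every edge of $\tilde\Lambda$, is forced.

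The main obstacle I anticipate is bookkeeping: verifying that $\tilde f$ on vertices is independent of the representative chosen for the homotopy class of $\sigma$ (which reduces to the fact that $f$ preserves homotopy classes, together with uniqueness of endpoints of lifts in a tree), and checking the identity for the composite $\delta$-data requires chasing the pseudonaturality condition through the composition-of-morphisms formula $\delta_e^{gf}=g_{f(v)}(\delta_e^f)\cdot\delta_{f(e)}^g$. The case where $f$ collapses an edge needs a small extra check, but is handled uniformly by the convention that a collapsed edge contributes a trivial factor to $f(\sigma)$, so that its lift in $\tilde\Gamma$ is stationary, consistent with $\tilde f$ collapsing the corresponding edge of $\tilde\Lambda$.
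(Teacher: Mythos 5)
Your proposal follows the same path-lifting strategy as the paper's proof: both define $\tilde f$ on a vertex $\tilde v$ by taking the unique tight path $\tilde\sigma$ from $\tilde p$ to $\tilde v$, projecting via $\pi_\Lambda$, applying $f$, and lifting the result uniquely from $\tilde q$, and both derive uniqueness from the forced equality $\pi_\Gamma\tilde f'(\tilde\eta) = f\pi_\Lambda(\tilde\eta)$ on paths. The extra work you do---spelling out the edge-level definition via the star bijections, the composite $\delta$-data formula $\delta^{f\pi_\Lambda}_{\tilde e} = f_v(\delta^{\pi_\Lambda}_{\tilde e})\cdot\delta^f_{\pi_\Lambda(\tilde e)}$, and the collapsed-edge convention---is an accurate unpacking of what the paper dismisses as ``it is easy to see that $\tilde f$ defines a morphism and that the diagram commutes.'' One small imprecision worth flagging: the paper is careful to say that $\pi_\Gamma(\tilde\gamma)$ agrees with $f\pi_\Lambda(\gamma)$ only up to an element of $\mathscr{G}_{f\pi_\Lambda(x)}$ at the terminal vertex (because $\pi_\Gamma$ reads off paths in a fixed normal form, which $f\pi_\Lambda(\gamma)$ generally is not in), whereas your phrase ``lifts uniquely'' glosses over this; the endpoint is still well-defined, which is all you need, but the bookkeeping you defer to is exactly this normal-form discrepancy. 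Also note that $f_{\st(v)}$ in the paper is only defined for non-collapsing morphisms, so your edge-level formula should be read as applying to the non-collapsed edges, with the stationary-lift convention picking up the rest, which you do acknowledge.
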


\begin{proof}
	If $x$ is a point of $\tilde\Lambda$, there is a unique tight path
	$\gamma$ from $\tilde p$ to $x$. The path $f\pi_\Lambda(\gamma)$
	has a (unique) lift $\tilde\gamma$ to 
	$\tilde\Gamma$ beginning at $\tilde q$
	such that $\pi_\Gamma(\tilde\gamma)$
	differs from $f\pi_\Lambda(\gamma)$ only possibly
	by an element of $\mathscr{G}_{f\pi_\Lambda(x)}$ when $x$ is a vertex.
	In particular, the terminal endpoint $\tilde\gamma(1)$ is well-defined,
	and depends only on the homotopy class rel endpoints of 
	$f\pi_\Lambda(\gamma)$. 
	Define $\tilde f(x) = \tilde\gamma(1)$.
	It is easy to see that $\tilde f\colon\tilde\Lambda \to \tilde\Gamma$
	defines a morphism, and that the diagram commutes.
	For uniqueness, observe that any morphism $\tilde f'$ making the 
	above diagram commute must satisfy
	\[\pi_\Gamma\tilde f'(\tilde\eta) = f\pi_\Lambda(\tilde\eta) \]
	for any path $\tilde\eta$ in $(\tilde\Lambda)$, so specializing to paths 
	connecting $\tilde p$ to $x$, we see that $\tilde f'(x) = \tilde f(x)$ 
	for all $x \in \Lambda$.
\end{proof}

Notice that the map $\tilde f$ is $f_\sharp$-equivariant, in the sense
that if $h \in \pi_1(\Lambda,\mathscr{L},v)$, 
then for all $x \in \tilde\Lambda$ we have
\[\tilde f(h.x) = f_\sharp(h).\tilde f(x).\]
The converse is also true. 
\begin{prop}[cf.\ 4.1--4.5 of \cite{Bass}]\label{diagramfill2}
	Let $f_\sharp\colon \pi_1(\Lambda,\mathscr{L},p) 
	\to \pi_1(\Gamma,\mathscr{G},q)$ be a homomorphism,
	and let $\tilde f\colon (\tilde\Lambda,\tilde p) 
	\to (\tilde\Gamma,\tilde q)$
	be an $f_\sharp$-equivariant morphism of trees in the sense above. 
	There is a morphism 
	$f\colon (\Lambda,\mathscr{L},p) \to (\Gamma,\mathscr{G},q)$ 
	which induces $f_\sharp$ and $\tilde f$ and 
	such that the following diagram commutes
	\[\begin{tikzcd}
		(\tilde\Lambda,\tilde p) \ar[r, "\tilde f"] \ar[d, "\pi_\Lambda"] 
		& (\tilde\Gamma,\tilde q) \ar[d, "\pi_\Gamma"] \\
		(\Lambda,\mathscr{L},p) \ar[r, dashed, "f"] & (\Gamma,\mathscr{G},q).
	\end{tikzcd}\]
\end{prop}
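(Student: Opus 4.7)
The plan is to extract the morphism $f$ from the composite $\pi_\Gamma\tilde f\colon\tilde\Lambda\to(\Gamma,\mathscr{G})$, viewing $\tilde\Lambda$ as a graph of groups with trivial vertex and edge groups, and then to read off the required data $(f_v, f_e, \delta_e)$ from the action of $\pi_1(\Gamma,\mathscr{G},q)$ on $\tilde\Gamma$.

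First I would define the underlying graph morphism $f\colon\Lambda\to\Gamma$. By \Cref{fundamentaldomain}, $\pi_\Lambda$ and $\pi_\Gamma$ realize $\Lambda$ and $\Gamma$ as the quotients of $\tilde\Lambda$ and $\tilde\Gamma$ by $\pi_1(\Lambda,\mathscr{L},p)$ and $\pi_1(\Gamma,\mathscr{G},q)$, respectively. Since $\tilde f$ is $f_\sharp$-equivariant, the map $\pi_\Gamma\tilde f$ descends to a unique graph morphism $f$ with $f\pi_\Lambda=\pi_\Gamma\tilde f$, and $f(p)=q$ by hypothesis.

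Next, fix fundamental domains $F_\Lambda\subset\tilde\Lambda$ and $F_\Gamma\subset\tilde\Gamma$ containing $\tilde p,\tilde q$ together with lifts $\tilde\Lambda_0,\tilde\Gamma_0$ of spanning trees as in \Cref{construction}. For each vertex $v$ of $\Lambda$, write $\tilde v\in\tilde\Lambda_0$ for its distinguished lift, so that $\mathrm{Stab}(\tilde v)=\mathscr{L}_v$; likewise let $u_v\in\tilde\Gamma_0$ denote the distinguished lift of $f(v)$, so $\mathrm{Stab}(u_v)=\mathscr{G}_{f(v)}$. Because $\pi_\Gamma\tilde f(\tilde v)=f\pi_\Lambda(\tilde v)=f(v)=\pi_\Gamma(u_v)$, there exists $d_v\in\pi_1(\Gamma,\mathscr{G},q)$ with $d_v\cdot\tilde f(\tilde v)=u_v$. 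Carry out the analogous construction for each edge $e$ of $\Lambda$ (with $\tilde e\in F_\Lambda$ chosen so that $\tau(\tilde e)\in\tilde\Lambda_0$), obtaining $d_e$. Now define
\[ f_v(g) := d_v f_\sharp(g) d_v^{-1}, \qquad f_e(h) := d_e f_\sharp(h) d_e^{-1}. \]
Equivariance of $\tilde f$ forces $f_v(g)\in\mathrm{Stab}(u_v)=\mathscr{G}_{f(v)}$ and similarly for $f_e$. For each oriented edge $e$ of $\Lambda$ with $v=\tau(e)$, set $\delta_e:=d_v d_e^{-1}$. Using that $d_e\cdot\tilde f(\tilde v)=d_e\cdot\tau(\tilde f(\tilde e))=\tau(u_e)=u_v$, one sees $\delta_e\in\mathscr{G}_{f(v)}$, and an immediate computation gives $\ad(\delta_e)\iota_{f(e)}\circ f_e = f_v\circ\iota_e$, which is the pseudonaturality square.

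To finish, I would verify that the resulting morphism $f$ induces the given $f_\sharp$ and that the lift of $f$ produced by \Cref{diagramfill1} recovers the given $\tilde f$. Both follow by chasing the definitions on generators: a loop in $(\Lambda,\mathscr{L},p)$ is a concatenation of edges and vertex-group elements, and each piece is translated by the $d_v,d_e$ in a way that cancels telescopically along the loop, identifying $f_\sharp([\gamma])$ with the given value by unique path lifting in the tree. The main obstacle is the bookkeeping for oriented edges $e$ with $\tau(\tilde e)\notin\tilde\Lambda_0$ (i.e., edges not in the spanning tree): for such edges one must first translate $\tau(\tilde e)$ back into $\tilde\Lambda_0$ by an element of $\pi_1(\Lambda,\mathscr{L},p)$, apply $f_\sharp$, and verify that the resulting $\delta_e$ lands in the correct vertex group on the $\Gamma$ side; and when $f$ collapses an edge $e$, one must interpret $\tilde f(\tilde e)$ as a vertex and replace $\iota_{f(e)}$ by the identity of $\mathscr{G}_{f(e)}=\mathscr{G}_{f(v)}$, checking that the pseudonaturality relation still holds in this degenerate form. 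Independence of the construction from the choices of $d_v,d_e$ (which are defined only up to left multiplication by the relevant stabilizers) is then automatic, since any ambiguity is absorbed into an inner automorphism of $\mathscr{G}_{f(v)}$ or $\mathscr{G}_{f(e)}$ and appears in the $\delta_e$'s in a pseudonaturally consistent way.
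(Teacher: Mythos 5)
Your approach is the same in outline as the paper's: extract the graph morphism $f$ from $\pi_\Gamma\tilde f$ by equivariance, choose conjugating elements taking $\tilde f$-images of distinguished lifts to distinguished lifts, and define $f_v$, $f_e$, $\delta_e$ by conjugation. Where you diverge is in the formula $\delta_e := d_vd_e^{-1}$, and this is where a gap opens. For $\delta_e$ to land in $\mathscr{G}_{f(v)}$, you invoke $d_e\cdot\tilde f(\tilde v)=\tau(u_e)=u_v$. But the identity $\tau(u_e)=u_v$ is not automatic: the distinguished lift $u_e$ of $f(e)$ in $F_\Gamma$ has its vertex $\tau(u_e)$ in $\tilde\Gamma_0$ only for the distinguished orientation of $f(e)$ (the one fixed when building $F_\Gamma$), and there is no reason why the orientation of $f(e)$ induced by your choice of orientation on $e$ (the one with $\tau(\tilde e)\in\tilde\Lambda_0$) should be that distinguished orientation. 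When $f$ sends $e$ to the ``wrong'' orientation of a non-spanning-tree edge of $\Gamma$, one gets $\tau(u_e)\ne u_v$ and $d_vd_e^{-1}\notin\mathscr{G}_{f(v)}$. You gesture at bookkeeping for edges with $\tau(\tilde e)\notin\tilde\Lambda_0$, but the failure is on the $\Gamma$ side, not the $\Lambda$ side, and you never establish that it can be repaired.

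The paper sidesteps this entirely by defining $\delta_e$ as $\delta_{\tilde f(\tilde e)}$, the conjugating element of the covering morphism $\pi_\Gamma\colon\tilde\Gamma\to(\Gamma,\mathscr{G})$ at the edge $\tilde f(\tilde e)$; this element lives in $\mathscr{G}_{\pi_\Gamma(\tau(\tilde f(\tilde e)))}=\mathscr{G}_{f(v)}$ by construction of $\pi_\Gamma$, with no conditions on orientations, and it makes $f\pi_\Lambda=\pi_\Gamma\tilde f$ hold as an identity of paths within the chosen fundamental domain. The pseudonaturality square, which you dismiss as ``an immediate computation,'' is then still not immediate: the paper must carefully trace $h\in\mathscr{L}_e$ through the conjugators $t_e$, $g_a$, $g_w$, $\delta_e$ (where $g_w$ is additionally required to match preferred fundamental domains, not merely to match vertices as your $d_v$ does), and identify $\delta_e$ as the element carrying $(t_ag_a)\cdot\tilde f(\tilde e)$ to $(g_wf_\sharp(t_e))\cdot\tilde f(\tilde e)$ inside $\st(\tilde w)$. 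You are right that the ambiguity in $d_v$, $d_e$ only changes $f$ up to the equivalence of \Cref{equivalence}, but that observation does not close the gap above; it presupposes that some valid $\delta_e\in\mathscr{G}_{f(v)}$ exists, which is the very thing at issue.
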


\begin{proof}
	As a morphism of graphs $f$ is easy to describe. 
	By $f_\sharp$-equivariance, the map $\pi_\Gamma\tilde f$ yields a
	well-defined map on $\pi_1(\Lambda,\mathscr{L},p)$-orbits;
	this is the map $f\colon \Lambda \to \Gamma$ as a morphism of graphs.

	Identify the graph of groups structures $\mathscr{L}$ and $\mathscr{G}$
	with those induced by the actions of the respective fundamental groups on
	$\tilde\Lambda$ and $\tilde\Gamma$. For $\tilde\Lambda$ 
	this involves a choice of
	fundamental domain $F\subset\tilde\Lambda$ containing $\tilde p$.
	Each edge $e \in \Lambda$ has a single preimage $\tilde e \in F$,
	and as a morphism of graphs of groups, $\pi_\Lambda$ satisfies
	\[	(\pi_\Lambda)_{\st(\tau(\tilde e))}(\tilde e ) = ([1],e). \]
	If $e$ is not collapsed by $f$, 
	define $\delta_e$ for the morphism $f$ as $\delta_{\tilde f(\tilde e)}$
	for the morphism $\pi_\Gamma$. Thus, for $\tilde\gamma$ a path in $F$,
	$\pi_\Gamma\tilde f(\tilde \gamma) = f\pi_\Lambda(\tilde \gamma)$.
	If $e$ is collapsed, define $\delta_e = 1$.

	Let $v$ be a vertex of $\Lambda$ and write $w = f(v)$.
	To define $f_v\colon \mathscr{L}_v \to \mathscr{G}_{w}$,
	recall that under the identification, $\mathscr{L}_v$ 
	is the stabilizer of a vertex $\tilde v \in \pi_\Lambda^{-1}(v)\cap F$.
	Similarly, $\mathscr{G}_{f(v)}$ is identified with the stabilizer of 
	some vertex $\tilde w$ in $\tilde\Gamma$. 
	The stabilizers of $\tilde f(\tilde v)$ and $w$ are conjugate in
	$\pi_1(\Gamma,\mathscr{G},q)$ via some element $g_w$ such that
	$g_w.\tilde f(\tilde v) = w$. Furthermore recall that there is a 
	preferred translate of the fundamental domain for the action of
	$\pi_1(\Gamma,\mathscr{G},q)$ containing $\tilde f(\tilde v)$ and
	another preferred translate containing $\tilde w$.
	Namely, in the language we have been developing, the translate containing
	the edges in the preimage of
	\[	\{([1],e) : e\in\st(w)\}	\]
	under the maps $(\pi_\Gamma)_{\st( \tilde f(\tilde v) )}$ and
	$(\pi_\Gamma)_{\st(\tilde w)}$, respectively.
	We require $g_w$ to take the preferred translate for
	$\tilde f(\tilde v)$ to the preferred translate for $\tilde w$.
	The restriction of 
	\[h \mapsto g_wf_\sharp(h)g_w^{-1}\]
	to the stabilizer of $\tilde v$ defines a homomorphism 
	$f_v\colon \mathscr{L}_v\to\mathscr{G}_w$.

	Now let $e$ be an edge of $\Lambda$ and write $a = f(e)$. 
	The story is similar:
	the stabilizer of the preimage $\tilde e$ in $F$ is identified with
	$\mathscr{L}_e$, and some element $g_a\in\pi_1(\Gamma,\mathscr{G},q)$
	takes $\tilde f(\tilde e)$ to the preferred preimage $\tilde a$.
	If $a$ is a vertex, we again require $g_a$ 
	to match up preferred fundamental domains.
	The homomorphism $f_e\colon \mathscr{L}_e \to \mathscr{G}_a$ is
	\[ h \mapsto g_af_\sharp(h)g_a^{-1}.	\]

	Whether $e$ is an edge of $\Gamma$ or $\Lambda$, the monomorphisms 
	$\iota_e$ have a uniform description. If $v = \tau(e)$,
	and $\tau(\tilde e) = \tilde v$, then
	the monomorphism $\iota_e$ is the inclusion
	$\stab(\tilde e)\hookrightarrow \stab(\tilde v)$.
	If not, there is some element $t_e$ with $t_e.\tau(\tilde e) = \tilde v$
	and matching up preferred fundamental domains. Then $\iota_e$
	is 
	\[	h \mapsto t_eht_e^{-1}.	\]
	In the former case, write $t_e = 1$.
	In the case where $f$ collapses $e$ to a vertex $a$,
	let $\iota_a = 1_{\mathscr{G}_a}$ and $t_a = 1$.
	Tracing an element $h \in\mathscr{L}_e$ 
	around the pseudonaturality square, we have
	\[\begin{tikzcd}
		h \ar[r,maps to,"f_e"] \ar[d,maps to,"\iota_e"]
		& g_af_\sharp(h)g_a^{-1} \ar[d,maps to,"\ad(\delta_e)\iota_a"]  \\
		t_eht_e^{-1} \ar[r,maps to,"f_v"]
		& g_wf_\sharp(t_eht_e^{-1})g_w^{-1} 
		= \delta_et_ag_af_\sharp(h)g_a^{-1}t_a^{-1}\delta_e^{-1}.
	\end{tikzcd}\]
	Equality holds: to see this, we only need to check the case where
	$e$ is not collapsed. In this case, note that $\delta_e\in \mathscr{G}_w$
	was defined to take $(t_ag_a).\tilde f(\tilde e)$,
	whose image under $(\pi_\Gamma)_{\st(\tilde w)}$
	is $([1],e)$, to $(g_wf_\sharp(t_e)).\tilde f(\tilde e)$.
	This completes the definition of $f$ as a morphism of graphs of groups.

	With this description of $f$, it is easy to see that $f$ induces
	$f_\sharp$. Choose a spanning tree $T \subset \Lambda$.
	For $v$ a vertex of $\Lambda$, let $\gamma_v$ be a path
	in $T$ (as a graph) between $p$ and $v$. 
	A set of generators of $\pi_1(\Lambda,\mathscr{L},p)$ is given by
	\[	\{\gamma_vg\bar\gamma_v, \gamma_{\tau(\bar e)}e\bar\gamma_{\tau(e)} :
	g \in \mathscr{L}_v,\ e \text{ an edge of } \Lambda\setminus T\}.	\]
	One checks that $f(\gamma_v)f_v(g)f(\bar\gamma_v) 
	= f_\sharp(\gamma_vg\bar\gamma_v)$ by observing that both elements
	stabilize $\tilde f(\tilde v)$ and act as the same element of 
	$\mathscr{G}_{f(v)}$ on the set of fundamental domains containing 
	$\tilde f(\tilde v)$. Similarly, observe that
	$f(\gamma_{\tau(\bar e)}e\bar\gamma_{\tau(e)}) = 
	f_\sharp(\gamma_{\tau(\bar e)}e\bar\gamma_{\tau(e)})$ 
	since if $\tau(e) = v$, both elements take
	$\tilde f(\tilde v)$ to $\tilde f(\tau(\tilde e))$ and 
	match fundamental domains identically.
	From this it follows from $f_\sharp$-equivariance that $f$ also
	induces $\tilde f$.
\end{proof}

\section{Equivalence of Morphisms}\label{equivalence}
In the notation of the previous proposition,
it is not the case that $f$ is uniquely determined 
by $f_\sharp$ and $\tilde f$.
Here is a simple example. 
Let $\Gamma$ be the graph with one edge $e$ and two vertices, $v_1$ and $v_2$,
and let $\mathscr{G}$ be the graph of groups structure $\mathscr{G}$ 
with vertex groups $G_1$ and $G_2$ and trivial edge group.
Orient $e$ so that $\tau(e) = v_1$.
Let $f\colon(\Gamma,\mathscr{G}) \to (\Gamma,\mathscr{G})$ be the morphism
which is the identity on each vertex and edge group,
and set $\delta_e = z$ for some element $z$ in the center of $G_1$,
and set $\delta_{\bar e} = 1$.
At any basepoint other than $v_1$, the morphism $f$ has
$f_\sharp$ and $\tilde f$ equal to the identity,
while at $v_1$ the homomorphism 
$f_\sharp\colon\pi_1(\Gamma,\mathscr{G},v_1) \to \pi_1(\Gamma,\mathscr{G},v_1)$
is the automorphism $\ad(z)$, and $\tilde f$ is equal to the action of $z$
on the Bass--Serre tree $T$, viewed as an element of $\stab(\tilde v_1)$.

More generally, if $f\colon (\Lambda,\mathscr{L},p) \to (\Gamma,\mathscr{G}),q)$
is a morphism of graphs of groups,
the following operations do not change $f_\sharp$ or $\tilde f$.
\begin{enumerate}
	\item If $v \ne p$ is a vertex of $\Lambda$ and
		$g \in \mathscr{G}_{f(v)}$,
		replace $f_v$ with $\ad(g)\circ f_v$
		and replace $\delta_e$ with $g\delta_e$
		for each oriented edge $e \in \st(v)$.
	\item If $e$ is an edge of $\Lambda$ not containing $p$
		and $g \in \mathscr{G}_{f(e)}$,
		replace $\delta_e$ and $\delta_{\bar e}$
		with $\delta_{\bar e}\iota_{\overline{f(e)}}(g)$
		and $\delta_e\iota_{f(e)}(g)$, respectively.
\end{enumerate}
A simple calculation reveals that neither operation affects
$f_\sharp$ nor $\tilde f$.
If $f$ can be transformed into $f'$ 
by a finite sequence of the above operations,
we say $f$ and $f'$ are \emph{equivalent fixing $p$.}\footnote{
	One could argue for saying \emph{homotopy} equivalent instead.
	We see no particular reason to prefer one terminology over the other,
	save that neither $f_\sharp$ nor $\tilde f$ 
	see any change under this form
	of equivalence.
} More generally, if one ignores the stipulations regarding the basepoint,
we will say two such morphisms are \emph{equivalent.} 
Equivalence classes of morphisms will become relevant for representing
outer automorphisms of $\pi_1(\Lambda,\mathscr{L})$.

\paragraph{Twist automorphisms.}
We would like to draw attention to a particular family of automorphisms
of $(\Lambda,\mathscr{L})$.
For $v$ a vertex of $\Lambda$,
$e$ an oriented edge of $\st(v)$
and $g \in \mathscr{L}_v$,
define the \emph{twist of $e$ by $g$,}
\[t_{ge}\colon (\Lambda,\mathscr{L}) \to (\Lambda,\mathscr{L})\]
by setting $(t_{ge})_x$ equal to the identity of $\mathscr{G}_x$ 
for $x$ an edge or vertex of $\Lambda$,
setting $\delta_e = g$, 
and setting $\delta_{e'} = 1$ for each other edge of $\Lambda$.
In the Bass--Serre tree $\tilde\Lambda$,
the lift $\tilde t_{ge}$ corresponds to changing the fundamental domain $F$
for the action of $\pi_1(\Lambda,\mathscr{L},p)$.
Since $F$ is a tree, 
removing the interior of $\tilde e$ separates $F$ into two components.
Call the component containing $\tilde p$ $F_0$,
call $F_1$ the closure of $F\setminus F_0$.
The lift $\tilde t_{ge}$ acts by fixing $F_0$ and sending $F_1$ to $g.F_1$.

Cohen and Lustig \cite{CohenLustig} define a similar notion
which they call a \emph{Dehn twist automorphism.} 
Our twists are Dehn twists when $g$ lies in the image
of the center of the edge group $Z(\mathscr{L}_e)$ under $\iota_e$.
Recall that an essential simple closed curve $\gamma$ on a surface $S$
determines a splitting of $\pi_1(S)$ 
as a one-edge graph of groups $(\Gamma,\mathscr{G})$.
The action of the Dehn twist about $\gamma$ on $\pi_1(S)$
may be realized as a Dehn twist automorphism of $(\Gamma,\mathscr{G})$.
More generally, Dehn twist automorphisms on some splitting of
$\pi_1(S)$ as a graph of groups with cyclic edge groups correspond
to products of commuting Dehn twists about some multi-curve on $S$.

\section{Covering Spaces}
Let $f$ be a morphism  that does not collapse  edges.
The map $f_{\st(v)}$ captures the local behavior of $f$ at $v$.
We say that $f$ is \emph{locally injective} or an \emph{immersion} if 
\begin{enumerate*}[label=(\roman*)]
	\item each homomorphism $f_e$ and $f_v$ is a monomorphism, and 
	\item each $f_{\st(v)}$ is injective.
\end{enumerate*}
An immersion is an \emph{embedding} if
additionally $f$ is injective as a morphism of graphs.
We say an immersion 
$f$ is a \emph{covering map} if $f$ is surjective as a morphism of graphs
and each $f_{\st(v)}$ is a bijection.
The composition of two immersions is clearly an immersion,
and the composition of two covering maps is a covering map.
(Recall that we require graphs of groups to be connected.)

\begin{lem} If $f\colon(\Lambda,\mathscr{L},p)\to(\Gamma,\mathscr{G},f(p))$
	is an immersion, the induced homomorphism
	\[	f_\sharp\colon\pi_1(\Lambda,\mathscr{L},p)\to
	\pi_1(\Gamma,\mathscr{G},f(p))	\]
	is a monomorphism.
\end{lem}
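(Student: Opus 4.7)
The plan is to pass to the Bass--Serre trees via \Cref{diagramfill1}: the morphism $f$ lifts to an $f_\sharp$-equivariant morphism of trees $\tilde f\colon(\tilde\Lambda,\tilde p)\to(\tilde\Gamma,\tilde q)$, where $\tilde q$ is the distinguished lift of $f(p)$. I will show $\tilde f$ is injective, and then combine $f_\sharp$-equivariance with the immersion hypothesis on the stabilizer of the basepoint to conclude $\ker f_\sharp = 1$.

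For the first step, I would check that $\tilde f$ is locally injective at each vertex. Under the local identifications of \Cref{starsofverts}, combined with the formula for $f_{\st(v)}$ from \Cref{localmap}, the induced star map $\tilde f_{\st(\tilde v)}$ is precisely the map $([g],e)\mapsto([f_v(g)\delta_e],f(e))$. Well-definedness comes from pseudonaturality, and injectivity from the immersion hypothesis that $f_{\st(v)}$ is injective (together with injectivity of $f_v$ and $f_e$). Since immersions do not collapse edges, neither does $\tilde f$. The standard argument for morphisms of trees then upgrades local injectivity to global injectivity: given distinct $x,y\in\tilde\Lambda$, the unique tight edge path $\tilde\sigma$ from $x$ to $y$ is sent by $\tilde f$ to a non-trivial path in $\tilde\Gamma$ that remains tight---local injectivity at each interior vertex of $\tilde\sigma$ prevents the introduction of backtracks---so its endpoints $\tilde f(x)$ and $\tilde f(y)$ are necessarily distinct.

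With $\tilde f$ injective, $f_\sharp$-equivariance forces any $g\in\ker f_\sharp$ to fix $\tilde\Lambda$ pointwise: from $\tilde f(g.x) = f_\sharp(g).\tilde f(x) = \tilde f(x)$ we obtain $g.x=x$. In particular $g$ stabilizes $\tilde p$, placing $g$ in the stabilizer of $\tilde p$---namely $\mathscr{L}_v$ when $p$ is a vertex $v$, or a conjugate of $\iota_e(\mathscr{L}_e)$ inside $\mathscr{L}_{\tau(e)}$ when $p$ lies in the interior of an edge $e$. Restricted to this subgroup, $f_\sharp$ coincides with $f_v$ (respectively a conjugate of $f_e$), both of which are monomorphisms by the immersion hypothesis, so $g=1$. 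The step I expect to require the most care is verifying that the star map of $\tilde f$ is literally $f_{\st(v)}$ once one tracks the elements $\delta_e$ and the chosen fundamental domains from the construction of $\tilde f$ in \Cref{diagramfill1}; once that identification is in hand, the remainder is formal from the tree structure and equivariance.
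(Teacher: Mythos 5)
Your proof is correct, and it reaches the conclusion by a genuinely different route than the paper's. The paper argues directly in $(\Lambda,\mathscr{L})$: a nontrivial $g\in\ker f_\sharp$ is represented by a tight loop $\gamma$, and null-homotopy of $f(\gamma)$ forces a cancellation at some subpath $ege'$ of $\gamma$, which unwinds to an equality $f_{\st(v)}([1],e)=f_{\st(v)}([g],\bar e')$ contradicting the immersion hypothesis. You instead lift to Bass--Serre trees, prove $\tilde f$ is an embedding (local injectivity plus the tree argument), and then use $f_\sharp$-equivariance to trap $\ker f_\sharp$ inside $\stab(\tilde p)\cong\mathscr{L}_p$, where injectivity of $f_p$ finishes. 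In effect you establish the embedding half of the subsequent lemma (cited in the paper to Corollary 4.6 of Bass) first and deduce the present lemma from it; the paper proceeds in the opposite order, proving the monomorphism from scratch and then stating the embedding result without proof. One thing your route buys: the final step makes condition (i) of the immersion definition (injectivity of $f_v$, $f_e$) do visible work, and cleanly handles the degenerate case where the tight representative $\gamma$ is a constant loop carrying a nontrivial vertex-group label---a case the paper's written argument (which locates a subpath $ege'$) tacitly glosses over. The cost is the bookkeeping you flagged yourself: matching $\tilde f_{\st(\tilde v)}$ with $f_{\st(v)}$ through the coset identification of $\st(\tilde v)$, which the paper sidesteps by never leaving the quotient graph of groups.
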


\begin{proof}
	Suppose to the contrary that $g\in\pi_1(\Lambda,\mathscr{L},p)$
	is a nontrivial element in the kernel of $f_\sharp$,
	and represent $g$ by an
	immersion $\gamma\colon I \to (\Lambda,\mathscr{L})$.\footnote{
	Technically our combinatorial definition of morphism
	requires $p$ to be a vertex of $\Lambda$. 
	This can be accomplished by declaring $p$
	to be a vertex and altering $\mathscr{L}$ in the obvious way.
}
	Since $f(\gamma)$ is null-homotopic, (the image of)
	$\gamma$ has a subpath $ege'$ 
	such that $f(e) = f(\bar e')$ and 
	\[ \delta_e^{-1}f_v(g)\delta_{e'} = \iota_{f(e)}(h), \]
	where $v = \tau(e)$ and $h\in G_{f(e)}$.
	Thus $[\delta_e] = [f_v(g)\delta_{e'}]$, 
	so  we conclude that 
	\[ f_{\st(v)}([1],e) = f_{\st(v)}([g],\bar e').	\]
	But because $\gamma\colon I \to (\Lambda,\mathscr{L})$ is
	an immersion, either $e \ne \bar e'$ or $[g] \ne [1]$,
	so this contradicts the assumption that $f$ is an immersion.
\end{proof}

Put another way, the above proof shows that
an immersion $f\colon(\Lambda,\mathscr{L})\to(\Gamma,\mathscr{G})$
sends immersed paths in $(\Lambda,\mathscr{L})$ to immersed paths in 
$(\Gamma,\mathscr{G})$.

\begin{lem}[Corollary 4.6 of \cite{Bass}]
	Let $f\colon (\Lambda,\mathscr{L},p) \to (\Gamma,\mathscr{G},q)$
	be a morphism. The corresponding morphism of Bass--Serre trees
	$\tilde f\colon \tilde\Lambda \to \tilde\Gamma$ is an embedding
	if and only if $f$ is an immersion, and is an isomorphism if and only if
	$f$ is a covering map. \hfill\qedsymbol
\end{lem}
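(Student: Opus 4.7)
The plan is to reduce both biconditionals to local conditions at individual vertices of the Bass--Serre trees, using the explicit description of stars from \Cref{starsofverts}.

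First I would establish a local description of $\tilde f$. For each vertex $\tilde v \in \tilde\Lambda$ over $v \in \Lambda$, the star $\st(\tilde v)$ is identified with $\coprod_{e \in \st(v)} \mathscr{L}_v/\iota_e(\mathscr{L}_e)$ via $\pi_\Lambda$, and similarly $\st(\tilde f(\tilde v))$ sits over $f(v)$ via $\pi_\Gamma$. Unfolding the definition of $\tilde f$ from \Cref{diagramfill1} together with the recipe for $\delta_{\tilde e}$ in \Cref{localmap}, the restriction $\tilde f|_{\st(\tilde v)}$ corresponds precisely to $f_{\st(v)}$, namely $([g],e)\mapsto([f_v(g)\delta_e], f(e))$. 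The check amounts to lifting $f$ applied to the edge path $g e$ in $(\Lambda,\mathscr{L})$; by pseudonaturality this becomes $f_v(g)\delta_{\bar e} f(e)\delta_e^{-1}$ in $(\Gamma,\mathscr{G})$, whose terminal segment records exactly the coset and edge on the right-hand side.

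With this local description, the $(\Leftarrow)$ direction of the embedding statement is immediate on each star: injectivity of every $f_{\st(v)}$ says $\tilde f$ is locally injective, while condition (i) of immersion forces $f$, and hence $\tilde f$, to avoid collapsing edges. To promote this to a global embedding I would invoke the tree structure: any two distinct points of $\tilde\Lambda$ are joined by a unique tight edge path, and local injectivity together with the no-collapse condition ensures that its image in $\tilde\Gamma$ remains tight; since $\tilde\Gamma$ contains no tight loops, the image is non-constant, proving injectivity on vertices. Injectivity on edges then follows because two edges with equal image would share endpoints (by vertex injectivity) and hence lie in a common star. Finally the hypothesis that each $f_v$ and $f_e$ is a monomorphism is exactly what ensures that, via the $f_\sharp$-equivariance of $\tilde f$, each vertex stabilizer $\stab_{\pi_1(\Lambda,\mathscr{L})}(\tilde v)\cong\mathscr{L}_v$ embeds into $\stab_{\pi_1(\Gamma,\mathscr{G})}(\tilde f(\tilde v))\cong\mathscr{G}_{f(v)}$. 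The $(\Rightarrow)$ direction reads the same local conditions off an embedded $\tilde f$.

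The isomorphism statement follows the same pattern with ``injective'' replaced by ``bijective.'' If $f$ is a covering map, the local identification gives that $\tilde f$ is a covering morphism of trees, which must be an isomorphism since $\tilde\Gamma$ is simply connected; conversely an isomorphism $\tilde f$ is in particular an embedding, so $f$ is an immersion, and bijectivity on stars together with surjectivity on graphs translates back to the covering conditions on $f$. The main obstacle is the bookkeeping in the local description: aligning the $\delta$-elements and cosets requires care because the distinguished fundamental domains in $\tilde\Lambda$ and $\tilde\Gamma$ need not be compatible under $\tilde f$, introducing extra translations by vertex group elements that must be absorbed into coset representatives. Once the local matching of $\tilde f|_{\st(\tilde v)}$ with $f_{\st(v)}$ is firmly in place, everything else is standard combinatorial topology of trees.
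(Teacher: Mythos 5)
The paper offers no argument here to compare against; the lemma is stated with a citation to Bass and closed by $\qedsymbol$. Your local identification of $\tilde f$ restricted to $\st(\tilde v)$ with $f_{\st(v)}$ is correct --- it is exactly what the construction of $\tilde f$ records together with the star maps introduced for morphisms and the Bass--Serre tree --- and the promotion of local injectivity to global injectivity via the absence of tight loops in $\tilde\Gamma$ is standard. Both $(\Leftarrow)$ directions are therefore sound.

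The $(\Rightarrow)$ direction, which you dispatch in a single clause (``reads the same local conditions off an embedded $\tilde f$''), contains a real gap. The local description delivers condition (ii) of an immersion (injectivity of each $f_{\st(v)}$) and the no-collapse condition, but not condition (i), that each $f_v$ and $f_e$ is a monomorphism. Your appeal to stabilizers embedding via $f_\sharp$-equivariance does not actually follow from $\tilde f$ being injective as a map of trees: if $g \in \stab(\tilde v)$ has $f_\sharp(g) = 1$, then equivariance and injectivity of $\tilde f$ only force $g$ to fix $\tilde\Lambda$ pointwise, i.e.\ $g$ lies in the kernel of the action, which the paper has already shown can be nontrivial. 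Concretely, take $\Lambda$ to be a single vertex with one loop, $\mathscr{L}_v = \mathscr{L}_e = \mathbb{Z}/2\mathbb{Z}$ with identity edge inclusions (so $\pi_1(\Lambda,\mathscr{L}) \cong \mathbb{Z}/2\mathbb{Z} \times \mathbb{Z}$ and $\tilde\Lambda$ is a line on which $\mathbb{Z}/2\mathbb{Z}$ acts trivially), take $\Gamma$ to be the same graph with trivial groups, and let $f$ be the identity on the graph with the trivial maps on vertex and edge groups. Then $\tilde f$ is an isomorphism of lines, yet $f_v\colon \mathbb{Z}/2\mathbb{Z} \to 1$ is not injective, so $f$ is not an immersion. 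The forward direction thus needs the action on $\tilde\Lambda$ to be effective --- a hypothesis that neither the lemma nor your write-up makes explicit --- and a complete proof must either supply that hypothesis or explain why condition (i) should be dropped in this setting.
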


We saw in the previous section that the projection
$\pi\colon T\to (\Gamma,\mathscr{G})$ from the Bass--Serre
tree $T$ is a covering map.  
Let $H$ be a subgroup of $\pi_1(\Gamma,\mathscr{G},p)$. 
The action of $H$ on $T$ gives $H\backslash T$ 
the structure of a graph of groups,
with covering map $\rho\colon T \to H\backslash T$.

\begin{cor}
	With notation as above, the map $\rho$ fits in
	a commutative diagram of covering maps of graphs of groups
	as below
	\[\begin{tikzcd}[row sep = small,column sep = small]
		T\ar[dd,"\pi"]\ar[dr,"\rho"] & \\
		& H\backslash T\ar[dl,"r"] \\
		(\Gamma,\mathscr{G}).
	\end{tikzcd}\]
\end{cor}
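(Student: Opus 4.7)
The graph of groups structure on $H\backslash T$ and the fact that $\rho$ is a covering map are immediate from \Cref{inducedgraphofgroups}, \Cref{BassSerreThm}, and the discussion at the start of \Cref{localmap}, applied with $H$ in place of $\pi_1$: since $H$ inherits an action on $T$ without inversions in edges, the quotient carries an induced graph of groups structure, the Bass--Serre tree of which is canonically $T$, and $\rho$ is (equivalent to) the natural projection from a Bass--Serre tree, which was shown to be a covering map. So the real work is to produce $r$ and verify commutativity.

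To construct $r$, I would apply \Cref{diagramfill2} to the inclusion $\iota\colon H \hookrightarrow \pi_1(\Gamma,\mathscr{G},p)$ together with the identity map $\mathrm{id}\colon T \to T$, viewed as an $\iota$-equivariant morphism from the Bass--Serre tree of $H\backslash T$ to the Bass--Serre tree of $(\Gamma,\mathscr{G})$. (Equivariance is exactly the statement that the $H$-action on the source is the restriction of the $\pi_1$-action on the target, which is built into the setup.) The proposition produces a morphism $r\colon H\backslash T \to (\Gamma,\mathscr{G})$ whose induced homomorphism on fundamental groups is $\iota$ and whose induced morphism of Bass--Serre trees is $\tilde r = \mathrm{id}_T$. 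Since $\mathrm{id}_T$ is an isomorphism of trees, the Lemma paraphrasing Corollary 4.6 of \cite{Bass} immediately gives that $r$ is a covering map.

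For commutativity, the composition $r\rho\colon T \to (\Gamma,\mathscr{G})$ is a morphism whose induced map on Bass--Serre trees is $\mathrm{id}_T = \tilde \pi$ and whose induced homomorphism on the (trivial) fundamental group of $T$ is necessarily the trivial map, matching that of $\pi$. With compatible fundamental-domain choices—a $\pi_1$-fundamental domain $F_{\pi_1} \subset T$ chosen inside an $H$-fundamental domain $F_H$, with the image of $F_{\pi_1}$ in $H\backslash T$ used to set up $r$—the conjugating data $\delta_e$ agree, so $r\rho = \pi$ on the nose; otherwise they differ only by the equivalence of \Cref{equivalence}. The only mild obstacle in the whole argument is this bookkeeping about fundamental domains and basepoints: diagramfill2 pins down $r$ only once such choices are fixed, so one must choose once and for all and propagate the choice consistently through the definitions of $\pi$, $\rho$, and $r$. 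Beyond that, the corollary is essentially a direct assembly of the tools developed in \Cref{localmap}.
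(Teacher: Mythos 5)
Your proposal is correct and follows the same route as the paper: apply \Cref{diagramfill2} to the inclusion $H \hookrightarrow \pi_1(\Gamma,\mathscr{G},p)$ together with $\mathrm{id}_T\colon T \to T$ to produce $r$, with the commuting square delivered directly by that proposition's conclusion, and covering-ness of $r$ following from the Bass--Serre tree isomorphism criterion. Your closing paragraph re-verifying commutativity via fundamental-domain bookkeeping is slightly redundant—\Cref{diagramfill2} already guarantees $r\rho = \pi$—but the reasoning is sound, and you usefully spell out why $r$ is a covering map, which the paper's one-line proof leaves implicit.
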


\begin{proof} Apply \Cref{diagramfill2} to the following diagram
	\[\begin{tikzcd}
		T \ar[r,equals] \ar[d,"\rho"] & T \ar[d,"\pi"] \\
		H\backslash T \ar[r,dashed,"r"] & (\Gamma,\mathscr{G}).
	\end{tikzcd}\]
	
	\vspace{-2em}\qedhere
\end{proof}

\begin{prop} Suppose $f\colon (\Lambda,\mathscr{L},p) \to (\Gamma,\mathscr{G},q)$
	is a morphism of graphs of groups and that 
	$\rho\colon (\bar\Gamma,\bar{\mathscr{G}},\bar q) \to (\Gamma,\mathscr{G},q)$
	is a covering map. There exists a morphism $\bar f$ making
	the following diagram commute
	\[\begin{tikzcd}
		& (\bar{\Gamma},\bar{\mathscr{G}},\bar q) \ar[d,"\rho"] \\
		(\Lambda,\mathscr{L},p) \ar[r,"f"] \ar[ur,dashed,"\bar f"]
		& (\Gamma,\mathscr{G},q)
	\end{tikzcd}\]
	if and only if $f_\sharp(\pi_1(\Lambda,\mathscr{L},p)) \le 
	\rho_\sharp(\pi_1(\bar\Gamma,\bar{\mathscr{G}},\bar q))$.
\end{prop}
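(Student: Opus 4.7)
If a morphism $\bar f$ with $\rho\circ\bar f = f$ exists, then functoriality of the fundamental group construction gives $f_\sharp = \rho_\sharp\circ\bar f_\sharp$, so $f_\sharp(\pi_1(\Lambda,\mathscr{L},p)) \le \rho_\sharp(\pi_1(\bar\Gamma,\bar{\mathscr{G}},\bar q))$.

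\textbf{Sufficiency.} The plan is to build $\bar f$ by passing to the Bass--Serre trees via the correspondence of \Cref{diagramfill1} and \Cref{diagramfill2}. First, apply \Cref{diagramfill1} to both $f$ and $\rho$ to obtain basepoint-preserving equivariant morphisms of trees $\tilde f\colon\tilde\Lambda\to\tilde\Gamma$ and $\tilde\rho\colon\tilde{\bar\Gamma}\to\tilde\Gamma$. By the preceding lemma (Corollary 4.6 of \cite{Bass}), since $\rho$ is a covering map $\tilde\rho$ is an isomorphism of trees; in particular $\rho_\sharp$ is a monomorphism. The hypothesis that $f_\sharp(\pi_1(\Lambda,\mathscr{L},p))\le \rho_\sharp(\pi_1(\bar\Gamma,\bar{\mathscr{G}},\bar q))$ then allows us to define a homomorphism
\[
\bar f_\sharp := \rho_\sharp^{-1}\circ f_\sharp\colon \pi_1(\Lambda,\mathscr{L},p)\to\pi_1(\bar\Gamma,\bar{\mathscr{G}},\bar q),
\]
and a basepoint-preserving morphism of trees $\tilde{\bar f}:=\tilde\rho^{-1}\circ\tilde f\colon\tilde\Lambda\to\tilde{\bar\Gamma}$. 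A direct calculation using the $\rho_\sharp$-equivariance of $\tilde\rho$ and the $f_\sharp$-equivariance of $\tilde f$ shows that $\tilde{\bar f}$ is $\bar f_\sharp$-equivariant. Now apply \Cref{diagramfill2} to the pair $(\bar f_\sharp,\tilde{\bar f})$ to produce a morphism of graphs of groups $\bar f\colon(\Lambda,\mathscr{L},p)\to(\bar\Gamma,\bar{\mathscr{G}},\bar q)$ inducing them.

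It remains to check that $\rho\circ\bar f$ equals $f$, not merely up to equivalence. Both morphisms induce the same homomorphism $f_\sharp$ on fundamental groups and the same tree morphism $\tilde f=\tilde\rho\circ\tilde{\bar f}$, so by the discussion of \Cref{equivalence} they are equivalent. The main delicate point---and I expect this to be the only real obstacle---will be bookkeeping the conjugating elements $\delta_e$ so that the diagram commutes on the nose. The cleanest way is to make the choices of fundamental domains for the actions on $\tilde\Lambda$, $\tilde{\bar\Gamma}$ and $\tilde\Gamma$ compatible, so that the fundamental domain for $\tilde\Gamma$ is the $\tilde\rho$-image of the fundamental domain for $\tilde{\bar\Gamma}$; with this choice the construction of $\bar f$ in the proof of \Cref{diagramfill2} produces the $\delta_e$ required to make $\rho\circ\bar f$ agree with $f$ edge-by-edge, rather than just up to inner automorphism of target vertex groups.
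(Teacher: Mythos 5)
Your argument is essentially the same as the paper's: both reduce to the Bass--Serre tree level, observe that (after the identification provided by the covering-map lemma / the $H\backslash T$ corollary) $\tilde\Gamma$ serves as the Bass--Serre tree of $(\bar\Gamma,\bar{\mathscr{G}},\bar q)$, and then invoke \Cref{diagramfill2} to produce $\bar f$. Your closing paragraph flagging that $\rho\circ\bar f$ should equal $f$ on the nose --- and handling it by making the fundamental-domain choices compatible so that the $\delta_e$ bookkeeping works out --- is a point the paper leaves implicit in the construction of \Cref{diagramfill2}, and is a sensible thing to make explicit.
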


\begin{proof}
	That the condition is necessary is obvious. For sufficiency observe
	that the previous corollary implies that \Cref{diagramfill2} 
	applies to the diagram
	\[\begin{tikzcd}
		(\tilde\Lambda,\tilde p) \ar[r,"\tilde f"] \ar[d,"\pi_\Lambda"]
		& (\tilde\Gamma,\tilde q) \ar[d,"\pi_{\bar{\Gamma}}"] \\
		(\Lambda,\mathscr{L},p) \ar[r,dashed,"\bar f"] & 
		(\bar\Gamma,\bar{\mathscr{G}},q).
	\end{tikzcd}\]

	\vspace{-2em}\qedhere
\end{proof}

The proposition immediately implies the following corollary.
\begin{cor}\label{decktransformation}
	If $\rho_1\colon (\Lambda_1,\mathscr{L}_1,p_1) \to (\Gamma,\mathscr{G},q)$
	and $\rho_2\colon (\Lambda_2,\mathscr{L}_2,p_2) \to (\Gamma,\mathscr{G},q)$
	are covering maps, and 
	$(\rho_1)_\sharp(\pi_1(\Lambda_1,\mathscr{L}_1,p_1)) =
	(\rho_2)_\sharp(\pi_1(\Lambda_2,\mathscr{L}_2,p_2))$,
	then there is an isomorphism
	$f\colon (\Lambda_1,\mathscr{L}_1,p_1) \to 
	(\Lambda_2,\mathscr{L}_2,p_2)$
	such that $\rho_2f = \rho_1$.\hfill\qedsymbol
\end{cor}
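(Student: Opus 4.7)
The plan is to apply the preceding proposition twice, in opposite directions, to construct candidate mutually inverse morphisms $f$ and $g$, and then to invoke the preceding lemma after showing that the induced map $\tilde f$ is an isomorphism of Bass--Serre trees.

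First, I would note that the hypothesis $(\rho_1)_\sharp(\pi_1(\Lambda_1,\mathscr{L}_1,p_1)) = (\rho_2)_\sharp(\pi_1(\Lambda_2,\mathscr{L}_2,p_2))$ immediately yields the containment condition of the preceding proposition in both directions. Applying the proposition first with $\rho_1$ as the morphism to be lifted through $\rho_2$, and then with the roles of $\rho_1$ and $\rho_2$ reversed, produces morphisms
\[
f\colon (\Lambda_1,\mathscr{L}_1,p_1) \to (\Lambda_2,\mathscr{L}_2,p_2), \qquad g\colon (\Lambda_2,\mathscr{L}_2,p_2) \to (\Lambda_1,\mathscr{L}_1,p_1)
\]
satisfying $\rho_2 f = \rho_1$ and $\rho_1 g = \rho_2$.

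The key step is to show that $\tilde f$ is an isomorphism of Bass--Serre trees. From $\rho_2 (fg) = \rho_1 g = \rho_2$ and functoriality of the Bass--Serre lift (an immediate consequence of uniqueness in \Cref{diagramfill1}, since $\tilde f \tilde g$ makes the same commuting square as $\widetilde{fg}$), one obtains $\tilde\rho_2 \circ \widetilde{fg} = \tilde\rho_2$. Because $\rho_2$ is a covering map, the preceding lemma makes $\tilde\rho_2$ an isomorphism of trees, so cancelling gives $\widetilde{fg} = \tilde f \tilde g = \mathrm{id}_{\tilde\Lambda_2}$. A symmetric argument produces $\tilde g \tilde f = \mathrm{id}_{\tilde\Lambda_1}$, exhibiting $\tilde f$ as a tree isomorphism with inverse $\tilde g$.

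Invoking the preceding lemma once more, since $\tilde f$ is a tree isomorphism $f$ is a covering map, and I would argue it is in fact an isomorphism of graphs of groups: its induced homomorphism $f_\sharp$ is a bijection with two-sided inverse $g_\sharp$, and the equivariant tree isomorphism descends at the quotient level to a graph bijection whose induced maps on vertex and edge groups are bijections between the corresponding stabilizers (they fit into two-sided inverse pairs with those of $g$). The main subtlety I anticipate is this final descent: confirming that an equivariant tree isomorphism automatically upgrades the covering-map conclusion into full isomorphisms on each vertex and edge group, which amounts to unpacking the correspondence of \Cref{BassSerreThm} and the orbit--stabilizer bookkeeping from the construction of the induced graph-of-groups structure.
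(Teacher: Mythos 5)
Your proof is correct and is essentially the argument the paper intends: the corollary is recorded as an immediate consequence of the preceding proposition, obtained by lifting in both directions and checking that the lifts are mutually inverse. The subtlety you flag at the end---upgrading a covering map whose tree lift $\tilde f$ is an isomorphism to an honest isomorphism of graphs of groups---is genuine but closes out exactly along the lines you indicate: a surjective $f_\sharp$ must carry $\stab(\tilde v)$ onto $\stab(\tilde f(\tilde v))$ once $\tilde f$ is injective (if $f_\sharp(h)$ fixes $\tilde f(\tilde v)$ then $\tilde f(h.\tilde v) = \tilde f(\tilde v)$ forces $h.\tilde v = \tilde v$), so the vertex and edge group homomorphisms produced by \Cref{diagramfill2} are surjective as well as injective.
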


In fact, this observation proves the following classification of 
covering spaces.

\begin{thm}\label{Galoiscorrespondence}
	There is a Galois correspondence between
	(connected) pointed covering maps
	$\rho\colon (\Lambda,\mathscr{L},q)\to (\Gamma,\mathscr{G},p)$
	and subgroups $H\le\pi_1(\Gamma,\mathscr{G},p)$.
	The correspondence sends a covering map $\rho$
	to the subgroup $\rho_\sharp(\pi_1(\Lambda,\mathscr{L},q))$
	and sends a subgroup $H$ to the covering map
	$r\colon (H\backslash T,q)\to(\Gamma,\mathscr{G},p)$
	constructed above, where $q$
	is the image of $\tilde p$ under the covering projection
	$T \to H\backslash T$.\hfill\qedsymbol
\end{thm}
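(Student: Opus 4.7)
The plan is to verify that the two assignments described in the theorem are mutually inverse. Both assignments make sense because of what has already been established: a covering map is an immersion, and hence induces an injection on fundamental groups, so $\rho\mapsto \rho_\sharp(\pi_1(\Lambda,\mathscr{L},q))$ is well-defined. In the other direction, $H\le\pi_1(\Gamma,\mathscr{G},p)$ acts on $T$ without inversions (by \Cref{action}), so $H\backslash T$ inherits a graph of groups structure as in \Cref{inducedgraphofgroups}, and the preceding corollary supplies the covering map $r\colon (H\backslash T,q)\to(\Gamma,\mathscr{G},p)$, where $q$ is the image of $\tilde p$.

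For one composition, starting with a subgroup $H$, I claim that $r_\sharp(\pi_1(H\backslash T,q)) = H$. The action of $H$ on $T$ is without inversions and induces the graph of groups structure on $H\backslash T$; by the uniqueness part of \Cref{BassSerreThm}, there is a canonical isomorphism $H\cong \pi_1(H\backslash T,q)$, and under this identification $T$ is equivariantly the Bass–Serre tree of $H\backslash T$. The morphism $r$ lifts to the identity $T\to T$ in the sense of \Cref{diagramfill1}, and by $r_\sharp$-equivariance this identification carries $\pi_1(H\backslash T,q)$ onto $H\le \pi_1(\Gamma,\mathscr{G},p)$. In other words, the two different fundamental groups that both act on $T$ with quotient $H\backslash T$ must agree, and the induced maps into $\pi_1(\Gamma,\mathscr{G},p)$ must agree as well.

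For the other composition, suppose $\rho\colon(\Lambda,\mathscr{L},q)\to(\Gamma,\mathscr{G},p)$ is a covering map and set $H = \rho_\sharp(\pi_1(\Lambda,\mathscr{L},q))$. Form the covering $r\colon (H\backslash T,q')\to(\Gamma,\mathscr{G},p)$. By the previous paragraph, $r_\sharp(\pi_1(H\backslash T,q')) = H = \rho_\sharp(\pi_1(\Lambda,\mathscr{L},q))$, so \Cref{decktransformation} furnishes a pointed isomorphism of coverings between $(H\backslash T,q')$ and $(\Lambda,\mathscr{L},q)$ over $(\Gamma,\mathscr{G},p)$. This establishes the bijection on isomorphism classes.

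The main obstacle is the identification $\pi_1(H\backslash T,q)\cong H$ \emph{as subgroups} of $\pi_1(\Gamma,\mathscr{G},p)$ via $r_\sharp$, which is where the uniqueness clause of \Cref{BassSerreThm} is doing the real work; once this is in hand, the rest is essentially an application of \Cref{decktransformation}.
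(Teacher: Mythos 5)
Your proposal is correct and takes essentially the same route the paper intends: the theorem is stated with a \qedsymbol and no separate proof because it is meant to follow directly from \Cref{decktransformation} and the preceding corollary constructing $r\colon H\backslash T \to (\Gamma,\mathscr{G})$, and your argument fills in exactly that reasoning. You also correctly put your finger on the one nontrivial point — that $r_\sharp$ must be chosen so that $r_\sharp(\pi_1(H\backslash T,q))=H$ on the nose, which is supplied by the uniqueness clause of \Cref{BassSerreThm} (identifying $\pi_1(H\backslash T,q)$ with $H$ acting on $T$) together with the fact that $r_\sharp$ is part of the \emph{input} to \Cref{diagramfill2}, not its output, so one may take it to be that identification followed by the inclusion $H\hookrightarrow\pi_1(\Gamma,\mathscr{G},p)$.
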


\chapter{Folding}\label{foldingchapter}

In the seminal paper \cite{Stallings}, Stallings introduced the notion of
\emph{folding} morphisms of graphs 
and used it to give conceptually powerful new proofs
of many classical results about free groups.
He later observed \cite{StallingsGTrees} that 
the result of equivariantly folding a $G$-tree is again a $G$-tree,
so the theory extends to morphisms of $G$-trees.
Extending Dunwoody's work on accessibility, 
Bestvina and Feighn \cite{BestvinaFeighnGTrees} analyzed the effects of folding
a $G$-tree on the quotient graph of groups to bound the complexity of
small $G$-trees when $G$ is finitely presented. 
Recall that a group is \emph{small} if it does not contain a free 
subgroup of rank two. A $G$-tree is \emph{small} if each edge stabilizer is small.
Accessibility of finitely presented groups is equivalent to a finite
bound (depending  on $G$) for the complexity of
$G$-trees with \emph{finite} edge  stabilizers.
Dunwoody \cite{Dunwoody} extended folding to equivariant morphisms of trees
to show that finitely generated inaccessible groups contain infinite
torsion subgroups. Kapovich, Weidmann and 
Miasnikov \cite{KapovichWeidmannMiasnikov} used foldings of graphs of groups
to study the membership problem.

\paragraph{The idea of Stallings folding.}
Here is the setup. Suppose $G$ is the fundamental group of a graph $\Gamma$
(based at a vertex $p \in \Gamma$).
By \Cref{Galoiscorrespondence},
a subgroup $H \le G$ corresponds to a covering map
$\Gamma_H \to \Gamma$. Even if $H$ is finitely generated, if it is not of
finite index in $G$, the graph $\Gamma_H$ will be infinite.
In this case $\Gamma_H$ contains a compact \emph{core} $K_H$
where the fundamental group is concentrated, and the complement of $K_H$
in $\Gamma_H$ is a forest. The restriction of the covering map
$\Gamma_H \to \Gamma$ to $K_H$ is no longer a covering map, but
it remains an immersion, and the image in $\pi_1(\Gamma)$ is $H$.

If one knew how to describe the core $K_H$,
a whole host of useful information about $H$ would follow:
the rank of $H$,
a nice free basis for $H$, whether $H$ has finite index in $G$,
and so forth. If on the other hand one were 
just given a generating set $S$ for $H$, recovering this information 
purely algebraically is not straightforward. 

On the other hand, $S$ \emph{does}
give a representation of $H$ as a morphism of graphs with codomain $\Gamma$:
for each $s \in S$, represent $s$ by an edge path loop $\gamma_s$ in $\Gamma$ 
based at $p$. This yields a map from the circle $\gamma_s\colon S^1 \to \Gamma$.
Subdividing the circle yields a basepointed morphism of graphs.
Taking the wedge sum at the basepoint $\ast$ 
yields a graph $\Lambda_H$ and a morphism 
$f\colon (\Lambda_H,\ast) \to (\Gamma,p)$ with 
$f_\sharp(\pi_1(\Lambda_H,\ast)) = H$.
Since $H$ is finitely generated, $\Lambda_H$ is a finite graph.

The main insight of Stallings is that $f$ may be promoted to an immersion
in a finite and algorithmic fashion as we now explain.
If $f$ is not already an immersion, then it identifies a pair of edges
incident to a common vertex. We say $f$ \emph{folds} these edges,
and $f$ factors through the quotient map $\Lambda_H \to \Lambda'$
which identifies them. The quotient map is called a \emph{fold.}
If the resulting morphism $f'\colon\Lambda' \to \Gamma$ 
is not an immersion, we continue to fold.
At each step, the number of edges in the graph decreases,
so at some finite stage, the resulting morphism $\bar f\colon\bar\Lambda \to \Gamma$ 
must be an immersion---in fact, 
it is not hard to see that the core of $\bar\Lambda$ 
and $K_H$ are isomorphic!

In principle, if $H$ is not finitely generated (or more precisely, if the set $S$
is not finite), nothing untoward happens:
one just folds ``forever,'' and passing to the direct limit yields an immersion.
Although the process is no longer algorithmic in any strict sense, 
the constructive nature of the process remains valuable in applications.

Our interest in folding will be primarily to study automorphisms
of groups acting on trees.
We take up this study at the end of the chapter.

\section{Folding Bass--Serre Trees}
Let us begin making the foregoing discussion precise. Call a morphism 
$f\colon (\Lambda,\mathscr{L}) \to (\Gamma,\mathscr{G})$ of graphs of groups
\emph{Stallings} if it 
\begin{enumerate*}[label=(\roman*)]
	\item does not collapse edges and \item each group homomorphism
		$f_v\colon\mathscr{L}_v \to \mathscr{G}_v$ and
		$f_e\colon\mathscr{L}_e \to \mathscr{G}_e$
		is a monomorphism.
\end{enumerate*}
By \Cref{diagramfill1}, once we fix a basepoint, there is an
equivariant morphism $\tilde f\colon \tilde\Lambda \to \tilde\Gamma$ 
of Bass--Serre trees lifting $f$. If $f$ is not an immersion,
neither is $\tilde f$, so there is some vertex $\tilde v \in \tilde \Lambda$ 
such that $\tilde f_{\st(\tilde v)}$ is not injective, i.e.
there are distinct edges $\tilde e_1$ and $\tilde e_2$ 
with $\tau(\tilde e_1) = \tau(\tilde e_2) = \tilde v$, and 
$\tilde f(\tilde e_1) = \tilde f(\tilde e_2')$.

Define an equivalence relation on the $\tilde\Lambda$ as follows:
\begin{enumerate}
	\item $\tilde e_1 \sim \tilde e_2$
	and $\tau(\bar{\tilde e}_1) \sim \tau(\bar{\tilde e}_2)$.
\item Extend item 1. equivariantly: 
	if $x \sim y$ and $g \in \pi_1(\Lambda,\mathscr{L})$,
		then $g.x \sim g.y$.
\end{enumerate}
Let $\tilde \Lambda/_{ [\tilde e_1 = \tilde e_2] }$ be the quotient
of $\tilde\Lambda$ by this equivalence relation.
Stallings \cite[Section 3]{Stallings} notes that this is an example of 
a \emph{pushout} construction.
An observation of Chiswell~\cite[Theorem 4]{StallingsGTrees} says that
$\tilde\Lambda/_{ [\tilde e_1 = \tilde e_2] }$ 
is a tree, $\pi_1(\Lambda,\mathscr{L})$ 
acts without inversions, and the morphism $\tilde f$ factors as below
\[\begin{tikzcd}[row sep = tiny]
	\tilde\Lambda \ar[rr,"\tilde f"]\ar[rd] & & \tilde\Gamma. \\
	& \tilde\Lambda/_{ [\tilde e_1 = \tilde e_2] } \ar[ur] & 
\end{tikzcd}\]
The morphism $\tilde\Lambda \to \tilde\Lambda/_{[\tilde e_1 = \tilde e_2]}$ is
called a \emph{fold.} In fact, by \Cref{diagramfill2}, there is
also a factorization of $f$ such that the following diagram commutes
\[\begin{tikzcd}[row sep = tiny]
	\tilde\Lambda \ar[rd]\ar[rr,"\tilde f"]\ar[dd,"\pi_\Lambda"]
	& & \tilde \Gamma \ar[dd,"\pi_\Gamma"]\\
	& \tilde\Lambda/_{[\tilde e_1 = \tilde e_2]} \ar[ur] & \\
	(\Lambda,\mathscr{L}) \ar[rr,near start,"f"]\ar[rd]
	& &  (\Gamma,\mathscr{G}), \\
	& (\Lambda,\mathscr{L})/_{[e_1 = e_2]} \ar[from=uu,crossing over] 
	\ar[ur] & 
\end{tikzcd}\]
where $e_i = \pi_\Lambda(\tilde e_i)$, where we define
\[(\Lambda,\mathscr{L})/_{[e_1 = e_2]}
= \pi_1(\Lambda,\mathscr{L})\backslash\tilde\Lambda/_{[\tilde e_1=\tilde e_2]},\]
and where the morphism $\tilde\Lambda/_{[\tilde e_1 = \tilde e_2]}
\to (\Lambda,\mathscr{L})/_{[e_1=e_2]}$
is the covering projection.

Bestvina and Feighn analyze the combinatorial possibilities
for the resulting morphism 
$(\Lambda,\mathscr{L})\to (\Lambda,\mathscr{L})/_{[e_1 = e_2]}$,
which is Stallings, and which we will also call a fold. 
We discuss these possibilities below.

Ideally, one would like to say that if
$f\colon(\Lambda,\mathscr{L}) \to (\Gamma,\mathscr{G})$
is a Stallings morphism and $\Lambda$ is a finite graph,
then inductively $f$ factors as a finite sequence of folds followed by
an immersion. Unfortunately, this is just not true;
here is the problem. In the notation above,
let $\tilde v_1$ and $\tilde v_2$ denote
the initial vertices of $\tilde e_1$ and $\tilde e_2$ respectively,
and let $\tilde w$ be their common image in 
$\tilde\Lambda/_{[\tilde e_1=\tilde e_2]}$.
Note that 
\[ \langle \stab(\tilde v_1),\stab(\tilde v_2)\rangle \le \stab(\tilde w), \]
but that the left-hand subgroup is typically much \emph{larger} than
\[ \langle f_\sharp(\stab(\tilde v_1)),f_\sharp(\stab(\tilde v_2))\rangle
\le \stab(\tilde f(\tilde v_1)).\]
Thus the morphism $(\Lambda,\mathscr{L})/_{[e_1=e_2]}\to (\Gamma,\mathscr{G})$
may not be Stallings. 
On the level of group theory, observe that
$(\Lambda,\mathscr{L})$ and $(\Lambda,\mathscr{L})/_{[e_1=e_2]}$
have the same fundamental group. Thus a necessary condition
for a factorization into folds would be that
$f_\sharp\colon\pi_1(\Lambda,\mathscr{L}) \to  \pi_1(\Gamma,\mathscr{G})$
is injective. This is already too restrictive a condition 
when studying subgroups of free groups, since we begin with only a generating set
and not a free basis.

Dunwoody \cite[Section 2]{Dunwoody}
remedies this problem by introducing 
what he calls \emph{vertex morphisms---}we prefer \emph{vertex collapses---}effectively
replacing $\stab(\tilde w)$ with its image in
$\stab(\tilde f(\tilde v_1))$.
Dunwoody's vertex morphisms have to do with \emph{collapsing.}
We will pause to develop tools for collapsing graphs of groups, and then
return to folding.

\section{Collapsing Graphs of Groups}

There are two kinds of collapsing we shall be interested in, corresponding
to the two ways in which a morphism of graphs of groups can fail to be Stallings.
The former is collapsing of \emph{subgraphs.} In terms of the graph of groups
$(\Gamma,\mathscr{G})$, this operation preserves $\pi_1(\Gamma,\mathscr{G})$
while altering the topology of $\Gamma$. 
The latter is collapsing of \emph{stabilizers.}
This operation preserves $\Gamma$, but alters the graph of groups structure 
$\mathscr{G}$,
which has the effect of replacing $\pi_1(\Gamma,\mathscr{G})$ with
its image under $f_\sharp$. 
In each case the quotient morphism $f\colon(\Gamma,\mathscr{G})
\to (\Gamma',\mathscr{G}')$ induces surjective maps $\tilde f$ and $f_\sharp$.

The results of this section are summarized as follows.
\begin{prop}\label{collapsing}
	Every morphism $f\colon(\Lambda,\mathscr{L}) \to (\Gamma,\mathscr{G})$
	factors into a composition 
	of collapse moves followed by a Stallings morphism.
	The collapse moves proceed by
	collapsing each component of each preimage $f^{-1}(v)$
	as $v$ varies over the vertices of $\Gamma$,
	followed by collapsing the resulting morphism 
	to $(\Gamma,\mathscr{G})$ onto its image.  
	\hfill\qedsymbol
\end{prop}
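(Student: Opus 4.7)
The plan is to exploit the fact that a morphism $f$ fails to be Stallings in precisely two orthogonal ways: it may collapse some edges of $\Lambda$, or some of the group homomorphisms $f_v$, $f_e$ may fail to be injective. The two families of collapse moves in the statement address these failure modes one at a time, so I would define each kind of collapse, verify that each yields a valid factorization of $f$, and then observe that composing the two leaves no obstruction to being Stallings.

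First, for each vertex $v \in \Gamma$, I would form the subgraph $f^{-1}(v) \subset \Lambda$ consisting of vertices sent to $v$ together with edges collapsed to $v$. For each connected component $\Lambda_{v,i}$ of $f^{-1}(v)$, collapse $\Lambda_{v,i}$ to a single vertex $\hat v_i$ whose vertex group is $\pi_1(\Lambda_{v,i}, \mathscr{L}|_{\Lambda_{v,i}})$; edges of $\Lambda$ not in any $\Lambda_{v,i}$ survive, and an edge monomorphism $\iota_e$ landing in the old vertex group of $\tau(e) \in \Lambda_{v,i}$ is post-composed with the natural inclusion of that vertex group into the new $\pi_1$ supplied by \Cref{presentation}. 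That the result $(\Lambda', \mathscr{L}')$ is a graph of groups with $\pi_1(\Lambda', \mathscr{L}') \cong \pi_1(\Lambda, \mathscr{L})$ is cleanest tree-theoretically: collapsing every $\pi_1(\Lambda,\mathscr{L})$-translate of each $\Lambda_{v,i}$ in $\tilde\Lambda$ produces a tree on which $\pi_1(\Lambda,\mathscr{L})$ still acts without inversions, and by \Cref{BassSerreThm} and \Cref{inducedgraphofgroups} the quotient inherits exactly the collapsed graph-of-groups structure. Performing these collapses for all $v$ yields a morphism $c_1\colon (\Lambda, \mathscr{L}) \to (\Lambda', \mathscr{L}')$ together with a morphism $f'\colon (\Lambda', \mathscr{L}') \to (\Gamma, \mathscr{G})$ satisfying $f = f' c_1$, and $f'$ collapses no edges by construction.

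Next I would collapse stabilizers in $(\Lambda', \mathscr{L}')$. Define $(\Lambda', \mathscr{L}'')$ by setting $\mathscr{L}''_x = f'_x(\mathscr{L}'_x)$ for each vertex and edge $x$, and define the new edge inclusion by $\iota''_e(f'_e(h)) = f'_{\tau(e)}(\iota_e(h))$. The pseudonaturality condition $f'_{\tau(e)} \circ \iota_e = \ad(\delta_e) \circ \iota_{f(e)} \circ f'_e$, together with the fact that $\iota_{f(e)}$ is a monomorphism, forces $\ker(f'_{\tau(e)} \circ \iota_e) = \ker(f'_e)$, so $\iota''_e$ is well-defined and injective. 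Then $f'$ factors as $f' = f'' c_2$, where $c_2\colon (\Lambda', \mathscr{L}') \to (\Lambda', \mathscr{L}'')$ is the stabilizer collapse and $f''\colon (\Lambda', \mathscr{L}'') \to (\Gamma, \mathscr{G})$ is the inclusion on vertex and edge groups, carrying the same $\delta_e$'s as $f'$. Since $f''$ collapses no edges and has monomorphism vertex and edge group homomorphisms, $f''$ is Stallings, and $f = f'' c_2 c_1$ is the desired factorization.

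The main obstacle I anticipate is the pseudonaturality bookkeeping after the subgraph collapse: when $\Lambda_{v,i}$ is nontrivial, the conjugating elements $\delta_e$ attached to edges meeting the new vertex $\hat v_i$ must be reinterpreted as elements of $\pi_1(\Lambda_{v,i}, \mathscr{L}|_{\Lambda_{v,i}})$ in a manner independent of the many choices of basepoints and connecting edge paths inside each $\Lambda_{v,i}$. Once this is organized, the stabilizer collapse is a routine diagram chase driven by the single pseudonaturality identity displayed above.
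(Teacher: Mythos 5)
Your proposal is correct and matches the paper's approach: both decompose the failure of $f$ to be Stallings into exactly the two operations named in the statement (collapsing the components of the preimages $f^{-1}(v)$ to eliminate collapsed edges, then collapsing the graph-of-groups structure onto its image to make the vertex and edge group maps injective), and both verify the first step tree-theoretically and the second by the pseudonaturality identity. The bookkeeping worry you flag at the end is not actually a gap: once you know $\tilde f$ factors through the quotient tree, the conjugating elements for $f'$ are supplied automatically by \Cref{diagramfill2} rather than needing to be reconstituted by hand from those of $f$, which is how the paper sidesteps the issue.
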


We remark that if $\Lambda$ is not a finite graph,
then the first sequence of collapses need not be finite.
The collapses nonetheless form a directed system,
and one could define a direct limit of these collapses.
We are primarily interested in morphisms 
where $\Lambda$ is finite.

\paragraph{Collapsing subgraphs.} Let $(\Gamma,\mathscr{G})$ be a graph of groups.
A \emph{subgraph of groups} is a pair $(\Lambda,\mathscr{G}|_{\Lambda})$,
where $\Lambda$ is a (nonempty) connected subgraph of $\Gamma$ and 
$\mathscr{G}|_\Lambda$ is the restriction of the graph of groups structure
$\mathscr{G}$ to $\Lambda$. In terms of abstract nonsense, the functor
$\mathscr{G}|_\Lambda$ is the restriction of $\mathscr{G}$ to
the full subcategory of $\Gamma$ determined by $\Lambda$. 
The inclusion $(\Lambda,\mathscr{G}|_\Lambda) \to (\Gamma,\mathscr{G})$
is an embedding. For convenience choose a basepoint in $\Lambda$.
Thus $\pi_1(\Lambda,\mathscr{G}|_\Lambda)$ becomes the subgroup of 
$\pi_1(\Gamma,\mathscr{G})$ consisting of homotopy classes of loops contained in
$(\Lambda,\mathscr{G}|_\Lambda)$. The graph of groups obtained by
\emph{collapsing the subgraph $\Lambda$} is $(\Gamma/\Lambda,\mathscr{G}/\Lambda)$,
where the graph of groups structure $\mathscr{G}/\Lambda$ is obtained from
$\mathscr{G}$ in the following way. For $x$ a vertex or an edge not contained in 
$\Lambda$, set $(\mathscr{G}/\Lambda)_x = \mathscr{G}_x$.
The vertex $\Lambda$ of $\Gamma/\Lambda$ has vertex group
$(\mathscr{G}/\Lambda)_\Lambda = \pi_1(\Lambda,\mathscr{G}|_\Lambda)$.
If an oriented edge $e$ and its terminal vertex $\tau(e)$ both lie outside 
$\Lambda$, the monomorphism $\iota_e$ remains the same in $\mathscr{G}/\Lambda$.
If $\tau(e) \in \Lambda$, then $\iota_e$ becomes the composition
$\begin{tikzcd}\mathscr{G}_e \ar[r,"\iota_e"] & \mathscr{G}_{\tau(e)}
\ar[r,hook] & \pi_1(\Lambda,\mathscr{G}|_\Lambda).\end{tikzcd}$

Alternatively, this process can be seen in the Bass--Serre tree.
By \Cref{diagramfill1}, the inclusion 
$(\Lambda,\mathscr{G}|_\Lambda) \to (\Gamma,\mathscr{G})$
induces an embedding of Bass--Serre trees $\tilde\Lambda \to \tilde\Gamma$.
The image of $\tilde\Lambda$ is the component of the full preimage of 
$\Lambda$ in $\tilde\Gamma$ containing the lift of the basepoint.
Let $\tilde\Gamma/\tilde\Lambda$ be the graph obtained by collapsing
each component of the $\pi_1(\Gamma,\mathscr{G})$-orbit of $\tilde\Lambda$
to a vertex. Thus $\tilde\Gamma/\tilde\Lambda$ naturally inherits
an action of $\pi_1(\Gamma,\mathscr{G})$, and 
$\pi_1(\Lambda,\mathscr{G}|_\Lambda)$ stabilizes the image of $\tilde\Lambda$.

In fact, these two descriptions are equivalent.

\begin{prop} In the notation above, $\pi_1(\Gamma/\Lambda,\mathscr{G}/\Lambda) = 
	\pi_1(\Gamma,\mathscr{G})$, and $\tilde\Gamma/\tilde\Lambda$ is the
	Bass--Serre tree, and the following diagram of covering projections
	and quotient maps described above commutes
	\[\begin{tikzcd}
		\tilde\Gamma \ar[r]\ar[d] & \tilde\Gamma/\tilde\Lambda \ar[d] \\
		(\Gamma,\mathscr{G}) \ar[r] & (\Gamma/\Lambda,\mathscr{G}/\Lambda).
	\end{tikzcd}\]
\end{prop}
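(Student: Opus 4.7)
The plan is to apply the uniqueness half of \Cref{BassSerreThm}: if we can exhibit $\tilde\Gamma/\tilde\Lambda$ as a tree on which $\pi_1(\Gamma,\mathscr{G})$ acts without inversions in edges, with quotient $\Gamma/\Lambda$ and induced graph of groups structure $\mathscr{G}/\Lambda$, then both halves of the proposition follow at once: the uniqueness clause identifies $\pi_1(\Gamma,\mathscr{G})$ with $\pi_1(\Gamma/\Lambda, \mathscr{G}/\Lambda)$, and $\tilde\Gamma/\tilde\Lambda$ with the latter's Bass--Serre tree, in a manner automatically compatible with the covering projections.

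First I would check that $\tilde\Gamma/\tilde\Lambda$ is a tree. By \Cref{diagramfill1} applied to the embedding $(\Lambda,\mathscr{G}|_\Lambda)\hookrightarrow(\Gamma,\mathscr{G})$, the full preimage of $\Lambda$ in $\tilde\Gamma$ is a disjoint union of subtrees, namely the $\pi_1(\Gamma,\mathscr{G})$-translates of $\tilde\Lambda$. Collapsing a disjoint collection of subtrees in a tree yields a tree, and since $\pi_1(\Gamma,\mathscr{G})$ permutes these subtrees, the action descends to $\tilde\Gamma/\tilde\Lambda$. There are no inversions: any edge of $\tilde\Gamma/\tilde\Lambda$ comes from an edge of $\tilde\Gamma$ outside the preimage of $\Lambda$, and inversions there are forbidden by hypothesis. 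Taking the quotient by $\pi_1(\Gamma,\mathscr{G})$ commutes with the collapse, so $\pi_1(\Gamma,\mathscr{G})\backslash(\tilde\Gamma/\tilde\Lambda)\cong \Gamma/\Lambda$ as graphs.

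Next I would verify that the graph of groups induced by this action is $(\Gamma/\Lambda,\mathscr{G}/\Lambda)$. For vertices and edges of $\Gamma/\Lambda$ coming from outside $\Lambda$, the lifts and stabilizers are unchanged from $\tilde\Gamma$, so those vertex and edge groups and monomorphisms agree with $\mathscr{G}$ on the nose. The delicate case is the new vertex $[\tilde\Lambda]$: its stabilizer in $\pi_1(\Gamma,\mathscr{G})$ is the setwise stabilizer of $\tilde\Lambda$ inside $\tilde\Gamma$ (the translates $g.\tilde\Lambda$ being either equal to or disjoint from $\tilde\Lambda$), and this setwise stabilizer is exactly $\pi_1(\Lambda,\mathscr{G}|_\Lambda)$ by \Cref{diagramfill1} applied to the embedding, since the action of $\pi_1(\Lambda,\mathscr{G}|_\Lambda)$ on $\tilde\Lambda$ is its own Bass--Serre action. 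For an edge $e$ outside $\Lambda$ with $\tau(e)\in\Lambda$, the edge-to-vertex monomorphism in the induced structure is the inclusion of the $\tilde\Gamma$-stabilizer of a chosen lift $\tilde e$ into the $\tilde\Gamma$-stabilizer of $\tilde\Lambda$; this factors through the $\tilde\Gamma$-stabilizer of $\tau(\tilde e)$, yielding exactly the composition $\mathscr{G}_e\to\mathscr{G}_{\tau(e)}\hookrightarrow\pi_1(\Lambda,\mathscr{G}|_\Lambda)$ demanded by the definition of $\mathscr{G}/\Lambda$. (A minor bookkeeping point: for edges outside $\Lambda$ with $\tau(e)\notin\Lambda$ but whose other endpoint lies in $\Lambda$, the analogous check requires choosing a lift of the other endpoint and possibly adjusting the conjugating element $g$ used to identify stabilizers of the lift with the actual vertex group, exactly as in \Cref{inducedgraphofgroups}.)

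Having matched all data, the uniqueness clause of \Cref{BassSerreThm} produces a canonical isomorphism $\pi_1(\Gamma,\mathscr{G})\cong\pi_1(\Gamma/\Lambda,\mathscr{G}/\Lambda)$ and identifies $\tilde\Gamma/\tilde\Lambda$ with the Bass--Serre tree of the collapsed graph of groups. Commutativity of the square follows because both routes around send an element of $\tilde\Gamma$ first to its $\pi_1$-orbit and then to the image in the collapse, in either order. The step I expect to take the most care is the identification of the stabilizer of $[\tilde\Lambda]$ with $\pi_1(\Lambda,\mathscr{G}|_\Lambda)$: one has to argue cleanly that the components of the preimage of $\Lambda$ in $\tilde\Gamma$ are genuinely disjoint (so that ``stabilizing the collapsed point'' and ``stabilizing $\tilde\Lambda$ setwise'' coincide), and that the setwise stabilizer is exactly the image of $\pi_1(\Lambda,\mathscr{G}|_\Lambda)$ and not a larger subgroup, which again comes from the embedding statement of \Cref{diagramfill1}.
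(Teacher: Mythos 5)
Your proof is correct and follows the same overall strategy as the paper's—show $\tilde\Gamma/\tilde\Lambda$ is a tree, identify the stabilizer of the collapsed vertex with $\pi_1(\Lambda,\mathscr{G}|_\Lambda)$, and conclude—but there are two worthwhile differences in how you carry it out. For the tree claim, the paper adapts Stallings's homological argument (short exact sequence of reduced chain complexes, the nine lemma) to show $\tilde\Gamma/\tilde\Lambda$ has the homology of a point; you instead simply cite the combinatorial fact that collapsing a disjoint union of subtrees of a tree yields a tree, which is more elementary and arguably cleaner here since the disjointness is already available. You also explicitly check that the induced edge-to-vertex monomorphisms agree with the definition of $\mathscr{G}/\Lambda$ before invoking the uniqueness clause of \Cref{BassSerreThm}; the paper treats that bookkeeping as implicit in ``it follows that $\tilde\Gamma/\tilde\Lambda$ is the Bass--Serre tree.'' On the one point you flag as delicate—that the setwise stabilizer of $\tilde\Lambda$ is exactly $\pi_1(\Lambda,\mathscr{G}|_\Lambda)$—the paper's argument is sharper than your appeal to \Cref{diagramfill1}: it observes that any $g$ preserving $\tilde\Lambda$ admits a representative loop lying entirely in $(\Lambda,\mathscr{G}|_\Lambda)$ (take a path in $\tilde\Lambda$ from $\tilde p$ to $g.\tilde p$ and project; the ambiguity from the ending puzzle piece is absorbed into a vertex group of $\mathscr{G}|_\Lambda$). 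Filling your flagged step with that observation would complete the argument.
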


\begin{proof} Stallings's neat homological proof \cite[Theorem 4]{StallingsGTrees} 
	of Chiswell's result above applies to show that $\tilde\Gamma/\tilde\Lambda$
	is also a tree. The idea is that a tree is a graph with the homology 
	of a point, so there is a short exact sequence of reduced 
	simplicial chain groups for $\tilde\Gamma$ 
	(which are $\pi_1(\Gamma,\mathscr{G})$-modules). 
	The $\pi_1(\Gamma,\mathscr{G})$-modules generated
	by the images of $1$- and $0$-chains of $\tilde\Lambda$ in $\tilde\Gamma$
	are isomorphic and each equal to the corresponding 
	kernel of the chain map induced by the
	quotient $\tilde\Gamma\to\tilde\Gamma/\tilde\Lambda$.
	That $\tilde\Gamma/\tilde\Lambda$ is a tree follows from the nine lemma.

	By definition, an element $g \in \pi_1(\Gamma,\mathscr{G})$ 
	stabilizes the image
	of $\tilde\Lambda$ in $\tilde\Gamma/\tilde\Lambda$ if and only if 
	$g$ preserves $\tilde\Lambda$ in its action on $\tilde \Gamma$.
	But then $g$ has a representative loop entirely contained in
	$(\Lambda,\mathscr{G}|_\Lambda)$, so $g$ belongs to the image of
	$\pi_1(\Lambda,\mathscr{G}|_\Lambda)$. It follows that
	$\tilde\Gamma/\tilde\Lambda$ is the Bass--Serre tree for
	$(\Gamma/\Lambda,\mathscr{G}/\Lambda)$.
\end{proof}

More generally one can collapse a disconnected subgraph of groups
by collapsing each component.

\paragraph{Collapsing stabilizers.}
Suppose $f\colon (\Lambda,\mathscr{L}) \to (\Gamma,\mathscr{G})$ is a morphism
which does not collapse edges.
We can produce a new graph of groups structure on $\Lambda$
by \emph{collapsing $\mathscr{L}$ to its image $f(\mathscr{L})$}
in the following way.
If $x$ is a vertex or an edge of $\Lambda$,
define $f(\mathscr{L})_x = f_x(\mathscr{L}_x) \le \mathscr{G}_{f(x)}$.
Let $e$ be an edge of $\Lambda$ with $\tau(e)=v$.
We define the monomorphism 
$\iota_e\colon f(\mathscr{L})_e \to f(\mathscr{L})_v$
as 
\[\begin{tikzcd} f(\mathscr{L})_e \ar[r,hook]
	& \mathscr{G}_{f(e)} \ar[r,"\iota_{f(e)}"]
	& \mathscr{G}_{f(v)} \ar[r,"\ad(\delta_e)"]
	& \mathscr{G}_{f(v)},
\end{tikzcd}\]
where $\delta_e \in \mathscr{G}_{f(v)}$ is part of the data of $f$.
Observe that the pseudonaturality condition of \Cref{morphisms} guarantees
that the image of the above monomorphism lies in $f(\mathscr{G})_v$.

The morphism $f$ factors as follows
\[\begin{tikzcd}[row sep = tiny]
	(\Lambda,\mathscr{L}) \ar[rr,"f"]\ar[rd,swap,"f_*"]
	& & (\Gamma,\mathscr{G}), \\
	& (\Lambda,f(\mathscr{L})) \ar[ur]
\end{tikzcd}\]
where the morphism
$f_\ast\colon (\Lambda,\mathscr{L}) \to (\Lambda,f(\mathscr{L}))$
is the identity on $\Lambda$
and acts as $f$ on vertex and edge groups.
By definition, 
if $e$ is an edge with $\tau(e) = v$,
we have $\iota_e(f_\ast)_e = (f_\ast)_v\iota_e$,
so the conjugating element $\delta_e$ for $f_\ast$ is trivial.
The morphism $(\Lambda,f(\mathscr{L})) \to (\Gamma,\mathscr{G})$
is Stallings.
It acts as the natural inclusion on vertex and edge groups,
as $f$ on $\Lambda$,
and has the same conjugating elements $\delta_e$ as $f$.
The fundamental group $\pi_1(\Lambda,f(\mathscr{L}))$ 
is a quotient of $\pi_1(\Lambda,\mathscr{L})$.

The Bass--Serre tree $\tilde\Lambda/f$ for $(\Lambda,f(\mathscr{L}))$
is built from the same ``puzzle piece,'' 
in the language of \Cref{construction},
as $\tilde\Lambda$,
only the attaching differs.
Recall that the stabilizer of a vertex $\tilde v\in \tilde\Lambda$
is canonically isomorphic to $\mathscr{L}_v$ and
permutes the set of puzzle pieces containing $\tilde v$.
Every vertex has a preferred puzzle piece
corresponding to the identity element of its stabilizer.
One can give an inductive description of the morphism 
$\tilde f_\ast\colon\tilde\Lambda \to \tilde\Lambda/f$
as follows:
the preferred puzzle piece containing the lift of the basepoint
in $\tilde\Lambda$ 
is sent via an isomorphism to
the preferred puzzle piece in $\tilde\Lambda/f$
containing the lift of the basepoint.
If $\tilde f_\ast(\tilde v)$ is already defined, extend $\tilde f_\ast$
to the set of puzzle pieces containing $\tilde v$  
by sending the preferred puzzle piece $P$ for $\tilde v$
to the preferred puzzle piece for $\tilde f_\ast(\tilde v)$,
and then using the identification of the stabilizer of $\tilde v$
with $\mathscr{L}_v$, 
sending $g.P$ to $(f_\ast)_v(g).\tilde f_\ast(P)$
for $g \in \stab(\tilde v)$.

\paragraph{}
Dunwoody's vertex morphisms,
which we will call \emph{vertex collapses} to avoid  confusion
are a special case 
of collapsing a graph of groups structure onto its image 
that arises when folding.
Let $v$ be a vertex of $\Lambda$, 
and let $K_v$ be a normal subgroup of $\mathscr{L}_v$
such that for each oriented edge $e \in \st(v)$,
$\iota_e(\mathscr{L}_e) \cap K_v = 1$.
We may define a new graph of groups structure $\mathscr{L}/K_v$
on $\Lambda$ by setting  $(\mathscr{L}/K_v)_v = \mathscr{L}_v/K_v$,
and setting $(\mathscr{L}/K_v)_x = \mathscr{L}_x$ otherwise.
By assumption, the morphism 
\[\begin{tikzcd}
	\mathscr{L}_e \ar[r,"\iota_e"] 
	& \mathscr{L}_v \ar[r,two heads] 
	& \mathscr{L}_v/K_v
\end{tikzcd}\]
is a monomorphism for each oriented edge $e$ in $\st(v)$.
The evident morphism $f\colon (\Lambda,\mathscr{L}) \to (\Lambda,\mathscr{L}/K_v)$,
is equal to the result of collapsing $\mathscr{L}$ to its image under $f$.
We call $f$ a \emph{vertex collapse at $v$.}

\section{Folding Graphs of Groups}
We return to our description of folding.
Let $(\Lambda,\mathscr{L})$ be a graph of groups and let
$\tilde\Lambda/_{[\tilde e_1 = \tilde e_2]}$ 
be the result of folding a pair of edges 
$\tilde e_1$ and $\tilde e_2$ in $\tilde\Lambda$.
Let $\tilde v_i$ be the initial vertex of $\tilde e_i$,
and let $e_i$ and $v_i$
be the corresponding edge and vertex of $\Lambda$.
Let $e$ and $v$ be the images of the $e_i$ and $v_i$,
respectively, in $\Lambda/_{[e_1=e_2]}$.
Bestvina and Feighn \cite[Section 2]{BestvinaFeighnGTrees}
distinguish several types of folds
by their effect on $(\Lambda,\mathscr{L})$.
There are three types. 
Bestvina and Feighn distinguish these further into subtypes
based primarily on whether $\tau(e_1)$ is distinct from the $v_i$.
We will not draw such a distinction.

Throughout, let $f\colon (\Lambda,\mathscr{L}) \to (\Gamma,\mathscr{G})$
be a Stallings morphism that is not an immersion at $v$.
Recall that this means that the map
\[ f_{\st(v)}\colon\coprod_{e\in\st(v)} \mathscr{L}_v/\iota_e(\mathscr{L}_e)\times\{e\}
\to \coprod_{e'\in\st(f(v))} \mathscr{G}_{f(v)}/\iota_{e'}(\mathscr{G}_{e'})\times\{e'\}\]
\[([g],e) \mapsto ([f_v(g)\delta_e],f(e))\]
is not injective.

\usetikzlibrary{decorations.pathmorphing, shapes, quotes}
\tikzset{->-/.style = {
		very thick,
		decoration = {
			markings,
			mark = at position .5 with {\arrow{>}}
		},
		postaction = {decorate}
	},
	pt/.style = {
		circle,
		fill = black,
		scale = 0.5,
	},
	-<-/.style = {
		very thick,
		decoration = {
			markings,
			mark = at position .5 with {\arrow{<}}
		},
		postaction = {decorate}
	}
}
\paragraph{Type I.}
In this case, 
$e_1 \ne e_2$,
and $v_1 \ne v_2$.
In other words, 
the orbits of the $\tilde e_i$ and $\tilde v_i$
are distinct.
The stabilizer of the image of $\tilde e_1$
is the subgroup generated 
by the stabilizers of $\tilde e_1$ and $\tilde e_2$,
and similarly for the stabilizer of the image of $\tilde v_1$.
The situation in the quotient graph of groups
is described in \Cref{foldI}.
By $\langle\mathscr{L}_{v_1},\mathscr{L}_{v_2}\rangle$,
for instance, 
we mean the subgroup of $\pi_1(\Lambda,\mathscr{L})$
generated by $\mathscr{L}_{v_1}$ and $\mathscr{L}_{v_2}$,
under their identification with the stabilizers of
$\tilde v_1$ and $\tilde v_2$.
Note that we do not assume 
that $w=\tau(e_1)$ is distinct
from $v_1$ or $v_2$,
even though we have drawn it as distinct in the figure.

We briefly describe the resulting morphism
$\hat f\colon (\Lambda,\mathscr{L})/_{[e_1 = e_2]} \to (\Gamma,\mathscr{G})$.
Certainly as a morphism of graphs, it is clear how to define $\hat f$.
If $x$ is a vertex or edge of $\Lambda/_{[e_1=e_2]}$,
define $\hat f_x = f_x$.
The morphism 
$\hat f_v\colon\langle\mathscr{L}_{v_1},\mathscr{L}_{v_2}\rangle 
\to \mathscr{G}_{\hat f(v)}$
must be the restriction of $f_\sharp$ to
$\langle\mathscr{L}_{v_1},\mathscr{L}_{v_2}\rangle$.
Suppose first that as part of the data of $f$, we have
$\delta_{e_1} = \delta_{e_2}$ and $\delta_{\bar e_1} = \delta_{\bar e_2}$.
Observe that the pseudonaturality condition says that
if $h \in \mathscr{L}_{e_i}$ for $i = 1,2$, we have that
\[ \delta_{\bar e_i}^{-1}f_{v_i}\iota_{\bar e_i}(h)\delta_{\bar e_i} 
\in \iota_{f(e_i)}(\mathscr{G}_{f(e_i)}). \]
So if $\delta_{\bar e_1} = \delta_{\bar e_2}$,
we may use the above to define $\hat f_e$,
and check that the pseudonaturality condition holds with
$\delta_e = \delta_{e_1}$ and $\delta_{\bar e} = \delta_{\bar e_1}$.

If it is not the case that $\delta_{e_1} = \delta_{e_2}$
and $\delta_{\bar e_1} = \delta_{\bar e_2}$,
we alter the situation.
If $\delta_{e_1} \ne \delta_{e_2}$, 
that $f_{\st(v)}$ is not injective implies that
there exist $g \in \mathscr{L}_v$ and $h \in \mathscr{G}_{f(e)}$
such that $f_v(g)\delta_{e_2}\iota_{f(e)}(h) = \delta_{e_1}$.
We may pass from $f$ to an equivalent morphism as in \Cref{equivalence}
so that $\delta'_{e_2} = \delta_{e_2}\iota_{f(e)}(h)$.
Finally, we may twist $e_2$ by $g$ so that $\delta''_{e_2} = \delta_{e_1}$.
If now $\delta_{\bar e_1} \ne \delta_{\bar e_2}$,
observe that at least one of the $v_i$ is distinct from $w$,
so we may pass from $f$ to an equivalent morphism 
so that $\delta_{\bar e_1} = \delta_{\bar e_2}$.
All of these operations are demonstrated in the examples in the following section.

\begin{figure}[ht]
	\begin{center}
		\begin{tikzpicture}[auto]
		\node[pt,label = 180:$\mathscr{L}_w$] (w) at (0,0) {};
		\node[pt,label = {
				[xshift=5pt]:$\mathscr{L}_{v_1}$
			}] (v1) at (2,1) {};
		\node[pt,label = {
				[xshift=5pt,yshift=-2pt]below:$\mathscr{L}_{v_2}$
			}] (v2) at (2,-1) {};
		\draw[->-,swap] (v1) to node {$\mathscr{L}_{e_1}$} (w);
		\draw[->-] (v2) to node {$\mathscr{L}_{e_2}$} (w);
		\draw[->,
			decorate,
			decoration = {
				snake,
				pre length = 10mm,
				post length = 1mm
			}] (2.2,0) -- (4,0);
		\node[pt,label = 105:$\mathscr{L}_w$] (ww) at (5,0) {};
		\node[pt,label = {
				[xshift=5pt]:{$\langle\mathscr{L}_{v_1},\mathscr{L}_{v_2}\rangle$}
			}]
			(vv) at (7.5,0) {};
		\draw[->-] (vv) 
			to node {$\langle\mathscr{L}_{e_1},\mathscr{L}_{e_2}\rangle$}
			(ww);
	\end{tikzpicture}
	\end{center}
	\caption{The effect of a Type I fold.}
	\label{foldI}
\end{figure}

\paragraph{Type II.}
In this case, $e_1 = e_2$, so $v_1 = v_2$. In other words,
there is some $g \in \pi_1(\Lambda,\mathscr{L})$ with
$g.\tilde e_1 = \tilde e_2$.
Thus $g$ belongs to $\mathscr{L}_w$,
where $w = \tau(e_1) = \tau(e_2)$.
After folding,
$g$ will stabilize the image of $\tilde e_1$,
and thus also stabilize 
the image of $\tilde v_1$.
The situation in the quotient graph of groups 
is described in \Cref{foldII}.
Again, we do not assume 
that $w$ is distinct from $v_1$.

More generally, one might allow Type II folds 
to encompass the more general situation where $H$
is a subgroup of $\mathscr{L}_w$
and we fold $\tilde e_1$ with $h.\tilde e_1$ 
for all $h \in H$.
If $H$ happens to be finitely generated,
this is a finite composition of Type II folds in the stricter sense.

\begin{figure}[ht]
	\begin{center}
		\begin{tikzpicture}[auto]
		\node[pt,label = {
				[xshift=-5pt]:$\mathscr{L}_{w}$
			}] (w) at (0,0) {};
		\node[pt,label = {
				[xshift=5pt]:$\mathscr{L}_{v_1}$
			}] (v1) at (2.5,0) {};
		\draw[->-] (v1) to node {$\mathscr{L}_{e_1}$} (w);
		\draw[->,
			decorate,
			decoration = {
				snake,
				pre length = 7mm,
				post length = 1mm
			}] (3.2,0) -- (4.5,0);
		\node[pt,label = {
				[xshift=-5pt]:$\mathscr{L}_w$
			}] (ww) at (5.5,0) {};
		\node[pt,label = {
				[xshift=5pt]:{$\langle\mathscr{L}_{v_1},g\rangle$}
			}]
			(vv) at (8,0) {};
		\draw[->-] (vv) 
			to node {$\langle\mathscr{L}_{e_1},g\rangle$}
			(ww);
	\end{tikzpicture}
	\end{center}
	\caption{The effect of a Type II fold.}
	\label{foldII}
\end{figure}

\paragraph{Type III.}
In this case, $e_1 \ne e_2$, but $v_1 = v_2$.
In other words, there is some $g \in \pi_1(\Lambda,\mathscr{L})$
with $g.\tilde v_1 = \tilde v_2$, 
but $\tilde e_1$ and $\tilde e_2$  
are in distinct orbits.
After folding, $g$ will stabilize the image of $\tilde v_1$.

The factoring of the morphism $f$ is broadly the same as in a Type I fold.
However, since $v_1 = v_2$, we cannot always arrange that
$\delta_{\bar e_1} = \delta_{\bar e_2}$.
So set $\delta_{\bar e} = \delta{\bar e_1}$.
The image of $g$ in $\mathscr{G}_{f(v)}$ 
will be the difference $\delta_{\bar e_1}^{-1}\delta_{\bar e_2}$.

\begin{figure}[ht]
	\begin{center}
		\begin{tikzpicture}[auto]
		\node[pt,label = {
				[xshift=-5pt]:$\mathscr{L}_{w}$
			}] (w) at (0,0) {};
		\node[pt,label = {
				[xshift=5pt]:$\mathscr{L}_{v_1}$
			}] (v1) at (2.5,0) {};
		\draw[->-,bend left] (v1) to node {$\mathscr{L}_{e_1}$} (w);
		\draw[->-,bend right,swap] (v1) to node {$\mathscr{L}_{e_2}$} (w);
		\draw[->,
			decorate,
			decoration = {
				snake,
				pre length = 7mm,
				post length = 1mm
			}] (3.2,0) -- (4.5,0);
		\node[pt,label = {
				[xshift=-5pt]:$\mathscr{L}_w$
			}] (ww) at (5.5,0) {};
		\node[pt,label = {
				[xshift=5pt]:{$\langle\mathscr{L}_{v_1},g\rangle$}
			}]
			(vv) at (8,0) {};
		\draw[->-] (vv) 
			to node {$\langle\mathscr{L}_{e_1},\mathscr{L}_{e_2}\rangle$}
			(ww);
	\end{tikzpicture}
	\end{center}
	\caption{The effect of a Type III fold.}
	\label{foldIII}
\end{figure}

\begin{rk}
	Bestvina and Feighn define an additional type of fold, 
	Type IIIC, 
	which occurs when there exists $g \in \pi_1(\Lambda,\mathscr{L})$
	with $g.\tilde e_1 = \bar{\tilde e}_2$. 
	In this case, 
	$e_1 = e_2$  is a loop,
	and $g$ acts as an inversion of $\tilde\Lambda/_{[\tilde e_1 = \tilde e_2]}$.
	This type of fold does not occur in factorizations
	of a Stallings morphism 
	$f\colon (\Lambda,\mathscr{L}) \to (\Gamma,\mathscr{G})$.
	The reason is that in this case $f_\sharp(g)$ would act
	as an inversion on $\tilde \Gamma$,
	which we do not allow.
\end{rk}

The best unconditional result is the following proposition,
due to Stallings \cite[Theorem 5]{StallingsGTrees}, 
Bestvina--Feighn \cite[Proposition, Section 2]{BestvinaFeighnGTrees}
and Dunwoody \cite[Theorem 2.1]{Dunwoody}.

\begin{prop}\label{Stallingsfolding}
	Let $f \colon (\Lambda,\mathscr{L}) \to (\Gamma,\mathscr{G})$
	be a Stallings morphism.
	If $f$ is not an immersion,
	then it admits a fold followed by a vertex collapse.
	There is a factorization
	\[\begin{tikzcd}[row sep = tiny]
		(\Lambda,\mathscr{L}) \ar[rr,"f"]\ar[rd,"e"]
		& & (\Gamma,\mathscr{G}), \\
		& (H,\mathscr{H}) \ar[ur,"m"] & 
	\end{tikzcd}\]
	where $m$ is an immersion and each of
	$e\colon \Lambda \to H$,
	$\tilde e\colon \tilde\Lambda \to \tilde H$
	and $e_\sharp\colon \pi_1(\Lambda,\mathscr{L}) \to \pi_1(H,\mathscr{H})$
	are surjective.
\end{prop}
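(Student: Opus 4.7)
The plan is to identify an offending pair of edges, fold them, restore the Stallings property via a vertex collapse, and finally iterate. Since $f$ is Stallings but not an immersion, the vertex- and edge-group maps $f_v, f_e$ are all injective, yet some local map $f_{\st(v)}$ fails to be injective. Unpacking the definition, this gives distinct oriented edges $e_1, e_2 \in \st(v)$ mapping to a common edge of $\Gamma$, together with $g \in \mathscr{L}_v$ and $h \in \mathscr{G}_{f(e_1)}$ satisfying $f_v(g)\delta_{e_2}\iota_{f(e_1)}(h) = \delta_{e_1}$; equivalently, in the Bass--Serre tree there are lifts $\tilde e_1, \tilde e_2 \in \st(\tilde v)$ with $\tilde f(\tilde e_1) = \tilde f(\tilde e_2)$.

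Next I would fold these edges and analyze the result. Form the pushout $\tilde\Lambda/_{[\tilde e_1 = \tilde e_2]}$, a tree by Chiswell's observation cited earlier, and descend to $(\Lambda',\mathscr{L}') := (\Lambda,\mathscr{L})/_{[e_1 = e_2]}$; the original $f$ factors as the fold followed by a morphism $f'\colon (\Lambda',\mathscr{L}') \to (\Gamma,\mathscr{G})$. The fold itself is surjective as a map of graphs, of Bass--Serre trees, and of fundamental groups, directly from the pushout construction. However, $f'$ may fail to be Stallings: the new vertex stabilizer at the image of $\tilde v_1$---which according to the Bestvina--Feighn classification has the form $\langle \mathscr{L}_{v_1},\mathscr{L}_{v_2}\rangle$ in Type I, $\langle \mathscr{L}_{v_1},g\rangle$ in Type II, and so on---can be strictly larger than its image under $f'_v$.

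Then I would apply the vertex-collapse procedure of \Cref{collapsing}: replace the affected vertex group $\mathscr{L}'_v$ by its image $f'_v(\mathscr{L}'_v) \subseteq \mathscr{G}_{f'(v)}$. Writing $K_v = \ker f'_v$, I need to check $K_v \cap \iota_e(\mathscr{L}'_e) = 1$ for every $e \in \st(v)$; this is where I would treat the three fold types separately, but in each case the verification reduces to $f'_e$ being injective on the new edge group, which follows by tracking stabilizers through the pushout and using that $f$ was Stallings on edge groups to begin with. The resulting morphism $f''\colon (\Lambda'', \mathscr{L}'') \to (\Gamma,\mathscr{G})$ is then Stallings, and the collapse is surjective on graphs (identically), on trees (by the puzzle-piece description in \Cref{construction}), and on fundamental groups.

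Finally, iterate: if $f''$ is an immersion, take $m = f''$ and $e$ to be the single fold-then-collapse; otherwise repeat the above with $f''$ in place of $f$. The hard part is handling the case where $\Lambda$ is infinite or the process does not terminate---one must take a direct limit of the resulting directed system and verify that the limit $(H,\mathscr{H})$ is a graph of groups (in particular connected), that the induced $m$ really is an immersion (any residual failure of some $m_{\st(\cdot)}$ would be witnessed at a finite stage and resolved there by a subsequent fold), and that $e$, $\tilde e$, and $e_\sharp$ remain surjective as direct limits of surjections. The main obstacle is thus not any single fold or collapse but the limiting argument; the strategy mirrors those of Stallings, Bestvina--Feighn, and Dunwoody in analogous settings and should adapt here.
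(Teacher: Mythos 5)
Your overall strategy — locate a failure of injectivity at some $\st(v)$, fold the offending pair, restore the Stallings property with a single vertex collapse, iterate, and pass to a (possibly transfinite) direct limit — is exactly the route the paper takes, and the scaffolding of the argument is correct. The honest flag that the infinite case requires a limiting argument in the spirit of Stallings is also faithful to the paper, which likewise just cites Stallings's observation that one may fold transfinitely.

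There is, however, a gap in your justification of the vertex-collapse step, and it is precisely the point the paper attributes to Dunwoody. You say the injectivity of $f$ on the new edge group ``follows by tracking stabilizers through the pushout and using that $f$ was Stallings on edge groups to begin with.'' That last clause does not deliver what you need. After folding $\tilde e_1$ onto $\tilde e_2$, the new edge group is $H_e = \langle \stab(\tilde e_1), \stab(\tilde e_2)\rangle$, and injectivity of $f$ on each $\stab(\tilde e_i)$ separately does \emph{not} imply injectivity on the subgroup they generate: $f_\sharp$ could very well identify an element of $\stab(\tilde e_1)$ with an element of $\stab(\tilde e_2)$. What actually rescues the argument is that $\tilde e_1$ and $\tilde e_2$ share the \emph{terminal} vertex $\tilde v$ (projecting to $w = \tau(e_1)=\tau(e_2)$ in $\Lambda$), so $\stab(\tilde e_1)$, $\stab(\tilde e_2)$, and hence $H_e$ all sit inside $\stab(\tilde v)\cong\mathscr{L}_w$; it is then injectivity of the \emph{vertex-group} map $f_w$ that forces $f$ to be injective on $H_e$. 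That the new edge group, though an edge group in the folded graph, lives entirely inside a single original vertex group is the crux of why exactly one vertex collapse at the image of $v_1,v_2$ restores the Stallings property, and it is this observation you should cite rather than injectivity on edge groups. (Separately, the condition $K_v\cap\iota_e(\mathscr{L}'_e)=1$ must be verified for every $e\in\st(v)$, not only the newly created edge; for the pre-existing edges incident to $v_1$ or $v_2$ it follows immediately from injectivity of $f_{v_1}$ and $f_{v_2}$, but it is worth saying.)
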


\begin{proof}
	Observe that if 
	$f\colon(\Lambda,\mathscr{L}) \to (\Gamma,\mathscr{G})$
	is not an immersion,
	then it factors through some fold
	$(\Lambda,\mathscr{L})/_{[e_1=e_2]}$.
	All the vertex and edge groups of $\mathscr{L}/_{[e_1=e_2]}$
	are equal to those of $\mathscr{L}$ save 
	one vertex group and one edge group.
	Call the new edge group $H_e$ and the new vertex group $H_v$.
	Observe that the subgroups that combine to form $H_e$
	belong to some vertex group $\mathscr{L}_w$.
	Dunwoody's insight is that thus the resulting morphism
	$f'\colon (\Lambda,\mathscr{L})/_{[e_1 = e_2]} \to (\Gamma,\mathscr{G})$
	is Stallings everywhere except possibly at $H_v$,
	so we may perform a vertex collapse.

	Nothing prevents us from iterating this process.
	Unfortunately, without further assumptions,
	nothing can prevent the process from continuing indefinitely.
	Stallings observes that one may continue to fold transfinitely
	and pass to direct limits when necessary. 
	At some possibly transfinite stage,
	the resulting graph of groups $(H,\mathscr{H})$ admits no further folds,
	so the map $m\colon (H,\mathscr{H}) \to (\Gamma,\mathscr{G})$ is an immersion.
	Each fold and vertex collapse induces surjective maps
	of graphs, Bass--Serre trees and fundamental groups.
	Thus the same holds true for the resulting map
	$e\colon (\Lambda,\mathscr{L}) \to (H,\mathscr{H})$.
\end{proof}

Note that if $(\Lambda,\mathscr{L})/_{[e_1=e_2]}$ is obtained from 
the finite graph of groups $(\Lambda,\mathscr{L})$ 
by a Type I or Type III fold,
then $\Lambda/_{[e_1=e_2]}$ has fewer edges than $\Lambda$.
Thus, if the process of foldings and vertex collapses 
does \emph{not} terminate, after some finite stage,
all further folds are of Type II.

Kapovich, Weidmann and Miasnikov 
give a simple example \cite[Example 5.10]{KapovichWeidmannMiasnikov}
of an HNN extension $G = F_2*_{F_2}$ corresponding to
an injective non-surjective endomorphism of a free group of rank two,
and a two-generated subgroup $H \le G$
for which the sequence of Type II folds
for the associated morphism of graphs of groups does not terminate.

Type II folds replace some edge group $\mathscr{L}_e$,
which we may think of via $f$ as a subgroup of $\mathscr{G}_{f(e)}$,
with some strictly larger subgroup.
Thus to guarantee that the process terminates after finitely many iterations,
we need to argue that each edge group 
will only be increased finitely many times.

An obvious sufficient condition is that for each edge, the homomorphism
\[f_e\colon \mathscr{L}_e \to \mathscr{G}_{f(e)}\]
is an isomorphism---in fact,
this would guarantee that only Type I and Type III folds occur.
More useful for our purposes would be the condition that $f_\sharp$ is surjective
and that the edge groups $\mathscr{G}_{f(e)}$ are finitely generated.
This is the condition that Dunwoody uses in \cite[Theorem 2.1]{Dunwoody},
modeled on the analogous condition in 
\cite[Proposition, Section 2]{BestvinaFeighnGTrees}.
Another possible sufficient condition 
would be that the edge groups of $(\Gamma,\mathscr{G})$
are \emph{Noetherian,} that is,
every ascending chain \[G_0 \le G_1 \le G_2 \le \dotsb\] of subgroups stabilizes.
For groups, the Noetherian property is clearly equivalent to requiring that
every subgroup is finitely generated.
This latter property of a group is sometimes called \emph{slender.}

The main family of examples of Noetherian groups 
are \emph{virtually polycyclic} groups.
A polycyclic group is a group $G$ with a (finite) subnormal series
\[ 1 = G_0 \triangleleft G_1 \triangleleft \dotsb \triangleleft G_k = G \]
whose factor groups $G_i/G_{i-1}$ are cyclic.
Examples of virtually polycyclic groups include finite groups,
finitely generated abelian groups,
and virtually solvable subgroups of $\gl_n(\mathbb{Z})$.
Auslander \cite{Auslander} and Swan \cite{Swan} show
that in fact any virtually polycyclic group is a
virtually solvable subgroup of $\gl_n(\mathbb{Z})$ for some $n$.
As such, virtually polycyclic groups have good computational properties.
For this reason,
Kapovich, Weidmann and Miasnikov restrict their algorithms 
to graphs of groups $(\Gamma,\mathscr{G})$ all of whose edge groups are
virtually polycyclic. 

Thus we have the following corollary of \Cref{Stallingsfolding}.

\begin{cor}\label{Stallingspracticalfolding}
	If $f\colon (\Lambda,\mathscr{L}) \to (\Gamma,\mathscr{G})$
	is a Stallings morphism
	where $\Lambda$ is a finite graph and either
	\begin{enumerate*}[label=(\roman*)]
	\item each edge group of $(\Gamma,\mathscr{G})$ is Noetherian, or
	\item each edge group of $\mathscr{G}$ is finitely generated
		and $f_\sharp\colon \pi_1(\Lambda,\mathscr{L}) \to \pi_1(\Gamma,\mathscr{G})$
		is surjective,
	\end{enumerate*}
	then $f$ factors into a finite sequence of folds and vertex collapses
	followed by an immersion.
	\hfill\qedsymbol
\end{cor}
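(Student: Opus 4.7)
The plan is to apply \Cref{Stallingsfolding} to $f$ to obtain a possibly transfinite factorization as a sequence of folds and vertex collapses followed by an immersion $m\colon (H,\mathscr{H}) \to (\Gamma,\mathscr{G})$, and then to argue that under either hypothesis the sequence is in fact finite. The strategy is to bound the number of folds of each type separately.

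First I would observe that Type I and Type III folds strictly decrease the edge count of the underlying graph, whereas Type II folds and vertex collapses leave the graph unchanged. Since $\Lambda$ is finite, at most $|E(\Lambda)|$ folds of Types I or III can occur in the whole sequence, so past some finite stage only Type II folds remain (interspersed with vertex collapses). Each such Type II fold strictly enlarges a single edge group $\mathscr{L}_e$ by adjoining one element of the ambient vertex group, yielding at each edge an ascending chain $\mathscr{L}_{e,1} < \mathscr{L}_{e,2} < \dotsb$.

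In case (i) the conclusion is immediate: the edge homomorphism $(f_n)_e$ embeds this chain as a strictly ascending chain of subgroups of the Noetherian group $\mathscr{G}_{f(e)}$, which must stabilize; with finitely many edges, only finitely many Type II folds occur overall.

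Case (ii) is the main obstacle, since subgroups of finitely generated groups need not satisfy the ascending chain condition. Here I would exploit that folds and vertex collapses induce surjections on fundamental groups, so $f_\sharp$-surjectivity is preserved throughout the sequence and passes to the limit $m_\sharp$. For each edge $e$ of $\Gamma$ in the image of the morphism, pick a finite generating set $g_1,\dotsc,g_k$ of $\mathscr{G}_e$ and use $f_\sharp$-surjectivity to lift each $\iota_e(g_i)$ to an edge-path loop $\gamma_i$ in $(\Lambda,\mathscr{L})$. Each $\gamma_i$ is a specific finite combinatorial object that tightens through the folding sequence in finitely many steps into an edge-loop in $(H,\mathscr{H})$ representing an element of the limit edge group $\mathscr{H}_{e'}$ at some edge $e'$ of $H$ lying over $e$. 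Partitioning the $g_i$ by which edge $e'$ they land at and running through all edges of $\Gamma$ in the image, one sees that every limit edge group $\mathscr{H}_{e'}$ is realized as a subgroup generated by the finitely many landing elements, and is hence finitely generated. Any finite generating set of $\mathscr{H}_{e'}$ then lies in some $\mathscr{L}_{e',N}$ along the ascending chain, forcing stabilization after finitely many Type II folds at $e'$; finiteness of $E(\Lambda)$ then yields the claim. The delicate point in case (ii) is correctly identifying which elements end up at which limit edge and verifying that the tightening process terminates in a bounded number of steps, which is where $f_\sharp$-surjectivity together with finite generation of each $\mathscr{G}_e$ does the real work.
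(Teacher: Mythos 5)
Your argument for case (i) is exactly the one the paper gestures at in the discussion leading up to the corollary: after finitely many Type I and III folds the underlying graph stabilizes, so only Type II folds remain, and each of those strictly enlarges one of the finitely many edge groups; the Noetherian hypothesis then kills the ascending chain. For (ii) the paper simply defers to Dunwoody (Theorem 2.1) and Bestvina--Feighn, so your attempt to give a direct argument goes beyond what the text records.

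Your (ii) sketch is in the right spirit but the step you yourself flag as delicate --- tracking the $\gamma_i$ through the folding sequence and arguing the tightening terminates in a bounded number of steps --- can be sidestepped entirely, and as written it is not quite a proof. The cleaner route is the following. Since each fold and vertex collapse induces a surjection on fundamental groups, the composite $e_\sharp$ is surjective, so $m_\sharp = f_\sharp \circ (e_\sharp)^{-1}$ (on the level of images) is surjective; but $m$ is an immersion, so $m_\sharp$ is injective by the lemma in \Cref{morphisms}. Hence $m_\sharp$ is an isomorphism. The corresponding $\tilde m\colon \tilde H \to \tilde\Gamma$ is an equivariant \emph{embedding}, and injectivity of $\tilde m$ forces the stabilizer in $\tilde H$ of a lift $\tilde e'$ to equal the stabilizer in $\tilde\Gamma$ of $\tilde m(\tilde e')$ (an element of $\pi_1$ fixing $\tilde m(\tilde e')$ sends $\tilde e'$ to a point with the same image under $\tilde m$, hence to $\tilde e'$). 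Identifying edge groups with edge stabilizers as in the proof of \Cref{BassSerreThm}, this says $m_{e'}\colon \mathscr{H}_{e'} \to \mathscr{G}_{m(e')}$ is an isomorphism, so each limit edge group is finitely generated. Now for each edge $e'$ of $H$ the Type II folds produce a nested chain of subgroups whose union is $\mathscr{H}_{e'}$; since a nested union equal to a finitely generated group must stabilize at a finite stage, and $H$ has finitely many edges, only finitely many Type II folds occur. This avoids any need to argue about lengths of loops or which loop lands at which edge.

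Two small points of care that your write-up glosses over: after a Type II fold the vertex group at $\tau(e_1)$ may also strictly enlarge, and the subsequent vertex collapse can shrink a vertex group --- but not an edge group, since the kernel of a vertex collapse intersects each incident edge group trivially by construction; so the edge-group chains really are monotone and the counting argument goes through.
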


\section{Examples}
Let $C_n$ denote the cyclic group  of order $n$.
Consider the following group
\[ G = \langle a,b,c,t \mid [a,b] = a^2 = b^2 = c^3 = 1,\ tbt^{-1} = a \rangle
= (C_2^2*_{C_2})*C_3.\]
We have $\langle a,b \rangle \cong C_2^2$,
and $\langle c\rangle \cong  C_3$.
The group $G$
is the fundamental group of a graph of groups $(\Gamma,\mathscr{G})$
as described in \Cref{exampleG}.
Here $x$ denotes the generator of $\mathscr{G}_{e_1} = C_2$.

\begin{figure}[ht]
	\begin{center}
		\begin{tikzpicture}[auto,commutative diagrams/every diagram]
			\node[pt,
				label = $v_1$]
				(v) at (0,0) {};
			\node[pt, label = {$v_2$}] (w) at (2,0) {};
			\draw[->-] (v) to 
				[loop left,in = 150, out = -150, min distance = 25mm]
				node {$e_1$} (v);
			\draw[->-] (v) to node {$e_2$} (w);
			\node (x) at (4,0) {$C_2$};
			\node (ab) at (7,0) {$\langle a,b \rangle$};
			\node (1) at (9,0) {$1$};
			\node (c) at (11,0) {$\langle c \rangle$};
			\path[commutative diagrams/.cd, every arrow, every label,
				shift left=1mm]
				(x) edge
				node {$\iota_{e_1}\colon x\mapsto a$} (ab);
			\path[commutative diagrams/.cd, every arrow, every label]
				(1) edge (ab)
				(1) edge (c);
			\path[commutative diagrams/.cd, every arrow, every label,
				shift left=-1mm]
				(x) edge
				node[swap] {$\iota_{\bar e_1}\colon x\mapsto b$} (ab);
		\end{tikzpicture}
	\caption{The graph  $\Gamma$ and graph of groups structure $\mathscr{G}$.}
	\label{exampleG}
		\end{center}
\end{figure}

Base the fundamental group at $v_1$. 
Thus $t\in G$ corresponds to the loop $e_1$.
We will demonstrate folding by showing how to represent the subgroup
\[H = \langle a, ctb, ctc^{-1} \rangle\] as an immersion.
We will define a graph $\Lambda$ 
and a morphism $f\colon \Lambda \to (\Gamma,\mathscr{G})$
so that $f_\sharp(\pi_1(\Lambda,\ast)) = H$.
Since our generating set for $H$ has three elements,
$\Lambda$ will be a subdivision of a rose with three petals.
Note that because $v_1$ has nontrivial stabilizer,
some elements of $G$ are represented by loops of length zero.
We choose loops $\gamma_1$, $\gamma_2$ and $\gamma_3$
based at $v_1$ corresponding to the displayed generating set for $H$.
\[
	\gamma_i = \begin{dcases} a & i = 1 \\
		e_2c\bar e_2e_1b & i = 2 \\
		e_2c\bar e_2e_1e_2c^{-1}\bar e_2 & i = 3.
	\end{dcases}\]
To better see what is happening,
we will first subdivide
each edge of $\Lambda$ into three edges,
and then replace the middle edges with the paths $\gamma_i$, 
see \Cref{foldingexI}.

\begin{figure}[ht]
	\begin{center}
		\includegraphics[width=\textwidth]{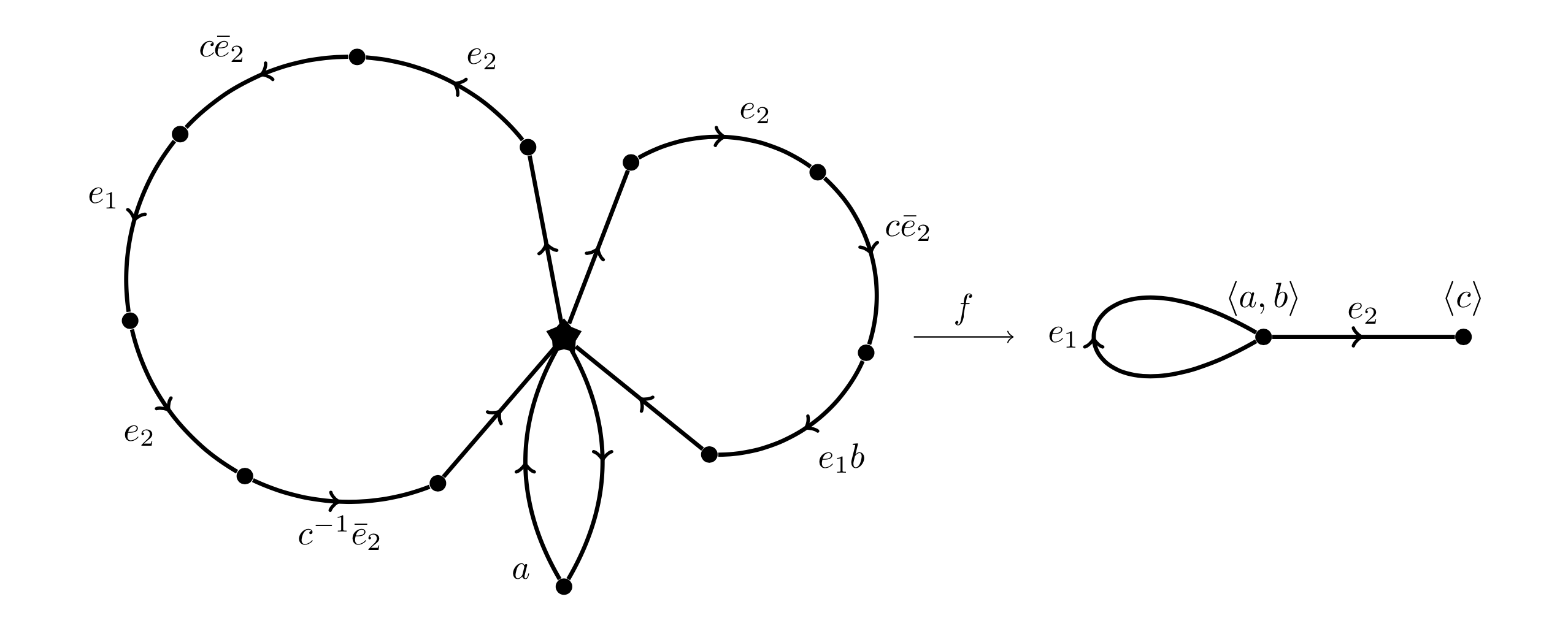}
		\caption{The morphism $f\colon \Lambda \to (\Gamma,\mathscr{G})$.}
		\label{foldingexI}
	\end{center}
\end{figure}

In \Cref{foldingexI} we have described the morphism $f$ 
by labelling $\Lambda$.
The basepoint (starred) and the unlabeled edges are collapsed to $v_1$.
The preferred orientation of each edge $e$ is indicated by the arrowhead.
The edge $e$ is labeled by its image 
$\delta_{\bar e}f(e)\delta_e^{-1}$ in the preferred orientation,
with $\delta_e = 1$ omitted. $\Lambda$ has fundamental group $F_3$.

The morphism $f$ is not Stallings,
so we collapse a subgraph,
through which $f$ factors as
$f_1 \colon (\Lambda^1,\mathscr{L}^1) \to (\Gamma,\mathscr{G})$.
In $\Lambda_1$,
the basepoint $\star$ has vertex group $\mathscr{L}_\star^1 = \mathbb{Z}$,
so $f_1$ is not Stallings.
We perform a vertex collapse at $\star$.
The resulting morphism 
$f_2\colon (\Lambda^2,\mathscr{L}^2) \to (\Gamma,\mathscr{G})$
is Stallings. See \Cref{foldingexII}.

\begin{figure}[ht]
	\begin{center}
		\includegraphics[width=\textwidth]{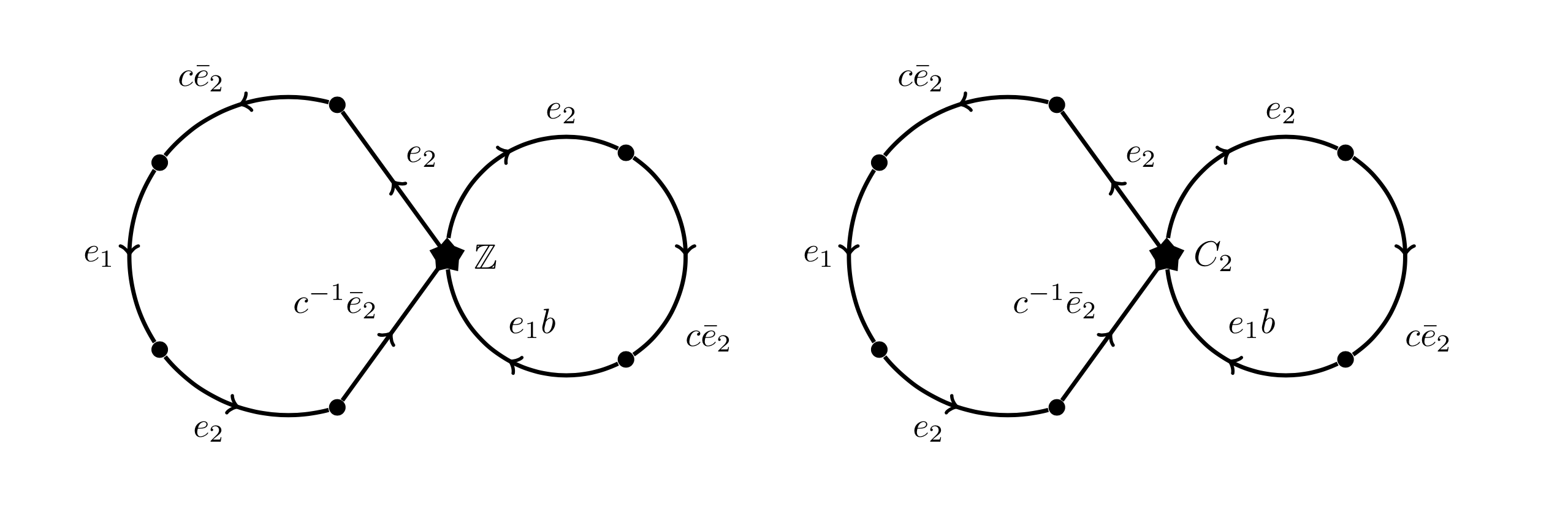}
		\caption{$(\Lambda^1,\mathscr{L}^2)$
		and $(\Lambda^2,\mathscr{L}^2)$.}
		\label{foldingexII}
	\end{center}
\end{figure}

The morphism $f_2$ is not an immersion at $\star$. 
We perform three Type I folds,
folding the left loop over the right loop.
In order to perform the third fold,
we first replace $f$ by an equivalent morphism
so that the path around the outer loop reads
\[e_2c\bar e_2e_1bb^{-1}e_2c^{-1}\bar e_2.\]
The resulting morphism $f_4$ is an immersion.
See \Cref{foldingexIII}.
We see that $H \cong F_2*C_2$.

\begin{figure}[ht]
	\begin{center}
		\includegraphics[width=\textwidth]{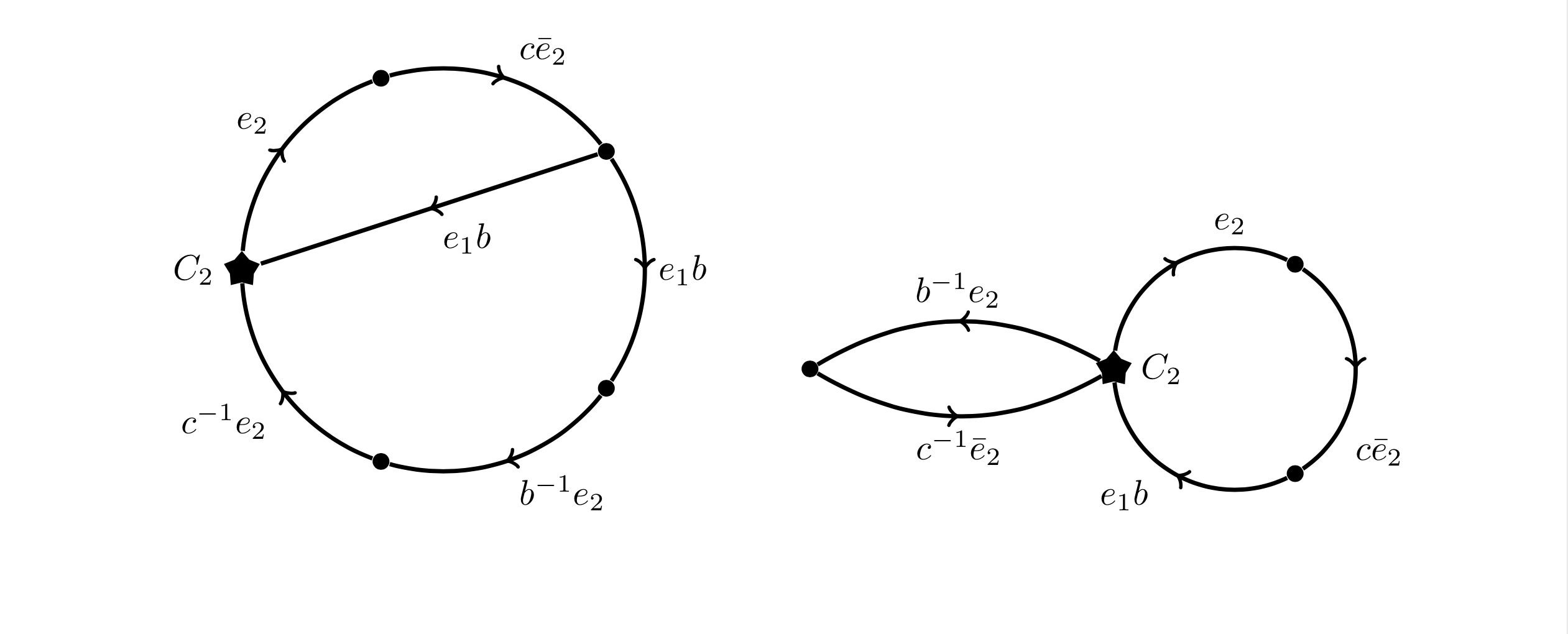}
		\caption{$(\Lambda^3,\mathscr{L}^3)$
		and $(\Lambda^4,\mathscr{L}^4)$.}
		\label{foldingexIII}
	\end{center}
\end{figure}

\paragraph{}
Suppose now that we begin with $H' = \langle a,ctc^{-1},cta \rangle$.
The beginning steps of the folding process are essentially identical,
so we omit them,
but the fourth map is no longer an immersion,
since $a^{-1}e_2$ and $e_2$ differ by an element of $\mathscr{L}'^4_\star$.
We may twist the edge labeled $a^{-1}e_2$ by $a$---or,
more properly speaking, by the element of $\mathscr{L}'^4_\star$
mapping to $a$---and 
then perform a Type III fold followed by a vertex collapse,
yielding the Stallings morphism $f'_5$.
See \Cref{foldingexIIII}.

From here there are two more Type I folds, 
and finally a Type II fold,
pulling the generator of $C_2$ across the loop labeled $e_1a$,
yielding $f'_6\colon (\Lambda'^6,\mathscr{L}'^6)$ as in
\Cref{foldingexIIII}.
Performing a final vertex collapse yields an isomorphism.

\begin{figure}[hb]
	\begin{center}
		\includegraphics[width=\textwidth]{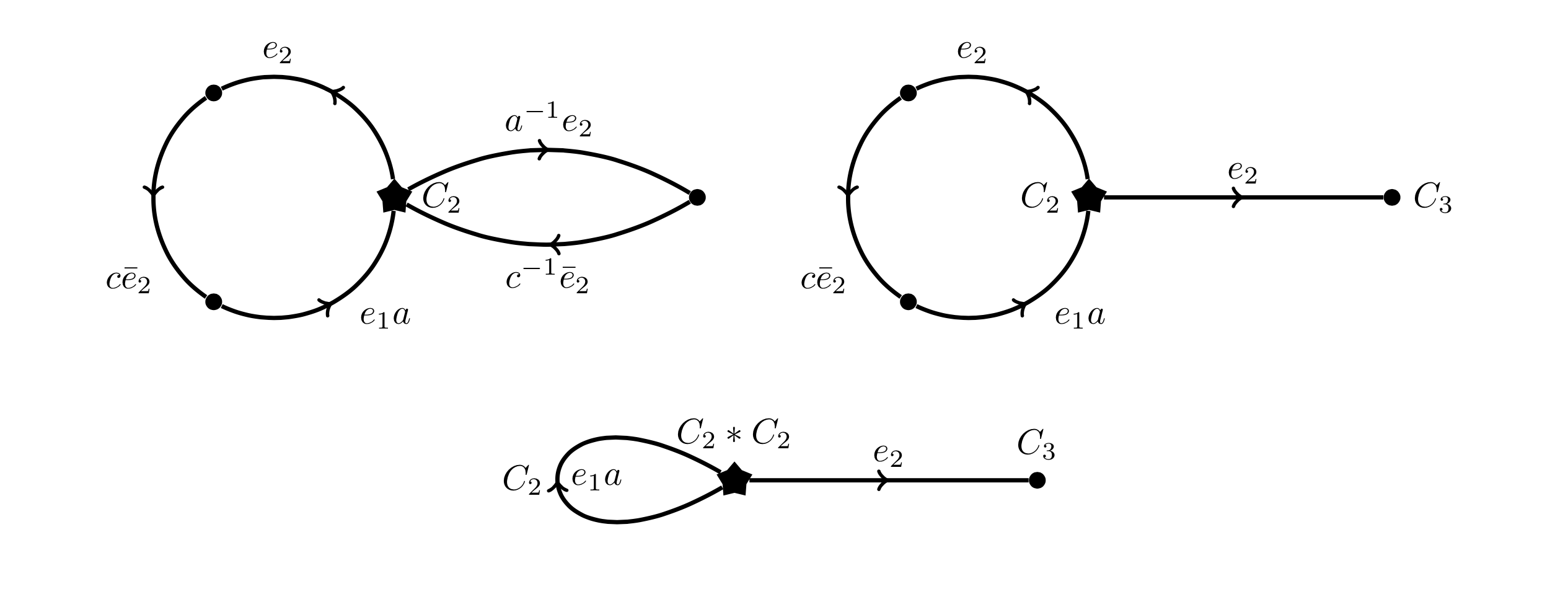}
		\caption{The graphs of groups 
			$(\Lambda'^4,\mathscr{L}'^4)$, $(\Lambda'^5,\mathscr{L}'^5)$
		and $(\Lambda'^6,\mathscr{L}'^6)$.}
		\label{foldingexIIII}
	\end{center}
\end{figure}

\section{Applications to Automorphism Groups}\label{applications}

Stallings remarks \cite{Stallings} that an automorphism of a free group can 
be thought of as a subdivision of a graph followed by a sequence of folds.
Bestvina and Handel \cite{BestvinaHandel}
developed this insight to great effect,
giving an analogue of the Nielsen--Thurston classification of
surface diffeomorphisms for homotopy equivalences of graphs.
We build on this work in the following chapter.

In this section we will use folding to give a conceptually simple proof 
of a theorem of Fouxe-Rabinovitch \cite{FouxeRabinovitch}.

\begin{thm}\label{fouxerabinovitch}
	Let $G = A_1*\dotsb*A_n*F_k$ be a finitely generated free product,
	where the $A_i$ are freely indecomposable and not infinite cyclic,
	and $F_k$ is a free group of rank $k$.
	The group $\aut(G)$ is generated by \emph{partial conjugations,}
	\emph{transvections,} 
	the $\aut(A_i)$,
	and a finite group $\Sigma_G$.
	In particular, $\aut(G)$ is finitely generated if each $\aut(A_i)$ is.
\end{thm}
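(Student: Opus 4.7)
The plan is to represent $G$ as the fundamental group of a ``thistle'' graph of groups $(\Gamma,\mathscr{G})$: let $\Gamma$ have a central vertex $v_0$ with trivial vertex group, $n$ pendant edges $e_1,\dotsc,e_n$ with trivial edge groups attached to leaves $v_i$ carrying vertex group $A_i$, and $k$ loops $\ell_1,\dotsc,\ell_k$ based at $v_0$. The presentation in \Cref{presentation} immediately identifies $\pi_1(\Gamma,\mathscr{G},v_0)$ with $A_1*\dotsb*A_n*F_k = G$. Given an arbitrary $\Phi\in\aut(G)$, I would realize $\Phi$ as a morphism $f\colon(\Lambda,\mathscr{L})\to(\Gamma,\mathscr{G})$, where $(\Lambda,\mathscr{L})$ is a subdivision of $(\Gamma,\mathscr{G})$ and $f_\sharp$ agrees with $\Phi$ under the natural identification. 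Existence uses the Kurosh subgroup theorem: each freely indecomposable non-cyclic factor $\Phi(A_i) \le G$ is conjugate into some $A_{\sigma(i)}$, so $\Phi$ restricts to an injection $A_i \hookrightarrow A_{\sigma(i)}$ up to conjugacy. After subdividing each $\ell_j$ into a path tracing out $\Phi(\ell_j)$ and each $e_i$ into a path carrying the conjugating element to $v_{\sigma(i)}$, we have the desired realization.

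With $f$ in hand, \Cref{Stallingsfolding} applies, since all edge groups are trivial (hence Noetherian) and $f_\sharp = \Phi$ is surjective. It factors as a finite sequence of folds and vertex collapses followed by an immersion $m\colon(H,\mathscr{H})\to(\Gamma,\mathscr{G})$. Because $f_\sharp$ is an isomorphism and $e_\sharp$ is surjective, $m_\sharp$ must be an isomorphism, so $m$ is a covering of degree one, i.e.\ an isomorphism of graphs of groups. Reading the factorization backwards realizes $\Phi$ as a composition of the inverses of these elementary moves together with a terminal isomorphism of the thistle onto itself.

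The core of the argument is to check that every elementary move contributes a generator of one of the claimed kinds. The terminal isomorphism of the thistle must permute pendant edges with isomorphic vertex groups and permute or invert the loops $\ell_j$, which is precisely an element of a finite symmetry group $\Sigma_G$ of the thistle, combined with a choice of isomorphism $A_i\to A_{\sigma(i)}$ (absorbed into the factors $\aut(A_i)$). A Type I fold that pulls the initial portion of some $\ell_j$ or $e_i$ along another edge contributes a transvection---the generator carried by the folded edge is multiplied by the generator carried by the edge it is folded along. A Type II fold at a leaf $v_i$ absorbs the beginning of a loop into $A_i$, so it has no effect on the isomorphism type of $\pi_1$ and can be absorbed into $\aut(A_i)$; a Type II fold at $v_0$ inserts a partial conjugation, conjugating the free factor living on one side of the central vertex by the element being folded in. The main obstacle will be Type III folds, which change the topology of the underlying graph and in particular create new loops; these must be interpreted as a transvection followed by a partial conjugation, and the bookkeeping to make this interpretation rigorous---especially in the presence of a nontrivial vertex group at the folded endpoint---is the step requiring the most care. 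Finite generation of $\aut(G)$ when each $\aut(A_i)$ is finitely generated is then immediate, since the partial conjugations, transvections, and $\Sigma_G$ together form a finite set.
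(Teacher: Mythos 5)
Your proposal follows essentially the same route as the paper: realize $\Phi$ on the thistle $\mathcal{T}_{n,k}(G)$, subdivide, factor into folds via \Cref{Stallingspracticalfolding}, and read each elementary move off as a Fouxe-Rabinovitch generator. Two small corrections to your accounting, though.

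First, the Kurosh subgroup theorem alone gives you that $\Phi(A_i)$ is conjugate \emph{into} some $A_{\sigma(i)}$; to conclude that $\Phi$ induces an \emph{isomorphism} $A_i \to A_{\sigma(i)}$ up to conjugacy (which you need for the topological realization to be a homotopy equivalence on the underlying graph, with $f_v$ an isomorphism at each leaf), you need the uniqueness statement in Grushko's theorem, which the paper cites directly. You can recover this from Kurosh by also applying it to $\Phi^{-1}$, but the gap should be filled.

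Second, Type II folds cannot occur here. All edge groups of the thistle are trivial, so every edge-group map $f_e\colon \mathscr{L}_e \to \mathscr{G}_{f(e)}$ is an isomorphism, and the paper's observation just after \Cref{foldIII} shows this excludes Type II: a Type II fold would require some $g \in \mathscr{L}_w$ outside $\iota_{e_1}(\mathscr{L}_{e_1})$ with $f_w(g) \in \iota_{f(e_1)}(\mathscr{G}_{f(e_1)})$, contradicting injectivity of $f_w$. What you describe as ``a Type II fold at a leaf absorbing the beginning of a loop into $A_i$'' is really a \emph{twist automorphism} applied to make the next Type I fold possible, and the paper's bookkeeping works exactly at this level: twists and changes of maximal tree, rather than fold types, are what give rise to transvections and partial conjugations (with Type III folds corresponding to maximal-tree changes). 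Tracking the moves in this way is what makes the careful bookkeeping you anticipate needing for ``Type III'' tractable.
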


Actually, Fouxe-Rabinovitch, building on earlier work of Nielsen \cite{Nielsen},
gave a full presentation of $\aut(G)$.
This is a finite presentation 
if each $A_i$ and each $\aut(A_i)$ is finitely presented.

Let $G$ be a free product as in the statement, and
fix a free basis $F_k = \langle x_1,\dotsc, x_k\rangle$.
For convenience, we write $A_{n+i} = \langle x_i \rangle$ for $1 \le i \le k$.
Let $i$ and $j$ be distinct integers with $1 \le i,j \le n+k$,
and choose $g_i \in A_i$.
The \emph{partial conjugation} of $A_j$ by $g_i$ is the automorphism
determined by its action on the free factors as
\[ \chi_{g_i,j} \begin{dcases}
	g_j \mapsto g_i^{-1}g_jg_i & g_j \in A_j \\
	g_\ell \mapsto g_\ell & g_\ell \in A_\ell,\ \ell \ne j,\ 1\le \ell \le n+k.
\end{dcases}\]

Now let $i$ and $j$ be integers with $1 \le i \le n+k$ and $1 \le j \le k$
such that $i \ne n+j$. Choose $g_i \in A_i$.
The \emph{left} and \emph{right transvections of $x_j$ by $g_i$}
are the automorphisms determined by their actions on the free factors as
\begin{gather*}
	\lambda_{g_i,j}\begin{dcases}
	x_j \mapsto g_ix_j \\
	g_\ell \mapsto g_\ell & g_\ell\in A_\ell,\ \ell \ne n+j,\ 1 \le \ell \le n+k,
\end{dcases} \\
\shortintertext{ and }
\rho_{g_i,j} \begin{dcases}
	x_j \mapsto x_jg_i \\
	g_\ell \mapsto g_\ell & g_\ell \in A_\ell,\ \ell \ne n+j,\ 1\le\ell\le n+k,
\end{dcases}
\end{gather*}
respectively. In the case of the free group $G = F_k$, 
the left and right transvections $\lambda_{x_i,j}$ and $\rho_{x_i,j}$
are called \emph{Nielsen transformations.}
These, together with a finite group $\Sigma_{F_k}$ 
that permutes and inverts the $x_i$,
are the standard generators for $\aut(F_k)$.
In general, the group $\Sigma_G$ is the product of
$\Sigma_{F_k}$ with a finite group that permutes 
those $A_i$ that are isomorphic.
Obviously this requires fixing some choice of isomorphism between them.

Our method of proof appears to be well-known 
in the case where $G = F_k$ is a free group;
we learned it from course notes of Bestvina \cite{BestvinaCourse}.
The idea is to follow up on Stallings's remark:
given an automorphism $\Phi\colon F_k \to F_k$,
construct a \emph{topological realization} 
$f\colon \Gamma \to \Gamma$ on a graph $\Gamma$ with $\pi_1(\Gamma) = F_k$
as a subdivision of a graph followed by a sequence of folds.
The result follows by carefully keeping track of the fundamental group.

In fact, we show that this method works in complete generality
so long as one can carry out the first step.
That is, if $G$ splits as $G = \pi_1(\Gamma,\mathscr{G})$,
folding can be used to analyze
any automorphism $\Phi\colon G \to G$ 
that can be topologically realized on $(\Gamma,\mathscr{G})$.

We return to the case of $G$ a free product.
In the case where each $A_i$ is finitely generated,
a choice of finite generating set yields a finite set
of partial conjugations and transvections that suffice to generate the rest.
The subgroup of $\aut(G)$ generated by partial conjugations 
and transvections is finitely presented if each $A_i$ is.
Call this subgroup $\aut^0(G)$, and the generating set described above a set of
\emph{Fouxe-Rabinovitch generators} for $\aut^0(G)$.
The algorithmic nature of folding yields the following useful corollary.

\begin{cor}\label{combing}
	There is a normal form for $\aut^0(G)$,
	and a method that takes as input an $(n+k)$-tuple
	$(g_i)$ of elements of $G$
	and determines whether $\ad(g_i^{-1})(A_i)$ together with 
	$\{g_{n+1},\dotsc,g_{n+k}\}$ generate $G$,
	and if so produces a normal form for an automorphism $\Phi\in\aut^0(G)$
	in the Fouxe-Rabinovitch generators
	taking the $A_i$ and a free basis of $F_k$
	to $\ad(g_i)(A_i)$ and $\{g_{n+1},\dotsc,g_{n+k}\}$.
\end{cor}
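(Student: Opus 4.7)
The plan is to exploit the algorithmic character of folding, as sketched in the paragraph preceding the corollary. Let $(\Gamma,\mathscr{G})$ be the \emph{thistle} splitting of $G$: a central vertex $v_0$ with trivial group carries $k$ loops (corresponding to $x_1,\dotsc,x_k$) and $n$ edges of trivial edge group joining $v_0$ to vertices $v_1,\dotsc,v_n$ with vertex groups $A_1,\dotsc,A_n$. Basing at $v_0$, we identify $\pi_1(\Gamma,\mathscr{G},v_0)$ with $G$ so that the $A_i$ and the loops represent the prescribed free factors.

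Given an $(n+k)$-tuple $(g_i)$, form a subdivision $(\Lambda,\mathscr{L})$ of the thistle and a morphism $f\colon(\Lambda,\mathscr{L})\to(\Gamma,\mathscr{G})$ realizing the putative endomorphism $\Phi$: replace each loop $\ell_j$ by a subdivided path spelling $g_{n+j}$, and insert a spelling of $g_i$ into each leaf edge $e_i$, so that the induced homomorphism $f_\sharp$ sends $A_i$ to $\ad(g_i)(A_i)$ and $x_j$ to $g_{n+j}$. Because the edge groups of $(\Gamma,\mathscr{G})$ are trivial (hence Noetherian), \Cref{collapsing} and \Cref{Stallingspracticalfolding} factor $f$ in finitely many steps as a sequence of subgraph collapses, vertex collapses, and folds followed by an immersion $m\colon(\bar\Lambda,\bar{\mathscr{L}})\to(\Gamma,\mathscr{G})$. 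The endomorphism $\Phi$ is an automorphism if and only if $m$ is an isomorphism of graphs of groups, a combinatorial condition on the output of the algorithm; this yields the recognition procedure.

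To extract the normal form, we interpret each elementary move carried out during the construction and folding of $f$ as an explicit Fouxe-Rabinovitch generator acting on the thistle. Modifying an endpoint correction $\delta_e$ that lies in a leaf group $A_i$ by an element of $A_i$ realizes a partial conjugation $\chi_{g,j}$, or a transvection $\lambda_{g,j}$ or $\rho_{g,j}$, depending on whether the edge $e$ is a leaf edge or a loop. A combinatorial permutation of thistle edges induced by a fold (or by an equivalence move between isomorphic leaves) realizes an element of $\Sigma_G$. The remaining equivalence moves of \Cref{equivalence} merely absorb inner automorphisms of $G$. Reversing the sequence of moves writes $\Phi$ as an explicit word in the Fouxe-Rabinovitch generators; fixing a deterministic rule for selecting which fold or equivalence to perform at each stage produces a canonical word, which we take as the normal form.

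The main obstacle is bookkeeping. One must verify precisely that each step used in constructing $f$ and in folding it admits an interpretation via a Fouxe-Rabinovitch generator acting on the thistle, handling carefully the way in which equivalence moves near the basepoint $v_0$ can inject or cancel inner automorphisms that distinguish a partial conjugation from the identity of $G$. One must also check that the deterministic choice of folds and equivalences is consistent enough that otherwise-commuting operations performed in different orders yield the same word, so that the output of the algorithm depends only on $\Phi$ and not on incidental choices.
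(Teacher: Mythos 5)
Your approach is essentially identical to the paper's: realize the input tuple as a morphism of the thistle, factor into folds via \Cref{collapsing} and \Cref{Stallingspracticalfolding} (the paper uses the proof of \Cref{fouxerabinovitch} directly, but the content is the same), declare the tuple a generating set precisely when the terminal immersion is an isomorphism, and read off Fouxe-Rabinovitch generators from the twists and changes of maximal tree that occur during folding.

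Two remarks on the points you flag as ``obstacles.'' First, the consistency concern you raise at the end is resolved in the paper not by verifying commutation relations but by fixing concrete tie-breaking rules (fold non-automorphism folds first; among twists, pick the one with the smallest acting letter; when changing the spanning tree, remove the edge whose initial vertex is closest to the terminal vertex of the new edge; order commuting products by smallest target index). Once these choices are pinned down the algorithm's output is a deterministic function of the input tuple, and the concern dissolves. Second, your remark that a fold may realize an element of $\Sigma_G$ is out of place here: the corollary concerns $\aut^0(G)$, and the tuple's constraint $A_i \mapsto \ad(g_i)(A_i)$ (with the same index $i$, not a permuted one) guarantees that the final isomorphism of the thistle, if one exists, is the identity; no permutation of prickles nor non-inner factor automorphism ever enters. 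Elements of $\Sigma_G$ and of the $\aut(A_i)$ are only needed for the full statement of \Cref{fouxerabinovitch}, not for the present corollary.

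Finally, the paper also assumes explicitly that each $A_i$ is equipped with a normal-form algorithm for its own elements, which you leave implicit; without it ``a method'' in the sense of the corollary cannot be formulated.
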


In principle---but not in actual fact in general---our method is algorithmic.
The problem is that arbitrary finitely-generated groups,
as is well-known, are not algorithmically well-behaved.
In many cases of interest,
for instance when the $A_i$ are small finite groups,
the method is simple enough that fairly complicated examples
can be worked easily by hand or programmed into a computer.
Of course, if the $\aut(A_i)$ are also amenable to algorithmic computation,
the corollary gives a method for putting any automorphism of $G$ 
in a normal form by first folding, then performing factor automorphisms,
and finally permuting isomorphic factors.

One particularly useful kind of combing would associate to a group element
a \emph{quasi-geodesic normal form.} 
Briefly, given a group $G$ and finite generating set $S$,
there is an associated \emph{Cayley graph,}
with vertices the elements of $G$ and an edge connecting
$h$ to $g$ when their difference $h^{-1}g$ belongs to $S$.
Any (connected) graph can be given a geodesic metric 
by declaring each (open) edge isometric to an (open) unit interval
and setting the distance between two points equal to the infimal length 
of a path connecting them.
The fine structure of a Cayley graph depends very much 
on the choice of generating set,
but the large-scale geometry 
turns out to be an invariant of the group.
A geodesic (shortest path) from $h$ to $g$
corresponds to a way of writing $h^{-1}g$ as a product
$s_1\dotsb s_\ell$ of elements of $S$ with $\ell$ as small as possible.

Let $K$ and $C$ be real numbers with $K\ge 1$ and $C > 0$,
and let $(X,d_X)$ be a geodesic metric space.
A \emph{$(K,C)$-quasi-geodesic} $\gamma\colon \mathbb{R}_+ \to X$ is a ray---that
is, a singly infinite path,
which we assume to be parametrized by arc length---such that 
for all $x,y\in \mathbb{R}_+$,
\[ \frac 1K\left|y - x\right| - C \le d_X(\gamma(x),\gamma(y))
\le K\left|y - x\right| + C. \]
In other words, the true distance in $X$ between points in the image of $\gamma$
differs from their distance along $\gamma$ by a 
bounded multiplicative and additive error.
A set of quasi-geodesic normal forms
associates to each $g \in G$ a path from $1$ to $g$ in the Cayley graph,
i.e. a finite sequence of elements of $S$ whose product is $g$,
which is $(K,C)$-quasi-geodesic in the true length of $g$.
Such a set of normal forms
would remain quasi-geodesic in any generating set,
and could prove useful for studying the large-scale geometry of $\aut(G)$.

Unfortunately, our normal forms do not enjoy this property.

\begin{prop}\label{notquasigeodesic}
	As long as $n+k \ge 4$ or $n = 0$ and $k = 3$,
	the combing constructed in \Cref{combing} is not quasi-geodesic.
\end{prop}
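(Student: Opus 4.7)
The plan is to extend the argument of Qing and Rafi \cite{QingRafi} to the free-product setting: I would exhibit a sequence $(\Phi_m)_{m\ge 1}$ in $\aut^0(G)$ whose word length in the Fouxe-Rabinovitch generators grows like $O(m)$, but whose normal form produced by \Cref{combing} has length bounded below by $c\lambda^m$ for some constants $c > 0$ and $\lambda > 1$. Since any quasi-geodesic $\sigma$ from $1$ to $\Phi_m$ must satisfy $\operatorname{len}(\sigma) \le K\cdot d(1,\Phi_m) + KC$, no pair of constants $(K,C)$ can accommodate $c\lambda^m \le K\ell m + KC$ for all $m$, and the proposition follows.

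First, I would construct a single element $\psi\in\aut^0(G)$ which is \emph{exponentially growing}, in the sense that the Perron--Frobenius eigenvalue of its transition matrix on Kurosh factors (augmented by a free basis of $F_k$) exceeds $1$, and which is expressible as a product of a bounded number $\ell$ of Fouxe-Rabinovitch generators. In the pure free case $n=0$ and $k\ge 3$, this is precisely the setting of \cite{QingRafi}: one may take the Tribonacci-style automorphism $x_1\mapsto x_2$, $x_2\mapsto x_3$, $x_3\mapsto x_3x_2x_1$, which decomposes as a single transvection followed by a permutation in $\Sigma_{F_k}$ and has stretch factor $\lambda\approx 1.839$. When $n+k\ge 4$, I would use the four available Kurosh factors $H_1,\dotsc,H_4$ (drawn from the $A_i$ and cyclic factors of $F_k$) to build a tetranacci-style analogue: choose distinguished generators $h_i\in H_i$ and define $\psi$ as a single ``cyclic shift with amplification'' assembled from transvections, partial conjugations, and (when applicable) permutations of isomorphic $A_i$, arranged so that the induced transition matrix has spectral radius strictly greater than $1$. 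Setting $\Phi_m = \psi^m$ then yields $d_{\mathrm{FR}}(1,\Phi_m) \le \ell m$.

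Next, I would lower-bound the length of the combed normal form of $\Phi_m$ by the number of folds required to reduce its topological realization on the thistle to an isomorphism. By inspection of the fold types in the previous section, each Type I and Type III fold decreases the edge count of the subdivided domain by exactly one, while Type II folds leave the edge count unchanged (each corresponding to a single Fouxe-Rabinovitch letter). Any topological realization of $\Phi_m$ on the thistle must begin with a subdivision whose edge count is at least proportional to $\sum_i |\Phi_m(g_i)|$, where $g_i$ ranges over a fixed finite generating set for $G$ and lengths are cyclically reduced. Exponential growth of $\psi$ forces this sum to be at least $c\lambda^m$, so the number of folds---and hence the length of the combed normal form, up to the bounded correction coming from factor automorphisms and from $\Sigma_G$---is at least $c\lambda^m - O(1)$, giving the required contradiction with the linear upper bound.

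The main obstacle I anticipate is the explicit construction of $\psi$ in the mixed free-product setting $n+k\ge 4$: one must simultaneously verify that the chosen $\psi$ is exponentially growing (via a direct Perron--Frobenius computation on its transition matrix) and that it admits an expression as a bounded product of Fouxe-Rabinovitch generators. The rank hypothesis $n+k\ge 4$ or $k\ge 3$ is exactly what is needed to support both conditions simultaneously, paralleling the rank-$\ge 3$ hypothesis in the original Qing--Rafi argument.
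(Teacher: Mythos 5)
Your proposal is a genuinely different route from the paper's, and while it is plausible in outline, it contains two substantive gaps and one factual slip.

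The paper's proof does \emph{not} use exponentially-growing automorphisms at all. It writes down an explicit two-parameter family $\Phi_{k,n,m}$ that is polynomially growing (indeed linearly growing, Dehn-twist-like): the automorphism fixes $A*B*C$ pointwise and either conjugates $D$ or right-multiplies a free generator $d$ by a word $\bigl((ab)^{-m}c(ab)^m a\bigr)^n$. The folding combing of \Cref{combing} ``spells out'' this image and produces a word of length $n(4m+2)$ (resp.\ $n(2m+1)$), while the paper exhibits an explicit shortcut
\[
\Phi_{1,n,m} \;=\; (\chi_{a,c}\chi_{b,c})^m(\rho_{c,d}\rho_{a,d})^n(\chi_{a,c}\chi_{b,c})^{-m}
\]
of length $4m+2n$, using conjugation inside $\aut^0(G)$ to move the amplification $(ab)^{\pm m}$ outside. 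Comparing $nm$ to $n+m$ as both parameters grow gives non-quasi-geodesicity directly, with no spectral theory and no iteration. Note that the paper attributes this example to Qing and Rafi, so your description of their $F_3$ example as a ``Tribonacci-style'' exponentially-growing automorphism is likely not accurate; the structure they exploit is the conjugation shortcut, not exponential growth.

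Your approach, if it could be pushed through, would actually prove the stronger statement that the combing is \emph{exponentially} distorted rather than merely not quasi-geodesic, but it leaves two genuine gaps. First, the lower bound on the combed normal form length rests on the claim that ``the number of folds --- and hence the length of the combed normal form, up to the bounded correction\ldots --- is at least $c\lambda^m$.'' This conflation is not justified: folds per se do not emit Fouxe-Rabinovitch letters. In the combing of \Cref{combing} and the surrounding discussion, the generators are produced by the auxiliary \emph{twist} operations and \emph{changes of maximal tree} that are interleaved with folding, not by the folds themselves; a Type II fold does not correspond to an FR letter, contrary to what you assert. One can plausibly argue that a positive proportion of folds is preceded by a twist when $\Phi(g_i)$ is cyclically reduced and has no large syllables that cancel for free, but this requires a careful pass through the algorithm of \Cref{combing}, and you do not supply it. Second, you acknowledge but do not resolve the construction of a suitable exponentially-growing $\psi\in\aut^0(G)$ with uniformly bounded FR-length across all cases $n+k\ge 4$ with $k\le 2$; this requires verifying both the Perron--Frobenius condition and membership in $\aut^0(G)$ (your Tribonacci example involves a permutation from $\Sigma_{F_k}$, which lies outside $\aut^0(G)$, so you must pass to a power or to a conjugate product of transvections before the length estimate $d_{\mathrm{FR}}(1,\psi^m)\le\ell m$ is available). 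Both gaps are closable, but the paper's polynomially-growing example bypasses both and is the cleaner route.
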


In the case of $G$ a free group,
the above proposition is due to Qing and Rafi \cite[Theorem A]{QingRafi}.
We prove the proposition 
by showing that Qing and Rafi's example occurs in free products more generally.
We thank Kasra Rafi for pointing out their example to us 
and suggesting the proposition.
The proof of the proposition amounts to working an enlightening example,
so we give the proof of it first, and then the theorem.

\begin{proof}[Proof of \Cref{notquasigeodesic}.]
	The case $n = 0$ and $k = 3$ is Theorem A of \cite{QingRafi}.
	It suffices to prove the cases $n+k = 4$ where $k \in \{0,1,2\}$.
	Let $G = A * B * C * D$, where $A,B,C,D$ are nontrivial
	finitely generated groups, the last $k$ of which are infinite cyclic.
	Choose nontrivial elements $a \in A$, $b \in B$ and $c \in C$.
	For $m$ and $n$ positive integers, consider the following automorphisms
	\begin{align*}
		&\Phi_{0,n,m} 
		\begin{dcases}
			x \mapsto x & x \in A*B*C \\
			d \mapsto ((ab)^{-m}c(ab)^{m}a)^{-n} d((ab)^{-m}c(ab)^{m}a)^{n} & d \in D,
		\end{dcases} \\
		&\Phi_{1,n,m}
		\begin{dcases}
			x \mapsto x & x \in A*B*C \\
			d \mapsto d((ab)^{-m}c(ab)^{m}a)^n & D = \langle d \rangle
		\end{dcases} \\
		\shortintertext{and}
		&\Phi_{2,n,m}
		\begin{dcases}
			x \mapsto x & x \in A*B*C \\
			d \mapsto d(c(ab)^m)^n & D = \langle d \rangle
		\end{dcases}
	\end{align*}
		
	\paragraph{Topological realization.}
	We will focus on $\Phi_{1,n,m}$; the other cases are entirely analogous.
	Consider the graph of groups $\mathcal{T}_{3,1}(G)$ described in
	\Cref{thistle31G} below.
	Identify $G = \pi_1(\mathcal{T}_{3,1}(G),\star)$ by choosing
	a spanning tree and orientations as in \Cref{presentation} so that
	$d$, the generator of $D$, is represented by the loop around $e_4$ in
	the positive direction.
	A similar graph of groups $\mathcal{T}_{n,k}(G)$ exists for
	$G$ an arbitrary free product. We call it the \emph{thistle
	with $n$ prickles and $k$ petals.} It interpolates between the
	thistle of \cite{TrainTracksOrbigraphs} (no petals),
	and the \emph{rose} (no prickles) commonly used to study the free group.

	\begin{figure}[ht]
		\begin{center}
			\includegraphics[width=\textwidth]{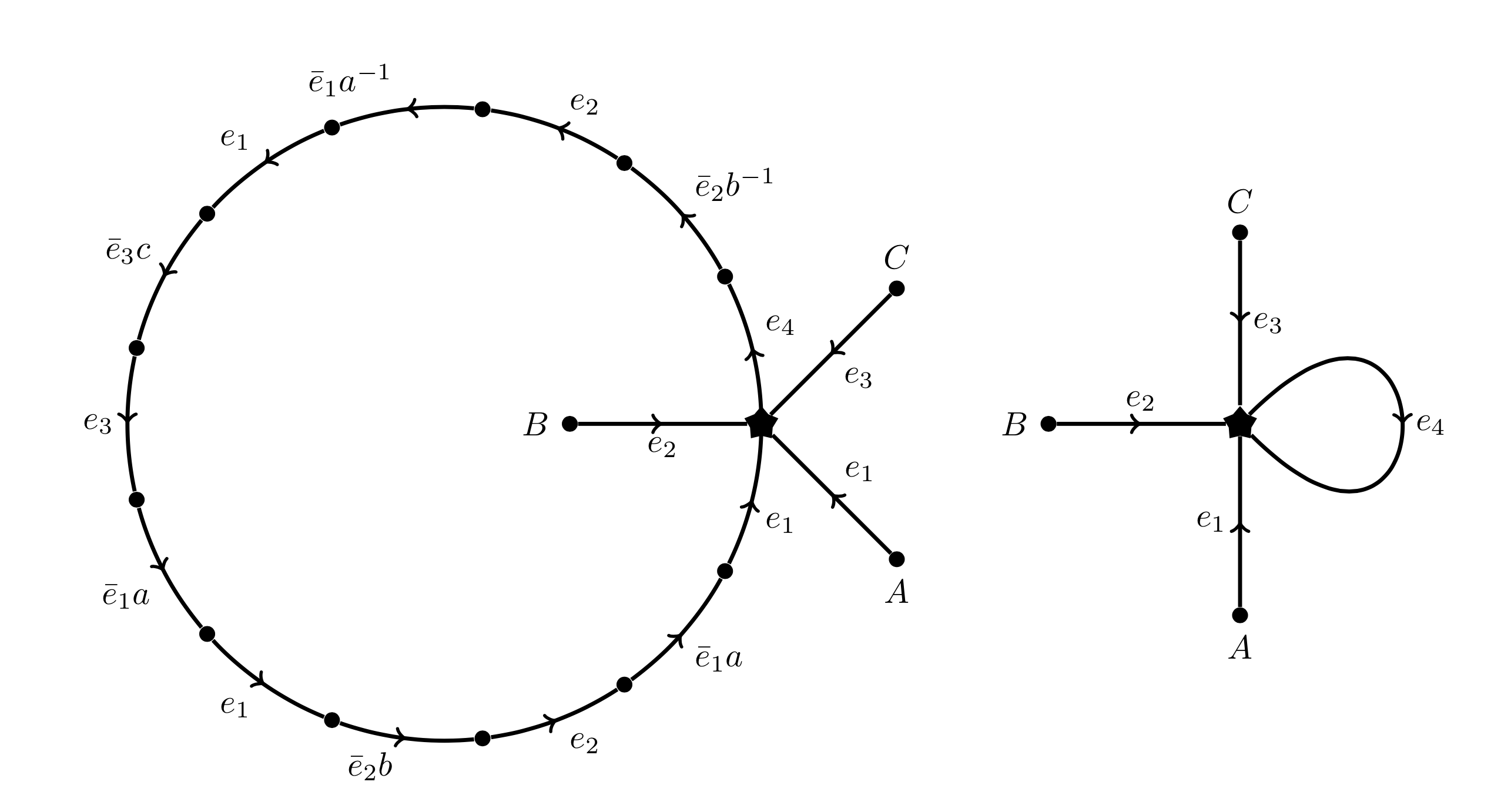}
			\caption{The thistle $\mathcal{T}_{3,1}(G)$ and the subdivision
			associated to $\Phi_{1,1,1}$.}
			\label{thistle31G}
		\end{center}
	\end{figure}

	Let $\sigma$ be the loop $\bar e_1ae_1\bar e_2be_2$ based at $\star$,
	and let $\gamma_m$ be the loop $\bar\sigma^m\bar e_3ce_3\sigma^m\bar e_1ae_1$,
	where $\sigma^m$ is the $m$-fold concatenation of $\sigma$ with itself.
	Note that $\Phi_{1,m,n}(d) \in G$ is represented by the loop
	$e_4\gamma_m^n$.
	A topological realization of $\Phi$ on $\mathcal{T}_{3,1}(G)$
	is given by the map $f\colon \mathcal{T}_{3,1}(G) \to \mathcal{T}_{3,1}(G)$ 
	which is the identity on the complement of $e_4$ and sends 
	$e_4$ to the path $e_4\gamma_m^n$.
	If we declare each point $e_4$ which is mapped to a vertex
	to be a vertex (with trivial vertex group),
	the map $f$ becomes a morphism, thus it has a factorization into folds,
	all of which must be of Type I.
	Because $e_4\gamma_m^n$ is a tight path, 
	$f$ is an immersion away from the basepoint $\star$.
	
	\paragraph{Folding Automorphisms.}
	As in \Cref{presentation}, we will choose a spanning tree
	to identify the fundamental group of the subdivided thistle with $G$.
	The obvious choice is the 
	spanning tree containing all the edges of the subdivided thistle,
	save for the one mapping to $e_4$.
	Thus $d$ is represented by the loop 
	that runs once around the subdivision of $e_4$.

	The last two edges of $\gamma_m^n$ read $\bar e_1ae_1$.
	Thus we may fold the two edges labeled $e_1$ together.
	Before folding again,
	we need to twist the edge labeled $\bar e_1a$ by $a^{-1}$.
	This sends the loop representing $d$ in the previous identification
	to the loop now representing $da$.
	Thus the twisting performed the right transvection of $d$ by $a$,
	which we will write as $\rho_{a,d}$.
	We record this as the first automorphism in our normal form.

	Iterating this process, we see that folding produces the normal forms
	\begin{align*}
		\Phi_{0,n,m} &= ((\chi_{a,d}\chi_{b,d})^{-m}\chi_{c,d}
		(\chi_{a,d}\chi_{b,d})^m\chi_{a,d})^n, \\
		\Phi_{1,n,m} &= ((\rho_{a,d}\rho_{b,d})^{-m}\rho_{c,d}
		(\rho_{a,d}\rho_{b,d})^m\rho_{a,d})^n, \\
		\shortintertext{and}
		\Phi_{2,n,m} &= (\rho_{c,d}(\rho_{a,d}\rho_{b,d})^m)^n,
	\end{align*}
	whose lengths in the Fouxe-Rabinovitch generators are
	$n(4m+2)$, $n(4m+2)$ and $n(2m+1)$, respectively.
	(Automorphisms are composed as functions.)

	On the other hand, one checks that
	\begin{align*}
		\Phi_{0,n,m} &= (\chi_{a,c}\chi_{b,c})^m(\chi_{c,d}\chi_{a,d})^n
		(\chi_{a,c}\chi_{b,c})^{-m}, \\
		\Phi_{1,n,m} &= (\chi_{a,c}\chi_{b,c})^m(\rho_{c,d}\rho_{a,d})^n
		(\chi_{a,c}\chi_{b,c})^{-m}, \\
		\shortintertext{and}
		\Phi_{2,n,m} &=
		(\rho_{a,c}\rho_{b,c})^m\rho_{c,d}^n(\rho_{a,c}\rho_{b,c})^{-m},
	\end{align*}
	whose lengths in the Fouxe-Rabinovitch generators are
	$4m + 2n$, $4m + 2n$ and $4m + n$, respectively.
	As $n$, and $m$ go to infinity, this shows that our normal forms cannot be
	$(K,C)$-quasi-geodesic for any choice of $(K,C)$.
\end{proof}

\begin{proof}[Proof of \Cref{fouxerabinovitch}.]
	All the ideas are already present in the proof above.
	A famous theorem of Grushko says that if
	$G = A_1*\dotsb*A_n*F_k$ and $G = B_1*\dotsb*B_m*F_\ell$,
	where the $A_i$ are freely indecomposable and not infinite cyclic,
	then $m = n$, $k = \ell$ and after reordering, $B_i$ is conjugate to $A_i$,
	and no element of $F_\ell$ is conjugate into any $A_i$.
	Thus if $\Phi\colon G \to G$ is an automorphism,
	it must permute the conjugacy classes of the $A_i$.

	We conclude that $\Phi$ can be topologically realized 
	by $f\colon\mathcal{T}_{n,k}(G) \to \mathcal{T}_{n,k}(G)$
	using exactly the same process as above.
	Since $\Phi$ composed with the surjection of $G$ onto $F_k$ is a surjection,
	we conclude that $f$ is a homotopy equivalence of the underlying graph.
	Thus after subdividing we may factor $f$ into a product of folds,
	followed by an isomorphism.
	The possible isomorphisms of $\mathcal{T}_{n,k}(G)$
	include elements of $\aut(A_i)$ and the finite group $\Sigma_G$.

	At each step, whenever possible, we fold edges belonging to the maximal tree,
	possibly passing to equivalent automorphisms,
	performing twist automorphisms or changing the maximal tree
	in order to keep folding.
	Both of the latter operations result in automorphisms of $G$,
	but the former does not.

	In the previous proposition, we analyzed the effect of twisting
	an edge $e$ by $g^{-1}$.
	It may happen that a single twist induces a
	commuting product of transvections and
	partial conjugations by $g$ or $g^{-1}$,
	based on whether the loop corresponding to a generator traverses $e$
	once or twice, and in which direction.

	Consider changing the maximal tree $T$ by adding an edge $e$
	that is not a loop and removing $e'$.
	The edges of the loop $\gamma_{\tau(\bar e)}e\bar\gamma_{\tau(e)}$ determine
	a cycle in the homology of the graph, we may choose any oriented edge 
	appearing with positive exponent in this cycle as our $e'$.
	If the loop $\gamma_{\tau(\bar e)}e\bar\gamma_{\tau(e)}$
	represented the generator $x_i$,
	changing the maximal tree corresponds to a commuting product of transvections
	and partial conjugations of generators not equal to $x_k$
	by $x_k$ and $x_k^{-1}$, according to whether the loop 
	corresponding to a generator traverses $e'$ once or twice, 
	and in which direction.
	A similar statement holds for folding an edge $e'$ 
	belonging to the maximal tree across a loop edge $e$.
	Writing down the automorphisms in the order they occur,
	we can write any element of $\aut(G)$ in the Fouxe-Rabinovitch generators.
\end{proof}

\begin{proof}[Proof of \Cref{combing}.]
	To make the foregoing discussion algorithmic, we need only stipulate
	\emph{how} we fold, and place the generators that occur in a commuting product
	in some order. For this step we assume that there is a normal form for
	elements of each $A_i$ and an algorithm to put elements of $A_i$ in normal form.
	For the former question,
	we will perform all folds that do not result in automorphisms first.
	Then, we perform the automorphism where the \emph{acting letter,}
	the $i$ in $\chi_{g_i,j}$ or $\rho_{g_i,j}$, is smallest.
	When we change the maximal tree, choose the edge $e'$ in the cycle 
	whose initial vertex is closest to the terminal vertex of $e$.
	Order the commuting product so that the smallest $j$ appears first.
	
	To determine whether an $(n+k)$-tuple $(g_i)$ of elements of $G$ corresponds,
	as in the statement, to a generating set,
	choose tight paths $\gamma_i$ in $\mathcal{T}_{n,k}(G)$ representing them,
	and form the map $f\colon \mathcal{T}_{n,k}(G) \to \mathcal{T}_{n,k}(G)$
	with $f(e_i) = \gamma_i$.
	Factor $f$ into a product of folds. If the last morphism
	is not an isomorphism, the tuple does not determine a generating set.
	Otherwise we have a normal form for it as above.
\end{proof}

\begin{rk}
	As we remarked earlier,
	nothing about our method of proof is special to $G$ a free product,
	except perhaps the guarantee that every automorphism $\Phi\colon G \to G$
	may be topologically realized on $\mathcal{T}_{n,k}(G)$,
	and our good understanding of automorphisms of $\mathcal{T}_{n,k}(G)$.
\end{rk}

\chapter{Train Track Maps}\label{traintrackchapter}

As we have already seen,
representing an automorphism of $G$
as a map on a graph of groups with fundamental group $G$
is a fruitful way to analyze its structure.
Ideally one wants such a representative to reveal
aspects of the dynamics of the action 
of the automorphism on the fundamental group,
similar to the Jordan normal form of a linear transformation.
In a seminal paper, Bestvina and Handel \cite{BestvinaHandel}
showed that every outer automorphism $\varphi \in \out(F_n)$
has such a representative, called a \emph{relative train track map.}

The inspiration for their construction was
the \emph{Nielsen--Thurston} classification of surface diffeomorphisms.
Let $S$ be an orientable surface with negative Euler characteristic.
The \emph{mapping class group} of $S$,
$\Mod(S)$, also sometimes called the \emph{Teichm\"uller modular group} of $S$,
is the group of homotopy classes of 
orientation-preserving diffeomorphisms of $S$.
Thurston \cite{Thurston} introduced \emph{pseudo-Anosov} diffeomorphisms
$f\colon S \to S$ and showed that for each element $\varphi \in \Mod(S)$, either
\begin{enumerate}
	\item $\varphi$ has finite order,
	\item $\varphi$ is \emph{reducible,} meaning it preserves some
		finite set of homotopy classes of essential, simple closed curves
		on $S$, or
	\item $\varphi$ is represented by a pseudo-Anosov diffeomorphism.
\end{enumerate}
In the reducible case, one may cut $S$ along the preserved collection of curves
and continue classifying.
Thurston proves that pseudo-Anosov representatives are ``efficient''
representatives: they are unique within their
homotopy class and minimize a quantity called \emph{topological entropy.}

A \emph{train track map} (see below for a definition)
is analogous to a pseudo-Anosov diffeomorphism
in that it is an efficient representative for an element of $\out(F_n)$.
Although train track maps are typically not unique,
it is still true that they minimize a form of topological entropy.

Just as not all elements of $\Mod(S)$ 
are represented by pseudo-Anosov diffeomorphisms,
not all elements of $\out(F_n)$ are represented by train track maps.
A \emph{relative train track map} parallels the reducible case above:
rather than cutting a surface, however,
the metaphor is collapsing a subgraph.
The purpose of this chapter is to extend Bestvina and Handel's construction
to automorphisms representable on a graph of groups.

As an application, we answer affirmatively a question of Paulin \cite{Paulin},
who asked whether, for a word-hyperbolic group $G$,
there exists a trichotomy for outer automorphisms $\varphi \in \out(G)$.
Paulin's trichotomy is a common generalization of the situation
for $G$ a free group and 
for $G$ the fundamental group 
of a closed surface with negative Euler characteristic.
See \Cref{paulinquestion} below for details.

\section{Train Track Maps on Graphs}
Let $\Gamma$ be a graph. A homotopy equivalence $f\colon\Gamma \to \Gamma$
is a \emph{train track map} if for all $k \ge 0$,
the restriction of $f^k$ to each edge is an immersion.
The definition is due to Thurston.
Now, in general $f$ itself cannot be an immersion:
since $f_\sharp \colon \pi_1(\Gamma) \to \pi_1(\Gamma)$
is an isomorphism,
if $f$ were an immersion,
the results of the previous chapter would imply that 
$f$ is an automorphism of $\Gamma$,
and would therefore have finite order.

\paragraph{An application of Thurston.}
Let us illustrate one useful consequence of having a train track map
for an outer automorphism $\varphi \in \out(F_n)$.
The following is due to Thurston \cite[Remark 1.8]{BestvinaHandel}.
We may associate a nonnegative matrix $M$ with integer entries
to any self-map of a finite graph
sending edges to edge paths,
once we fix an ordering of the edges.
The $ij$th entry of the matrix $M$ records the
number of times the image of the $j$th edge crosses the $i$th edge
in either direction.
If $f\colon \Gamma \to \Gamma$ is a train track map,
then the number of edges crossed by $f^k(e_j)$
is the sum of the entries in the $j$th column of $M^k$.

Let us make the additional assumption that the transition matrix $M$
is \emph{irreducible} (we give the definition below).
Every irreducible nonnegative integer matrix $M$
has a \emph{Perron--Frobenius eigenvalue} $\lambda \ge 1$,
and a corresponding eigenvector $\vec v$ with positive entries \cite{Seneta}.
If we metrize $\Gamma$ by assigning each edge $e_j$ a length of $\vec v_j$,
then $f$ expands each edge by a factor of $\lambda$.

Suppose our train track map $f\colon \Gamma \to \Gamma$ 
represents $\varphi \in \out(F_n)$.
Although outer automorphisms do not act on elements of $F_n$,
they do act on their conjugacy classes.
If $c$ is such a conjugacy class,
let $|c|$ denote the minimum word length of a representative
in some fixed finite generating set of $F_n$.
The \emph{exponential growth rate} of $c$ under $\varphi$ is
\[ \operatorname{EGR}(\varphi,c) = 
\limsup_{n\to\infty}\frac{\log |\varphi^n(c)|}{n}. \]
A quick calculation reveals that distorting the word length
on $F_n$ by a quasi-isometry does not alter $\operatorname{EGR}(\varphi,c)$.
In particular, the length of an immersed loop representing $\varphi(c)$
in the metric above will do,
and we conclude that $\operatorname{EGR}(\varphi,c) \le \log\lambda$,
with equality when $f$ restricted to the 
immersed loop determined by $c$ is an immersion.
In fact, it turns out that for any conjugacy class $c$,
$\operatorname{EGR}(\varphi,c)$ is either $\log\lambda$ or $0$,
with the latter occurring only when $\varphi$ acts periodically on $c$.
The key tool in completing the proof is Thurston's
Bounded Cancellation Lemma \cite{Cooper}.
We will complete the argument below (see \Cref{boundedcancellation})
once we are equipped with more vocabulary.

\paragraph{}
Now suppose that $f\colon \Gamma \to \Gamma$ takes edges to edge paths,
but that $f$ is not a train track map.
In this case, the Perron--Frobenius eigenvalue of the associated matrix $M$ 
\emph{overestimates} the exponential growth rate.
If one could show that in this situation $f$ may be altered 
so that the eigenvalue $\lambda$ decreases,
it follows that $f$ is a train track map when $\lambda$ reaches a minimum.
Furthermore, one could hope to find a train track map algorithmically
by applying certain moves to decrease the eigenvalue,
and arguing that a minimum must be reached after iterating this process
finitely many times.
In fact, this is exactly what Bestvina and Handel do.

\section{Train Track Maps on Graphs of Groups}
Notice that if $f\colon (\Gamma,\mathscr{G}) \to (\Gamma,\mathscr{G})$
is a \emph{topological realization} of an outer automorphism 
of $\pi_1(\Gamma,\mathscr{G})$,
it makes sense to ask whether $f$ is a train track map.
As foreshadowed in the previous chapter,
it turns out that the whole theory goes through 
so long as one can carry out the first step.
The first main result of this chapter is the following theorem.

\begin{thm}\label{traintrackthm}
	Let $G$ be a  finitely generated group.
	If $\varphi \in \out(G)$ can be realized on some 
	nontrivial splitting of $G$ with finitely generated edge groups,
	then there exists a nontrivial splitting $G = \pi_1(\Gamma,\mathscr{G})$
	with finitely generated edge groups
	and an irreducible train track map
	$f\colon (\Gamma,\mathscr{G}) \to (\Gamma,\mathscr{G})$
	realizing $\varphi$.
\end{thm}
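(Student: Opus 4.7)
The plan is to adapt Bestvina and Handel's construction from \cite{BestvinaHandel} to the graph of groups setting, using the folding machinery of the previous chapter as a replacement for their elementary graph moves. By hypothesis, fix a topological realization $f_0 \colon (\Gamma_0, \mathscr{G}_0) \to (\Gamma_0, \mathscr{G}_0)$ of $\varphi$; after absorbing the subdivision in the domain, we may assume $f_0$ is itself a morphism of graphs of groups. To any such morphism $f$ we associate a nonnegative integer \emph{transition matrix} $M(f)$ indexed by unoriented edges of $\Gamma$, whose $(i,j)$-entry counts the number of times $f(e_j)$ crosses $e_i$ in either direction. The complexity I will try to minimize is the lexicographic pair $\bigl(|E(\Gamma)|, \lambda(f)\bigr)$, where $\lambda(f) \ge 1$ denotes the Perron--Frobenius eigenvalue of $M(f)$ in the irreducible case and the maximum such eigenvalue over the strata of the canonical filtration in general.

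The first reduction is to the irreducible case. If $M(f)$ is reducible, it determines a proper $f$-invariant subgraph of groups $(\Lambda, \mathscr{L}) \subset (\Gamma, \mathscr{G})$ carrying all of the exponential growth; using \Cref{collapsing} I collapse the components of the complement of $\Lambda$ and tighten, producing a new topological realization of $\varphi$ on a smaller (still nontrivial) splitting whose transition matrix is the top irreducible block of the old one. Iterating, we may assume $M(f)$ is irreducible. This step, repeated whenever the later moves expose new invariant subgraphs, is precisely what gives a train track map rather than merely a relative train track map.

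The heart of the argument is the main loop. If $f$ is already a train track map we stop. Otherwise there is an \emph{illegal turn}: a pair of oriented edges $e_1, e_2 \in \st(v)$ such that $f_{\st(v)}$ identifies the initial germs of $f(e_1)$ and $f(e_2)$, i.e. the first edges of these image paths lie in the same coset of the appropriate image of an edge group in $\mathscr{G}_{f(v)}$. This is exactly the data needed to apply \Cref{Stallingsfolding}. I fold maximally along this illegal turn and tighten; a computation parallel to Bestvina--Handel's shows the result is still a topological realization of $\varphi$ and does not increase $\lambda$, strictly decreasing it unless the fold is ``proper'' in a precise sense (in which case $|E(\Gamma)|$ drops) or the fold exposes an invariant subgraph (which we then collapse, returning to the previous paragraph). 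When neither happens, a subdivision combined with one of Bestvina--Handel's auxiliary moves---collapsing a valence-one vertex, or a valence-two homotopy pulling an edge through its interior vertex---sets up a fold that does strictly decrease $\lambda$.

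Termination is the main obstacle, but is handled essentially as in the free group case. The Perron--Frobenius eigenvalues arising along the way are algebraic integers, roots of characteristic polynomials whose degrees and coefficients are controlled by the combinatorics of $f$; the subset of $[1, \infty)$ realized in this way is therefore well-ordered, so $\lambda$ cannot strictly decrease infinitely often. Combined with the observation that every non-$\lambda$-decreasing move either strictly decreases the edge count or exposes an invariant subgraph to collapse, the algorithm halts at an irreducible train track map. Nontriviality of the final splitting is preserved because no move can trivialize $(\Gamma, \mathscr{G})$ without exhibiting $\varphi$ as realizable on a trivial splitting, contradicting the starting hypothesis; and finite generation of edge groups persists because folds produce edge groups that are generated by finitely many conjugates of the original finitely generated edge groups, while subgraph collapses leave edges outside the collapsed part untouched.
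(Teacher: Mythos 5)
Your overall strategy — fold at illegal turns, collapse invariant subgraphs, and use valence-one and valence-two homotopies, driving down the Perron--Frobenius eigenvalue — is essentially the approach the paper takes, following Bestvina--Handel. But your termination argument has a genuine gap.

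You assert that the Perron--Frobenius eigenvalues arising are ``roots of characteristic polynomials whose degrees and coefficients are controlled by the combinatorics of $f$,'' and conclude that the set of such eigenvalues is well-ordered in $[1,\infty)$. That conclusion is false unless you first establish a \emph{uniform} bound on the number of edges of the graphs $\Gamma$ that appear, independent of how many moves have been performed. Without such a bound the degree of the characteristic polynomial is unbounded, and the set of Perron--Frobenius eigenvalues of irreducible nonnegative integer matrices of arbitrary size is \emph{not} well-ordered: for example, the companion matrices of $x^n - x - 1$ are irreducible and their dominant roots decrease strictly to $1$ as $n\to\infty$. So one could, in principle, strictly decrease $\lambda$ infinitely often while the matrix size grows. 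Your lexicographic complexity $\bigl(|E(\Gamma)|,\lambda\bigr)$ does not save you either, since subdivision strictly increases the edge count before the fold that you hope will decrease $\lambda$; a full cycle of moves need not decrease the lexicographic pair.

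What is missing is precisely the paper's Lemma~\ref{complexity}: after removing inessential valence-one and valence-two vertices, the number of edges of any $(\Gamma,\mathscr{G})$ with $\mathbb{G}\succeq(\Gamma,\mathscr{G})$ is bounded by $2\eta(\mathbb{G}) + 3\beta(\mathbb{G}) - 3$, a constant depending only on the fixed reference graph of groups. This bounds the size of every transition matrix that can occur at the end of a cycle; combined with the observation that the Perron--Frobenius eigenvalue bounds the minimum row sum (hence, for irreducible matrices of bounded size with bounded eigenvalue, also bounds the entries), there are only finitely many transition matrices with $\lambda$ below the initial value, and termination follows. You need to state and prove some version of this complexity bound before your well-ordering claim can be made honest. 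You should also verify (as the paper does in Remark~\ref{valencetwodanger}) that the valence-two homotopies you invoke never force an increase in $\lambda$, since in general one may not be free to choose which edge at an inessential valence-two vertex to collapse.
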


\begin{rk}
	If in the statement above,
	$\varphi$ was originally realized on the splitting $(\Lambda,\mathscr{L})$,
	then, modulo appropriate subdivisions,
	there is a morphism $(\Lambda,\mathscr{L}) \to (\Gamma,\mathscr{G})$
	inducing an isomorphism of fundamental groups,
	but there may not be any such morphism in the other direction.
	In the above situation we say that 
	$(\Lambda,\mathscr{L}) \succeq (\Gamma,\mathscr{G})$,
	but we may not have $(\Lambda,\mathscr{L}) \asymp (\Gamma,\mathscr{G})$.
	In terms of the $G$-action on the Bass--Serre tree,
	every vertex stabilizer of $\tilde\Lambda$ 
	stabilizes some vertex of $\tilde\Gamma$,
	but some elements may stabilize vertices of $\tilde\Gamma$
	while acting freely on $\tilde\Lambda$.
\end{rk}

In Bestvina and Handel's original formulation,
not every $\varphi \in \out(F_n)$ is represented by a train track map.
The obstruction to promoting $f\colon \Gamma \to \Gamma$
to an irreducible train track map is finding a noncontractible
$f$-invariant subgraph.
In this case, Bestvina and Handel
find a stratification of $\Gamma$ into subgraphs
and a \emph{relative train track map,}
which preserves the stratification,
and resembles a train track map on each stratum.
We will give a more precise definition below.

In our proof of \Cref{traintrackthm},
$f$-invariant subgraphs are collapsed,
thus we end up with an irreducible train track map.
If $(\Lambda,\mathscr{L})$ is an $f$-invariant subgraph of groups,
it makes sense to restrict $f$ to $(\Lambda,\mathscr{L})$,
and we can apply \Cref{traintrackthm} again.
If we began with a finite graph,
the process terminates in finitely many steps,
and we end up with a stratified collection of train track maps,
where train track maps appearing lower in the stratification
happen within vertex groups of some higher-stratum train track map.

Thus one way to prove the existence of relative train track maps
from the existence of train track maps
is to carefully ``blow up'' the vertex groups
of the higher strata to yield a realization on a stratified graph of groups.
Another way is to adapt the original Bestvina--Handel argument.
This is the second main result of the chapter.

\begin{thm}\label{reltraintrackthm}
	Let $G$ be a finitely generated group.
	If $\varphi\in\out(G)$ is realized by 
	$f\colon(\Gamma,\mathscr{G}) \to (\Gamma,\mathscr{G})$,
	for some splitting $G = \pi_1(\Gamma,\mathscr{G})$
	with finitely generated vertex and edge groups,
	then there is a relative train track map
	$f'\colon (\Gamma',\mathscr{G}') \to (\Gamma',\mathscr{G}')$
	realizing $\varphi$ such that 
	$(\Gamma,\mathscr{G}) \asymp (\Gamma',\mathscr{G}')$.
\end{thm}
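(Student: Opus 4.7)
The plan is to adapt the original Bestvina--Handel argument from \cite{BestvinaHandel} to the graph-of-groups setting, using the folding machinery developed in \Cref{foldingchapter} in place of the classical Stallings foldings. Starting from the given realization $f\colon(\Gamma,\mathscr{G}) \to (\Gamma,\mathscr{G})$, the first step is to associate to $f$ its transition matrix $M(f)$, whose $(i,j)$-entry counts the number of times $f(e_j)$ crosses $e_i$ in either direction. A maximal $f$-invariant filtration
\[\varnothing = \Gamma_0 \subset \Gamma_1 \subset \dotsb \subset \Gamma_m = \Gamma\]
refining the block decomposition of $M(f)$ can be chosen so that each stratum $H_r = \overline{\Gamma_r\setminus\Gamma_{r-1}}$ has either nilpotent (zero) transition or irreducible transition with Perron--Frobenius eigenvalue $\lambda_r \ge 1$, which is exponentially-growing when $\lambda_r > 1$.

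Next I would define a lexicographic complexity on pairs $(f',(\Gamma',\mathscr{G}'))$ with $(\Gamma',\mathscr{G}') \asymp (\Gamma,\mathscr{G})$, built from the decreasing sequence of Perron--Frobenius eigenvalues of the exponentially-growing strata together with auxiliary data such as the number of edges in each stratum. The key lemma to establish, in parallel with Bestvina--Handel, is that if $f$ fails the relative train track property at some exponentially-growing stratum $H_r$, then one of a small menu of moves strictly decreases the complexity: subdivision at the preimage of a vertex, a valence-one or valence-two homotopy, collapsing a pre-trivial edge within $H_r$, or folding two edges of $H_r$ meeting at a common vertex whose $f$-images begin with the same initial direction in $(\Gamma,\mathscr{G})$. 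Each such move has a graph-of-groups realization from \Cref{foldingchapter}, carrying the pseudonaturality data along the way, and termination then follows from well-ordering of the complexity.

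The principal obstacle is ensuring that the moves preserve the splitting up to $\asymp$-equivalence. Subdivision and valence-two homotopy are automatically safe, but a fold within a stratum must be realized by a Type I or Type III fold (in the notation of \Cref{foldingchapter}), which preserve the Bass--Serre tree up to equivariant isomorphism, rather than by a Type II fold, which would enlarge a vertex stabilizer and change $(\Gamma,\mathscr{G})$ up to $\asymp$. I would rule out Type II folds at this stage by showing that such a fold would have to be followed by a vertex collapse (as in \Cref{Stallingsfolding}) that forces $f_\sharp$ to fail to be surjective onto the enlarged vertex group, contradicting the assumption that $f$ realizes an automorphism. The remaining verifications---that each Bestvina--Handel core move (core subdivision, folding at an illegal turn within $H_r$, and sliding) genuinely lowers the complexity and has the expected effect on the transition matrix of $H_r$---are formally analogous to the free-group proof, though substantial bookkeeping is required to track vertex and edge group data and conjugating elements $\delta_e$ through each move.
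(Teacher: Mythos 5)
Your overall plan---adapt Bestvina--Handel's Section 5 directly, with the edge-count bound of \Cref{complexity} replacing $3n-3$ and folding/tightening/valence homotopies as the basic moves---is exactly the route the paper takes (indeed, the paper explicitly defers the bookkeeping to the reader after stating that the original argument ``may be adapted straightforwardly''). The transition-matrix filtration, the lexicographic complexity, and the termination by well-ordering are all in line with the intended proof.

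However, the paragraph about Type~II folds contains two genuine errors. First, you assert that Type~I and Type~III folds ``preserve the Bass--Serre tree up to equivariant isomorphism''; this is false on its face---any nontrivial fold is a quotient that strictly reduces the number of orbits of edges or vertices, and Type~I in particular replaces $\mathscr{L}_{v_1}$ and $\mathscr{L}_{v_2}$ by $\langle\mathscr{L}_{v_1},\mathscr{L}_{v_2}\rangle$, which can enlarge the vertex stabilizer just as a Type~II fold does. Second, there is no need to rule out Type~II folds, and the mechanism you propose (a forced vertex collapse contradicting surjectivity of $f_\sharp$) does not apply: in this setting one folds two edges of $(\Gamma,\mathscr{G})$ that have equal $f$-image, which is a fold of the self-map $f$ and is never followed by a vertex collapse. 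The correct reason that every such fold preserves the $\asymp$-class, irrespective of its type, is that the fold is \emph{induced by} $f$: if $q\colon\tilde\Gamma\to\tilde\Gamma_1$ is the equivariant fold and $\tilde f=\bar f\circ q$, then $\bar f\colon\tilde\Gamma_1\to\tilde\Gamma$ is a $\Phi$-equivariant morphism, so any subgroup elliptic in $\tilde\Gamma_1$ maps under $\Phi$ to a subgroup elliptic in $\tilde\Gamma$; since $\varphi^{-1}$ is also realized on $(\Gamma,\mathscr{G})$ (by a homotopy inverse of $f$), $\Phi^{-1}$ preserves ellipticity and the new vertex stabilizers are already elliptic in $\tilde\Gamma$. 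This is the argument you need in place of the discussion of fold types, and it works uniformly for Type~I, II, and III.
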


A satisfying consequence of the thorough understanding
of morphisms of graphs of groups we developed in previous chapters
is that adapting the original proof poses relatively few new difficulties.

\begin{dfn}
	The collection of $\varphi\in\out(G)$ realizable on a given
	splitting $G = \pi_1(\Gamma,\mathscr{G})$ form a subgroup,
	which we will call the \emph{modular group of $(\Gamma,\mathscr{G})$,}
	$\Mod(\Gamma,\mathscr{G})$.
\end{dfn}

Because every homotopy equivalence of a surface $S$ that restricts to
a homeomorphism on the boundary of $S$ is homotopic to a diffeomorphism,
$\Mod(S)$ may be defined 
as the group of homotopy classes of orientation-preserving 
homotopy equivalences $f\colon S  \to S$.
This parsimony of definitions is the source of our term.

There are other candidates for a name.
Sykiotis \cite{Sykiotis} calls such outer automorphisms \emph{symmetric.}
In some situations, $\Mod(\Gamma,\mathscr{G})$ 
coincides with the subgroup of $\out(G)$ 
preserving the \emph{deformation space} of $G$-trees $\mathfrak{D}$
containing $\tilde\Gamma$,
which Guirardel and Levitt refer to as $\out_\mathfrak{D}(G)$
\cite{GuirardelLevittDeformationSpaces}.
Symmetric automorphisms of free groups already refer to a particular
$\Mod(\Gamma,\mathscr{G})$ on the one hand,
and in certain cases, e.g. the Baumslag--Solitar group $\operatorname{BS}(2,4)$,
the modular group of a graph of groups
is much smaller than $\out_\mathfrak{D}(G)$.

If $G$ is a free product $A_1*\dotsb*A_n*F_k$,
where the $A_i$ are freely indecomposable and not infinite cyclic,
the argument in the previous chapter shows that 
$\out(G) = \Mod(\mathcal{T}_{n,k}(G))$.
The existence of relative train track maps for free products
has been proven several times,
first by Collins and Turner \cite{CollinsTurner},
and later by Sykiotis \cite{Sykiotis} (who allows certain finite edge groups),
Francaviglia and Martino \cite{FrancavigliaMartino},
and most recently by the present author \cite{TrainTracksOrbigraphs}.
This chapter generalizes the early sections of \cite{TrainTracksOrbigraphs}
and comprises parts of a future revision of that paper.

Recently Feighn and Handel \cite{FeighnHandel},
building on earlier work of Bestvina, Feighn and Handel \cite{BestvinaFeighnHandel},
developed an improvement of relative train track maps
they call \emph{completely split relative train track maps,}
CTs for short.
We plan to show how to construct CTs on graphs of groups,
but leave that to the future.

\paragraph{Hierarchical structure and a question.}
Before we proceed, let us note an interesting connection
we are presently unable to tease out in full.
Let $S$ be an orientable surface.
The \emph{curve complex} of $S$, 
denoted $\mathscr{C}(S)$,
is a simplicial complex
with vertices isotopy classes of essential, simple closed curves on $S$,
and with an edge between two vertices when the associated curves
may be realized disjointly.
The complex is \emph{flag;} a higher dimensional simplex is present
if and only if its $1$-skeleton is.\footnote{
Warren Dicks: ``every non-simplex contains a non-edge.''}
$\Mod(S)$ acts on $\mathscr{C}(S)$,
and a celebrated theorem of Masur and Minsky \cite{MasurMinsky}
says that $\mathscr{C}(S)$ is \emph{Gromov-hyperbolic},
that is, negatively curved in a large-scale sense.

A mapping class $\varphi \in \Mod(S)$ is pseudo-Anosov
if and only if it has no periodic orbit in $\mathscr{C}(S)$,
and reducible if and only if $\varphi$ fixes a simplex $\sigma$.
In the latter case, $\varphi$ acts on the subcomplex 
whose vertices are all adjacent to each vertex of $\sigma$.
This turns out to be the curve complex of the surface obtained by
cutting along the curves making up the vertices of $\sigma$.
Thus one may understand elements of $\Mod(S)$
via their action on a hierarchy of hyperbolic spaces.
This situation has been codified by Behrstock, Hagen and Sisto
\cite{BehrstockHagenSisto} into the study of
\emph{hierarchically hyperbolic spaces.}

One interesting aspect of \Cref{traintrackthm}
is how it parallels the hierarchical situation in the mapping class group.
Indeed, in the case where $G$ is a free product,
there are a number of Gromov-hyperbolic complexes
that $\out(G)$ acts on that could play the role of curve complexes.

However, there must be some subtlety:
in \cite{BehrstockHagenSistoII}, Behrstock, Hagen and Sisto
adapt an argument of Bowditch to show that
hierarchically hyperbolic groups---very roughly,
those groups with a nice action on a hierarchically hyperbolic space---satisfy
a \emph{quadratic isoperimetric inequality.}
On the other hand, Handel and Mosher \cite{HandelMosher}
and Bridson and Vogtmann \cite{BridsonVogtmann}
showed that for $n\ge 3$,
the optimal isoperimetric 
inequality---also called the \emph{Dehn function}---for 
$\out(F_n)$ is exponential.

Notably, this argument does not show \emph{where}
the hierarchically hyperbolic machinery fails.
It would be interesting to know for which splittings $(\Gamma,\mathscr{G})$,
if any, the hierarchical structure gestured at here
shows that $\Mod(\Gamma,\mathscr{G})$ is a hierarchically hyperbolic group.
Such a question likely amounts to 
asking for the Dehn function of $\Mod(\Gamma,\mathscr{G})$,
which remains unknown even for $G$ a free product that is not a free group.

\section{Topology of Finite Graphs of Groups}

In the preceding chapters,
we have been carefully describing graphs of groups
and morphisms between them in an essentially algebraic or combinatorial way.
Indeed, in principle everything that follows could be expressed this way.
We find it clarifying to instead adopt the usual language of topology.

Let $(\Lambda,\mathscr{L})$ and $(\Gamma,\mathscr{G})$ be graphs of groups
with fundamental group isomorphic to $G$.
Let $f\colon \tilde\Lambda \to \tilde\Gamma$
be a $G$-equivariant continuous map.
Each edge in the fundamental domain of $\tilde\Lambda$ is sent to
a path (a continuous map of the interval) in $\tilde\Gamma$,
which is homotopic rel endpoints to a piecewise-linear embedding 
with respect to some metric or to a constant map---a \emph{tight edge path,} 
in other words.
Additionally, each $G$-equivariant map is equivariantly homotopic 
to a map that sends vertices to vertices.
In sum, every $G$-equivariant map $f\colon \tilde\Lambda \to \tilde\Gamma$
is equivariantly homotopic to a map 
$f'\colon \tilde\Lambda \to \tilde\Gamma$
which satisfies the following conditions.
\begin{enumerate}
	\item After subdividing each edge of $\tilde\Lambda$ into finitely many edges,
		$f' \colon \tilde\Lambda \to \tilde\Gamma$ becomes a morphism,
	\item $f'$ sends vertices to vertices, and
	\item $f'$ sends edges to tight edge paths.
\end{enumerate}

In fact, the same is true of the induced map 
$(\Lambda,\mathscr{L}) \to (\Gamma,\mathscr{G})$.
We specialize this discussion to self-maps.
\begin{dfn}
	Let $\varphi\in \out(G)$ be an outer automorphism,
	and $(\Gamma,\mathscr{G})$ a graph of groups with
	$\pi_1(\Gamma,\mathscr{G},\star) = G$.
	A map $f\colon (\Gamma,\mathscr{G}) \to (\Gamma,\mathscr{G})$
	is a \emph{topological realization} of $\varphi$
	if it satisfies the three conditions above, does not collapse edges,
	and the induced outer action of $f$ on $(\Gamma,\mathscr{G},\star)$
	is $\varphi$.
	(Note that we do not require $f$ to fix the basepoint.)
\end{dfn}

Thus up to homotopy, every $G$-equivariant map of $G$-trees
may be represented on the quotient graph of groups.
There is however an additional point of subtlety:
the translation between the two depends on a fixed choice of fundamental domain
in each $G$-tree.
In the quotient graph of groups, this is the same data
as fixing an identification of the fundamental group with $G$.

\section{Markings}

In what follows, we will begin with an element 
$\varphi \in \Mod(\Gamma,\mathscr{G})$\footnote{
We are tempted to call it a mapping class.}
and a topological realization $f\colon(\Gamma,\mathscr{G}) \to (\Gamma,\mathscr{G})$.
To promote $f$ to a train track map,
we will perform a series of operations
which change $(\Gamma,\mathscr{G})$.
For $f$ to remain a topological realization of $\varphi$,
we need a way of keeping track of the fundamental group.  

To do this, fix once and for all a reference graph of groups $\mathbb{G}$,
a vertex $\star \in \mathbb{G}$,
and an identification $\pi_1(\mathbb{G},\star)$ with $G$.
For example, if $G$ is a free product,
we may take $\mathbb{G}$ to be the thistle $\mathcal{T}_{n,k}(G)$, 
and $\star$ to be the vertex of $\mathcal{T}_{n,k}(G)$.

\begin{dfn}
	A \emph{marked} graph of groups is a graph of groups $(\Gamma,\mathscr{G})$
	together with a \emph{homotopy equivalence}
	$ \tau\colon \mathbb{G} \to (\Gamma,\mathscr{G}). $
	That is, $\tau$ is a (continuous) map, 
	and there exists a (not unique) map 
	$\sigma\colon (\Gamma,\mathscr{G}) \to \mathbb{G}$
	such that $\tau\sigma$ and $\sigma\tau$ are each homotopic to the identity.
Thus $(\Gamma,\mathscr{G}) \asymp \mathbb{G}$.
\end{dfn}

To prove \Cref{traintrackthm},
it may happen that we need to pass to a graph of groups
$(\Gamma',\mathscr{G}')$ such that $\mathbb{G} \succneqq (\Gamma',\mathscr{G}')$.
The induced map $\tau'\colon \mathbb{G} \to (\Gamma',\mathscr{G}')$
will fail to admit a homotopy inverse,
essentially because it collapses certain edges.
Nonetheless, by the results of the previous chapter,
we may collapse subgraphs to yield $\mathbb{G}'$
and a homotopy equivalence $\tau''\colon \mathbb{G}' \to (\Gamma',\mathscr{G}')$,
which will play the role of a marking.
We will  typically leave  this implicit in what follows.

\section{Irreducibility}

Fix $\varphi\in\Mod(\mathbb{G})$, and a topological realization
$f\colon(\Gamma,\mathscr{G})\to(\Gamma,\mathscr{G})$ of $\varphi$.
Label the edges of $\Gamma$ as $e_1,\dotsc,e_m$.

\begin{dfn}
	The \emph{transition matrix} $M$ of $f$
	is an $m\times m$ matrix with $ij$th entry
	$m_{ij}$ equal to the number of times the edge path $f(e_j)$
	contains $e_i$ in either orientation.
	A matrix with nonnegative integer entries is \emph{irreducible}
	if for each pair $i$ and $j$, there exists some positive integer $k$
	such that the $ij$th entry of $M^k$ is positive.
\end{dfn}

\begin{rk}
	Here is a simple criterion for checking whether such a matrix $M$
	is irreducible.
	Create a \emph{directed} graph $\Gamma_M$ 
	with vertices $v_1,\dotsc v_m$ in one-to-one correspondence 
	with the columns of $M$.
	Draw $m_{ij}$ directed edges from $v_j$ to $v_i$.
	$M$ is irreducible if and only if $\Gamma_M$ is \emph{strongly connected.}
	This means that for each vertex $v \in \Gamma_M$,
	there exists directed paths connecting $v$ to every vertex of $\Gamma_M$
	(including $v$ itself).
\end{rk}

\begin{ex}\label{traintrackex}
	Consider the free product
	\[ G = C_2*C_2*C_2*C_2 = 
	\langle  a,b,c,d \mid a^2 = b^2 = c^2 = d^2 =1 \rangle\]
	and an automorphism $\Phi\colon G \to G$
	defined by its action on generators as
	\[ \Phi\begin{dcases}
		a \mapsto b \\
		b \mapsto c \\
		c \mapsto d \\
		d \mapsto cbdadbc.
	\end{dcases}\]
	(Notice that, e.g. $c^{-1} = c$.)
	Let $\mathcal{T}_4  = \mathcal{T}_{4,0}(G)$.
	A topological realization $f\colon \mathcal{T}_{4} \to \mathcal{T}_{4}$
	of $\Phi$ is described in \Cref{traintrackfig1}.
	The transition matrix of $f$ is
	\[ M = \begin{pmatrix}
		0 & 0 & 0 & 1 \\
		1 & 0 & 0 & 2 \\
		0 & 1 & 0 & 2 \\
		0 & 0 & 1 & 2
	\end{pmatrix}.\]
	One checks that $M$ is irreducible.
	\begin{figure}[ht]
		\begin{center}
			\[
				\begin{tikzpicture}[auto, node distance=4cm, baseline]
				\node[pt, star, scale = 1.5] (v) at (0,0) {};
				\node[pt, "$\langle a \rangle$" left] (a) [above left of=v] {}
					edge[->-, "$e_1$"] (v);
				\node[pt, "$\langle b \rangle$" right] (b) [above right of=v] {}
					edge[->-, "$e_2$"] (v);
				\node[pt,  "$\langle c \rangle$" right] (c) [below right of=v] {}
					edge[->-, "$e_3$"] (v);
				\node[pt, "$\langle d \rangle$" left] (d) [below left of=v] {}
					edge[->-, "$e_4$"] (v);
			\end{tikzpicture}
			\qquad
			f\begin{dcases}
				e_1 \mapsto e_2 \\
				e_2 \mapsto e_3 \\
				e_3 \mapsto e_4 \\
			e_4 \mapsto e_1\bar e_4de_4\bar e_2be_2\bar e_3ce_3
			\end{dcases}
		\]
		\end{center}
		\caption{The topological realization 
		$f\colon \mathcal{T}_4 \to \mathcal{T}_4$.}
		\label{traintrackfig1}
	\end{figure}
\end{ex}

The following theorem is the main tool for the proof of \Cref{traintrackthm}.

\begin{thm}[Perron--Frobenius \cite{Seneta}]
	Let $M$ be an irreducible, nonnegative integral matrix.
	Then there is a unique positive eigenvector $\vec v$
	with norm $1$
	whose associated eigenvalue $\lambda$ satisfies $\lambda \ge 1$.
	If $\lambda = 1$, then $M$ is a transitive permutation matrix.
	If $\vec w$ is a positive vector and $\mu > 0$ satisfies
	$(M\vec w)_i \le \mu\vec w_i$ for all $i$ and $(M\vec w)_j < \mu\vec w_j$
	for some $j$, then $\lambda < \mu$.
\end{thm}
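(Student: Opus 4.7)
The plan is to give the classical Perron--Frobenius argument, taking special care of the two features that distinguish this setting: $M$ has integer entries, and $M$ is irreducible rather than merely positive. For existence of the eigenvector $\vec v$, I would apply Brouwer's fixed point theorem to the continuous self-map of the standard simplex $\Delta = \{\vec v \geq 0 : \|\vec v\|_1 = 1\}$ defined by $\vec v \mapsto M\vec v/\|M\vec v\|_1$; this is well-defined on $\Delta$ because no column of $M$ can be identically zero (otherwise the corresponding row of every power $M^k$ would vanish, contradicting irreducibility). A fixed point yields $\vec v \in \Delta$ with $M\vec v = \lambda\vec v$ for some $\lambda \geq 0$. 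To upgrade $\vec v$ to a \emph{strictly} positive vector, I would use irreducibility to pick, for each fixed $i$, an integer $k$ with $(M^k)_{ij} > 0$ for every $j$; then $\lambda^k v_i = (M^k\vec v)_i = \sum_j (M^k)_{ij} v_j$, and if $v_i$ were zero the right-hand side would be strictly positive, a contradiction.

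The bound $\lambda \geq 1$ falls out of the same calculation together with the integrality hypothesis: irreducibility provides $k \geq 1$ with $(M^k)_{ii} \geq 1$, and then $\lambda^k v_i \geq (M^k)_{ii} v_i \geq v_i$, so $\lambda \geq 1$. For the equality case, I would sum $M\vec v = \vec v$ over $i$ to obtain $\sum_j (c_j - 1) v_j = 0$, where $c_j$ denotes the $j$th column sum of $M$. Since no column is zero we have $c_j \geq 1$, and since each $v_j > 0$ this forces $c_j = 1$ for every $j$. A nonnegative integer matrix whose columns sum to $1$ has exactly one nonzero entry per column, equal to $1$, and therefore encodes a function on indices; no row can be identically zero (again by irreducibility), so this function is a bijection, making $M$ a permutation matrix. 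Irreducibility then identifies the associated permutation with a single cycle, i.e., a transitive permutation.

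For the comparison statement and for uniqueness I would pass to the transpose: applying the existence part to $M^{T}$, which inherits irreducibility and integrality, yields a strictly positive left eigenvector $\vec u$ with $\vec u^{T} M = \lambda' \vec u^{T}$, and a one-line pairing $\lambda'\, \vec u^{T}\vec v = \vec u^{T}M\vec v = \lambda\, \vec u^{T}\vec v$ together with $\vec u^{T}\vec v > 0$ gives $\lambda' = \lambda$. Now, given $\vec w > 0$ and $\mu > 0$ with $M\vec w \leq \mu\vec w$ componentwise and strict inequality in the $j$th coordinate, pairing with $\vec u$ yields
\[
\lambda\, \vec u^{T}\vec w \;=\; \vec u^{T} M \vec w \;\leq\; \mu\, \vec u^{T}\vec w,
\]
and because $u_j > 0$ the strict componentwise inequality at coordinate $j$ promotes this to a strict inequality, giving $\lambda < \mu$. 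Uniqueness of $\vec v$ then follows by a standard trick: if $\vec v'$ were another positive unit eigenvector for $\lambda$, consider the largest $t \geq 0$ with $\vec v - t\vec v' \geq 0$; this difference lies in the $\lambda$-eigenspace and has some vanishing coordinate, so the strict-positivity argument of the first paragraph forces it to vanish identically, giving $\vec v = t\vec v'$ and hence $\vec v = \vec v'$ on $\Delta$. The step I expect to require the most care is the dichotomy in the $\lambda = 1$ case, which is the one place the integer hypothesis is genuinely essential and where the column-sum bookkeeping has to be combined with irreducibility to promote a function to a single-cycle permutation.
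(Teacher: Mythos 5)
The paper does not prove this theorem---it is stated as a citation to Seneta---so I am evaluating your argument on its own terms. The overall architecture is the classical one (Brouwer on the simplex, left eigenvector for the pairing estimate, subtraction trick for uniqueness, column-sum bookkeeping for the $\lambda=1$ case), and most of it is correct, including the careful use of integrality where it is actually needed. One bookkeeping slip: if column $j$ of $M$ vanishes then the $j$th \emph{column} (not row) of every $M^k$ vanishes; your reasoning is right, just the word is wrong.

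There is, however, a genuine gap in the strict-positivity step. You claim that for each fixed $i$ there is a single $k$ with $(M^k)_{ij}>0$ for \emph{every} $j$. This fails for irreducible but imprimitive matrices: for $M=\begin{pmatrix}0&1\\1&0\end{pmatrix}$ one has $(M^k)_{11}$ and $(M^k)_{12}$ alternating between $1$ and $0$, so no such $k$ exists. Fortunately the argument does not need this. Suppose $v_i=0$; since $\vec v\ne 0$, some $v_{j_0}>0$, and irreducibility gives $k$ with $(M^k)_{ij_0}>0$, whence $0=\lambda^k v_i=\sum_j (M^k)_{ij}v_j\ge (M^k)_{ij_0}v_{j_0}>0$, a contradiction. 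This single-index version is what you should quote both in the first paragraph and when you reuse ``the strict-positivity argument'' in the uniqueness step. One further small point on uniqueness: your subtraction trick only rules out other positive unit eigenvectors \emph{for the same} $\lambda$; to finish you should note that the pairing $\vec u^{T}M\vec v'=\lambda\,\vec u^{T}\vec v'$ already forces any positive eigenvector $\vec v'$ to have eigenvalue $\lambda$, so the case of a different eigenvalue never arises. With these two repairs, the proof is complete.
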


If the transition matrix of a topological realization
$f\colon (\Gamma,\mathscr{G}) \to (\Gamma,\mathscr{G})$ 
is irreducible,
we say $f$ is \emph{irreducible}, and
we call the eigenvalue $\lambda$ in the statement above
the \emph{Perron--Frobenius eigenvalue} of $f$.

For the topological realization 
$f\colon \mathcal{T}_4 \to \mathcal{T}_4$ 
in \Cref{traintrackex},
the Perron--Frobnius eigenvalue $\lambda$
is the largest root of the polynomial $x^4 - 2x^3 - 2x^2 - 2x - 1$;
we have $\lambda \approx 2.948$.

\begin{rk}
	Note that the theorem implies that if $M$ and $M'$
	are irreducible matrices with $m_{ij} \le m'_{ij}$
	for all pairs $i$ and $j$ and with strict inequality in at least one pair,
	then the associated Perron--Frobenius eigenvalues $\lambda$ and $\lambda'$
	satisfy $\lambda < \lambda'$.
\end{rk}

\paragraph{Collapsing revisited.}
If $f\colon (\Gamma,\mathscr{G}) \to (\Gamma,\mathscr{G})$
is \emph{not} irreducible, 
then the graph associated to the transition matrix for $f$
is not strongly connected.
This means there is some edge of $\Gamma$ whose forward orbit under $f$
is contained within some 
(possibly disconnected) proper $f$-invariant subgraph of $\Gamma$.

One possibility is that $\tilde\Gamma$ is not \emph{minimal,}
i.e. $\Gamma$ contains a valence-one vertex $v$
with incident edge $e$ such that $\iota_e\colon \mathscr{G}_e \to \mathscr{G}_v$
is an isomorphism.
We will show that such valence-one vertices may be safely removed.

Here is the other possibility.
Suppose $\Gamma_0$ is a proper $f$-invariant subgraph of $\Gamma$
which is maximal with respect to inclusion (among proper subgraphs, naturally).
Let $(\Gamma_1,\mathscr{G}_1)$ be the graph of groups obtained by
collapsing each component of $\Gamma_0$ 
and let $p\colon (\Gamma,\mathscr{G}) \to (\Gamma_1,\mathscr{G}_1)$
be the quotient map.
By \Cref{collapsing}, $f$ yields a map
$f_1\colon (\Gamma_1,\mathscr{G}_1) \to (\Gamma_1,\mathscr{G}_1)$
such that the following diagram commutes
\[\begin{tikzcd}
	(\Gamma,\mathscr{G}) \ar[r,"f"] \ar[d,"p"] & (\Gamma,\mathscr{G}) \ar[d,"p"] \\
	(\Gamma_1,\mathscr{G}_1) \ar[r,"f_1"] & (\Gamma_1,\mathscr{G}_1).
\end{tikzcd}\]
If $\tau\colon \mathbb{G} \to (\Gamma,\mathscr{G})$ is the original marking,
then $p\tau$ is the new marking.
Because $f$ sends edges to tight paths,
for each edge $e$ that is not collapsed,
any maximal segment of $f(e)$ contained in $\Gamma_0$
is not null-homotopic rel endpoints.
It follows that $f_1$ sends edges to tight paths.
Since $\Gamma_0$ was assumed to be maximal, 
$f_1$ does not collapse edges and is thus a topological realization.
Since $\Gamma_0$ was a maximal invariant subgraph,
if $\tilde\Gamma_1$ is minimal, then 
$\Gamma_1$ has no proper $f_1$-invariant subgraph,
so $f_1$ is irreducible.
The transition matrix for $f_1$ is obtained from that for $f$
by removing the rows and columns corresponding to the edges of $\Gamma_0$.

We clearly have $(\Gamma,\mathscr{G}) \succeq (\Gamma_1,\mathscr{G}_1)$.
In order to have $(\Gamma,\mathscr{G}) \asymp (\Gamma_1\mathscr{G}_1)$,
each $g \in G$ must fix a point of $\tilde\Gamma_1$ if and only if
$g$ fixes a point of $\tilde\Gamma$.
This happens if and only
for each component $(\Lambda,\mathscr{L})$ of $\Gamma_0$,
each element of
$\pi_1(\Lambda,\mathscr{L})$ fixes a point of $\tilde\Lambda$.
It follows by finite generation of $G$ that 
there is a point of $\tilde\Lambda$ fixed by all of $\pi_1(\Lambda,\mathscr{L})$;
see for instance \cite{Tits}.
$\tilde\Lambda$ is equivariantly contractible to this fixed point.
Thus $\Lambda$ must be a tree,
and some vertex $v \in \Lambda$ satisfies
$\mathscr{L}_v \cong \pi_1(\Lambda,\mathscr{L})$.

In this case we say $(\Lambda,\mathscr{L})$
is a \emph{contractible tree} in $(\Gamma,\mathscr{G})$.
It is \emph{nontrivial} if it contains an edge.
(This definition is standard for train tracks,
but nonstandard for graph theory.)
If each component of a subgraph $\Gamma_0 \subseteq \Gamma$ 
determines a contractible tree in $(\Gamma,\mathscr{G})$,
we say $\Gamma_0$ is a \emph{contractible forest}.
It is \emph{nontrivial} if some component is nontrivial.
A (necessarily contractible) forest is \emph{pretrivial}
for a homotopy equivalence $f\colon(\Gamma,\mathscr{G}) \to (\Gamma,\mathscr{G})$
if each edge contained in the forest is eventually mapped to a point
by some iterate of $f$.
If $f\colon (\Gamma,\mathscr{G}) \to (\Gamma,\mathscr{G})$
only fails to be a topological realization because it collapses edges,
collapsing a maximal pretrivial forest yields a topological realization.

In what follows, whenever we collapse invariant subgraphs,
we will always be working with a marked graph of groups $(\Gamma,\mathscr{G})$
with $\tilde\Gamma$ minimal.
This avoids the silly case where collapsing a maximal invariant subgraph
causes $G$ to act with global fixed point on the resulting Bass--Serre tree.

\begin{dfn}
	We say that $\varphi\in\Mod(\mathbb{G})$ is \emph{irreducible}
	if for every topological realization 
	$f\colon (\Gamma,\mathscr{G}) \to (\Gamma,\mathscr{G})$
	with $\mathbb{G} \asymp  (\Gamma,\mathscr{G})$
	such that $\tilde\Gamma$ is minimal and
	$\Gamma$ contains no nontrivial 
	$f$-invariant contractible forests,
	we have that $f$ is irreducible.
\end{dfn}

In general, if $\varphi$ belongs to $\Mod(\mathbb{G})$,
where the underlying graph of $\mathbb{G}$ is finite,
then $\varphi$ is irreducible in some $\Mod(\mathbb{G}')$,
where $\mathbb{G} \succeq \mathbb{G}'$---if
only because every topological realization
on a graph of groups with one edge is homotopic
to an automorphism.

\section{Operations on Topological Realizations}

Suppose a topological realization 
$f\colon (\Gamma,\mathscr{G}) \to (\Gamma,\mathscr{G})$
is not a train track map.
Thus there is some edge $e$ of $\Gamma$ and $k > 1$
such that $f^k|_e$ is not an immersion.
By the results of the previous chapter,
$f^k|_e$ factors through a fold.
After performing that fold,
we may be able to tighten $f$,
thus decreasing the associated Perron--Frobenius eigenvalue.

\begin{dfn}
	A \emph{turn} based at a vertex $v$ in a marked graph of groups
	$(\Gamma,\mathscr{G})$ is an unordered pair 
	of oriented edges $\{\tilde e,\tilde e'\}$
	in $\st(\tilde v)$, where $\tilde v$ is a lift of $v$ to $\tilde\Gamma$.
	Equivalently, a turn is a pair of elements $\{([g],e),([g'],e')\}$ of the set
	\[ \coprod_{e\in\st(v)}\mathscr{G}_v/\iota_e(\mathscr{G}_e) \times \{e\}. \]
	We will use both definitions interchangeably.
	A turn is \emph{nondegenerate} if $\tilde e \ne \tilde e'$,
	otherwise it is \emph{degenerate.}
\end{dfn}

\begin{rk}
	Our definition of $\st(v)$ 
	as those oriented edges with \emph{terminal} vertex $v$
	means that our orientation convention for turns
	is the opposite of the $\out(F_n)$ literature.
\end{rk}

A topological realization $f\colon (\Gamma,\mathscr{G}) \to (\Gamma,\mathscr{G})$
yields a self-map $Df$ of the set of turns in $(\Gamma,\mathscr{G})$.
If $\hat f$ is the morphism determined by subdividing $f$, we have
\[ Df\{([g],e),([g'],e')\} = \{\hat f_{\st(v)}([g],e),\hat f_{\st(v)}([g'],e')\}, \]
where $\hat f_{\st(v)}$ is the map defined in \Cref{localmap}.
If $f(eg) = g_{i_1}e_{i_1}\dotsb e_{i_k}g_{i_{k+1}}$,
we have $\hat f_{\st(v)}([g],e) = ([g_{i_{k+1}}],e_{i_k})$.

In \Cref{traintrackex}, $\star$ is mapped to itself by $f$,
and the restriction of $Df$ to $\star$ is determined by the dynamical system 
$e_1 \mapsto e_2 \mapsto e_3 \leftrightarrow e_4$.

\begin{dfn}
	A turn is \emph{illegal} 
	if its image under some iterate of $Df$ is degenerate.
	It is \emph{legal} otherwise.
\end{dfn}

Observe that $\mathscr{G}_v$ acts on the set of turns based at $v$
by left multiplication on the cosets,
and $Df$ respects this action.
Thus legality and illegality is a property of a turn's orbit under $\mathscr{G}_v$.

In \Cref{traintrackex}, a turn $\{ e_i, e_j\}$ based at $\star$
is illegal if $i \equiv j \mod 2$, and is legal otherwise.

Consider the edge path
\[ \gamma = g_1e_1\dotsb e_kg_{k+1}. \]
We say $\gamma$ \emph{crosses} or \emph{takes} the turns
$\{([1], e_i),([g_{i+1}],\bar e_{i+1})\}$ for $1 \le i \le k$.
A path $\gamma$ is \emph{legal} if it takes only legal turns,
and is illegal otherwise.
Thus a topological realization 
$f\colon (\Gamma,\mathscr{G}) \to (\Gamma,\mathscr{G})$
is a \emph{train track map} if and only if $f$ sends each edge
of $\Gamma$ to a legal path.
In \Cref{traintrackex}, $f$ is not a train track map
because the $f$-image of $e_4$ takes the illegal turn $\{\bar e_4,\bar e_2\}$.

\begin{rk}\label{boundedcancellation}
	We can now finish the argument of Thurston.
	Suppose $f\colon \Gamma \to \Gamma$ is an irreducible train track map
	representing $\varphi\in\out(F_n)$
	with Perron--Frobenius eigenvalue $\lambda > 1$,
	and that $c$ is a conjugacy class in $F_n$.
	We may represent $c$ by an immersion $\sigma\colon S^1 \to \Gamma$.
	As we argued above, there exists a metric on $\Gamma$
	with respect to which $f$ 
	expands lengths of legal paths by a factor of $\lambda$.
	For $\gamma\colon S^1 \to \Gamma$,
	let $|\gamma|$ denote the length of the unique immersed
	circle homotopic to $\gamma$ in this metric.

	Since $f$ sends legal paths to legal paths,
	for all $k \ge 0$,
	$f^k(\sigma)$ has a finite number of number of illegal turns 
	bounded independently of $k$.
	At each illegal turn, 
	there may be nontrivial cancellation,
	so the length of an immersed circle homotopic to $f^k(\sigma)$
	is \emph{a priori} not greater than $\lambda^k(\sigma)$.
	On the other hand, the \emph{Bounded Cancellation Lemma}
	\cite{Cooper} implies that the length of cancellation 
	at any illegal turn is bounded independently of $k$ and $\sigma$.
	Thus we have
	\[ \lambda^k|\sigma| - K \le |f^k\sigma| \le \lambda^k|\sigma|, \]
	for some constant $K$.
	It follows that either $|f^k(\sigma)|$ is bounded independent of $k$,
	or $\operatorname{EGR}(\varphi,c) = \log\lambda$.
\end{rk}

\paragraph{\Cref{traintrackex} continued.}
Let us fold $f\colon \mathcal{T}_4 \to \mathcal{T}_4$ 
at the illegal turn $\{ e_4, e_2\}$.
To do this, subdivide $e_4$ at the preimage of the vertex
with vertex group $\langle c\rangle$
into the edge path $e_4'e_4''$,
and fold $e_4''$ with $e_2$.
The action of the resulting map 
$f'\colon (\Gamma_1,\mathscr{G}_1) \to (\Gamma_1,\mathscr{G}_1)$ 
on edges is obtained from $f$ by replacing instances of $e_4$ with $e_4'e_2$.
Thus \[f'(e'_4) = e_1\bar e_2\bar e_4'de_4'e_2\bar e_2b e_2\bar e_3c.\]
We may tighten $f'$ by a homotopy with support on $e_4'$ 
to remove $e_2\bar e_2$, yielding
an irreducible topological realization
$f_1\colon (\Gamma_1,\mathscr{G}_1) \to (\Gamma_1,\mathscr{G}_1)$.
See  \Cref{traintrackfig2}.

\begin{figure}[ht]
	\centering
	\[\begin{tikzpicture}[auto, node distance =4cm, baseline]
			\node[pt, star, scale = 1.5] (v) at (0,0) {};
			\node[pt, "$\langle a \rangle$" left] (a) [above of=v] {}
				edge[->-, "$e_1$"'] (v);
			\node[pt, "$\langle b \rangle$" above] (b) [right of=v] {}
				edge[->-, "$e_2$"] (v);
			\node[pt,  "$\langle c \rangle$" left] (c) [below of=v] {}
				edge[->-, "$e_3$"] (v);
			\node[pt, "$\langle d \rangle$" above] (d) [right of=b] {}
				edge[->-, "$e'_4$"] (b);
		\end{tikzpicture}\qquad
		f_1\begin{dcases}
			e_1 \mapsto e_2 \\
			e_2 \mapsto e_3 \\
			e_3 \mapsto e_4'e_2 \\
			e_4' \mapsto e_1\bar e_2\bar e'_4de'_4be_2\bar e_3c
		\end{dcases}
	\]
	\caption{The topological realization 
	$f_1\colon (\Gamma_1,\mathscr{G}_1) \to (\Gamma_1,\mathscr{G}_1)$.}
	\label{traintrackfig2}
\end{figure}

The Perron--Frobenius eigenvalue $\lambda_1$ for 
$f_1\colon (\Gamma_1,\mathscr{G}_1) \to (\Gamma_1,\mathscr{G}_1)$
is the largest root of the polynomial $x^4 - 2x^3 - 2x^2 +x - 1$
and satisfies $\lambda_1 \approx 2.663$;
thus $\lambda_1 <  \lambda$.

However, $f_1$ is still not a train track map:
$Df_1$ sends the turn $\{(1,e_4),(b,\bar e_2)\}$ crossed by $f_1(e'_4)$
to $\{(c,\bar e_3),(c,\bar e_3)\}$, which is thus illegal.
As explained in the previous chapter, in order to fold again, 
we need to twist the edge $e'_4$ by $b^{-1} = b$.
This changes the marking by replacing $\langle d\rangle$
with $\langle bdb\rangle$.
This also replaces $f_1(e_3)$ with $e'_4be_2$
and replaces $f_1(e'_4)$ with $e_1\bar e_2b\bar e'_4de'_4e_2\bar e_3c$.
Then we fold $e'_4$ and $\bar e_2$,
producing a new marked graph $(\Gamma_2,\mathscr{G}_2)$
which is abstractly isomorphic to $\mathcal{T}_4$, 
but with a different marking.
The action of the resulting map 
$f''\colon (\Gamma_2,\mathscr{G}_2) \to (\Gamma_2,\mathscr{G}_2)$
on edges is obtained by replacing instances of $e'_4$ with $e''_4\bar e_2$.
Thus we have
\[ f''(e''_4) = e_1\bar e_2be_2\bar e''_4bdbe''_4\bar e_2e_2, \]
and we may tighten to produce
an irreducible topological realization 
$f_2\colon (\Gamma_2,\mathscr{G}_2) \to (\Gamma_2,\mathscr{G}_2)$.
See \Cref{traintrackfig3}.

\begin{figure}[ht]
	\centering
	\[\begin{tikzpicture}[auto, node distance = 4cm, baseline]
		\node[pt, star, scale = 1.5] (v) at (0,0) {};
		\node[pt, "$\langle a \rangle$" left] (a) [above left of=v] {}
			edge[->-, "$e_1$"] (v);
		\node[pt, "$\langle b \rangle$" right] (b) [above right of=v] {}
			edge[->-, "$e_2$"] (v);
		\node[pt,  "$\langle c \rangle$" left] (c) [below left of=v] {}
			edge[->-, "$e_3$"] (v);
		\node[pt, "$\langle bdb \rangle$" right] (d) [below right of=v] {}
			edge[->-, "$e''_4$"] (v);
		\end{tikzpicture}
		\qquad
		f_2\begin{dcases}
			e_1 \mapsto e_2 \\
			e_2 \mapsto e_3 \\
			e_3 \mapsto e''_4\bar e_2be_2 \\
			e_4 \mapsto e_1\bar e_2be_2\bar e''_4bdbe''_4
	\end{dcases}\]
	\caption{The topological realization 
	$f_2\colon (\Gamma_2,\mathscr{G}_2) \to (\Gamma_2,\mathscr{G}_2)$.}
	\label{traintrackfig3}
\end{figure}

The Perron--Frobenius eigenvalue $\lambda_2$
is the largest root of $x^4 - 2x^3 - 2x^2 + 2x - 1$
and satisfies $\lambda_2 \approx 2.539$;
thus $\lambda_2 < \lambda_1$.
The restriction of $Df_2$ to turns incident to $\star$
is determined by the dynamical system $e_1 \mapsto e_2 \leftrightarrow e_3$,
$e_4 \mapsto e_4$.
The only nondegenerate illegal turn in $(\Gamma_2,\mathscr{G}_2)$
is $\{e_1,e_3\}$, which is not crossed by the $f_2$-image of any edge,
so $f_2\colon (\Gamma_2,\mathscr{G}_2) \to (\Gamma_2,\mathscr{G}_2)$ 
is a train track map.

The name ``train track map''
asks the reader to imagine drawing the edges incident to $\star$
in such a way that the illegal turn $e_1$ and $e_3$ make a very sharp corner,
while each other pair of eges makes a much looser one.
A legal path, then, is one that does not make any sharp turns---it
is the kind of path a train could make as it moves along its tracks.
In a setting with nontrivial vertex stabilizers,
one should perhaps imagine this picture in the Bass--Serre tree
or in an orbifold sense.

\paragraph{}
The broad-strokes outline of the proof of \Cref{traintrackthm} 
is much the same as in the previous example. 
By folding at illegal turns,
we often produce nontrivial tightening,
which decreases the Perron--Frobenius eigenvalue $\lambda$.
By controlling the presence of valence-one and valence-two vertices,
we may argue that the transition matrix lies in a finite set of matrices,
thus the Perron--Frobenius eigenvalue $\lambda$ 
may only be decreased finitely many times.
In the remainder of the section, 
we make this precise by recalling Bestvina and Handel's original analysis.
The proofs are identical to the original, so we omit them.

\paragraph{Subdivision.}
Given a topological realization,
$f\colon (\Gamma,\mathscr{G}) \to (\Gamma,\mathscr{G})$,
if $p$ is a point in the interior of an edge $e$
such that $f(p)$ is a vertex,
we may give $(\Gamma,\mathscr{G})$ a new graph of groups structure 
by declaring $p$ to be a vertex, with vertex group equal to $\mathscr{G}_e$.

\begin{lem}[Lemma 1.10 of \cite{BestvinaHandel}]
	If $f\colon (\Gamma,\mathscr{G}) \to (\Gamma,\mathscr{G})$
	is a topological realization of $\varphi\in\Mod(\mathbb{G})$,
	and $f_1\colon (\Gamma_1,\mathscr{G}_1) \to (\Gamma_1,\mathscr{G}_1)$
	is obtained by subdivision,
	then $f_1$ is a topological realization of $\varphi\in\Mod(\mathbb{G})$.
	If $f$ is irreducible, then $f_1$ is too,
	and the associated Perron--Frobenius eigenvalues are equal.
	\hfill\qedsymbol
\end{lem}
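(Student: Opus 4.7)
The proof has three parts: showing that $f_1$ is a topological realization of $\varphi$, that the transition matrix $M_1$ has the same Perron--Frobenius eigenvalue $\lambda$ as $M$, and that irreducibility is preserved. First, I would make the subdivision precise. Declaring $p$ to be a vertex splits $e$ into two edges $e'$ and $e''$. Since $f(p)$ is a vertex, $f^{-1}(p)$ is a finite set of interior points of edges (one for each crossing of $e$ by some $f(e_j)$); for $f_1$ to send edges to genuine edge paths I would also declare each preimage to be a vertex, each receiving as its vertex group the edge group of its ambient edge. The resulting graph of groups $(\Gamma_1,\mathscr{G}_1)$ has the same underlying topological space and fundamental group, and the same continuous self-map $f_1 := f$ now sends vertices to vertices and edges to tight edge paths without collapsing. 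In particular the induced outer automorphism is still $\varphi$, and the original marking of $(\Gamma,\mathscr{G})$ passes to a marking of $(\Gamma_1,\mathscr{G}_1)$.

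For the eigenvalue, let $\vec v > 0$ be an eigenvector of $M$ for $\lambda$ giving arc lengths in which $f$ expands each edge by $\lambda$, and take a homotopy representative of $f$ that is affine on each edge in this parametrization. The position of $p$ along $e$ is then forced by the requirement that $f(p)$ be a vertex, and similarly the positions of preimages of $p$ are determined. Assigning each new segment of $\Gamma_1$ its natural arc length produces a positive vector $\vec v_1$ on $E(\Gamma_1)$, with the arc lengths of the segments of $e_j$ summing to $v_j$. Because $f_1$ still expands each segment by a factor of $\lambda$ and the total $\vec v_1$-length of $f_1(e_\alpha)$ is tallied by its crossings of segments in $\Gamma_1$, the vector $\vec v_1$ satisfies the eigenvector equation for $M_1$ with eigenvalue $\lambda$.

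For irreducibility, let $\pi\colon E(\Gamma_1) \to E(\Gamma)$ send each segment to its ambient edge; arrows of $\Gamma_{M_1}$ project to arrows of $\Gamma_M$, so strong connectivity of $\Gamma_{M_1}$ implies that of $\Gamma_M$. For the converse the key observation is that each occurrence of $e_i$ in an edge path $f(e_j)$ corresponds in $\Gamma_1$ to a traversal of all the segments of $e_i$ in succession. Using the arc-length estimate from the previous paragraph, the subpath $f_1^k(e_\beta) \subseteq f^k(\pi(e_\beta))$ has length $\lambda^k v_1[e_\beta]$, a fixed positive proportion of the length $\lambda^k v_{\pi(\beta)}$ of the full iterate; passing to an aperiodic power of $M$ if necessary, for large $k$ this subpath must cross $\pi(e_\alpha)$ and hence every one of its segments, including $e_\alpha$. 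The Perron--Frobenius theorem then identifies $\lambda$ as the Perron--Frobenius eigenvalue of $M_1$. The hard part is precisely this last argument: one must arrange that the fixed fraction of $f^k(\pi(e_\beta))$ captured by $f_1^k(e_\beta)$ genuinely contains a crossing of $\pi(e_\alpha)$, not merely edges appearing near the boundary of the interval, and the uniform distribution of crossings guaranteed by Perron--Frobenius for aperiodic powers of $M$ is what closes this gap.
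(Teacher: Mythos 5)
Your first step misreads the subdivision move. The paper (following Bestvina--Handel) adds only the single new vertex $p$; you additionally declare every point of $f^{-1}(p)$ a vertex, on the grounds that otherwise ``$f_1$ might not send edges to genuine edge paths.'' But condition (3) of a topological realization concerns what an edge \emph{maps to}, not the preimage of a vertex. After subdividing only at $p$, the map $f_1 = f$ still sends every edge of $\Gamma_1$ to a tight edge path: a path in $\Gamma$ that crossed $e$ becomes, in $\Gamma_1$, a path that crosses $e'e''$ (or $\bar e''\bar e'$), which is still a tight edge path passing through the new vertex $p$. The requirement that domain edges map onto single edges is condition (1), which permits a \emph{temporary} subdivision of the domain without altering $\Gamma_1$. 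Your extra vertices genuinely change the operation (and the size of $M_1$), and your own rationale would then force subdividing at $f^{-1}(f^{-1}(p))$ and so on, an infinite regress the paper's definition is designed to avoid.

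The gap you flag at the end is real, and the ``uniform distribution of crossings'' you invoke is not a fact you can cite; the iterated images need not distribute crossings evenly, and the cleaner argument bypasses this entirely. For $e_\alpha$ not a segment of $e$, $f_1^k(e_\alpha)$ is literally $f^k(e_\alpha)$ read in $\Gamma_1$, so whenever $f^k(e_\alpha)$ crosses $e$ it crosses both $e'$ and $e''$; irreducibility of $M$ then handles all pairs involving such $e_\alpha$. For $e_\alpha = e'$ (the only subtle case), $f_1(e')$ is a nontrivial tight edge path ending at the vertex $f(p)$. If it crosses any edge not a segment of $e$, reduce to the previous case; if it stays in $\{e',e''\}$, then since $\lambda > 1$ (which always holds, since $\lambda = 1$ would make $f(e)$ a single edge and leave no interior point mapping to a vertex) the length estimate forces $f_1(e')$ to cross $e''$, and then $f_1(e'')$ must also lie in $\{e',e''\}$, so $\{e\}$ is $f$-invariant, contradicting irreducibility of $M$ unless $\Gamma$ has a single edge, where one checks $M_1$ directly. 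Your eigenvector computation is essentially correct, but note its conclusion -- that $\lambda$ is \emph{the} Perron--Frobenius eigenvalue of $M_1$ -- presupposes the irreducibility of $M_1$ you have not yet established, so the logical order should be reversed.
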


\paragraph{Valence-One Homotopy.}
We distinguish two kinds of valence-one vertex of $(\Gamma,\mathscr{G})$.
A valence-one vertex with incident edge $e$ is \emph{inessential} 
if the monomorphism $\iota_e\colon \mathscr{G}_e \to \mathscr{G}_v$
is an isomorphism.
A valence-one vertex is \emph{essential} if it is not inessential.
The Bass--Serre tree is minimal if and only if $(\Gamma,\mathscr{G})$
contains no inessential valence-one vertices.

If $v$ is an inessential valence-one vertex with incident edge $e$,
then $e$ is a contractible subtree. 
Let $(\Gamma_1,\mathscr{G}_1)$ denote the subgraph of groups
determined by $\Gamma_1 \setminus \{e,v\}$,
and let $\pi\colon (\Gamma,\mathscr{G}) \to (\Gamma_1,\mathscr{G}_1)$
be the collapsing map.
Let $f_1\colon(\Gamma_1,\mathscr{G}_1) \to (\Gamma_1,\mathscr{G}_1)$
be the topological realization obtained from $\pi f|_{(\Gamma_1,\mathscr{G}_1)}$
by tightening and collapsing a maximal pretrivial forest.
We say $f_1\colon (\Gamma_1,\mathscr{G}_1) \to (\Gamma_1,\mathscr{G}_1)$
is obtained from $f\colon (\Gamma,\mathscr{G}) \to (\Gamma,\mathscr{G})$
by a \emph{valence-one homotopy}.

\begin{lem}[Lemma 1.11 of \cite{BestvinaHandel}]
	If $f\colon (\Gamma,\mathscr{G}) \to (\Gamma,\mathscr{G})$
	is an irreducible topological realization
	with Perron--Frobenius eigenvalue $\lambda$
	and $f_1\colon (\Gamma_1,\mathscr{G}_1) \to (\Gamma_1,\mathscr{G}_1)$
	is obtained from $f\colon(\Gamma,\mathscr{G}) \to (\Gamma,\mathscr{G})$
	by performing valence-one homotopies 
	on all inessential valence-one vertices of $(\Gamma,\mathscr{G})$
	followed by the collapse of a maximal invariant subgraph,
	then $f_1\colon (\Gamma_1,\mathscr{G}_1) \to (\Gamma_1,\mathscr{G}_1)$
	is irreducible, and the associated Perron--Frobenius eigenvalue
	$\lambda_1$ satisfies $\lambda_1 < \lambda$.
	\hfill\qedsymbol
\end{lem}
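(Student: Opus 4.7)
The plan is to apply the Perron--Frobenius theorem's strict-inequality criterion to the transition matrix $M_1$ of $f_1$, after metrizing $\Gamma$ so that $f$ uniformly stretches edges by $\lambda$. Concretely, I would assign each edge $e_j$ length $v_j$, where $\vec v$ is the positive eigenvector associated to $\lambda$, so that $|f(e_j)|_{\vec v} = \lambda v_j$ for every edge $e_j$.

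Let $\Gamma_0 \subseteq \Gamma$ be the union of all edges removed in passing from $(\Gamma,\mathscr{G})$ to $(\Gamma_1,\mathscr{G}_1)$: edges incident to inessential valence-one vertices, edges in the pretrivial forests collapsed during the attendant tightening, and edges in the maximal invariant subgraph of the intermediate realization collapsed last. The lemma is only informative when at least one inessential valence-one vertex exists, so I would assume that and conclude $E(\Gamma_0) \neq \varnothing$. For each $e_j \in E(\Gamma_1)$, the path $f_1(e_j)$ is obtained from $f(e_j)$ by deleting all occurrences of edges of $\Gamma_0$ and tightening; both operations can only shorten tight edge paths. Writing $\vec v_1$ for the restriction of $\vec v$ to $E(\Gamma_1)$, this yields
\[ |f_1(e_j)|_{\vec v_1} \;\le\; \lambda v_j - \sum_{e_i \in \Gamma_0} M_{ij}\, v_i, \]
and hence $M_1 \vec v_1 \le \lambda \vec v_1$ coordinatewise.

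The heart of the argument is producing strict inequality in at least one coordinate. Were every inequality above an equality, then no $f(e_j)$ with $e_j \in E(\Gamma_1)$ would cross any edge of $\Gamma_0$, making $\Gamma_1$ a proper $f$-invariant subgraph of $\Gamma$---contradicting the irreducibility of $f$, which is equivalent to strong connectivity of the directed graph $\Gamma_M$ associated with $M$. With strict inequality in hand, and with $M_1$ irreducible (by maximality of the collapsed invariant subgraph together with minimality of $\tilde\Gamma_1$, all inessential valence-one vertices having been removed), the third assertion of the Perron--Frobenius theorem delivers $\lambda_1 < \lambda$.

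The hard part will be the bookkeeping. The procedure cascades: each valence-one homotopy can introduce further inessential valence-one vertices and further pretrivial forests, so the definition of $\Gamma_0$ above is genuinely inductive. I would verify by induction on the number of basic operations performed that the net effect on the $f$-image of each surviving edge is exactly deletion of $\Gamma_0$-edges followed by tightening, which reduces the problem to the single bound displayed above.
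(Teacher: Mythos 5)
Your argument is correct and is essentially the same proof that Bestvina--Handel give for Lemma~1.11, which the paper cites and omits: metrize by the Perron--Frobenius eigenvector so that $f$ stretches each edge by $\lambda$, observe that valence-one homotopy (together with the attendant tightenings and collapses) can only shorten images of surviving edges, and then invoke the strict-inequality clause of the Perron--Frobenius theorem once some image is strictly shorter. Your device for obtaining the strict inequality---if every surviving edge's image avoided the removed edges $\Gamma_0$ entirely, the surviving edges would span a proper $f$-invariant subgraph, contradicting strong connectivity of $\Gamma_M$---is exactly the right way to handle the cascading removals and is how the original argument goes; the only cosmetic point worth noting is that $\lvert f_1(e_j)\rvert \le \lambda v_j - \sum_{e_i\in\Gamma_0}M_{ij}v_i$ really bounds the $j$th entry of $M_1^{\mathsf T}\vec v_1$ rather than of $M_1\vec v_1$, but since $M_1$ and $M_1^{\mathsf T}$ share their Perron--Frobenius eigenvalue (and the paper itself elides the same transpose in its ``metrize by $\vec v_j$'' discussion), this does not affect the conclusion.
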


\paragraph{Valence-Two Homotopy.}
We likewise distinguish two kinds of 
valence-two vertex of $(\Gamma,\mathscr{G})$.
A valence-two vertex $v$ with incident edges $e_i$ and $e_j$
is \emph{inessential} if at least one of the monomorphisms
$\iota_{e_i}\colon \mathscr{G}_{e_i} \to \mathscr{G}_v$ and
$\iota_{e_j}\colon \mathscr{G}_{e_j} \to \mathscr{G}_v$
is an isomorphism,
say $\iota_{e_j}\colon \mathscr{G}_{e_j} \to  \mathscr{G}_v$.
Again, it is \emph{essential} if it is not inessential.
(For convenience, if $(\Gamma,\mathscr{G})$ has one vertex,
that vertex is essential regardless of its valence.)
Let $\pi$ be the map that collapses $e_j$ to a point
and expands $e_i$ over $e_j$.
Define a map $f'\colon (\Gamma,\mathscr{G}) \to (\Gamma,\mathscr{G})$
by tightening $\pi f$.
Observe that no vertex of $(\Gamma,\mathscr{G})$ is mapped to $v$.
Thus we may define a new graph of groups structure 
$(\Gamma',\mathscr{G}')$
by removing $v$ from the set of vertices.
Thus the edge path $e_i\bar e_j$ is now an edge,
which we will call $e_i$.
Let $f''\colon (\Gamma',\mathscr{G}') \to (\Gamma,\mathscr{G}')$
be the map obtained by
tightening $f''(e_i) = f'(e_i\bar e_j)$.
Finally, let $f_1\colon(\Gamma_1,\mathscr{G}_1) \to (\Gamma_1,\mathscr{G}_1)$
be the topological realization obtained by collapsing
a maximal pretrivial forest.
We say that $f_1\colon(\Gamma_1,\mathscr{G}_1)\to(\Gamma_1,\mathscr{G}_1)$
is obtained by a \emph{valence-two homotopy of $v$ across $e_j$.}

\begin{lem}[Lemma 1.12 of \cite{BestvinaHandel}]
	Let $f\colon(\Gamma,\mathscr{G}) \to (\Gamma,\mathscr{G})$
	be an irreducible topological realization,
	and suppose $(\Gamma,\mathscr{G})$
	has no inessential valence-one vertices.
	Suppose $f_2\colon(\Gamma_2,\mathscr{G}_2) \to (\Gamma_2,\mathscr{G}_2)$
	is the irreducible topological realization
	obtained by performing a valence-two homotopy of
	$v$ across $e_j$
	followed by the collapse of a maximal invariant subgraph.
	Let $M$ be the transition matrix of $f$
	and choose a positive eigenvector $\vec w$ with
	$M\vec w = \lambda \vec w$.
	If $\vec w_i \le \vec w_j$,
	then $\lambda_2 \le \lambda$;
	if $\vec w_i < \vec w_j$,
	then $\lambda_2 < \lambda_1$.
	\hfill \qedsymbol
\end{lem}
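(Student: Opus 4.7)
The plan is to apply the Perron--Frobenius comparison theorem: for an irreducible nonnegative matrix $M_2$, if $\vec w' > 0$ satisfies $M_2 \vec w' \le \mu \vec w'$ entrywise with strict inequality in at least one coordinate, then $\lambda_2 < \mu$. I would construct such a $\vec w'$ from the given Perron eigenvector $\vec w$ of $M$: for edges $e_k$ of $\Gamma_2$ inherited from $\Gamma$, set $\vec w'_k = \vec w_k$; for the new edge $E$ of $\Gamma_2$ corresponding to the edge path $e_i\bar e_j$ in $\Gamma$, set $\vec w'_E = \vec w_i + \vec w_j$. With this choice, the new metric on $\Gamma_2$ agrees with the old metric on $\Gamma$ under the identification $E \leftrightarrow e_i\bar e_j$, making the length bookkeeping transparent.

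First I would establish the combinatorial backbone. Since $v$ has valence two with only $e_i$ and $e_j$ incident, tightness of any edge path in $\Gamma$ forces every interior occurrence of $e_i$ to be immediately followed by $\bar e_j$, and symmetrically for the other three orientations. Consequently, interior occurrences of $e_i^\pm$ and $e_j^\mp$ pair up into traversals of $E$ in $\Gamma_2$. For an edge $e_k$ with $k \ne i, j$ such that $f$ sends neither endpoint of $e_k$ to $v$, the pairing forces $m_{ik} = m_{jk}$, and a direct length computation then yields $(M_2 \vec w')_k = \lambda \vec w'_k$. For the new edge $E$, I would bound the length of $f_2(E) = \mathrm{tight}(\pi f(e_i\bar e_j))$ by the combined length of $\pi f(e_i)$ and $\pi f(\bar e_j)$ and invoke the key identity
\begin{equation*}
	(M\vec w)_i - (M\vec w)_j = \lambda(\vec w_i - \vec w_j) \le 0,
\end{equation*}
which, combined with the pairing, forces $(M_2 \vec w')_E \le \lambda \vec w'_E$ with strict inequality when $\vec w_i < \vec w_j$. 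The Perron--Frobenius comparison then yields $\lambda_2 \le \lambda$, strictly so in the strict case.

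The main obstacle will be the boundary exceptions to the pairing: if $f$ sends some endpoint of $e_k$ to $v$, then $m_{ik} - m_{jk}$ equals a signed correction $(c + d) - (c' + d') \in \{-2, -1, 0, 1, 2\}$ depending on which of $\bar e_i, \bar e_j, e_i, e_j$ appears as a boundary edge of $f(e_k)$. I would handle these discrepancies in two steps: first, cancellations in $\pi f(e_k)$ arising when a collapsed $e_j$ sits between two boundary edges of the same type are captured by the tightening loss, which absorbs negative-orientation boundary terms; second, the final collapse of a maximal $f_2$-invariant subgraph in the construction of $f_2$, as Bestvina and Handel employ in the free-group setting of Lemma~1.12 of~\cite{BestvinaHandel}, eliminates any edges whose boundary terms remain positive without raising the Perron--Frobenius eigenvalue. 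Once these accounting issues are resolved, the coordinate-wise inequality $M_2 \vec w' \le \lambda \vec w'$ is established, and the Perron--Frobenius comparison completes the proof.
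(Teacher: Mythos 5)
The paper does not actually give a proof of this lemma; it refers to Bestvina--Handel's Lemma~1.12 and says the proofs are identical and omitted. So I'm comparing your proposal against the argument that must be supplied.

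There is a genuine gap, and it lies in the very first step: the choice of the test vector. You set $\vec w'_E = \vec w_i + \vec w_j$, on the grounds that this is the natural ``length'' of the combined edge $E = e_i\bar e_j$, and then you hope to establish $M_2\vec w' \le \lambda\vec w'$. But with the convention stated in this paper, $m_{kl}$ counts the occurrences of $e_k$ in $f(e_l)$ and the eigenvector satisfies $M\vec w = \lambda\vec w$; plugging your $\vec w'$ into the (pre-tightening) matrix $M_1^0$ of the valence-two-homotopied map and subtracting $\lambda\vec w'$ yields, for $k \ne E$,
\[
(M_1^0\vec w')_k - \lambda\vec w'_k \;=\; m_{ki}\vec w_j + m_{kj}\vec w_i \;\ge\; 0,
\]
which is the \emph{wrong sign}: every coordinate moves in the direction that would prevent the Perron--Frobenius comparison from applying. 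Your choice of $\vec w'_E$ never lets the hypothesis $\vec w_i \le \vec w_j$ enter the calculation with the correct sign, which should be a warning flag since the lemma is false without that hypothesis. Your fallback plan --- that tightening absorbs negative boundary corrections and that the final collapse of a maximal invariant subgraph eliminates the positive ones --- does not repair this. Tightening only \emph{lowers} entries of $M_1$, so it cannot flip a uniformly non-negative surplus into a non-positive one; and the maximal invariant subgraph is determined by the directed graph of $M_1$, not by which columns happen to carry a positive correction term, so there is no reason for the offending edges to be swallowed by the collapse.

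The fix is to take $\vec w'_E = \vec w_i$ --- the eigenvector entry of the \emph{surviving} edge, not the metric length of the combined edge path. Since an $E$-crossing in $\pi f(e_l)$ is precisely an $e_i$-crossing in $f(e_l)$, one has $(M_1^0)_{El} = m_{il}$, $(M_1^0)_{kE} = m_{ki} + m_{kj}$, and $(M_1^0)_{EE} = m_{ii} + m_{ij}$, from which a short computation (using $\sum_l m_{kl}\vec w_l = \lambda\vec w_k$) gives
\[
(M_1^0\vec w')_k - \lambda\vec w'_k = m_{kj}(\vec w_i - \vec w_j) \quad (k\ne E),
\qquad
(M_1^0\vec w')_E - \lambda\vec w'_E = m_{ij}(\vec w_i - \vec w_j).
\]
Both corrections carry the single factor $(\vec w_i - \vec w_j)$, so the hypothesis $\vec w_i \le \vec w_j$ gives $M_1^0\vec w' \le \lambda\vec w'$ entrywise; tightening and the collapse of a maximal invariant subgraph only decrease entries or delete rows and columns, so $M_2\vec w'' \le \lambda\vec w''$ follows for the restriction $\vec w''$, and the Perron--Frobenius comparison gives $\lambda_2 \le \lambda$. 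When $\vec w_i < \vec w_j$, irreducibility of $M$ forces $m_{kj} > 0$ for some $k\ne j$ (otherwise $e_j$ would be an invariant subgraph), producing a strict inequality in that coordinate, and one checks it survives the collapse to give $\lambda_2 < \lambda$. None of the delicate boundary bookkeeping in your proposal is needed once the test vector is chosen correctly.
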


\begin{rk}\label{valencetwodanger}
	The statement of the lemma hides a problem:
	if we cannot freely choose which edge incident to an inessential
	valence-two vertex to collapse via a valence-two homotopy,
	we may be forced to \emph{increase} $\lambda$.
	Fortunately, such valence-two vertices are not produced
	in the proof of \Cref{traintrackthm}.
\end{rk}

\paragraph{Folding.}
We have already seen folding in the previous chapter.
We may fold edges $e$ and $e'$ who share a common terminal vertex
and whose images under
$f\colon(\Gamma,\mathscr{G}) \to (\Gamma,\mathscr{G})$
are equal.
More generally,
we may subdivide $e$ and $e'$ and fold their maximal
common terminal segment.

\begin{lem}[Lemma 1.15 of \cite{BestvinaHandel}]
	Suppose $f\colon  (\Gamma,\mathscr{G}) \to (\Gamma,\mathscr{G})$
	is an irreducible topological realization
	and that $f_1\colon (\Gamma_1,\mathscr{G}_1) \to (\Gamma_1,\mathscr{G}_1)$
	is obtained by folding a pair of edges.
	If $f_1$ is a topological realization, then it is irreducible,
	and the associated Perron--Frobenius eigenvalues
	satisfy $\lambda_1 = \lambda$.
	Otherwise, 
	let $f_2\colon (\Gamma_2,\mathscr{G}_2) \to (\Gamma_2,\mathscr{G}_2)$
	be the irreducible topological representative obtained
	by tightening, collapsing a maximal pretrivial forest,
	and collapsing a maximal invariant subgraph.
	Then the associated Perron--Frobenius eigenvalues satisfy
	$\lambda_2 < \lambda$.
	\hfill \qedsymbol
\end{lem}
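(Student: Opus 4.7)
The plan is to track what folding does to the transition matrix and then to apply the comparison clause of the Perron--Frobenius theorem from the excerpt: if $\vec{w}$ is positive and $M\vec{w} \le \mu\vec{w}$ with strict inequality in at least one coordinate, the Perron eigenvalue of $M$ is strictly less than $\mu$. Before anything else I would observe that irreducibility passes from $f$ to $f_1$: any proper $f_1$-invariant subgraph of $\Gamma_1$ would pull back through the folding map to a proper $f$-invariant subgraph of $\Gamma$, contradicting irreducibility of $f$. Hence whichever positive eigenvector I can produce for $M_1$ will automatically be the Perron eigenvector.

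For the case when $f_1$ is a topological realization, let $\hat{e} \in \Gamma_1$ be the image of the folded pair $e_1, e_2 \in \Gamma$, and let $\vec{v}$ be the positive Perron eigenvector of the transition matrix $M$ of $f$ with eigenvalue $\lambda$. Because the fold identifies $f(e_1)$ with $f(e_2)$, the $e_1$ and $e_2$ columns of $M$ are equal after composing with the folding projection, and the $e_1$ and $e_2$ rows sum to the $\hat{e}$ row of $M_1$. Define $\vec{v}_1$ on $\Gamma_1$ by $\vec{v}_1(\hat{e}) = \vec{v}(e_1) + \vec{v}(e_2)$ and $\vec{v}_1(e) = \vec{v}(e)$ for every other edge. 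A direct bookkeeping check using the row and column identities will give $M_1 \vec{v}_1 = \lambda \vec{v}_1$, and since $M_1$ is irreducible with positive eigenvector $\vec{v}_1$, Perron--Frobenius yields $\lambda_1 = \lambda$.

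For the case when $f_1$ is not a topological realization, I would keep the amalgamated vector $\vec{v}_1$ and compare it against the matrix $M_2$ built from $f_2$. The three reductions used to produce $f_2$ — tightening, collapsing a maximal pretrivial forest, and collapsing a maximal invariant subgraph — all either shrink entries of the transition matrix or delete rows and columns entirely. Restricting $\vec{v}_1$ to the surviving edge set $S$ gives a positive vector $\vec{w}$ with $M_2 \vec{w} \le \lambda \vec{w}$ coordinate-wise. The assumption that $f_1$ failed to be a topological realization forces at least one of the three reductions to be genuinely nontrivial, so a positive contribution present in the identity $M_1 \vec{v}_1 = \lambda \vec{v}_1$ is removed; irreducibility of $f_1$ guarantees an $e \in S$ whose transition entries genuinely see this removal, producing a strict inequality $(M_2 \vec{w})_e < \lambda \vec{w}_e$. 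Perron--Frobenius then gives $\lambda_2 < \lambda$.

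The main obstacle is precisely this last point: exhibiting a coordinate in which the inequality is strict. When tightening consumes a nontrivial subpath, the strict loss occurs in the column of the shortened edge, which is easy. The delicate case is when only pretrivial or invariant-subgraph collapse occurs: I need an edge $e \in S$ whose $f_1$-image (or that of some edge hit by $f_1(e)$) crosses the collapsed region, and here irreducibility of $f_1$ is exactly the tool that lets me find such an $e$. Carefully interleaving the fold bookkeeping, the tightening, and the two kinds of collapse — so that no accidental contribution from outside $S$ or from cancellation is double-counted — will be the technical heart of the argument.
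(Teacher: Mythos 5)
The paper does not supply a proof of this lemma---it declares the argument identical to Bestvina--Handel's---so I am judging your proposal on its own terms. The first half is complete and correct. Since the fold is performed on a pair of edges with $f(e_1)=f(e_2)$, the $e_1$ and $e_2$ columns of $M$ are already equal, and you use this (twice) to check that the amalgamated vector $\vec{v}_1$ with $\vec{v}_1(\hat{e})=\vec{v}(e_1)+\vec{v}(e_2)$ satisfies $M_1\vec{v}_1=\lambda\vec{v}_1$; the pullback argument for irreducibility of $M_1$ is sound, and together these give $\lambda_1=\lambda$.

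The second half has a genuine gap where you flag it, and the condition you propose to verify has its indices reversed. You need a row index $i$ in the surviving edge set $S$ with $(M_2\vec{w})_i<\lambda\vec{w}_i$. In the identity $\sum_j (M_1)_{ij}\vec{v}_1(j)=\lambda\vec{v}_1(i)$, the terms discarded in passing to $(M_2\vec{w})_i$ are the ones with $j$ in the collapsed part $\Gamma_0$, together with any entries of the $S$-block decreased by tightening. If tightening already decreased some entry $(i_0,j_0)$ with $i_0\in S$ you are done. Otherwise strictness must come from a nonzero $(M_1)_{ij}$ with $i\in S$ and $j\in\Gamma_0$; that is, an edge \emph{of} $\Gamma_0$ whose $f_1$-image crosses an edge \emph{of} $S$. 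You instead ask for an edge of $S$ whose image crosses $\Gamma_0$---this controls entries with $i\in\Gamma_0$, $j\in S$, which live in rows that get deleted and cannot contribute the strict inequality. The existence of the needed entry is exactly where irreducibility of $M_1$ enters: if $(M_1)_{ij}=0$ for every $i\in S$, $j\in\Gamma_0$, then $\Gamma_0$ would be a proper nonempty $M_1$-invariant subgraph, contradicting irreducibility. If you prefer to avoid the case split entirely, you can instead cite the strict-monotonicity form of Perron--Frobenius (if $A$ is irreducible, $0\le B\le A$, and $B\ne A$, then the spectral radius of $B$ is strictly less than that of $A$; this is in Seneta): the tightened matrix $M_1'$ satisfies $M_1'\le M_1$ and $M_1'\ne M_1$, so its spectral radius is strictly below $\lambda_1=\lambda$, and $M_2$ is entrywise dominated by a principal submatrix of $M_1'$, giving $\lambda_2<\lambda$ without having to localize the strict coordinate.
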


\section{Proof of the Main Theorem}
We begin with $\varphi \in \Mod(\mathbb{G})$,
and a topological realization 
$f\colon (\Gamma,\mathscr{G}) \to (\Gamma,\mathscr{G})$.
By performing preliminary valence-one and valence-two homotopies,
we may assume that $(\Gamma,\mathscr{G})$ has no inessential
valence-one and valence-two vertices.
Indeed, we may also assume that $\mathbb{G}$ does not have
inessential valence-one and valence-two vertices.
Likewise, by collapsing a maximal invariant subgraph,
we may assume that $f\colon(\Gamma,\mathscr{G}) \to (\Gamma,\mathscr{G})$
is irreducible.

\begin{lem}\label{complexity}
	Given a graph of groups $(\Gamma,\mathscr{G})$,
	let $\eta(\Gamma,\mathscr{G})$ be the number of vertices $v$ 
	of $\Gamma$ such that for each oriented edge $e \in \st(v)$,
	the monomorphism
	$\iota_e\colon \mathscr{G}_e \to \mathscr{G}_v$ is not surjective.
	For the moment we call these vertices \emph{marked.}
	Let $\beta(\Gamma)$ be the first Betti number of $\Gamma$.
	If $\mathbb{G} \succeq (\Gamma,\mathbb{G})$ 
	and $(\Gamma,\mathscr{G})$ has no inessential
	valence-one or valence-two vertices,
	then $\Gamma$ has at most
	$2\eta(\mathbb{G}) + 3\beta(\mathbb{G}) - 3$ edges.
\end{lem}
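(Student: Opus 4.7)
The plan is to combine two observations: a valence-counting bound applied directly to $\mathbb{G}$ giving $|E(\mathbb{G})| \leq 2\eta(\mathbb{G}) + 3\beta(\mathbb{G}) - 3$, together with the monotonicity $|E(\Gamma)| \leq |E(\mathbb{G})|$ under the $\succeq$ relation. This approach sidesteps any need to compare $\eta$ or $\beta$ of $(\Gamma, \mathscr{G})$ directly with those of $\mathbb{G}$, which turns out to be a delicate matter in its own right.

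For the first step I would partition $V(\mathbb{G})$ by valence into $V_1$, $V_2$, and $V_{\geq 3}$, and further split $V_{\geq 3}$ into marked and unmarked vertices $V_{\geq 3}^m \sqcup V_{\geq 3}^u$. The standing assumption that $\mathbb{G}$ has no inessential valence-one or valence-two vertices ensures that every vertex in $V_1 \cup V_2$ is marked: for a valence-one vertex $v$ with incident edge $e$, $\iota_e$ must be a proper inclusion, and for a valence-two vertex both incident inclusions must be proper. Hence
\[
\eta(\mathbb{G}) \;=\; |V_1| + |V_2| + |V_{\geq 3}^m|.
\]
Double-counting edge-endpoint incidences gives $2|E(\mathbb{G})| = \sum_v \deg(v) \geq |V_1| + 2|V_2| + 3|V_{\geq 3}|$, and combining this with the Euler identity $|V(\mathbb{G})| = |E(\mathbb{G})| - \beta(\mathbb{G}) + 1$ rearranges to
\[
|E(\mathbb{G})| \;\leq\; 2\eta(\mathbb{G}) + 3\beta(\mathbb{G}) - 3.
\]

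For the second step, the relation $\mathbb{G} \succeq (\Gamma, \mathscr{G})$ provides, via \Cref{diagramfill1} and \Cref{diagramfill2}, a $G$-equivariant simplicial map $\tilde{\mathbb{G}} \to \tilde\Gamma$ of Bass--Serre trees. A simplicial map sends each edge of $\tilde{\mathbb{G}}$ to either an edge or a vertex of $\tilde\Gamma$, so by \Cref{collapsing} the induced map on quotient graphs of groups realizes $(\Gamma, \mathscr{G})$ as the result of collapsing a subgraph of groups $\mathscr{L} \subseteq \mathbb{G}$. The edges of $\Gamma$ are then in bijection with the non-collapsed edges of $\mathbb{G}$, namely $E(\mathbb{G}) \setminus E(\mathscr{L})$, giving $|E(\Gamma)| \leq |E(\mathbb{G})|$. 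Combining with the valence bound yields the desired inequality.

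The main obstacle I anticipate is verifying that the equivariant map of Bass--Serre trees can be taken simplicial without stretching any edge over a path of length $\geq 2$ in $\tilde\Gamma$; any such stretching would, after taking the $G$-quotient, force intermediate valence-two vertices whose inclusions are isomorphisms on at least one side, contradicting the no-inessential-valence-two hypothesis on $\mathbb{G}$. This rigidity argument, together with the structure theory of morphisms from the previous chapter, should suffice to complete the proof.
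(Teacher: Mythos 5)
Your first step — the Euler-characteristic / valence-counting bound — is correct, and it is essentially the same computation the paper performs (the paper packages it by adding $\eta$ extra edges to force all valences $\ge 3$, but the two presentations are equivalent). The problem is that you apply it to the wrong graph: you bound $|E(\mathbb{G})|$ and then try to conclude $|E(\Gamma)| \le |E(\mathbb{G})|$. That monotonicity is false. The relation $\mathbb{G} \succeq (\Gamma,\mathscr{G})$ yields a morphism only \emph{after subdividing} $\mathbb{G}$, and subdivision followed by folds need not decrease edges below $|E(\mathbb{G})|$. Concretely, take $\mathbb{G}$ to be a rose with two petals and trivial groups, so $\pi_1(\mathbb{G}) = F_2$, $\eta(\mathbb{G}) = 0$, $\beta(\mathbb{G}) = 2$, and $|E(\mathbb{G})| = 2$. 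Take $(\Gamma,\mathscr{G})$ to be a theta graph (two valence-three vertices, three edges) with trivial groups. Then $\mathbb{G} \asymp (\Gamma,\mathscr{G})$, $(\Gamma,\mathscr{G})$ has no inessential low-valence vertices, the lemma's bound is $2\cdot 0 + 3\cdot 2 - 3 = 3$ (and $|E(\Gamma)| = 3$, so the bound is tight), but $|E(\Gamma)| = 3 > 2 = |E(\mathbb{G})|$.

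Your anticipated rigidity argument does not rescue the step. You suggest that stretching an edge of $\tilde{\mathbb{G}}$ over a length-$\ge 2$ path would create inessential valence-two vertices. But the valence-two vertices created by subdivision live in the \emph{subdivided} $\mathbb{G}$, not in $\mathbb{G}$ itself (which is assumed clean) nor in $(\Gamma,\mathscr{G})$ (where the subsequent folds identify these subdivision points with others, raising their valence to $\ge 3$). In the theta example, each petal of the rose must subdivide into a length-two path; the two midpoints then fold together with the base vertex's images to produce the two valence-three vertices of the theta. No inessential vertex survives, yet the edge count went up. So there is no ``simplicial, non-stretching'' morphism $\tilde{\mathbb{G}} \to \tilde\Gamma$ in general, and the premise that $(\Gamma,\mathscr{G})$ is obtained from $\mathbb{G}$ by collapsing a subgraph is simply not available.

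The fix is to run your valence count on $\Gamma$ rather than on $\mathbb{G}$, giving $|E(\Gamma)| \le 2\eta(\Gamma,\mathscr{G}) + 3\beta(\Gamma) - 3$, and then compare the invariants rather than the edge counts. This is what the paper does: the map $\sigma \colon \mathbb{G} \to (\Gamma,\mathscr{G})$ sends marked vertices to marked vertices, any marked vertex of $(\Gamma,\mathscr{G})$ not hit by a marked vertex of $\mathbb{G}$ must arise from collapsing a subgraph of nontrivial first Betti number, and this trade-off yields $\eta(\Gamma,\mathscr{G}) + \beta(\Gamma) \le \eta(\mathbb{G}) + \beta(\mathbb{G})$ together with $\beta(\Gamma) \le \beta(\mathbb{G})$. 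Writing $2\eta + 3\beta - 3 = 2(\eta + \beta) + \beta - 3$ then transfers the bound from $\Gamma$ to $\mathbb{G}$. The delicate invariant comparison you were hoping to sidestep is precisely the content of the proof.
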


\begin{rk}
	The statement above has a corner case that is worth mentioning.
	This occurs when $\beta(\mathbb{G}) = 1$ and $\eta(\mathbb{G}) = 0$.
	In this case, $\mathbb{G}$ is homotopy equivalent to a
	graph of groups $(\Gamma,\mathscr{G})$ with one vertex and one edge $e$
	where at least one of the monomorphisms $\iota_e$ and $\iota_{\bar e}$
	is surjective.
	In this case every topological realization
	$f\colon (\Gamma,\mathscr{G}) \to (\Gamma,\mathscr{G})$
	is homotopic to an automorphism of $(\Gamma,\mathscr{G})$,
	so \Cref{traintrackthm} holds
	after passing to this graph with one vertex by repeatedly
	removing inessential valence-one and valence-two vertices
	via homotopies.
\end{rk}

\begin{proof}
	If $(\Gamma,\mathscr{G}) \asymp \mathbb{G}$,
	a homotopy equivalence between them sends 
	marked vertices of $\mathbb{G}$ to marked vertices of
	$(\Gamma,\mathscr{G})$ and vice versa,
	so $\eta(\Gamma,\mathscr{G}) = \eta(\mathbb{G})$.
	Only marked vertices of $\Gamma$ may have valence less than three.
	Similarly $\beta(\Gamma) = \beta(\mathbb{G})$.
	Choose a cyclic ordering of the marked vertices of $\Gamma$ 
	and form a new graph $\Gamma'$ by attaching an edge from
	each marked vertex of $\Gamma$ to its successor in the cyclic order.
	We have thus added $\eta(\mathbb{G})$ edges to $\Gamma$.
	The graph $\Gamma'$ has no valence-one nor valence-two vertices
	and first Betti number $\eta(\mathbb{G}) + \beta(\mathbb{G})$.
	The lemma follows in the case $(\Gamma,\mathscr{G}) \asymp \mathbb{G}$
	by a simple Euler characteristic argument.

	If instead $\mathbb{G} \succneqq (\Gamma,\mathscr{G})$,
	the map $\sigma\colon \mathbb{G} \to (\Gamma,\mathscr{G})$
	identifying $G$ with $\pi_1(\Gamma,\mathscr{G})$
	sends marked vertices of $\mathbb{G}$ to marked vertices
	of $(\Gamma,\mathscr{G})$. 
	If a vertex of $(\Gamma,\mathscr{G})$ is marked
	but not in the image of any marked vertex of $\mathbb{G}$,
	then it is obtained by collapsing some subgraph of $\mathbb{G}$
	with nontrivial first Betti number.
	In other words, we have
	\[ \eta(\Gamma,\mathscr{G}) + \beta(\Gamma) 
		\le \eta(\mathbb{G}) + \beta(\mathbb{G}) \quad\text{and}\quad
	\beta(\Gamma) \le \beta(\mathbb{G}), \]
	and at least one inequality is strict.
	The lemma follows by the argument above.
\end{proof}

We now turn the proof of the main theorem.
The argument is essentially due to Bestvina and Handel
\cite[Theorem 1.7]{BestvinaHandel}.

\begin{proof}[Proof of \Cref{traintrackthm}]
	We keep the notation as above.
	Suppose $\lambda = 1$.
	Then $f$ permutes the edges of $\Gamma$
	and is thus an automorphism of $(\Gamma,\mathscr{G})$.
	In particular, $f$ is a train track map.

	So assume $\lambda >  1$.
	We will show that if our irreducible topological realization
	$f\colon (\Gamma,\mathscr{G}) \to (\Gamma,\mathscr{G})$
	is not a train track map, 
	then there is an irreducible topological realization
	$f_1\colon (\Gamma_1,\mathscr{G}_1) \to (\Gamma_1,\mathscr{G}_1)$
	without inessential valence-one or valence-two vertices
	such that the associated Perron--Frobenius eigenvalues satisfy
	$\lambda_1 < \lambda$.

	The previous lemma shows the size of the transition matrix
	of $f_1\colon (\Gamma_1,\mathscr{G}_1) \to (\Gamma_1,\mathscr{G}_1)$
	is uniformly bounded depending only on $\mathbb{G}$.
	Furthermore, if $M$ is an irreducible matrix,
	its Perron--Frobenius eigenvalue $\lambda$ is bounded below 
	by the minimum sum of the entries of a row of $M$.  
	To see this, let $\vec w$ be the associated positive eigenvector.
	If $\vec w_j$ is the smallest entry of $\vec w$,
	$\lambda \vec w_j = (M \vec w)_j$ is greater than
	$w_j$ times the sum of the entries of the $j$th row of $M$.
	
	Thus if we iterate, reducing the Perron--Frobenius eigenvalue,
	there are only finitely many irreducible transition matrices that can occur,
	so at some finite stage the Perron--Frobenius eigenvalue will reach a minimum.
	At this point, we must have a train track map.

	To complete the proof, we turn to the question of decreasing $\lambda$.
	Suppose $f\colon (\Gamma,\mathscr{G}) \to (\Gamma,\mathscr{G})$
	is not a train track map.
	Then there exists a point $p$ in the interior of an edge
	such that $f(p)$ is a vertex,
	and $f^k$ is not locally injective at $p$ for some $k > 1$.
	We assume that topological realizations act linearly on edges
	with respect to some metric on $\Gamma$.
	Since $\lambda > 1$,
	this means the set of points of $\Gamma$ 
	eventually mapped to a vertex is dense.
	Thus we can choose a neighborhood $U$ of $p$
	so small that it satisfies the following conditions.
	\begin{enumerate}
		\item The boundary $\partial U$ is a two point set $\{ s, t \}$,
			where $f^\ell(s)$ and $f^\ell(t)$
			are vertices for some $\ell \ge 1$.
		\item $f^i|_U$ is injective for $1 \le i \le k-1$.
		\item $f^k$ is two-to-one on $U \setminus \{p\}$,
			and $f^k(U)$ is contained within a single edge.
		\item $p \notin f^i(U)$, for $1 \le i \le k$.
	\end{enumerate}

	First we subdivide at $p$. Then we subdivide
	at $f^i(s)$ and $f^i(t)$ for $1 \le i \le \ell -1$
	(in reverse order so that subdivision is allowed).
	The vertex $p$ has valence two;
	denote the incident edges by $e$ and $e'$.
	Observe that $f^{k-1}(e)$ and $f^{k-1}(e')$ are each single
	edges that are identified by $f$.
	Thus we may fold.
	The resulting map
	$f'\colon (\Gamma',\mathscr{G}') \to (\Gamma',\mathscr{G}')$
	may be a topological realization,
	in which case $\lambda' = \lambda$.
	In this case, $f'^{k-2}(e)$ and $f'^{k-2}(e')$
	are single edges that are identified by $f'$.
	In the contrary case,
	nontrivial tightening occurs.
	After collapsing a maximal pretrivial forest
	and a maximal invariant subgraph,
	the resulting irreducible topological realization
	$f''\colon (\Gamma'',\mathscr{G}'') \to (\Gamma'',\mathscr{G}'')$
	satisfies $\lambda'' < \lambda$.
	
	Repeating this dichotomy $k$ times if necessary,
	we have either decreased $\lambda$,
	or we have folded $e$ and $e'$ 
	so that $p$ is now an inessential vertex of valence one.

	We remove inessential valence-one and valence-two vertices
	by the appropriate homotopies.
	Since valence-one homotopy always decreases
	the Perron--Frobenius eigenvalue,
	the resulting irreducible topological realization
	$f_1\colon (\Gamma_1,\mathscr{G}_1) \to (\Gamma_1,\mathscr{G}_1)$
	satisfies $\lambda_1 < \lambda$.
\end{proof}

\begin{rk}
	As in the original, the proof of \Cref{traintrackthm}
	provides in outline an algorithm that takes as input
	a topological realization of an automorphism
	and returns a train track map.
\end{rk}

\section{On a Question of Paulin}\label{paulinquestion}

In this section, we establish the following alternative
for outer automorphisms of word hyperbolic groups.
It answers affirmatively two questions of Paulin
\cite[page 333]{Paulin} \cite[page 150]{Paulin}.

A group is \emph{word hyperbolic} if its word metric 
for some (and hence any) finite generating set
satisfies a certain large-scale negative curvature condition
called \emph{Gromov-hyperbolicity} or $\delta$\emph{-hyperbolicity}.
Free groups, free products of finite groups, and 
fundamental groups of surfaces with negative Euler characteristic
are all examples of word hyperbolic groups.

\begin{thm}\label{trichotomy}
	Let $G$ be a finitely-generated word hyperbolic group.
	Each $\Phi \in \aut(G)$ satisfies one of the following
	conditions.
	\begin{enumerate}
		\item Periodic: the outer class of $\Phi$
			has finite order in $\out(G)$.
		\item Reducible: there exists a splitting
			$G = \pi_1(\Gamma,\mathscr{G})$
			with virtually cyclic edge groups
			and an automorphism $\Phi'$ in the outer class of
			$\Phi$ such that $\Phi$ is an automorphism
			of $(\Gamma,\mathscr{G})$.
		\item Irreducible: There exists a \emph{small} 
			isometric action of $G$
			on a \emph{real tree} $T$
			and a \emph{homothety} $H \colon T \to T$
			with \emph{stretch factor}
			an \emph{algebraic integer} $\lambda > 1$
			such that $H(g.x) = \Phi'(g).H(x)$
			for all $x \in T$ and $g \in G$.
			The homothety $H$ has a fixed point.
	\end{enumerate}
\end{thm}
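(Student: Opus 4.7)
The plan is to apply \Cref{traintrackthm} and then a Bestvina--Paulin rescaling. If the outer class of $\Phi$ has finite order, we land in the periodic case. Otherwise, we first need a nontrivial splitting of $G$ with virtually cyclic edge groups on which to realize $\Phi$. Such a splitting exists: by Stallings' theorem if $G$ is multi-ended, by Grushko if $G$ is freely decomposable, and by Bowditch's canonical JSJ if $G$ is one-ended and splits over a virtually cyclic subgroup. (If $G$ is one-ended, freely indecomposable, and admits no virtually cyclic splitting, a separate argument shows $\out(G)$ is finite, so $\Phi$ is already periodic.) Since these canonical splittings are $\out(G)$-invariant, $\Phi$ is realized on one of them, and applying \Cref{traintrackthm} yields a nontrivial splitting $G = \pi_1(\Gamma,\mathscr{G})$ with finitely generated edge groups together with an irreducible train track map $f\colon (\Gamma,\mathscr{G}) \to (\Gamma,\mathscr{G})$ realizing some $\Phi'$ in the outer class of $\Phi$. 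Let $\lambda$ be the Perron--Frobenius eigenvalue of $f$. If $\lambda = 1$, then $f$ is a graph of groups automorphism and we are in the reducible case.

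So assume $\lambda > 1$. Metrize the Bass--Serre tree $\tilde\Gamma$ using the positive Perron--Frobenius eigenvector $\vec v$, so that the lift of each edge $e_j$ has length $\vec v_j$. The equivariant lift $\tilde f \colon \tilde\Gamma \to \tilde\Gamma$ then satisfies $\tilde f(g.x) = \Phi'(g).\tilde f(x)$ and stretches every legal lift by the constant factor $\lambda$. The main construction now is a Bestvina--Paulin rescaling: consider the sequence of isometric $G$-actions $\rho_n(g) = (\Phi')^{-n}(g)$ on $\tilde\Gamma$ equipped with the rescaled metric $\lambda^{-n}d_{\tilde\Gamma}$. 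Bestvina--Paulin compactness produces a subsequence converging in the equivariant pointed Gromov--Hausdorff topology to an isometric $G$-action on a real tree $T$, and the rescaled copies of $\tilde f$ descend to a $\Phi'$-equivariant $\lambda$-homothety $H \colon T \to T$ satisfying $H(g.x) = \Phi'(g).H(x)$. Because $\lambda > 1$, the nested segments $[H^n x, H^{n+1} x]$ intersect in the required unique fixed point of $H$; and $\lambda$ is an algebraic integer because it is an eigenvalue of an integer matrix.

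The hard part is the limiting step, namely verifying convergence of the rescaled actions and smallness of the resulting arc stabilizers. Hyperbolicity of $G$ is essential here: an unbounded sequence of elements stabilizing converging arcs would produce, in the limit, pairs of elements whose commutators translate points by arbitrarily small amounts in the rescaled metric, forcing long thin rectangles that cannot occur in a $\delta$-hyperbolic Cayley graph. Consequently, no arc stabilizer of $T$ contains a copy of $F_2$, and since in a hyperbolic group a subgroup without $F_2$ is virtually cyclic, $T$ is a small $G$-tree in the required sense. The new ingredient that we contribute to Paulin's question is precisely the irreducible train track map on a graph of groups furnished by \Cref{traintrackthm}: once this equivariant $\lambda$-stretching map is in hand, the rescaling procedure runs exactly as in the classical Bestvina--Paulin argument for free groups and surface groups.
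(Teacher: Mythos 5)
Your overall strategy tracks the paper's closely: find an $\out(G)$-invariant splitting with virtually cyclic edge groups (via Dunwoody accessibility and Bowditch's JSJ), apply \Cref{traintrackthm}, read off the reducible case when $\lambda = 1$, and otherwise build an $\mathbb{R}$-tree with a $\lambda$-homothety. Where you diverge is the last step. The paper uses the Gaboriau--Jaeger--Levitt--Lustig $d_\infty$ construction: put the Perron--Frobenius metric on $\tilde\Gamma$, observe $d(\tilde f(x), \tilde f(y)) \le \lambda d(x,y)$, and directly define $d_\infty(x,y) = \lim_k \lambda^{-k} d(\tilde f^k x, \tilde f^k y)$; the tree $T$ is the Hausdorffification. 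You instead run Bestvina--Paulin equivariant Gromov--Hausdorff compactness on the rescaled twisted actions $\rho_n$. Both routes are viable and well known, but GJLL is the more economical one once a train track map is in hand, since the PF eigenvector delivers the metric and stretch factor directly, with no appeal to compactness of the space of actions. The paper even notes explicitly that Paulin already proved the existence of $T$ and $H$ by rescaling; what \Cref{traintrackthm} buys (and what the paper emphasizes) is the algebraic integrality of $\lambda$ and the $\lambda = 1$ dichotomy, both of which you get, but you are to some extent rederiving Paulin's half of the theorem rather than building on it.

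Two of your details need repair. First, your smallness argument conflates the two rescaling setups. ``Long thin rectangles in a $\delta$-hyperbolic Cayley graph'' is Paulin's argument for rescaling the \emph{Cayley graph} of $G$; you are rescaling the Bass--Serre tree $\tilde\Gamma$, which is $0$-hyperbolic, and there the relevant control on arc stabilizers comes from the virtually cyclic edge stabilizers of $\tilde\Gamma$: an element of $G$ fixing a long segment of $\tilde\Gamma$ lies in the intersection of the edge stabilizers along that segment, so two elements fixing a common long segment virtually commute and cannot generate an $F_2$. That is the mechanism you want, not thin rectangles in the Cayley graph. Second, your fixed-point argument has a sign error: for $\lambda > 1$ the distances $d(H^n x, H^{n+1} x) = \lambda^n d(x, Hx)$ \emph{grow}, so those segments neither nest nor shrink. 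The fixed point comes from iterating the contraction $H^{-1}$ (ratio $\lambda^{-1} < 1$) and applying Banach, possibly after passing to the completion of $T$.
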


Clearly the third bullet point needs some explanation.
A \emph{real tree} (or an \emph{$\mathbb{R}$-tree}) $T$ is a metric space where
every pair of distinct points is connected by a unique arc,
by which we mean an embedding of an interval of $\mathbb{R}$.
Furthermore, this arc is a \emph{geodesic,}
i.e. its length realizes the distance between the pair of points.
A group is \emph{small} if it does not contain a free group of rank $2$.
An action of a group $G$ on a real tree $T$ is \emph{small}
if each arc stabilizer is small.
A \emph{homothety} of a metric space $(X,d)$
is a map $h\colon X \to X$
together with a \emph{stretch factor} $\lambda > 0$
such that 
\[ d(h(x),h(y)) = \lambda d(x,y)\qquad \text{for all } x, y\in X.\]
A real number is an \emph{algebraic integer} if it is the root of
a monic polynomial with integer coefficients.
Eigenvalues of integral matrices are algebraic integers.

For the reader familiar with Thurston's theory,
the trichotomy in the statement really is a true generalization
of Thurston's dynamical trichotomy, as we now briefly explain.

A simple closed curve $\gamma$ on a surface $S$ determines
a splitting of $\pi_1(S)$ as an amalgamated free product or HNN extension
of free groups with cyclic edge group generated by 
the loop determined by $\gamma$ in $\pi_1(S)$
according to whether $\gamma$ separates or does not separate $S$, respectively.
If a mapping class $\varphi\in\Mod(S)$
fixes a simple closed curve up to isotopy,
it has a representative $\Phi'\colon S \to S$ that fixes the curve
(and the basepoint).
Thus $\Phi'$ acts as an automorphism of the corresponding splitting.

Associated to a pseudo-Anosov diffeomorphism
$\Phi\colon S \to S$
there is an \emph{attracting geodesic lamination.}
The precise definition is perhaps not so important,
but one may think of points of the lamination as 
accumulation points of geodesic representatives
for the curves $\Phi^k(c)$ for $k \ge 0$
in some fixed hyperbolic metric on $S$,
where $c$ is an arbitrary essential simple closed curve.
\emph{Dual} to the lift of the attracting lamination 
to the universal cover of $S$ is an $\emph{R}$-tree $T$
equipped with a small action of $\pi_1(S)$.
Since the pseudo-Anosov diffeomorphism $\Phi$ 
preserves the leaves of the lamination,
it acts on the dual tree.
The stretch factor $\lambda$ is the stretch factor of $\Phi$,
hence is an algebraic integer satisfying $\lambda > 1$.

The proof of \Cref{trichotomy} follows rather quickly
from \Cref{traintrackthm}, building on deep theorems
of Dunwoody, Bowditch, Bestvina–Feighn and Paulin.
The method of passing from a (relative) train track
to an invariant $\mathbb{R}$-tree appears to have
been discovered simultaneously by many people.
We sketch the construction given by Gaboriau--Jaeger--Levitt--Lustig
\cite{GaboriauJaegerLevittLustig}.

Paulin \cite{Paulin2} has a partial result in the vein of
\Cref{trichotomy}. Namely, he shows the existence
of the invariant tree $T$ and homothety $H$.
Our proof shows that the stretch factor of $H$
is always an algebraic integer,
and that when $H$ has stretch factor $1$,
$\Phi$ is reducible.

\begin{proof}
	Fix a finitely-generated word hyperbolic group $G$.
	Bestvina--Feighn \cite{BestvinaFeighnRTrees} show that
	$\out(G)$ is finite when $G$ does not admit a splitting
	with virtually cyclic edge groups.
	In this case, each automorphism $\Phi\colon G \to G$
	is periodic.

	So assume $G$ admits a splitting with virtually cyclic edge groups.
	Finitely-generated word hyperbolic groups are finitely presented,
	so $G$ is \emph{accessible} 
	in the sense of Dunwoody \cite{DunwoodyAccessible}.
	Thus $G$ admits a maximal splitting as a finite graph of groups
	with finite edge groups 
	(and without inessential valence-one or valence-two vertices).
	In this case each vertex group is either finite or a \emph{one-ended}
	word hyperbolic group.
	Bowditch \cite{Bowditch} constructs a canonical splitting
	called the \emph{JSJ splitting} for one-ended word hyperbolic groups
	that do not act geometrically on the hyperbolic plane.
	It is a finite graph of groups with virtually cyclic edge groups.
	We may ``blow up'' Dunwoody's accessible splitting
	by grafting in Bowditch's JSJ splitting for every vertex
	with one-ended vertex group 
	that does not act geometrically on the hyperbolic  plane.
	As explained above, in the case where $G$ \emph{does} 
	act geometrically on the hyperbolic plane,
	the trichotomy is due to Thurston; thus we may assume
	that $G$ does not act geometrically on the hyperbolic plane,
	and we may leave such vertices alone in the splitting.
	Call the resulting graph of groups $\mathbb{G}$.

	Since the conjugacy classes of vertex groups of $\mathbb{G}$ 
	are determined by $G$, $\Mod(\mathbb{G}) = \out(G)$.
	By \Cref{traintrackthm}, if $\varphi \in \out(G)$
	is the outer class of $\Phi$,
	there exists an irreducible train track map
	$f\colon (\Gamma,\mathscr{G})  \to (\Gamma,\mathscr{G})$
	realizing $\varphi$
	with $\mathbb{G} \succeq (\Gamma,\mathscr{G})$,
	which thus has virtually cyclic edge groups.
	If the associated Perron--Frobenius eigenvalue $\lambda$
	is equal to $1$, then $f$ is an automorphism of $(\Gamma,\mathscr{G})$.
	To choose $\Phi'$, one need only choose a basepoint $\star$
	in $(\Gamma,\mathscr{G})$ 
	and a path in the marking spanning tree of $\Gamma$ 
	from $\star$ to $f(\star)$.

	If $\lambda > 1$, we use the construction of
	a real tree $T$ associated to a train track map
	from \cite{GaboriauJaegerLevittLustig}.
	In particular we refer the reader to them for the choice of
	$\Phi'$ with fixed  point in $T$.
	The idea is the following.
	Assign each edge of $\Gamma$ a length equal to
	the corresponding entry of the positive eigenvector $\vec w$ with norm $1$.
	Assume $f$ linearly expands edges by a factor of $\lambda$.
	Fix some lift $\tilde f$ to the Bass--Serre tree $\tilde\Gamma$.
	As above, this involves some choices.
	We may equip $\tilde \Gamma$ with the lifted metric.
	We have $\tilde f(g. x) = \Phi'(g). f(x)$,
	and $d(\tilde f(x),\tilde f(y)) \le \lambda d(x,y)$
	for all $g \in  G$ and $x,y \in \tilde\Gamma$.
	Define a new distance function on $T$ as
	the limit of the non-increasing sequence
	\[ d_\infty(x,y) = \lim_{k\to\infty}\frac{d(\tilde f^k(x),\tilde f^k(y))}
		{\lambda^k}.  \]
	One checks that the $G$ action preserves $d_\infty$,
	and that $d_\infty(\tilde f(x),\tilde f(y)) = \lambda d_\infty(x,y)$.
	The distance $d_\infty$ fails to be a metric on $\tilde\Gamma$ 
	only because there are a priori distinct points $x$ and $y$
	with $d_\infty(x,y) = 0$.
	The real tree $T$ is obtained from $(\tilde\Gamma,d_\infty)$
	by identifying those points $x$, $y$ of $\tilde\Gamma$
	with $d_\infty(x,y) = 0$.
\end{proof}

\section{Relative and Partial Train Track Maps}

In the previous sections, we constructed
irreducible train track maps from topological realizations
by using the machinery of graphs of groups to collapse 
any invariant subgraphs that occurred.
While this gives a conceptually simple result,
a train track map for every element of $\Mod(\mathbb{G})$,
it hides much of the dynamical complexity possible.

A finer tool is a \emph{relative} train track map.
Here is the idea.
A \emph{filtration} of a graph $\Gamma$
is a nested sequence of subgraphs
\[ \varnothing = \Gamma_0 \subset \Gamma_1 \subset \dotsb
\subset \Gamma_m = \Gamma. \]
The subgraphs need not be connected.
A relative train track map is a topological realization
respecting the filtration that looks like,
in a sense which we make precise below,
a train track map on each $\Gamma_k$
once we collapse $\Gamma_{k-1}$.

Let us make the forgoing more precise.
The $k$th \emph{stratum} of a filtration 
$\varnothing \subset \Gamma_0 \subset \Gamma_1 \subset
\dotsb \subset \Gamma_m = \Gamma$ is the subgraph
\[ H_k = \overline{\Gamma_k \setminus \Gamma_{k-1}}. \]
Suppose a topological realization 
$f\colon (\Gamma,\mathscr{G}) \to (\Gamma,\mathscr{G})$
\emph{preserves} the filtration,
in the sense that $f(\Gamma_i) \subset \Gamma_i$.
Then the $k$th stratum has a \emph{transition matrix}
$M_k$, which has rows and columns for the edges of $H_k$,
and $ij$th entry equal to, as usual,
the number of times the $f$-image of the $j$th
edge crosses the $i$th edge in either direction.
The $k$th stratum is \emph{irreducible} if $M_k$ is
an irreducible matrix.
We say a filtration is \emph{maximal}
if each stratum is either irreducible
or has transition matrix identically equal to zero.
The latter are \emph{zero strata}.
Every filtration can be refined to a maximal filtration.
We always assume that $f$ preserves a given filtration,
and that it is maximal.

Each irreducible stratum has a Perron--Frobenius eigenvalue $\lambda_k$.
We say the $k$th stratum is \emph{exponentially-growing} if $\lambda_k > 1$.
If $\lambda_k = 1$, then $H_k$ is \emph{non-exponentially-growing}.
A stratum $H_k$ \emph{contains} the turn $\{([g],e),([g'],e')\}$
if both $e$ and $e'$ belong to $H_k$.

\begin{dfn}
	A topological realization 
	$f\colon (\Gamma,\mathscr{G}) \to (\Gamma,\mathscr{G})$
	is a \emph{relative train track map}
	if there is a maximal filtration
	$\varnothing =  \Gamma_0 \subset \Gamma_1 \subset \dotsb
	\subset \Gamma_m = \Gamma$
	preserved by $f$
	such that for each exponentially-growing stratum $H_k$,
	the map $f$ satisfies the following three properties.
	\begin{enumerate}
		\item[(RTT-i)] The map $Df$ maps
			the set of turns in $H_k$ into itself.
		\item[(RTT-ii)] If $\gamma \subset \Gamma_{k-1}$
			is a nontrivial path with endpoints in
			$H_k \cap \Gamma_{k-1}$,
			then $f(\gamma)$
			is a homotopically nontrivial path with endpoints in
			$H_k \cap \Gamma_{k-1}$.
		\item[(RTT-iii)] For each legal path $\gamma$
			in $H_k$, $f(\gamma)$ is a path that does
			not contain any illegal turns in $H_k$.
	\end{enumerate}
\end{dfn}

\begin{prop}\label{ttfromrtt}
	Suppose $f\colon(\Gamma,\mathscr{G}) \to (\Gamma,\mathscr{G})$
	is a relative train track map with irreducible stratum $H_k$.
	Let $(\Gamma'_k,\mathscr{G}'_k)$ denote the marked
	graph of groups obtained from $\Gamma_k$
	by collapsing each component of $\Gamma_{k-1}$.
	The restriction of $f$ to $\Gamma_k$ factors through the collapse,
	and the resulting map
	$f_k\colon (\Gamma'_k,\mathscr{G}'_k) \to (\Gamma'_k,\mathscr{G}'_k)$,
	is an irreducible train track map.
\end{prop}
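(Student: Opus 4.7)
The plan is to verify in turn that $f_k$ is a well-defined topological realization, is irreducible, and satisfies the train track property. First, since $f$ preserves the filtration, $f(\Gamma_{k-1}) \subset \Gamma_{k-1}$, and by continuity each component of $\Gamma_{k-1}$ maps into a single component of $\Gamma_{k-1}$; thus $f|_{\Gamma_k}$ composed with the collapse descends to a well-defined map $f_k\colon(\Gamma'_k,\mathscr{G}'_k) \to (\Gamma'_k,\mathscr{G}'_k)$. Because collapsing discards only $\Gamma_{k-1}$-edges from $f(e_j)$, the transition matrix of $f_k$ on the edges of $H_k$ equals the stratum transition matrix $M_k$. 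Irreducibility of the stratum $H_k$ thus gives irreducibility of $f_k$; moreover every column of $M_k$ is nonzero, so no edge of $H_k$ is collapsed by $f_k$, confirming that $f_k$ is a topological realization.

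Next I would show by induction on $n\ge 1$ that $f_k^n(e)$ is immersed and that each $H_k$-turn it contains is legal. Since $f$ preserves the filtration, $f_k^n(e)$ is the collapse of $f^n(e)$. Decompose $f^n(e)$ as an alternating concatenation $\alpha_1\gamma_1\alpha_2\gamma_2\cdots$ of maximal $H_k$-subpaths $\alpha_i$ and maximal $\Gamma_{k-1}$-subpaths $\gamma_i$; after collapsing, each $\gamma_i$ becomes an element $[\gamma_i]$ of the vertex group at a collapse vertex $v_{C_i}$. The turns of $f_k^n(e)$ are then of two kinds: interior $H_k$-turns within some $\alpha_i$, and transition turns at a collapse vertex. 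For the interior turns, (RTT-iii) applied iteratively, starting from the trivially legal path $e$, shows that every $\alpha_i$ is itself a legal $H_k$-path; (RTT-i) guarantees that the $Df$-orbit of an $H_k$-turn remains in $H_k$, so that the action of $Df_k$ on $H_k$-turns is determined by $Df$ up to the identification of collapse vertices, and legal $H_k$-turns remain legal in $(\Gamma'_k,\mathscr{G}'_k)$. For a transition turn at $v_{C_i}$, lift the subpath $\alpha_i\gamma_i\alpha_{i+1}$ of $f^n(e)$ to the Bass--Serre tree $\tilde\Gamma$: tightness of $f^n(e)$ ensures the two $H_k$-edges flanking $\tilde\gamma_i$ are distinct edges of $\tilde\Gamma$, and they remain distinct edges at the collapse vertex $\tilde v_{C_i}$ after collapsing the containing component of $\tilde\Gamma_{k-1}$, so the transition turn is non-degenerate.

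The main obstacle is the coherence of this decomposition under iteration: to go from $n$ to $n+1$, one must know that the $H_k/\Gamma_{k-1}$-decomposition of $f^{n+1}(e)$ is obtained by applying $f$ to each block of the decomposition of $f^n(e)$ and then tightening within each block, with no cancellation eating across a $\Gamma_{k-1}$-block. Property (RTT-ii) is the precise tool: for any nontrivial $\gamma_i$ in $\Gamma_{k-1}$ with endpoints in $H_k\cap\Gamma_{k-1}$, the image $f(\gamma_i)$ is a homotopically nontrivial path in $\Gamma_{k-1}$ with endpoints in $H_k\cap\Gamma_{k-1}$. This prevents the tightening of $f^{n+1}(e)$ from collapsing the $\Gamma_{k-1}$-block between $f(\alpha_i)$ and $f(\alpha_{i+1})$, so the blockwise structure and hence the non-degeneracy of every transition turn survive each iteration. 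Combined with the interior analysis via (RTT-i) and (RTT-iii), this closes the induction and shows that $f_k$ is a train track map.
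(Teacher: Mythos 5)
Your proof is correct in its conclusion but takes a genuinely different route from the paper's. The paper argues directly about the turns taken by the single path $f_k(e)$: the interior $H_k$-turns are legal by (RTT-i) and (RTT-iii), and for a transition turn $T$ that is potentially illegal, it uses a homotopy inverse of $f$ to produce a nontrivial path $\sigma\subset\Gamma_{k-1}$ with $f(\sigma)$ homotopically trivial, which contradicts (RTT-ii). That one contradiction finishes the proof without any explicit iteration. You instead argue by induction on $n$ that $f_k^n(e)$ is immersed, which requires you to track the alternating $H_k/\Gamma_{k-1}$ block structure of $f^n(e)$ across iterates and to re-verify non-degeneracy of each transition turn by the Bass--Serre lift at every level. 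Both routes use all three (RTT) properties, and yours is a legitimate alternative, but it is heavier: the paper collapses your whole induction into the observation that ``legal'' already means ``stays non-degenerate under all iterates of $Df_k$.''

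Two points in your argument deserve tightening. First, your coherence step needs (RTT-i) not only to say that the $Df$-orbit of an $H_k$-turn stays in $H_k$, but also to guarantee that for an $H_k$-edge $a$, the path $f(a)$ both begins and ends with $H_k$-edges; without this, $f(\alpha_i)$ could end with a $\Gamma_{k-1}$-tail, and the $\Gamma_{k-1}$-block sandwiched between $f(\alpha_i)$ and $f(\alpha_{i+1})$ would not simply be $f(\gamma_i)$, so (RTT-ii) would not directly prevent it from tightening away. Second, your appeal to ``tightness of $f^n(e)$'' elides a subtlety: for a relative train track map the \emph{literal} iterate $f^n\restriction_e$ generally does backtrack (inside $\Gamma_{k-1}$), so either you should work with the tightened $f^n_\#(e)$ and note that its collapse agrees with $\pi(f^n\restriction_e)$ because the discarded backtracks are null-homotopic and vanish in the vertex groups, or you should observe directly that all backtracking in $f^n\restriction_e$ lies in $\Gamma_{k-1}$ and is therefore killed by the collapse. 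Neither point is fatal, but as written the induction is a little loose at exactly the place where the paper's homotopy-inverse argument sidesteps the issue.
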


\begin{proof}
	If $H_k$ is non-exponentially-growing,
	then $f_k$ is an automorphism of $(\Gamma'_k,\mathscr{G}'_k)$.
	Therefore suppose $H_k$ is exponentially-growing.

	Let $e$ be an edge of $H_k$.
	By (RTT-i) and (RTT-iii), we have
	\[ f(e) = \alpha_1\gamma_1\dotsb \gamma_\ell\alpha_\ell, \]
	where each $\alpha_i$ is a legal path in $H_k$,
	and each $\gamma_i$ is contained in $\Gamma_{k-1}$.
	After collapsing, each $\gamma_i$
	determines a vertex group element $g_i$.
	Thus
	\[ f_k(e) = \alpha_1g_1\dotsb g_k\alpha_k,  \]
	which is a legal path except possibly at the turns
	where $\alpha_i$ and $\alpha_{i+1}$ meet.
	Suppose $T = \{([g],e),([g'],e')\}$ is such a turn.
	Because $f$ sends edges to immersed paths,
	the only case where $T$ may be illegal
	occurs if $e$ and $e'$ do not form a turn in $\Gamma_k$
	because there we have $\tau(e) \ne \tau(e')$,
	but we have $f^i(\tau(e)) = f^i(\tau(e'))$
	for some $i \ge 1$.

	Write $v = f^{i-1}(\tau(e))$ and $w = f^{i-1}(\tau(e'))$.
	In this case,
	by considering a homotopy inverse for $f$,
	we see that there is some path $\sigma$
	connecting $v$ to $w$ such that $f(\sigma)$ is homotopically trivial.
	(cf. Remark 2.8 of \cite{FeighnHandel}.)
	This contradicts (RTT-ii),
	so we conclude that $f_k(e)$ is a legal path,
	from which the statement follows.
\end{proof}

With the tools developed so far, the existence of relative train track maps
as proven by Bestvina and Handel in \cite[Section 5]{BestvinaHandel}
may be adapted straightforwardly, using the complexity bound in \Cref{complexity}
in place of Bestvina--Handel's $3n-3$.
We leave the adaptation to the reader, and record the following  result.

\begin{thm}\label{rttexistence}
	Every $\varphi\in\Mod(\mathbb{G})$ may be realized by 
	a relative train track map 
	$f\colon (\Gamma,\mathscr{G}) \to (\Gamma,\mathscr{G})$
	where $(\Gamma,\mathscr{G}) \asymp \mathbb{G}$.
	\hfill\qedsymbol
\end{thm}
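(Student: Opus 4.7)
The plan is to adapt Bestvina--Handel's proof of \cite[Theorem 5.12]{BestvinaHandel} to the graph-of-groups setting, leveraging the machinery of the previous chapters in place of their free-group-specific arguments. I would begin with any topological realization $f\colon (\Gamma,\mathscr{G}) \to (\Gamma,\mathscr{G})$ of $\varphi$ with $(\Gamma,\mathscr{G}) \asymp \mathbb{G}$ and no inessential valence-one or valence-two vertices (easy to arrange). Then I would produce a maximal filtration preserved by $f$ by an iterated collapse procedure: find a maximal proper $f$-invariant subgraph $\Gamma_{m-1}\subset \Gamma$, so that after collapsing the resulting map on $\Gamma/\Gamma_{m-1}$ is irreducible (or the top stratum has zero transition matrix); recursively apply the same procedure to $f|_{\Gamma_{m-1}}$. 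Any resulting inessential valence-one or valence-two vertices, as well as any pretrivial forests, should be removed. Refining to make each stratum either irreducible or a zero stratum yields the maximal filtration.

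Next I would define a complexity function on filtered topological realizations. Following Bestvina--Handel, take $c(f)$ to be the tuple $(N,\lambda_{k_1},\dotsc,\lambda_{k_r},\#E(\Gamma),\dotsc)$, where $N$ is the number of strata, the $\lambda_{k_i}$ are the Perron--Frobenius eigenvalues of the exponentially-growing strata listed from top down, and further entries break ties by counting edges and, say, illegal turns; compare these tuples lexicographically. The crucial point is that \Cref{complexity} furnishes a uniform bound on $\#E(\Gamma)$ in terms of $\mathbb{G}$, so the set of transition matrices that can occur is finite and the set of possible $\lambda_k$ values (as algebraic integers bounded below by the minimum row sum) is well-ordered in the relevant sense. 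The main loop then runs as follows: if some exponentially-growing stratum $H_k$ fails one of (RTT-i), (RTT-ii), (RTT-iii), perform a move—subdivision, folding at an illegal turn contained in $H_k$ (using \Cref{Stallingsfolding} to control the effect on $(\Gamma,\mathscr{G})$), core subdivision, or valence-one/valence-two homotopy as needed—to produce $f_1\colon (\Gamma_1,\mathscr{G}_1)\to(\Gamma_1,\mathscr{G}_1)$ with $c(f_1) < c(f)$; subsequent removal of inessential vertices and collapses of maximal pretrivial forests and maximal invariant subgraphs within each $\Gamma_k$ preserve the filtered structure.

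The three relative train track properties are handled by three distinct reductions, each with a graph-of-groups analogue already in hand: (RTT-i) by folding illegal turns in $H_k$, which, by the lemmas of the previous section, either strictly lowers $\lambda_k$ or produces a fold that does not change $\lambda_k$ but strictly decreases a lower-order complexity; (RTT-ii) by subdividing $H_k$ along preimages of points in $\Gamma_{k-1}$ and then performing a folding/collapsing argument that again decreases complexity (Bestvina--Handel's ``core subdivision'' adapts verbatim, since collapsing pretrivial subforests and invariant subgraphs is already set up in \Cref{collapsing}); and (RTT-iii) by an inductive folding argument at the offending illegal turn inside $H_k$. Throughout, one must check that the moves never affect strata above $H_k$ and that the underlying marking is preserved up to collapsing edges, so that the homotopy class $\varphi$ is unchanged and $(\Gamma_1,\mathscr{G}_1) \asymp \mathbb{G}$ continues to hold.

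The hard part will be bookkeeping: verifying, move by move, that the complexity tuple really does strictly decrease and that the graph-of-groups structure behaves as it does for ordinary graphs. In particular, \Cref{valencetwodanger} warns that valence-two homotopies can raise $\lambda$ if one is forced to collapse the wrong edge; one must check, as in the original, that only the ``safe'' valence-two moves are needed here. The other subtlety is that folding in the graph-of-groups setting can enlarge edge or vertex groups via Type II folds, so one must confirm that any such enlargements either occur below $H_k$ (hence don't affect $\lambda_k$) or give a strictly lower $\lambda_k$. Once these verifications are in place, minimality of the complexity forces all three relative train track properties at every exponentially-growing stratum, yielding the desired $f$.
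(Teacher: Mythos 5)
Your proposal takes exactly the route the paper intends: adapt Bestvina--Handel's Section 5 argument, substituting \Cref{complexity} for their $3n-3$ bound, and running the lexicographic-complexity descent over a set of filtered topological realizations that is finite by the edge bound. The paper records the result without spelling out the adaptation, so your filling in of the move-by-move structure is in the same spirit.

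Two small points worth tightening. First, you have the moves allocated to the wrong properties in places: in Bestvina--Handel, (RTT-i) is restored by \emph{core subdivision} (their Lemma 5.13), (RTT-ii) by \emph{collapsing inessential connecting paths} (Lemma 5.14), and (RTT-iii) by the iterated folding/tightening argument within $H_k$ that decreases $\lambda_k$; your text pairs ``folding illegal turns'' with (RTT-i) and ``core subdivision'' with (RTT-ii), which is backwards. This is a labelling slip rather than a logical error, but the bookkeeping you flag as the hard part depends on knowing exactly which move repairs which property, so it matters. Second, be careful with the phrase ``collapses of ... maximal invariant subgraphs within each $\Gamma_k$'': the theorem requires $(\Gamma_1,\mathscr{G}_1) \asymp \mathbb{G}$ throughout, so only pretrivial (hence contractible) forests and inessential vertices may be collapsed; a nontrivial $f$-invariant subgraph must be \emph{refined into the filtration}, not collapsed, or the deformation space changes and you fall back to \Cref{traintrackthm} rather than a relative train track map. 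As stated, your cleanup step risks doing the wrong thing; it should explicitly distinguish ``absorb into a new stratum'' from ``collapse.''
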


\begin{rk}
	As shown in \cite{FeighnHandelAlg}, the construction of relative train track
	maps may be made algorithmic---in principle, although not in actual fact
	if our vertex and edge groups are allowed to be arbitrary.
\end{rk}

Another way to prove that
every element $\varphi\in \Mod(\mathbb{G})$ may be realized
by a relative train track map
$f\colon (\Gamma,\mathscr{G}) \to (\Gamma,\mathscr{G})$
with $(\Gamma,\mathscr{G}) \asymp \mathbb{G}$
would be to prove a kind of converse to \Cref{ttfromrtt}.
That is, if one could ``blow up'' subgraphs that were
collapsed by carefully stitching in the associated train track maps,
one could assemble a topological realization from a hierarchy
of train track maps.
In fact, what one constructs from this process is not quite yet
a relative train track map but a \emph{partial train track map}
in the sense of \cite{GaboriauJaegerLevittLustig}.
Bestvina and Handel describe two operations called
\emph{core subdivision} and \emph{collapsing inessential connecting paths}
in \cite[Lemmas 5.13 and 5.14]{BestvinaHandel},
which may be adapted to allow one to modify our partial train track maps 
into relative train track maps.
The details will appear elsewhere.
For the remainder of this section we content ourselves to describing
the ``blowing up'' in the special case where all edge groups are trivial.

We give a slightly different, \emph{stronger} definition than Gaboriau, Jaeger,
Levitt and Lustig. 
Their definition is tuned to the construction of an $\mathbb{R}$-tree,
and is thus only concerned with the top stratum.

\begin{dfn}
	A topological realization $f\colon(\Gamma,\mathscr{G}) \to (\Gamma,\mathscr{G})$
	is a \emph{partial train track map}
	with respect to an $f$-invariant
	maximal filtration
	$\varnothing \subset \Gamma_0 \subset \dotsb \subset \Gamma_m = \Gamma$
	if it satisfies the following conditions:
	\begin{enumerate}
		\item $(\Gamma,\mathscr{G})$ has no 
			inessential valence-one or valence-two vertices.
		\item Each stratum $H_k$ is irreducible.
		\item If $p$ and $q$ are points connected by a path $\sigma$
			contained in the interior
			of an edge $e$ in $H_k$ such that for some $\ell > 1$,
			$f^\ell(p) = f^\ell(q)$ and $f^\ell(\sigma)$ is
			null-homotopic,
			then $f^\ell(\sigma) \subset \Gamma_{k-1}$.
	\end{enumerate}
\end{dfn}

\begin{thm}\label{rttfromtt}
	Let $G$ be a free product.
	Every outer automorphism $\varphi \in \out(G)$
	may be topologically realized by a partial train track map
	$f \colon (\Gamma,\mathscr{G}) \to (\Gamma,\mathscr{G})$,
	where the graph of groups $(\Gamma,\mathscr{G})$ has trivial edge groups.
\end{thm}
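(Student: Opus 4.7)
The plan is to iterate Theorem \ref{traintrackthm} bottom-down to build a hierarchy of irreducible train track maps, and then assemble them by ``blowing up'' vertex groups bottom-up into a single partial train track map. Since $G$ is a free product, by Grushko's theorem (invoked as in the proof of Theorem \ref{fouxerabinovitch}) the outer automorphism $\varphi$ permutes the conjugacy classes of freely indecomposable, non-cyclic free factors, so every realization of $\varphi$ may be taken to have trivial edge groups; moreover $\varphi$ restricts to a well-defined outer automorphism on each vertex group once we replace $\varphi$ by a suitable power if necessary (or, equivalently, allow the top-level map to permute isomorphic vertex groups).

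First I would apply Theorem \ref{traintrackthm} to $\varphi$ and $G$ to obtain an irreducible train track map $f_m \colon (\Lambda_m,\mathscr{L}_m) \to (\Lambda_m,\mathscr{L}_m)$ with trivial edge groups. The vertex groups $\mathscr{L}_{m,v}$ are themselves free products by the Kurosh subgroup theorem, and $\varphi$ induces outer automorphisms $\varphi_v \in \out(\mathscr{L}_{m,v})$. Recursively applying Theorem \ref{traintrackthm} to each $(\mathscr{L}_{m,v},\varphi_v)$ produces irreducible train track maps $f_v \colon (\Lambda_v,\mathscr{L}_v) \to (\Lambda_v,\mathscr{L}_v)$, whose own vertex groups are in turn treated recursively. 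Because the Kurosh rank strictly decreases at each recursive step, the process terminates after finitely many stages in leaves whose vertex groups are freely indecomposable and non-cyclic (with $\varphi_v$ realized by an automorphism of a single vertex with that group).

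Now I would blow up: inductively, starting from the bottom of the hierarchy, replace each vertex $v$ of the ambient graph of groups by the graph of groups $(\Lambda_v,\mathscr{L}_v)$ constructed for it, attaching each incident edge of the ambient graph at the basepoint of $(\Lambda_v,\mathscr{L}_v)$. Triviality of edge groups is crucial here: it lets us attach edges at any chosen basepoint without compatibility conditions on monomorphisms. The resulting marked graph of groups $(\Gamma,\mathscr{G})$ carries a natural filtration $\varnothing = \Gamma_0 \subset \Gamma_1 \subset \dotsb \subset \Gamma_m = \Gamma$, with $H_k$ the edges coming from the $(m-k+1)$st layer of the hierarchy. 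The assembled map $f \colon (\Gamma,\mathscr{G}) \to (\Gamma,\mathscr{G})$ acts as $f_v$ on each blown-up piece and, after collapsing $\Gamma_{k-1}$, restricts to the original irreducible train track map on that stratum, so each $H_k$ is irreducible. Preliminary valence-one and valence-two homotopies performed at each stage of the hierarchy (using the fact that edge groups are trivial to ensure we do not introduce inessential valence-one or valence-two vertices upon attaching) yield condition (1) of the definition.

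The main obstacle is verifying condition (3): if $p,q$ lie in the interior of an edge $e$ of $H_k$ with $f^\ell(p) = f^\ell(q)$ and $f^\ell(\sigma) \simeq \ast$, then $f^\ell(\sigma) \subset \Gamma_{k-1}$. The point is that modulo the collapse $(\Gamma_k,\mathscr{G}_k) \to (\Gamma'_k,\mathscr{G}'_k)$ of Proposition \ref{ttfromrtt}, the restriction of $f^\ell$ to $H_k$ is a power of a genuine irreducible train track map, hence sends every edge to a legal (immersed) path; thus any identifications $f^\ell(p) = f^\ell(q)$ along an edge of $H_k$ must be absorbed by a null-homotopic subpath of $f^\ell(\sigma)$ whose image in $(\Gamma'_k,\mathscr{G}'_k)$ is trivial, i.e.\ lies in the collapsed subgraph $\Gamma_{k-1}$. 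Combining this observation across all strata yields the partial train track property, completing the construction.
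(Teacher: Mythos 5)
Your strategy matches the paper's: recursively apply Theorem \ref{traintrackthm} to build a nested sequence of irreducible train track maps, then blow up by replacing vertex groups with the corresponding lower-level graphs of groups attached at basepoints (using triviality of edge groups to avoid any compatibility constraints on monomorphisms). Two points gloss over genuine difficulties, though. Replacing $\varphi$ by a power so that it preserves each vertex group is \emph{not} equivalent to allowing the top-level map to permute isomorphic vertex groups---it only proves the statement for $\varphi^k$, which is strictly weaker. The correct move, which your parenthetical gestures at, is to apply Theorem \ref{traintrackthm} to a possibly disconnected graph of groups whose components $f$ permutes; the paper notes explicitly that the proof of Theorem \ref{traintrackthm} does not rely on connectedness of $\Lambda$.

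More seriously, ``the assembled map acts as $f_v$ on each blown-up piece'' elides the central technicality of the blow-up. When $(\Lambda_{k-1},\mathscr{L}_{k-1})$ is stitched in at the vertex $v$ of $(\Lambda'_k,\mathscr{L}'_k)$, the upper map induces a genuine \emph{automorphism} $(f'_k)_v$ of the vertex group $(\mathscr{L}'_k)_v = \pi_1(\Lambda_{k-1},\mathscr{L}_{k-1},p)$, while the lower train track map $f_{k-1}$ realizes only an \emph{outer} class and need not even fix the basepoint $p$. To ensure the assembled map realizes $\varphi$, one must choose a tight path $\eta_p$ from $p$ to $f_{k-1}(p)$ so that $\gamma \mapsto \eta_p f_{k-1}(\gamma)\bar\eta_p$ induces exactly $(f'_k)_v$, and then correct the images of edges of $\Lambda'_k$ incident to $v$ by inserting $\eta_p$ or $\bar\eta_p$ and tightening. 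Without this basepoint correction the assembled map is determined only up to conjugation at each blown-up vertex, so it need not realize the given $\varphi$. The rest of your argument---the Kurosh-rank (equivalently, the $2n+3k-3$ complexity bound) termination, the maximal filtration, and the verification of condition (3) via the collapse to $(\Gamma'_k,\mathscr{G}'_k)$---is sound and matches the paper.
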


\begin{rk}
	Of course, if $\varphi \in \Mod(\mathbb{G})$, 
	where $\mathbb{G}$ is any splitting of $G$
	with trivial edge groups,
	the proof shows that $(\Gamma,\mathscr{G}) \asymp \mathbb{G}$.
\end{rk}

\begin{proof}
	We begin with a topological realization 
	of $\varphi$ on a graph of groups $\mathbb{G}$ with trivial edge groups.
	(One exists, by the results of the previous chapter.)
	We use the method in the proof of \Cref{traintrackthm}
	to construct a hierarchy of train track maps in the following way.
	Suppose during the process of constructing the initial train track map,
	we encounter an invariant subgraph $(\Lambda,\mathscr{L})$ 
	that is not a contractible forest.
	Since $(\Lambda,\mathscr{L})$ is $f$-invariant,
	it makes sense to restrict $f$ to $(\Lambda,\mathscr{L})$.
	In fact, we shall restrict to those components
	of $(\Lambda,\mathscr{L})$ which are not contractible trees.
	Abusing notation, we will call the union of these components
	$(\Lambda,\mathscr{L})$.
	Note that in the strictest sense, 
	$(\Lambda,\mathscr{L})$ is not a graph of groups,
	and $f$ is not a topological realization,
	simply because \emph{a priori} $\Lambda$ may not be connected.
	Nonetheless, the proof of \Cref{traintrackthm} 
	does not rely on the connectivity of $\Lambda$,
	so we may apply it to $(\Lambda,\mathscr{L})$.

	In \Cref{complexity}, we gave a bound on the number of edges
	contained in a marked graph of groups
	without inessential valence-one or valence-two vertices.
	For a Grushko decomposition of $G$ as
	\[ G = A_1 * \dotsb * A_n * F_k, \]
	where the $A_i$ are freely indecomposable and not infinite cyclic
	and $F_k$ is free of rank $k$,
	if $(\Gamma,\mathscr{G})$ is a graph of groups with trivial edge groups 
	and fundamental group $G$
	this bound is $2n + 3k - 3$,
	discounting the exceptional cases $(n,k) = (1,0)$ or $(0,1)$,
	where the bounds are respectively $0$ and $1$.
	This bound strictly decreases both when collapsing an
	invariant subgraph that is not a contractible forest,
	and when passing to such an invariant subgraph.
	Thus the process of passing to invariant subgraphs
	terminates after finitely many iterations.
	As a result, we have a nested sequence of train track maps
	\[ f_1\colon (\Lambda_1,\mathscr{L}_1) \to (\Lambda_1,\mathscr{L}_1),
	\dotsc, f_m\colon (\Lambda_m,\mathscr{L}_m) \to (\Lambda_m,\mathscr{L}_m), \]
	where for each $i$ there is some $j$
	satisfying $i < j$ such that 
	each component of $\Lambda_i$ corresponds to a vertex of
	$\Lambda_j$ with vertex group equal to
	the fundamental group of that component.

	We will inductively construct a new sequence of graphs of groups
	and topological realizations of $\varphi$
	\[ f'_m\colon(\Lambda'_m,\mathscr{L}'_m) \to (\Lambda'_m,\mathscr{L}'_m),
	\dotsc,f'_1\colon(\Lambda'_1,\mathscr{L}'_1) \to (\Lambda'_1,\mathscr{L}'_1)\]
	beginning with $f'_m$ and proceeding downwards.
	The marked graph of groups $(\Lambda'_1,\mathscr{L}'_1)$
	will satisfy $(\Lambda'_1,\mathscr{L}'_1) \asymp \mathbb{G}$.
	Moreover, each component of $(\Lambda_i,\mathscr{L}_i)$
	will correspond to a vertex of $(\Lambda'_{i+1},\mathscr{L}'_{i+1})$
	as above.
	Begin by setting $(\Lambda'_m,\mathscr{L}'_m)  = (\Lambda_m,\mathscr{L}_m)$.

	Assume that each 
	$f'_i\colon(\Lambda'_i,\mathscr{L}'_i) \to (\Lambda'_i,\mathscr{L}_i)$
	has been constructed for $k \le i \le m$.
	The marking on each component $(C,\mathscr{C})$ 
	of $(\Lambda_{k-1},\mathscr{L}_{k-1})$
	includes a choice of basepoint $p_C$
	(which we may assume to be a vertex),
	and a choice of spanning tree.
	Let $v_C$ be the vertex of $\Lambda_{k}$ corresponding to $(C,\mathscr{C})$.
	The graph of groups $(\Lambda'_{k-1},\mathscr{L}'_{k-1})$
	is obtained from the disjoint union of $(\Lambda_{k-1},\mathscr{L}_{k-1})$
	and $(\Lambda'_k,\mathscr{L}'_k)$
	by removing each $v_C$ and reattaching the incident edges to $p_C$.

	The foregoing is the step we are currently unable to take 
	while allowing nontrivial edge groups.
	If all edge groups are finitely generated,
	then for each oriented edge $e \in \st(v_C)$,
	we have that $\iota_e((\mathscr{L}'_k)_e)$ stabilizes
	some vertex of the Bass--Serre tree $\tilde\Lambda_{k-1}$.
	However, if the edge groups differ, it appears that it may happen that
	the marking on $(\Lambda_{k-1},\mathscr{L}_{k-1})$
	does not allow us to choose a fundamental domain in $\tilde\Lambda_{k-1}$
	containing each of these vertices.
	
	We will define $f'_{k-1}$ from $f'_k$ and $f_{k-1}$.
	To avoid notational complexity,
	we will describe the process assuming that
	$\Lambda_{k-1}$ is connected with basepoint $p$
	and that $(\Lambda_{k-1},\Lambda_{k-1})$ corresponds to the vertex
	$v$ in $\Lambda'_k$.
	The general case is entirely analogous.

	We define 
	$f'_{k-1}\colon (\Lambda'_k,\mathscr{L}'_k) \to (\Lambda'_k,\mathscr{L}'_k)$
	in the following way.
	On the subgraph $(\Lambda_{k-1},\mathscr{L}_{k-1})$,
	set $f'_{k-1} = f_{k-1}$.
	If $e$ is an edge of $\Lambda'_k$, its image $f'_k(e)$ is of the form
	\[ f'_k(e) = \sigma_1g_1\dotsb \sigma_{\ell-1} g_{\ell -1}\sigma_\ell, \]
	where each $g_i$ belongs to 
	$(\mathscr{L}'_k)_v = \pi_1(\Lambda_{k-1},\mathscr{L}_{k-1},p)$
	and where each $\sigma_i$ is a tight path in $(\Lambda'_k,\mathscr{L}'_k)$
	whose interior does not meet $v$.
	Assume firstly that $\sigma_1$ and $\sigma_\ell$ are
	nontrivial paths, i.e. that $f'_k(e)$ neither begins nor ends at $v$.
	In this case define
	\[ f'_{k-1}(e) = \sigma_1\gamma_1\dotsb 
	\sigma_{\ell-1}\gamma_{\ell-1}\sigma_{\ell-1}. \]

	To complete the definition, we have to address a technicality.
	We have $(\mathscr{L}'_k)_v = \pi_1(\Lambda_{k-1},\mathscr{L}_{k-1},p)$,
	but the vertex group map 
	$(f'_k)_v\colon (\mathscr{L}'_k)_v \to (\mathscr{L}'_k)_v$
	is an automorphism, \emph{not} an outer automorphism.
	Meanwhile, the basepoint $p$ 
	need not be fixed by $f_{k-1}$, 
	so the train track map 
	$f_{k-1}\colon (\Lambda_{k-1},\mathscr{L}_{k-1}) 
	\to (\Lambda_{k-1},\mathscr{L}_{k-1})$
	need not yet induce a well-defined automorphism of 
	$\pi_1(\Lambda_{k-1},\mathscr{L}_{k-1},p)$,
	much less $(f'_k)_v$.
	To remedy this, choose some tight path $\eta_p$
	from $p$ to $f(p)$ such that the map 
	\[ \gamma \mapsto \eta_pf_{k-1}(\gamma)\bar\eta_p \]
	of loops in $(\Lambda_{k-1},\mathscr{L}_{k-1})$ based at $p$ 
	induces the automorphism $(f'_k)_v$.
	In the notation of the previous paragraph,
	if $f'_k(e)$ begins or ends at $v$,
	the corresponding path $\sigma_1$ or $\sigma_{\ell}$ is trivial,
	and it should be replaced in the definition of $f'_{k-1}(e)$
	by $\bar\eta_p$ or $\eta_p$ respectively and then tightening.
	That is, if $f'_k(e)$ both begins and ends at $v$, we define
	$f'_{k-1}(e)$ by tightening the path
	\[ \bar\eta_p\gamma_1\sigma_2\dotsb\sigma_{\ell-1}\gamma_{\ell-1}\eta_p. \]
	In the general case where $\Lambda_{k-1}$ is not connected,
	the foregoing discussion applies \emph{mutatis mutandis.}
	For instance,
	the path $\eta_C$ for the component $C$ should be
	defined as the appropriate tight path from
	$p_{C'}$ to $f_{k-1}(p_C)$,
	where $C'$ is the component of $\Lambda_{k-1}$ containing $f(p_C)$.
	The rest of the construction follows by induction.

	Note that each 
	$f'_{k}\colon(\Lambda'_k,\mathscr{L}'_k) \to (\Lambda'_k,\mathscr{L}'_k)$
	is a topological realization,
	and $(\Lambda'_1,\mathscr{L}'_1)$ satisfies 
	$(\Lambda'_1,\mathscr{L}'_1) \asymp \mathbb{G}$. 
	Each edge of $\Lambda_i$ determines an edge of $\Lambda'_1$.
	Set $\Gamma_i$ to be the union of the edges of $\Lambda'_1$
	coming from each $\Lambda_j$ for $j \le i$.
	This defines a maximal filtration
	\[ \varnothing = \Gamma_0 \subset \Gamma_1 
	\subset \dotsb \subset \Gamma_m = \Lambda'_1 \]
	preserved by $f$.

	The desired partial train track map is
	$f'_1\colon (\Lambda'_1,\mathscr{L}'_1) \to (\Lambda'_1,\mathscr{L}'_1)$.
	It is clear from the construction that $(\Lambda'_1,\mathscr{L}'_1)$
	has no inessential valence-one nor valence-two vertices,
	simply because each $(\Lambda_i,\mathscr{L}_i)$ satisfy this property.
	Likewise each stratum is irreducible.
	Finally the train track property for each
	$f_i\colon (\Lambda_i,\mathscr{L}_i) \to (\Lambda_i,\mathscr{L}_i)$
	implies that if $p$ and $q$ are points of $H_k$
	connected by a path $\sigma$ that lies in the interior of a single edge $e$,
	then if $f_1'^\ell(p) = f_1'^\ell(q)$ and
	$f_1'^\ell(\sigma)$ is null-homotopic for some $\ell > 1$,
	then $f_1'^\ell(\sigma) \subset \Gamma_{k-1}$.
\end{proof}

\chapter*{Questions and Problems}
\label{openproblems}
\addcontentsline{toc}{chapter}{\nameref{openproblems}}

We collect here fifteen questions we find interesting
for which the techniques in this thesis might prove useful.
No attempt has been made at being comprehensive.
We often default to asking about $\out(W_n)$,
where $W_n$ is the free product of $n$ copies 
of the cyclic group of order $2$.
Many of these questions could just as well be asked
about the outer automorphism group of a free product 
or virtually free group,
or even $\Mod(\mathbb{G})$ as defined in the previous chapter.

\paragraph{Recent Work.}
Automorphism and outer automorphism groups of free products
have attracted a lot of interest recently.
Let us mention a few results.
Das \cite{Das} showed that when $G$ is a free product of $n$ finite
groups satisfying $n \ge 4$,
$\out(G)$ is \emph{thick} in the sense of \cite{BehrstockDrutuMosher},
and thus not relatively hyperbolic.

On the other hand, Healy \cite{Healy} showed that $\out(W_n)$
is \emph{acylindrically hyperbolic,}
a property also enjoyed by $\out(F_n)$ and the mapping class group
of a closed surface.
Genevois and Horbez \cite{GenevoisHorbez} showed that $\aut(G)$
is acylindrically hyperbolic when $G$ is finitely generated and 
has infinitely many ends. 
They remark that in general the following question is open.
\setcounter{question}{0}
\begin{question}
	If $G = A_1 * \dotsb * A_n$ is finitely generated and $n$
	satisfies $n \ge 3$, is  $\out(G)$ acylindrically hyperbolic?
\end{question}

Varghese \cite{Varghese} showed that $\aut(W_n)$ does not have
\emph{Kazhdan's property (T)} as soon as it is infinite.
Her argument exhibits a kind of ``strand-forgetting'' homomorphism
similar to that defined on the pure braid group.
Thus her argument implicitly shows that $\aut(G)$ and $\out(G)$
do not have Kazhdan's property (T) when $G$ is a free product
of $n$ finite groups.

Finally, in a striking parallel with the case of the 
outer automorphism group of the braid group,
Guerch \cite{Guerch} showed that $\out(\out(W_n))$ is trivial
for $n \ge 5$ and a cyclic group of order $2$ when $n = 4$.

\section*{Connections to $\cato$ Groups}

Let us recall the following question from the \nameref{introduction}.
Write $W_n$ for the free product of $n$ copies of the cyclic
group of order $2$.
We say a group $G$ is $\cato$ when there exists a
geometric action of $G$ on a metric space $X$ satisfying
Gromov's non-positive curvature condition $\cato$.

\begin{question}[Ruane]\label{kimquestion}
	Is $\out(W_n)$ a $\cato$ group when $n\ge 4$?
\end{question}

The thrust of this question asks whether $\out(W_n)$
is more similar to the outer automorphism group of a free group,
which is not a $\cato$ group,
or more similar to the quotient of the braid group by its center,
which is a $\cato$ group when the number of strands is at most $6$.
There are a number of ways to address this question,
all of which are interesting in their own right.

Cunningham in his thesis \cite{Cunningham}
shows that \emph{McCullough--Miller space} \cite{McCulloughMiller},
a contractible simplicial complex of dimension $n-2$
on which $\out(W_n)$ acts simplicially
with finite stabilizers and finite  quotient,
cannot support an $\out(W_n)$-equivariant $\cato$ metric.
We would like to highlight two directions this result may be pushed further.
Firstly, McCullough--Miller space is an equivariant
deformation retract of the spine of Guirardel--Levitt's
\emph{Outer Space} for $W_n$, which is also 
a contractible simplicial complex of dimension $n-2$
on which $\out(W_n)$ acts with finite stabilizers and finite quotient.

\begin{question}
	Does Cunningham's result extend to the spine of Guirardel--Levitt
	Outer Space?
\end{question}

Both McCullough--Miller space and the spine of 
Guirardel--Levitt Outer Space 
are defined for free products more generally,
and the action of $\out(G)$ on each space has finite stabilizers
when $G$ is a free product of $n$ finite groups.

\begin{question}
	Does Cunningham's result extend to
	McCullough--Miller space for $\out(G)$,
	where $G$ is a free product of $n$ finite groups and $n\ge 4$?
\end{question}

A weaker question than \Cref{kimquestion} asks
whether $\out(W_n)$ (or $\out(G)$) acts geometrically
on a $\cato$ \emph{cube complex.}
A result of Huang, Jankiewicz and Przytycki \cite{HuangJankiewiczPrzytycki}
in the case $n = 4$ and extended to all $n \ge 4$
by Haettel \cite{Haettel} says
that the $n$-strand braid group cannot act geometrically
on a $\cato$ cube complex.
Thus one expects a negative answer to the following question.

\begin{question}
	For $n \ge 4$, if $G$ is the free product of $n$ finite
	(or infinite cyclic) groups, does $\out(G)$
	act geometrically on a $\cato$ cube complex?
\end{question}

As discussed in the introduction, the present author
shows in \cite{SomeNewCAT0FreeByCyclicGroups}
that a natural class of free-by-cyclic groups
that might provide a negative answer to \Cref{kimquestion}
are in fact (virtually) $\cato$ groups.

We say a free-by-cyclic group $F_n\rtimes\mathbb{Z}$ 
is of \emph{Gersten type} if it contains elements $g$ and $h$
such that
\begin{enumerate}
	\item the subgroup $\langle g, h \rangle$ is
		free  abelian of rank two and
	\item the elements $h$, $gh$ and $g^2h$ are
		all conjugate in $F_n\rtimes\mathbb{Z}$.
\end{enumerate}

A Gersten-type free-by-cyclic group cannot be 
a subgroup of a $\cato$ group.
The previously mentioned result of the author suggests
the following question
\begin{question}
	If a free-by-cyclic group $G$ is not of Gersten type,
	is $G$ (virtually) a $\cato$ group?
	In particular, are all $W_n\rtimes\mathbb{Z}$ groups
	virtually $\cato$?
\end{question}

Ignat Soroko has informed the author that he
and Brady ask whether a free-by-cyclic group $G$ is $\cato$
if and only if it is \emph{virtually special,}
that is, if $G$ acts geometrically on a $\cato$ cube complex
$X$ whose geometry is particularly well-behaved.
(A famous result of Agol and Wise says that
fundamental groups of hyperbolic $3$-manifolds are virtually special.)
A weaker, strictly group-theoretic form of this question appears 
as Question 1 in \cite{BradySoroko}.

The $\cato$ $2$-complexes constructed by the present author
do not admit a cubical structure in general.
Since every $W_n\rtimes\mathbb{Z}$ group is virtually
free-by-cyclic, one avenue for pursuing Brady--Soroko's
question would be to answer the following.
\begin{question}
	Are polynomially-growing $W_n\rtimes\mathbb{Z}$ groups
	virtually special?
\end{question}
	
Finally, Rodenhausen and Wade \cite{RodenhausenWade}
compute a presentation for the centralizers in
$\aut(F_n)$ of \emph{Dehn-twist automorphisms} 
of the free group $F_n$, a class originally defined by
Cohen and Lustig \cite{CohenLustig}.
Using this computation for a generator of $\aut(F_n)$
called a \emph{Nielsen transformation},
they prove very strong restrictions for actions of $\aut(F_n)$
on complete $\cato$ metric spaces.

\begin{question}
	Do centralizers of Dehn-twist automorphisms of $W_n$
	exhibit the same behavior as those of $\aut(F_n)$?
\end{question}

Similar behavior is exhibited by Dehn twists in the mapping class group
of a closed surface, but is absent in the braid groups.

\section*{Topology and Geometry of Outer Spaces}

By analogy with the \emph{Teichm\"uller space}
of a surface, Culler and Vogtmann define in \cite{CullerVogtmann}
a contractible space now called \emph{Outer Space,}
$CV_n$ on which $\out(F_n)$ acts properly discontinuously.
Guirardel--Levitt generalize this construction to
the outer automorphism group of a free product in \cite{GuirardelLevitt},
and Krsti\'c--Vogtmann study fixed-point sets of finite group actions
on $CV_n$ in \cite{KrsticVogtmann}.

The topology of $CV_n$ is fairly well-understood,
but its geometry remains mysterious.
In particular, the most natural choice of metric on Outer Space,
called the \emph{Lipschitz metric} is not symmetric!
Nevertheless, one can realize an irreducible train track map 
with Perron--Frobenius eigenvalue $\lambda > 1$ as defining
an \emph{axis} in the Lipschitz metric 
for the action of the associated outer automorphism
$\varphi \in \out(F_n)$  on $CV_n$.
Francaviglia and Martino prove the existence of relative train track maps
for outer automorphisms of free products by finding such axes
in  \cite{FrancavigliaMartino}.
The results of this thesis imply, for instance,
that if $\varphi  \in \out(F_n)$ 
has an irreducible train track representative
with Perron--Frobenius eigenvalue $\lambda >1$
and $\varphi$ centralizes a finite subgroup $H \le \out(F_n)$,
then $\varphi$ has an axis entirely contained within
the fixed-point set of $H$.
It would be interesting to know if the following holds.

\begin{question}
	Are fixed-point sets of finite subgroups $H \le \out(F_n)$
	convex in the Lipschitz metric?
\end{question}

The \emph{spine} of Culler--Vogtmann and Guirardel--Levitt Outer Space
is a simplicial complex to which the corresponding Outer Space
equivariantly deformation retracts.
We may give the spine a metric by setting each edge to have unit length.
Let $G$ be a free product of finitely generated groups.
Given $\varphi \in \out(G)$ and an automorphism 
$\Phi\colon G \to G$ representing it,
the normal form given in \Cref{combing} defines a preferred
path in the $1$-skeleton of the spine of Guirardel--Levitt
Outer Space from $\varphi.v$ to $v$, for a fixed base vertex $v$.
Although we saw in \Cref{notquasigeodesic} that these paths
are not quasi-geodesic in any choice of 
$\out(G)$-equivariant metric on the spine,
a good understanding of homotopies between such paths
combined with a counting argument could provide upper bounds on
the \emph{Dehn function} of $\out(G)$,
which is a kind of optimal isoperimetric inequality for $\out(G)$
and any space on which it acts geometrically.

\begin{question}
	Do the paths in \Cref{combing} satisfy
	an exponential isoperimetric inequality?
	What is the Dehn function of $\out(G)$?
\end{question}

For $\out(F_n)$, the optimal isoperimetric inequality is exponential,
but both upper and lower bounds remain unknown for $\out(G)$.
A positive answer to \Cref{kimquestion} would yield
a quadratic Dehn function for $\out(W_n)$.
Let us also mention in connection with this question
the question mentioned in \Cref{traintrackchapter}.

\begin{question}
	For which graphs of groups $\mathbb{G}$ is
	the group $\Mod(\mathbb{G})$ a \emph{hierarchically hyperbolic}
	group?
	In particular, is $\out(W_n)$ a hierarchically hyperbolic group?
\end{question}

In \cite{Vogtmann}, Vogtmann studies paths in the $1$-skeleton
of the spine of $CV_n$. By showing how to push loops and homotopies
between loops outside of compact sets, she shows that the spine of $CV_n$
is \emph{simply-connected at infinity} when $n \ge 5$.

\begin{question}
	For $n \ge 6$, is $\out(W_n)$ simply-connected at infinity?
\end{question}

Connectivity at infinity of $\out(W_n)$ would allow one to prove
that $\out(W_n)$ is a virtual duality group in the sense of
Bieri--Eckmann. For this in the case of $\out(W_n)$,
one needs $(n-4)$-connectivity.

\begin{question}
	Is $\out(W_n)$ $(n-4)$-connected at infinity?
\end{question}

\section*{Dynamics of Train Track Maps}
In \cite{ThurstonEntropy}, Thurston shows that every algebraic integer
$\lambda \ge 1$ occurs as the Perron--Frobenius eigenvalue of
an irreducible train track map representing an element $\varphi\in\out(F_n)$
as $n$ is allowed to vary. By comparison,
Perron--Frobenius eigenvalues attached to pseudo-Anosov
diffeomorphisms of surfaces must satisfy
the stronger condition of being \emph{bi-Perron numbers.}
It would be interesting to know the answer to the following question.

\begin{question}
	Does every algebraic integer $\lambda \ge  1$ occur
	as the Perron--Frobenius eigenvalue of a train track map
	representing an element $\varphi \in \out(W_n)$ as $n$ varies?
\end{question}

For each fixed $n$, by contrast, there is a smallest $\lambda > 1$
that can be a Perron--Frobenius eigenvalue for a train track map
representing an element $\varphi \in \out(W_n)$.
This $\lambda$---or rather its logarithm---is the
\emph{least dilatation} of $\out(W_n)$, $L(W_n)$.
Since $W_n$ contains the quotient of the $n$-strand braid group
modulo its center, and every train track map for $\varphi\in\out(W_n)$
determines a train track map for $\hat\varphi\in\out(F_{n-1})$
with identical Perron--Frobenius eigenvalue,
we have 
\[ L(F_{n-1}) \le L(W_n) < L(B_n/Z), \]
where $B_n/Z$ denotes the $n$-strand braid group modulo its center.
The author has examples showing that the right hand inequality is in
fact a strict inequality.

\begin{question}
	Does $L(F_{n-1})$ satisfy $L(F_{n-1}) < L(W_n)$?
	Describe the asymptotics of
	\[ \frac{L(F_{n-1})}{L(W_n)}\quad\text{and}\quad
		\frac{L(W_n)}{L(B_n/Z)}
	\]
	as $n$ tends to infinity.
\end{question}

\bibliography{bib.bib}
\bibliographystyle{alpha}
\end{document}